\documentclass[12pt]{amsart}
\usepackage{ amsmath, amsthm, amsfonts, amssymb, color}
 \usepackage{mathrsfs}
\usepackage{amsfonts, amsmath}
 \usepackage{amsmath,amstext,amsthm,amssymb,amsxtra}
 \usepackage{txfonts} 
 \usepackage{amscd}
 \usepackage[colorlinks, citecolor=blue,pagebackref,hypertexnames=false]{hyperref}
 \allowdisplaybreaks
 \usepackage{pgf,tikz}
 \usepackage{multirow}
 \usepackage{diagbox} 

 \usepackage{tikz}
 \usepackage{cite}
\usepackage{xcolor}

\usetikzlibrary{decorations.pathreplacing}

 \usepackage[total={18cm,23.5cm}, left=1.5cm, right=1.5cm]{geometry}

\newcommand{\CC}{{\mathbb C^n}}

\newtheorem{thm}{Theorem}[section]
\newtheorem{cor}[thm]{Corollary}
\newtheorem{lem}[thm]{Lemma}
\newtheorem{prop}[thm]{Proposition}

\newtheorem{re}[thm]{Remark}
\newtheorem{defn}[thm]{Definition}

\numberwithin{equation}{section}

\newtheorem*{cor*}{Corollary}

\allowdisplaybreaks

\title[Weighted Fock Spaces]
{Bergman Kernels and Hankel Operators on Weighted Fock Spaces in Several Complex Variables}

\author{Guijun Liu,  Xiaofeng Wang and Zhicheng Zeng }

\address{Guijun Liu, Xiaofeng Wang and Zhicheng Zeng,
School of Mathematics and Information Science,
Guangzhou University, Guangzhou 510006, China}

\email{lgj19981998@163.com (G. Liu);
wxf@gzhu.edu.cn (X. Wang);
743706929@qq.com (Z. Zeng)}

\date{\today}

\keywords{Fock space, Bergman kernel, Bergman projection, $\bar \partial$-equation, Hankel operator}

\makeatletter
\@namedef{subjclassname@2020}{\textup{2020} Mathematics Subject Classification}
\makeatother
\subjclass[2020]{Primary 32A36; Secondary 32A35, 32W05, 47B35}

\begin{document}

\begin{abstract}
We introduce a class of plurisubharmonic weights on $\CC$ and develop
a theory of the associated weighted Fock spaces $F_\varphi^p$. The
curvature assumptions are imposed on a $\mathcal C^2$-regularization at
bounded distance from the original weight. Thus the original weight need
not be smooth or strictly plurisubharmonic, and, even when it is of class
$\mathcal C^2$, its complex Hessian eigenvalues need not be uniformly
comparable. This provides a counterpart of
Christ's doubling theory in several complex variables. We establish a global upper estimate with decay away from the diagonal and a uniform lower estimate near the diagonal for the weighted Bergman kernel. These
estimates yield $L^p$ bounds for the kernel functions, boundedness of
the Bergman projection, and duality and complex interpolation for the
associated Fock spaces. For a certain class of weights beyond the uniformly controlled
curvature setting, we construct an integral solution operator for the
$\bar\partial$ equation and establish scale-adapted weighted $L^p$
estimates for $1\leq p\leq\infty$. As an application, for all $1\leq p,q<\infty$, we characterize the boundedness and compactness of Hankel operators from
$F_\varphi^p$ to $L_\varphi^q$ with possibly unbounded symbols in
terms of weighted $\mathrm{IDA}$ spaces.
\end{abstract}

\maketitle

\section{Introduction}
\subsection{The problem and the framework.}
Let $\varphi$ be a plurisubharmonic function on $\CC$. For
$0<p<\infty$, let $L_\varphi^p$ be the space of measurable functions
$f$ on $\CC$ such that
\[
 \|f\|_{L_\varphi^p}:=\left(\int_{\CC}|f(z)|^p e^{-p\varphi(z)}\,dV(z)\right)^{1/p}<\infty,
\]
where $dV$ denotes Lebesgue volume measure. For $p=\infty$, set
\[
 \|f\|_{L_\varphi^\infty}:=\operatorname*{ess\,sup}_{z\in\CC}|f(z)|e^{-\varphi(z)}<\infty.
\]
The associated weighted Fock space is $F_\varphi^p:=L_\varphi^p\cap H(\CC)$, where $H(\CC)$ denotes the space of entire functions. When $0<p<1$, the preceding expression is
understood as a quasinorm. The Gaussian weights
$\varphi(z)=\alpha|z|^2/2$, $\alpha>0$, give the classical Fock
spaces. 

The curvature of the weight plays a central role in the analysis of
weighted Fock spaces. In one complex dimension, the curvature of a
subharmonic weight \(\varphi\) is represented by its Riesz measure
\(\mu:=\Delta\varphi\). If $\varphi$ is nonharmonic and $\mu$ is doubling, its local size can be described by a scale function $\rho$ satisfying $\mu(D(z,\rho(z)))=1$ for all $z\in\mathbb C$. This scale determines the natural metric and enters the estimates for the Bergman kernel, the Bergman metric, and the $\bar\partial$ equation. This point of view originates in the work of Christ \cite{C91} and was developed further in \cite{Mm03,Mo09,CO11}. A decisive feature of this theory is that every such weight admits a smooth subharmonic regularization $\widetilde\varphi$ satisfying
\begin{align}\label{smooth}
\|\varphi-\widetilde\varphi\|_{L^\infty(\mathbb C)}<\infty,
\qquad
C_0^{-1}\rho^{-2}\leq\Delta\widetilde\varphi
\leq C_0\rho^{-2},
\end{align}
where $C_0\geq 1$, and $\Delta\widetilde\varphi$ remains doubling; see \cite[Theorem~14]{Mm03}. Since weights at bounded distance define the
same weighted Fock spaces with equivalent quasinorms, the
regularization in \eqref{smooth} provides a smooth geometric model
for the original space.

In several complex variables, for a strictly plurisubharmonic weight $\varphi$ of class $\mathcal C^2$, the natural counterpart of the Riesz measure is the complex Monge--Amp\`ere measure $\mu_\varphi:=(i\partial\bar\partial\varphi)^n.$ A natural problem is whether a doubling condition on $\mu_\varphi$ yields results analogous to those in one complex dimension. Christ observed that this problem presents several difficulties when $n>1$.
In particular, a doubling condition alone does not suffice, and the main obstacle is the passage from a scalar equation to a system of equations. Indeed, if $\lambda_1,\ldots,\lambda_n$ are the eigenvalues of the complex Hessian of $\varphi$, then the density of $\mu_\varphi$ is a fixed positive multiple of $\lambda_1\cdots\lambda_n$. Consequently, doubling of $\mu_\varphi$ controls neither the individual eigenvalues nor the ratio $\lambda_{\max}/\lambda_{\min}$. Thus the Monge--Amp\`ere measure
alone does not determine a scalar scale that normalizes the curvature
in every complex direction. This is the scalar versus system
obstruction emphasized in \cite{C91}.

Motivated by these observations, we formulate a counterpart of
Christ's doubling theory by adopting an analogue of \eqref{smooth}
in several complex variables as our basic hypothesis; see
Definition~\ref{def:regularizable} below. The pointwise
curvature assumptions are imposed on a $\mathcal C^2$ representative at
bounded distance, whose complex Hessian eigenvalues are uniformly
comparable at each point, while the original weight may be nonsmooth
or not strictly plurisubharmonic. Even when the original weight is of class
$\mathcal C^2$, the eigenvalues of its complex Hessian need not be
uniformly comparable. Within this framework, we develop a systematic theory of weighted Fock spaces in several complex variables.

\subsection{Regularizable weights and Bergman kernels}

For $z\in\CC$ and $r>0$, let
$B(z,r):=\{w\in\CC:|w-z|<r\}.$
Let $\mathcal S$ be the class of positive Borel measurable functions
$\rho$ on $\CC$ for which there exists $c_0\geq1$ such that
\begin{align}\label{rho123}
c_0^{-1}\rho(w)
\leq \rho(z)
\leq c_0\rho(w),
\qquad z\in B(w,\rho(w)).
\end{align}
For $\rho\in\mathcal S$ and $z\in\CC$, define
$\omega_\rho(z):=\rho(z)^{-2}i\partial\bar\partial|z|^2.$

\begin{defn}\label{def:regularizable}
Let $\rho\in\mathcal S$. A plurisubharmonic function $\varphi$ on
$\CC$ is called $\rho$-regularizable if there exists a strictly
plurisubharmonic function
$\widetilde\varphi\in\mathcal C^2(\CC)$ such that
\begin{align}\label{regu}
\|\varphi-\widetilde\varphi\|_{L^\infty(\CC)}
<\infty
\quad\text{and}\quad
m\omega_\rho
\leq i\partial\bar\partial\widetilde\varphi
\leq M\omega_\rho
\end{align}
for some constants $0<m\leq M<\infty$. We call
$\widetilde\varphi$ a $\rho$-regularization  of $\varphi$.
\end{defn}

Let $\mathcal W^*(\CC)$ be the class of all plurisubharmonic
functions $\varphi$ on $\CC$ for which there exist
$\rho\in\mathcal S$ and a $\rho$-regularization
$\widetilde\varphi$ such that the complex Monge--Amp\`ere measure
$\mu_{\widetilde\varphi}:=(i\partial\bar\partial\widetilde\varphi)^n$
is doubling. The curvature bounds in
Definition~\ref{def:regularizable}, together with the local
comparability of $\rho$ in \eqref{rho123}, imply that
\[
C^{-1}
\leq
\mu_{\widetilde\varphi}\bigl(B(z,\rho(z))\bigr)
\leq C,
\qquad z\in\CC,
\]
for some constant $C>0$. Thus $\rho$ describes the local scale at
which the Monge--Amp\`ere mass of $\widetilde\varphi$ is uniformly
comparable to one. This is the analogue in several complex variables
of the unit mass normalization of the Riesz measure in Christ's
theory. When $n=1$, the regularization in \eqref{smooth} shows that
$\mathcal W^*(\mathbb C)$ contains the classical class of
subharmonic weights with doubling Riesz measure. Basic examples include the radial weights
$\varphi(z)=|z|^\alpha$ for $\alpha>0$. When $n\geq2$, another example
is
\begin{align}\label{noncom}
\varphi(z)
=
|z|^2+4\sum_{j=1}^n\cos(\operatorname{Re}z_j),
\qquad z\in\CC.
\end{align}
Its complex Hessian is diagonal with
eigenvalues $1-\cos(\operatorname{Re}z_j), 1\leq j\leq n,$
which are not uniformly comparable.

Throughout the paper, for each $\varphi\in\mathcal W^*(\CC)$, we fix
a function $\rho\in\mathcal S$ and a $\rho$-regularization
$\widetilde\varphi$ such that $\mu_{\widetilde\varphi}$ is doubling.
Let
\[
K_\varphi:\CC\times\CC\longrightarrow\mathbb C
\]
be the weighted Bergman kernel associated with $\varphi$, namely, the
reproducing kernel of $F_\varphi^2$. 

Our first objective is to establish a global upper estimate and a
uniform lower estimate near the diagonal for \(K_\varphi\) when
\(\varphi\in\mathcal W^*(\CC)\). Although
\(F_\varphi^2\) and \(F_{\widetilde\varphi}^2\) coincide with
equivalent norms, their reproducing kernels correspond to different
inner products. Thus norm equivalence alone does not transfer the
required pointwise estimates from \(K_{\widetilde\varphi}\) to
\(K_\varphi\). We address this difficulty by developing new arguments
for the corresponding kernel estimates.

An extensive literature is devoted to upper estimates for weighted
Bergman kernels when the weight is a $\mathcal C^2$
plurisubharmonic function. Delin \cite{D98} obtained a global upper
estimate for $\mathcal C^2$ strictly plurisubharmonic weights. The
constant in his estimate depends on the reciprocal of a local
positive lower bound for $\lambda_{\min}$, the smallest eigenvalue
of the complex Hessian. Consequently, the estimate does not apply
when $\lambda_{\min}=0$, as happens at the origin for
$\varphi(z)=|z|^\alpha$ with $\alpha>2$. Under uniform upper and lower curvature bounds, Lindholm \cite{L01} obtained exponential estimates away from the diagonal together with
a lower estimate on the diagonal. These curvature bounds correspond to the constant scale $\rho\equiv1$, whereas our framework allows the local scale to vary with $z$. Later, Dall'Ara \cite{DA15} used coercivity of the weighted Kohn Laplacian to obtain upper estimates for Bergman kernels associated with $\mathcal C^2$ plurisubharmonic weights that need not be strictly plurisubharmonic. In the specialization of his admissible weight framework where the complex Hessian eigenvalues are uniformly comparable and $\Delta\varphi\in RH_\infty$, the decay is governed by a scalar local scale $\rho$ defined through $\Delta\varphi$.
It follows from \cite[Proposition~12]{DA15} that this scale $\rho$ is
uniformly bounded above. In our framework, the local scale need not be bounded above. For
example, if $\varphi(z)=(1+|z|^2)^{\alpha/2}, 0<\alpha<2,$ then one may take $\rho(z)=(1+|z|)^{1-\alpha/2},$ and hence $\rho(z)\to\infty$ as $|z|\to\infty$.
 More recently, Phung \cite{Phung24} weakened the assumptions on
$\Delta\varphi$ while retaining the uniform comparability of the
complex Hessian eigenvalues. We observe that the argument used to
obtain the kernel estimate in \cite{Phung24} cannot be applied to the
weight in \eqref{noncom}. Indeed, its original complex Hessian
eigenvalues are not uniformly comparable, while $|z|^2$ is a
regularization at bounded distance. This illustrates the additional flexibility
gained by imposing curvature assumptions on the regularization rather than
on the original weight.

These developments laid important foundations for the theory in
higher dimensions and provided essential guidance for the present
work. However, to the best of our knowledge, among the classes
discussed above, a uniform lower estimate near the diagonal has previously
been established only under uniform upper and lower curvature bounds, as in
\cite{Sv12}. Without such bounds, the
preceding extensions provide global upper estimates but no
corresponding uniform lower estimate.

To state our result, for $z,w\in\CC$, write $K_z^\varphi(w):=K_\varphi(w,z).$

\begin{thm}\label{main}
Let $\varphi\in\mathcal W^*(\CC)$ with $\rho\in\mathcal S$. Then there exist positive
constants $C_1,C_2,\varepsilon$, and $r$ such that
\begin{align}\label{main1}
|K_z^\varphi(w)|
\leq
C_1
\frac{e^{\varphi(z)+\varphi(w)}}
{\rho(z)^n\rho(w)^n}
e^{-\varepsilon d_\varphi(z,w)},
\qquad z,w\in\CC,
\end{align}
and
\begin{align}\label{main2}
|K_z^\varphi(w)|
\geq
C_2
\frac{e^{\varphi(z)+\varphi(w)}}
{\rho(z)^n\rho(w)^n},
\qquad
d_\varphi(z,w)<r.
\end{align}
Here $d_\varphi$ denotes the Riemannian distance induced by the K\"ahler metric
$i\partial\bar\partial\widetilde\varphi$.
\end{thm}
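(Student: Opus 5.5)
We first reduce to the smooth model and then handle the non-smooth weight through the inner-product issue.

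\emph{Step 1: reduction and geometry.} Since $\|\varphi-\widetilde\varphi\|_{L^\infty}<\infty$, the spaces $F^p_\varphi$ and $F^p_{\widetilde\varphi}$ coincide with equivalent norms, and $e^{\varphi}\asymp e^{\widetilde\varphi}$. So in \eqref{main1} and \eqref{main2} the factors $e^{\varphi(z)+\varphi(w)}$ may be replaced by $e^{\widetilde\varphi(z)+\widetilde\varphi(w)}$. Next we record the geometry of $d_\varphi$. From $m\omega_\rho\le i\partial\bar\partial\widetilde\varphi\le M\omega_\rho$ and \eqref{rho123} we get $d_\varphi(z,w)\asymp |z-w|/\rho(z)$ when $|z-w|\le\rho(z)$. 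Moreover $d_\varphi(z,w)\gtrsim \min\{1,|z-w|/\rho(z)\}$ in general, and balls $B(z,\delta\rho(z))$ are comparable to $d_\varphi$-balls of radius $\asymp\delta$.

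\emph{Step 2: local normalization and the on-diagonal bound.} On $B(z,\rho(z))$ we rescale $w=z+\rho(z)\zeta$. The rescaled weight $\psi(\zeta)=\widetilde\varphi(z+\rho(z)\zeta)$ then has $i\partial\bar\partial\psi\asymp i\partial\bar\partial|\zeta|^2$ on the unit ball, uniformly in $z$. Subtracting a pluriharmonic function $h_z$ (the holomorphic part of the Taylor polynomial, or the solution of a local $\bar\partial$ problem) gives $|\psi-\mathrm{Re}\,h_z|\le C$ on $B(0,1)$. The sub-mean value property then yields
\[
|f(z)|^2e^{-2\varphi(z)}\lesssim \rho(z)^{-2n}\int_{B(z,\rho(z))}|f|^2e^{-2\varphi}\,dV .
\]
Hence $K_\varphi(z,z)\lesssim \rho(z)^{-2n}e^{2\varphi(z)}$. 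For the reverse inequality we build a test function: $g=\chi e^{H_z}-u$, where $\chi$ is a cutoff supported in $B(z,\rho(z))$, $H_z$ is holomorphic with $\mathrm{Re}\,H_z\approx\widetilde\varphi-\widetilde\varphi(z)$ near $z$, and $u$ is the $L^2$-minimal solution of $\bar\partial u=e^{H_z}\bar\partial\chi$. We estimate $u$ by H\"ormander's theorem with the weight $2\widetilde\varphi+2n\log|\cdot-z|$-type singular modification, which forces $u(z)=0$. The curvature lower bound $m\omega_\rho$ supplies the factor $\rho^2$ and compensates for $|\bar\partial\chi|\lesssim\rho^{-1}$. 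This gives $K_\varphi(z,z)\gtrsim\rho(z)^{-2n}e^{2\varphi(z)}$. Since H\"ormander's estimate is carried out with $\widetilde\varphi$ and only norm equivalence is needed, the smoothness defect of $\varphi$ does no harm here.

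\emph{Step 3: off-diagonal decay.} This is the main obstacle, because $K_\varphi$ is the kernel of the inner product weighted by $\varphi$, not $\widetilde\varphi$. We plan the standard Christ/Delin-type argument adapted to $\varphi$. Fix $z$ and set $f=K_z^\varphi$. Let $P_\varphi$ denote the orthogonal projection of $L^2_\varphi$. For a cutoff $\chi$ equal to $1$ on $B(w,\rho(w)/2)$, write $\chi f=P_\varphi(\chi f)+v$, where $v$ is the $L^2_\varphi$-minimal solution of $\bar\partial v=f\bar\partial\chi$. Pointwise evaluation of the holomorphic part at $w$ via Step 2 reduces \eqref{main1} to a weighted $L^2$ bound $\|v e^{\varepsilon d_\varphi(\cdot,z)}\|_{L^2_\varphi}\lesssim\|f\bar\partial\chi\, e^{\varepsilon d_\varphi(\cdot,z)}\|$. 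The twisted weight in this bound must be allowed to be nonsmooth but comparable. Concretely, we would prove that the minimal solution operator in $L^2_\varphi$ satisfies such estimates for the weight $\varphi+\varepsilon\eta$, where $\eta$ is a smooth regularization of $d_\varphi(\cdot,z)$ with $|\partial\eta|^2\lesssim\omega_\rho$-norm $1$.

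To obtain this, we write the minimal solution in $L^2_\varphi$ as $v=v_0-P_\varphi v_0$, where $v_0$ is H\"ormander's solution for the weight $\widetilde\varphi\pm\varepsilon\eta$; the perturbation is plurisubharmonic for small $\varepsilon$ because $|i\partial\bar\partial\eta|\lesssim\omega_\rho$. It then remains to show that $P_\varphi$ is bounded on $L^2_{\varphi+\varepsilon\eta}$. For this we use the Kerzman-type identity: $P_\varphi$ restricted to the twisted space differs from the corresponding twisted projection by a $\bar\partial$-solution term controlled by $\varepsilon$, and a Neumann series closes the argument for $\varepsilon$ small. Combining these pieces yields the $L^2$ decay, and Step 2 upgrades it to the pointwise bound \eqref{main1}.

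\emph{Step 4: near-diagonal lower bound.} Finally \eqref{main2} follows from Step 2 together with a gradient estimate. Applying the local Cauchy estimate in the rescaled coordinates to $K_z^\varphi e^{-H_w}$ and using the upper bound gives
\[
\bigl|K_z^\varphi(w)e^{-\varphi(w)}-K_z^\varphi(z)e^{-\varphi(z)}\cdot(\text{phase})\bigr|\lesssim d_\varphi(z,w)\,\rho(z)^{-2n}e^{\varphi(z)}
\]
in modulus. Choosing $r$ small then keeps $|K_z^\varphi(w)|$ comparable to the diagonal value, and \eqref{rho123} lets us replace $\rho(z)^{2n}$ by $\rho(z)^n\rho(w)^n$. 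The doubling of $\mu_{\widetilde\varphi}$ enters only to make the covering and summation arguments uniform.
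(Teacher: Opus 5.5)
Your Steps 1, 2 and 4 follow the paper. These are the sub-mean-value bound for $K_\varphi(z,z)$, a peak function obtained from a $\bar\partial$-problem with a logarithmic pole that forces $u(z)=0$ (Lemma~\ref{peak}), and the near-diagonal lower bound from a gradient/Cauchy estimate along the segment combined with the upper bound.

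Step~3 takes a genuinely different route. The paper runs the Delin argument (Theorem~\ref{df=theta}) only for the $\mathcal C^2$ weight $\widetilde\varphi$. It then passes to $\varphi$ operator-theoretically: $K_z^\varphi=T_b^{-1}K_z^{\widetilde\varphi}$ with $b=e^{-2(\varphi-\widetilde\varphi)}$, and the exponential localization of $T_b^{-1}$ comes from a Schur test on the normalized kernel (Lemma~\ref{z-wk2n}) together with a Neumann series for $WA^{-1}W^{-1}$, where $W=e^{\delta\min\{d_\rho(z,\cdot),R\}}$. You instead prove the weighted estimate for the $L^2_\varphi$-minimal solution directly, writing $v=v_0-P_\varphi v_0$ and proving that $P_\varphi$ is uniformly bounded on $L^2_{\varphi+\varepsilon\eta}$.

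That step does close. Put $\psi=e^{-\varepsilon\eta}$, $T=M_\psi P_\varphi M_{\psi^{-1}}$, and let $S$ be the $L^2_\varphi$-minimal solution operator. For $a=P_\varphi u$ one has $TP_\varphi u=a-\psi S(a\bar\partial\psi^{-1})$, and $(T(I-P_\varphi))^*k=S\bigl(P_\varphi(\psi k)\bar\partial\psi^{-1}\bigr)$. Combine this with $|\bar\partial\psi^{-1}|\lesssim\varepsilon\psi^{-1}\rho^{-1}$, the bound $\|S\theta\|_{L^2_\varphi}\lesssim\|\rho\theta\|_{L^2_\varphi}$, and the bound $\|\psi S\theta\|_{L^2_\varphi}\lesssim(1+\|T\|)\|\psi\rho\theta\|_{L^2_\varphi}$. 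This gives $\|T\|\leq1+C\varepsilon(1+\|T\|)$, which you can absorb for small $\varepsilon$.

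The two routes buy different things. Yours never uses $K^{\widetilde\varphi}$, and it yields as byproducts a Delin-type estimate for the nonsmooth weight and weighted boundedness of $P_\varphi$. The paper's route keeps all the $\bar\partial$-analysis on the $\mathcal C^2$ weight, where Theorem~\ref{df=theta} applies verbatim. Its transfer step needs only $e^{-2D}\leq b\leq e^{2D}$ and the decay of $K^{\widetilde\varphi}$.

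Three points need repair.

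(i) As displayed, your Step~3 weight has the wrong sign, and the evaluation point is wrong. You must evaluate at $z$, not at $w$: since $\chi=0$ near $z$, the function $P_\varphi(\chi K_z^\varphi)=-v$ is holomorphic there, and $\int\chi|K_z^\varphi|^2e^{-2\varphi}\,dV=-v(z)$. So you need $\|e^{-\varepsilon\eta}v\|_{L^2_\varphi}\lesssim\|e^{-\varepsilon\eta}\rho f\bar\partial\chi\|_{L^2_\varphi}$ with $\eta\approx d_\varphi(\cdot,z)$. This matches your "weight $\varphi+\varepsilon\eta$", but not your displayed factor $e^{+\varepsilon d_\varphi(\cdot,z)}$.

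(ii) The absorption or Neumann argument needs $\|T\|<\infty$ a priori. So $\eta$ must be truncated, as the paper does with $\min\{d_\rho(z,\cdot),R\}$, with constants uniform in the truncation. Also, Greene--Wu does not provide a regularized distance with $|i\partial\bar\partial\eta|\lesssim\omega_\rho$. Either build $\eta$ from a partition of unity at scale $\rho$, or use Theorem~\ref{df=theta}, which only needs $|\partial\omega|_{i\partial\bar\partial\widetilde\varphi}\leq\varepsilon_1\omega$.

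(iii) In Step~2, a global term $2n\log|\cdot-z|$ only controls $\int|u|^2e^{-2\widetilde\varphi}(\rho(z)/|\cdot-z|)^{2n}\,dV$. That does not bound $\|u\|_{L^2_\varphi}$, so your test function is not yet known to lie in $F^2_\varphi$. The paper adds the growing factor $\omega_x=e^{\varepsilon g_x}$ and uses $t^{2n}e^{-c\min\{t,t^\alpha\}}\lesssim1$ from Lemma~\ref{dzw}. This is one place where doubling is genuinely needed, so your closing remark understates its role.
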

The scale geometry in Theorem~\ref{main} is intrinsic in the sense that
it agrees, up to uniform equivalence, with the Bergman geometry. Let $\varrho$ denote the distance induced by the Bergman metric, and let $d_\rho$ denote the length distance induced by $\omega_\rho$. By \eqref{regu}, $d_{\varphi}$ and $d_\rho$ are uniformly equivalent, while Lemma~\ref{bergmetric} shows that there exists a constant $C\geq1$ such that
\[
C^{-1}d_\varphi(z,w)
\leq \varrho(z,w)
\leq Cd_\varphi(z,w),
\qquad z,w\in\CC.
\]
 Consequently, $d_\rho$, $d_{\varphi}$, and
$\varrho$ are mutually uniformly equivalent. In particular, when
$\rho\equiv1$, all three distances are comparable to the Euclidean
distance.

The upper and lower estimates in Theorem~\ref{main} are obtained by
different methods. To prove \eqref{main1}, we first combine a weighted
mean value inequality with a weighted H\"ormander $L^2$ estimate to
obtain the required estimate for the regularized weight
$\widetilde\varphi$. We then relate $K_\varphi$ to
$K_{\widetilde\varphi}$ through the invertible Toeplitz operator
$T_b$, where $b=e^{-2(\varphi-\widetilde\varphi)}.$
Schur estimates and a Neumann series argument give the required
localization of $T_b^{-1}$; see Proposition~\ref{Gue}. This Toeplitz
transfer yields the pointwise estimate for the original weight
$\varphi$, which may be nonsmooth. To prove \eqref{main2}, we solve a one-point interpolation problem and construct peak functions with uniformly controlled $L^2$ norms;
see Lemma~\ref{peak}. This construction is inspired by the method of
\cite{Sv12} for $\rho\equiv1$. When the local scale $\rho$ is allowed
to tend to zero or infinity, however, the construction faces
substantial difficulties caused by the degeneration or expansion of
the local geometry. Its implementation in our setting is therefore
essentially different from the $\rho\equiv1$ argument in \cite{Sv12}.

\subsection{The \texorpdfstring{$\bar\partial$}{d-bar} equation and Hankel
operators}
Let \(P_\varphi\) denote the Bergman projection from \(L_\varphi^2\) onto \(F_\varphi^2\). For a suitable symbol \(f\), the corresponding Hankel operator is defined by
$$
H_f:=(I-P_\varphi)M_f,
$$
where \(M_f\) denotes multiplication by \(f\). The boundedness and compactness of Hankel operators with general, possibly unbounded, symbols are classical problems in operator theory on analytic function spaces. The use of integral distance to holomorphic functions, abbreviated IDA, goes back to Luecking's work on Bergman spaces \cite{L92}. Hu and Virtanen applied this method to Hankel operators on Fock spaces;
see \cite{HV22,HV23}. In particular,  Hu and Virtanen \cite{HV23} characterized the
boundedness and compactness of Hankel operators
$H_f:F_\varphi^p\to L_\varphi^q$ for general symbols and weights
$\varphi\in\mathcal W_0$. This class consists of convex \(\mathcal C^2\) weights satisfying
$mI_{2n}\leq D_{\mathbb R}^2\varphi\leq MI_{2n}$
for some \(0<m\leq M<\infty\), where \(D_{\mathbb R}^2\varphi\) denotes the real Hessian of \(\varphi\) and \(I_{2n}\) is the \(2n\times2n\) identity matrix. Related characterizations for radial Fock-type weights $\Psi$ were obtained in \cite{ZHW} in the Hilbert space setting
$H_f:F_\Psi^2\rightarrow L_\Psi^2.$ To the best of our knowledge, beyond the classes treated there, corresponding boundedness and compactness characterizations from
$F^p_\varphi$ to $L^q_\varphi$ for weights with nonuniform curvature
in several complex variables have not previously been available.

Our next objective is to extend the results of \cite{HV23} to weights whose curvature need not be uniformly bounded. For this purpose, we work with the subclass \(\mathcal W_{\mathrm{gc}}^*(\CC)\) of \(\mathcal W^*(\CC)\), defined precisely in Section~\ref{canonical}. Informally, a weight belongs to this class if it admits a \(\rho\)-regularization \(\widetilde\varphi\) for which there exists \(F\in H(\CC)\) such that
\begin{align}\label{RALhess}
m\rho^{-2}I_{2n}
\leq
D_{\mathbb R}^2
\bigl(\widetilde\varphi-\operatorname{Re}F\bigr)
\leq
M\rho^{-2}I_{2n}
\end{align}
for some \(0<m\leq M<\infty\). When \(\rho\equiv1\), this class contains
$\mathcal W_0+\operatorname{Re}H(\CC),$ whose elements need not be convex. More generally, it contains weights of the form
$$\varphi(z)=|z|^\alpha+\operatorname{Re}F(z),
\qquad \alpha>1,\quad F\in H(\CC).
$$

For each \(1\leq p<\infty\), let \(\mathcal Q_p\) be the symbol class
defined in \eqref{Qp}. If \(f\in\mathcal Q_p\), then the Hankel
operator \(H_f\) is densely defined on \(F_\varphi^p\). In particular,
every bounded measurable function belongs to \(\mathcal Q_p\). Using
the weighted IDA spaces \(\mathrm{IDA}_r^{s,q,\alpha}\),
\(\mathrm{BDA}_r^{q,\alpha}\), and \(\mathrm{VDA}_r^{q,\alpha}\)
defined in Section~\ref{canonical}, we obtain the following
characterization.
   
 \begin{thm}\label{Hfpq}
Let $\varphi\in \mathcal W^*_{\mathrm{gc}}(\mathbb C^n)$. \\
 \textup{(A)} If $1\leq p\leq q<\infty$, $f\in\mathcal Q_p$, and $\gamma=n(1/q-1/p)$, then $ H_f : F_\varphi^p \to L_\varphi^q $ is bounded if and only if \( f \in \mathrm{BDA}_r^{q, 2\gamma} \), and \( H_f : F_\varphi^p \to L_\varphi^q \) is compact if and only if \( f \in \mathrm{VDA}_r^{q, 2\gamma} \)  for some (or any) $r>0$. Moreover, there exists a constant $C>0$ such that
\[
C^{-1}\|f\|_{\mathrm{BDA}_r^{q, 2\gamma }}\leq\|H_f\|_{F_\varphi^p \to L_\varphi^q} \leq C \|f\|_{\mathrm{BDA}_r^{q, 2\gamma }}.
\]
 \textup{(B)} If $1\leq q<p<\infty$, $f\in\mathcal Q_q$, and $s=\dfrac{pq}{p-q}$, then $ H_f : F_\varphi^p \to L_\varphi^q $ is bounded if and only if it is compact, and these conditions are equivalent to $f\in\mathrm{IDA}_{r}^{s, q, 0} $ for some (or any) $r>0$. Moreover, there exists a constant $C>0$ such that 
 \[
C^{-1}\|f\|_{\mathrm{IDA}_{r}^{s, q, 0}}\leq\|H_f\|_{F_{\varphi}^p \to L_{\varphi}^q}\leq C\|f\|_{\mathrm{IDA}_{r}^{s, q, 0}}.
\]
\end{thm}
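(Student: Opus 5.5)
We follow the Hu--Virtanen scheme of \cite{HV23}, replacing constant-scale balls by the scale-adapted balls $B(z,r\rho(z))$. The ingredients are the kernel estimates of Theorem~\ref{main} and the $\bar\partial$ solution operator with scale-adapted weighted $L^p$ estimates available for $\varphi\in\mathcal W^*_{\mathrm{gc}}(\CC)$.

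\textbf{Step 1: decomposition of the symbol.} Following the decomposition results behind the IDA spaces, we first show that $f\in\mathrm{IDA}_r^{s,q,\alpha}$ (or $\mathrm{BDA}_r^{q,\alpha}$, $\mathrm{VDA}_r^{q,\alpha}$) if and only if $f=f_1+f_2$ with the following properties.
\begin{itemize}
\item[(i)] $f_1\in\mathcal C^1$ satisfies $\rho^{1+\alpha n/\ldots}|\bar\partial f_1|$ in the corresponding $L^s$, $L^\infty$ or $C_0$ class; more precisely, $\rho|\bar\partial f_1|$ is weighted by $\rho^{\alpha}$.
\item[(ii)] $f_2$ satisfies the averaged condition $\bigl(\rho^{-2n}\int_{B(z,r\rho(z))}|f_2|^q\,dV\bigr)^{1/q}\rho(z)^{\alpha}$ in the same class.
\end{itemize}
To build $f_1$, take local holomorphic approximants $h_z$ on each ball of a $\rho$-lattice $\{a_j\}$ (finite overlap follows from \eqref{rho123} and doubling). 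Glue them with a partition of unity $\psi_j$ satisfying $|\nabla\psi_j|\lesssim\rho^{-1}$, and use the mean-value inequality on scale-adapted balls to control differences $h_{a_j}-h_{a_k}$ on overlaps.

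\textbf{Step 2: sufficiency.} Write $H_f g=H_{f_1}g+H_{f_2}g$. For the second term, $\|H_{f_2}g\|\le\|f_2g\|+\|P_\varphi(f_2g)\|$. The projection $P_\varphi$ is bounded on $L^q_\varphi$, as a consequence of \eqref{main1}. The term $\|f_2 g\|_{L^q_\varphi}$ is then controlled through the Carleson-measure characterization for $F^p_\varphi\to L^q$, using $|g(z)e^{-\varphi(z)}|^p\lesssim\rho(z)^{-2n}\int_{B(z,r\rho)}|g|^pe^{-p\varphi}$. This produces the factor $\rho^{2n(1/q-1/p)}=\rho^{2\gamma}$ when $p\le q$, and H\"older with exponent $s$ when $q<p$. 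For the first term, $H_{f_1}g$ is the $L^2_\varphi$-minimal solution of $\bar\partial u=g\bar\partial f_1$. Hence $H_{f_1}g=(I-P_\varphi)S(g\bar\partial f_1)$, where $S$ is the integral solution operator, and the scale-adapted bound $\|\rho^{-1}S v\|\lesssim\|v\|$ (applied as $\|Sv\|_{L^q_\varphi}\lesssim\|\rho v\|_{L^q_\varphi}$) reduces matters again to a Carleson estimate for $\rho|\bar\partial f_1|$. Density of $H_f$ on $F_\varphi^p$ uses $f\in\mathcal Q_p$. For compactness, truncate $f$ to large balls and approximate $H_f$ in norm by compact operators, using the vanishing condition.

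\textbf{Step 3: necessity and the main obstacle.} Test $H_f$ on normalized kernels $k_z=K_z^\varphi/\|K_z^\varphi\|_{F^p_\varphi}$. By \eqref{main1}--\eqref{main2}, $\|K_z^\varphi\|_{F^p_\varphi}\asymp e^{\varphi(z)}\rho(z)^{-2n+2n/p}$ and $|k_z|e^{-\varphi}\gtrsim\rho(z)^{-2n/p}$ on $B(z,r\rho(z))$. Then $H_f k_z=(f-P(fk_z)/k_z)k_z$ with $P(fk_z)/k_z$ holomorphic on the ball where $k_z$ does not vanish (\eqref{main2} with $d_\varphi\asymp d_\rho$). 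This gives $\rho(z)^{2\gamma}G_{q,r}(f)(z)\lesssim\|H_fk_z\|$, hence the BDA bound. Weak-null convergence of $k_z$ as $z\to\infty$ yields VDA for compact $H_f$.

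For $q<p$ we expect the main obstacle to be the reverse direction. There we use Khinchine/Luecking randomization: set $g_t=\sum_j\lambda_j r_j(t)k_{a_j}$ with $\{\lambda_j\}\in\ell^p$, where $\|g_t\|_{F^p_\varphi}\lesssim\|\lambda\|_{\ell^p}$ follows from the off-diagonal decay in \eqref{main1} and a Schur test on the lattice. Average over $t$, localize to the balls $B(a_j,r\rho(a_j))$ using the holomorphic correction, and dualize $\ell^{p/q}$ to obtain $\sum_j G_{q,r}(f)(a_j)^s<\infty$. The delicate point is controlling the cross-terms, which requires the exponential decay $e^{-\varepsilon d_\varphi}$ in \eqref{main1} to be summable against doubling lattices of variable scale. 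Finally, bounded implies compact for $q<p$ by the standard tail argument, since the $\ell^s$ sum has small tails. The ``some or any $r$'' statement follows from covering one scale-adapted ball by boundedly many of another radius, by \eqref{rho123}.
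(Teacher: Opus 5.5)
Your overall route is the paper's: the local decomposition $f=f_1+f_2$, the Carleson measures $\rho^q|\bar\partial f_1|^q\,dV$ and $|f_2|^q\,dV$ (Proposition~\ref{car}), the conjugated Berndtsson--Andersson operator $T_\varphi$ of Lemma~\ref{solve1}, testing on $k^\varphi_{z,p}$ with $P_\varphi(fk^\varphi_{z,p})/k^\varphi_{z,p}$ holomorphic on $B^r(z)$, and Rademacher randomization with the diagonal-operator criterion for $q<p$.

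\textbf{Gap at $p=1$.} There is a genuine gap in ``compact $\Rightarrow\mathrm{VDA}$'' at the endpoint $p=1$, which the statement includes. You assert that $k_z\to0$ weakly, but this fails in $F^1_\varphi$. Its dual is $F^\infty_\varphi$ (Theorem~\ref{dual}), and by Theorem~\ref{Pf=fp} and Proposition~\ref{prop:normkz},
$|\langle k^\varphi_{z,1},g\rangle_\varphi|=|g(z)|/\|K^\varphi_z\|_{L^1_\varphi}\simeq|g(z)|e^{-\varphi(z)}$.
This need not tend to $0$ for $g\in F^\infty_\varphi$. For example, take a sum of $k^\varphi_{w_m,\infty}$ over a sufficiently $d_\rho$-sparse sequence $w_m\to\infty$; it lies in $F^\infty_\varphi$ by Lemma~\ref{atomic}, and \eqref{main1}--\eqref{main2} show it is a counterexample. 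So compactness alone does not give $\|H_fk^\varphi_{z,1}\|_{L^q_\varphi}\to0$.

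The paper supplies the missing idea. For $|z_j|\to\infty$, extract a subsequence with $H_f(k^\varphi_{z_j,1})\to u$ in $L^q_\varphi$. Since $f\in L^1_{\mathrm{loc}}$ (from $f\in\mathcal Q_1$ and \eqref{main2}) and $k^\varphi_{z_j,1}\to0$ locally uniformly (Lemma~\ref{kz0}), we get $fk^\varphi_{z_j,1}\to0$ in $L^1_{\mathrm{loc}}$. Hence $u$ is the distributional limit of the entire functions $-P_\varphi(fk^\varphi_{z_j,1})$, so $\bar\partial u=0$. Also $P_\varphi u=\lim_jP_\varphi H_f(k^\varphi_{z_j,1})=0$, so $u=P_\varphi u=0$ by Theorem~\ref{Pf=fp}. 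Every subsequential limit therefore vanishes.

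\textbf{Compactness sufficiency.} This also fails as literally stated. Truncating $f$ itself by a cutoff $\chi_R$ creates, near $|z|=R$, the term $f\,\bar\partial\chi_R$. Its size is governed by $|f|$ rather than by $G_{q,r}(f)$. For instance, if $f$ is a rapidly growing entire function in $\mathcal Q_p$, then $H_f=0$, yet $\|f(1-\chi_R)\|_{\mathrm{BDA}_r^{q,2\gamma}}$ need not be small. What must be localized is the data. The pointwise estimate \eqref{local-decompo} makes $\mu$ and $\nu$ vanishing Carleson measures (automatically so when $q<p$, by Proposition~\ref{car}(B)). Then \eqref{HF1111}--\eqref{HF2222} show that $H_{f_1}$ and $H_{f_2}$ factor through the compact embeddings.

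\textbf{Smaller corrections.}
\begin{itemize}
\item $H_{f_1}g$ is the $L^2_\varphi$-minimal solution only when $f_1g\in L^2_\varphi$, which $\mathcal Q_p$ does not provide. The correct justification is Theorem~\ref{HfNf1}: $T_\varphi(g\bar\partial f_1)\in L^p_\varphi$, so $f_1g-T_\varphi(g\bar\partial f_1)\in F^p_\varphi$ is fixed by $P_\varphi$. This also requires checking $f_1,f_2\in\mathcal Q_p$ (resp.\ $\mathcal Q_q$), as in \eqref{957}.
\item For $q<p$ there are no cross-terms to control: after Khinchine, the finite overlap of $\{B^r(a_k)\}$ isolates the diagonal terms. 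What you must keep is the weight. Since $2ns(1/q-1/p)=2n$, the sum obtained is $\sum_k\rho(a_k)^{2n}G_{q,r}(f)(a_k)^s$, and this is what controls $\|G_{q,r_0}(f)\|_{L^s}^s$, not $\sum_kG_{q,r}(f)(a_k)^s$.
\end{itemize}
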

 
We first note that, when $n\geq2$, the characterization 
in Theorem~\ref{Hfpq} is new even in the Hilbert space case
\[
H_f:F_{|z|^\alpha}^2\longrightarrow L_{|z|^\alpha}^2,
\qquad 2<\alpha<4.
\]
These weights are not covered by \cite{ZHW}, which, among other
hypotheses on the radial profile, assumes
$\Psi'>0$, $\Psi''\geq0$, and $\Psi'''\geq0$. For $|z|^\alpha$ with
$2<\alpha<4$, one has $\Psi(t)=2t^{\alpha/2}$ under our
normalization, and hence $\Psi'''(t)<0$.  In the special case \(\rho\equiv1\), Theorem~\ref{Hfpq} extends the
results of Hu and Virtanen \cite{HV23} from \(\mathcal W_0\) to
\(\mathcal W_0+\operatorname{Re}H(\CC)\) and provides a partial
answer to their conjecture. A noteworthy consequence is an extension
of the Berger--Coburn phenomenon. This phenomenon was established for
the classical Fock spaces in \cite{BC87,HV21} and for weights in
\(\mathcal W_0\) in \cite{HV23}. Our result extends it to
\(\mathcal W_0+\operatorname{Re}H(\CC)\), as follows.

\begin{cor}\label{intro-berger-coburn}
Let $\varphi\in\mathcal W_0+\operatorname{Re}H(\CC)$,
$f\in L^\infty(\CC)$, and $1\leq p,q<\infty$. Then
\[
H_f:F_\varphi^p\longrightarrow L_\varphi^q
\text{ is compact}
\quad\Longleftrightarrow\quad
H_{\overline f}:F_\varphi^p\longrightarrow L_\varphi^q
\text{ is compact}.
\]
\end{cor}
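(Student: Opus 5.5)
The plan is to reduce Corollary~\ref{intro-berger-coburn} to Theorem~\ref{Hfpq} with $\rho\equiv1$, and then to show that the relevant $\mathrm{VDA}$ or $\mathrm{IDA}$ condition is invariant under $f\mapsto\overline f$ when $f$ is bounded.

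\textbf{Reduction.} First, every $\varphi\in\mathcal W_0+\operatorname{Re}H(\CC)$ lies in $\mathcal W^*_{\mathrm{gc}}(\CC)$ with $\rho\equiv1$, as noted after \eqref{RALhess}. We take $\widetilde\varphi=\varphi$. Condition \eqref{RALhess} then holds with $F$ the holomorphic summand, and $\mu_\varphi$ is comparable to Lebesgue measure, hence doubling. Bounded symbols belong to $\mathcal Q_p$ and $\mathcal Q_q$. So Theorem~\ref{Hfpq} applies to both $f$ and $\overline f$:
\begin{itemize}
\item for $p\le q$, compactness of $H_f$ is equivalent to $f\in\mathrm{VDA}_r^{q,2\gamma}$;
\item for $q<p$, compactness is equivalent to $f\in\mathrm{IDA}_r^{s,q,0}$.
\end{itemize}
When $\rho\equiv1$ these spaces should be defined through local quantities of the form
\[
G_{q,r}(f)(z)=\inf_{h\in H(B(z,r))}\Bigl(\frac{1}{|B(z,r)|}\int_{B(z,r)}|f-h|^q\,dV\Bigr)^{1/q},
\]
possibly multiplied by a power $\rho(z)^{2\gamma}$, which here is constant. $\mathrm{VDA}$ asks that $G_{q,r}(f)(z)\to0$ as $|z|\to\infty$, and $\mathrm{IDA}^{s,q,0}$ asks that $G_{q,r}(f)\in L^s$. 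The corollary therefore follows once we show that for $f\in L^\infty$, the condition $G_{q,r}(f)(z)\to0$ (respectively $G_{q,r}(f)\in L^s$) holds for $f$ if and only if it holds for $\overline f$, possibly after changing the radius $r$. The ``for some (or any) $r$'' clause of Theorem~\ref{Hfpq} allows that change.

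\textbf{Key step: a bounded $f$ has small $\overline f$-distance.} We adapt the Hu--Virtanen argument from \cite{HV23}.

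1. For bounded $f$, the local holomorphic approximant may be taken bounded. The infimum in $G_{q,2r}(f)(z)$ is attained or nearly attained by some $h$ with
\[
\|h\|_{L^q(B(z,2r))}\lesssim\|f\|_\infty .
\]
By the subharmonic mean-value inequality, $|h|\lesssim\|f\|_\infty$ on $B(z,r)$, uniformly in $z$.

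2. Write $f=h+g$ on $B(z,r)$, where $\|g\|_{L^q(B(z,r))}\lesssim G_{q,2r}(f)(z)$. Then $\overline f=\overline h+\overline g$, and it suffices to show that $\overline h$ is close to a holomorphic function on a smaller ball. Here $G$ should be controlled by the local mean oscillation
\[
\mathrm{MO}_{q,r}(f)(z)=\Bigl(\frac{1}{|B(z,r)|}\int_{B(z,r)}|f-\widehat f_r(z)|^q\,dV\Bigr)^{1/q},
\qquad \widehat f_r(z)=\text{the mean of } f \text{ on } B(z,r).
\]
This quantity is conjugation invariant, since $\mathrm{MO}(\overline f)=\mathrm{MO}(f)$, and trivially $G\le \mathrm{MO}$.

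3. For the reverse direction, we want to show
\[
\mathrm{MO}_{q,r/2}(f)(z)\lesssim G_{q,r}(f)(z)+\bigl(\text{oscillation of } h \text{ on } B(z,r/2)\bigr).
\]
Using the standard inequality
\[
\int|\operatorname{Re}h-c|^q\gtrsim\int|h-h(z)|^q
\]
for holomorphic $h$ (the real part controls the whole function modulo constants, by Cauchy estimates on harmonic conjugates), we bound the oscillation of $h$ by the oscillation of $\operatorname{Re} f$ plus $G$. Applying the same to $if$ gives control by $\operatorname{Im} f$. The most robust route, following \cite{HV23}, is to prove the identity that $f\in\mathrm{VDA}\cap L^\infty$ iff both $\operatorname{Re}f$ and $\operatorname{Im}f$ are in $\mathrm{VDA}$, i.e.\ that this holds if and only if $f\in \mathrm{VMO}$ plus a correction. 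This relation is conjugation symmetric, so the claim would follow.

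\textbf{Main obstacle.} The hardest step is showing that for bounded $f$, $f\in\mathrm{VDA}_r$ (or $\mathrm{IDA}^{s}$) already forces $\mathrm{VMO}$ (or $\mathrm{IMO}^{s}$) behaviour on a smaller ball. I would try a Cauchy-estimate argument. Boundedness of $h$ by $\|f\|_\infty$ gives $|\nabla h|\lesssim\|f\|_\infty/r$, so the oscillation of $h$ on a tiny ball $B(z,\delta r)$ is $O(\delta\|f\|_\infty)$. This yields
\[
\mathrm{MO}_{q,\delta r}(f)(z)\lesssim \delta^{-2n/q}G_{q,r}(f)(z)+\delta\|f\|_\infty .
\]
Letting $|z|\to\infty$ and then $\delta\to0$ gives the $\mathrm{VDA}$ case. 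The $L^s$ (IDA) case needs more care, since the additive term $\delta\|f\|_\infty$ is not in $L^s$. There I would replace the gradient bound by
\[
|\nabla h|\lesssim r^{-1}\bigl(G_{q,2r}(f)+\mathrm{MO}\bigr),
\]
obtained by subtracting constants, and close the argument through an absorption inequality. Alternatively, for $q<p$ one may invoke the paper's equivalence of boundedness and compactness together with boundedness of $H_{\overline f}$ via $\|f\|_\infty$, reducing again to a local comparison.
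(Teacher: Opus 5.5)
\textbf{There is a genuine gap in the case $q<p$.}

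Your reduction to Theorem~\ref{Hfpq} with $\rho\equiv1$ is sound. For $p\le q$ your Cauchy-estimate argument also works, provided you run it with base radius $r_0/\delta$. This is allowed, because compactness of $H_f$ gives $f\in\mathrm{VDA}_r^{q,2\gamma}$ for every $r$. You then get $\limsup_{|z|\to\infty}G_{q,r_0}(\overline f)(z)\lesssim\delta\|f\|_\infty$ at a fixed radius $r_0$, for every $\delta$.

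For $q<p$ you must show that, for bounded $f$, $G_{q,r}(f)\in L^s$ implies $G_{q,r}(\overline f)\in L^s$. Neither of your suggestions proves this.

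Your absorption scheme uses only two inputs: the pointwise inequality
\[
\mathrm{MO}_{q,\delta r}(f)\lesssim\delta\,\mathrm{MO}_{q,r}(f)+\delta^{-2n/q}G_{q,r}(f),
\]
and the a priori bound $\mathrm{MO}_{q,r}(f)\in L^\infty$. The function $f(z)=z_1$ satisfies both, since its mean oscillation is constant, and it has $G_{q,r}(f)\equiv0$. Yet $G_{q,r}(\overline f)$ is a nonzero constant. So no argument built from these inputs can reach the conclusion. Concretely:
\begin{enumerate}
\item Absorbing requires knowing $\mathrm{MO}_{q,\delta r}(f)\in L^s$ in advance, which is exactly what you want to prove.
\item The change-of-radius constant $A$ in $\|\mathrm{MO}_{q,r}(f)\|_{L^s}\le A\|\mathrm{MO}_{q,\delta r}(f)\|_{L^s}$ satisfies $A\gtrsim\delta^{-1}$. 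To see this, test on smooth bounded $f$ varying on scales much larger than $r$, for which $\mathrm{MO}_{q,t}(f)\approx c\,t|\nabla f|$. This cancels the gain $\delta$.
\end{enumerate}

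No local comparison can succeed either. A bounded $f$ may coincide with $\varepsilon(z_1-a_1)$ on $B(a,R)$, so that $G_{q,t}(f)(a)=0$ for $t\le R$ while $\mathrm{MO}_{q,t}(f)(a)\simeq\varepsilon t$. Boundedness of $f$ on all of $\CC$ must therefore enter through a global, Liouville-type mechanism. This is exactly what the Berger--Coburn theorem of \cite{HV23} supplies.

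Your fallback is false. For $q<p$, a bounded symbol need not give a bounded Hankel operator. For example, $f=\chi_{\{\operatorname{Re}z_1>0\}}$ has $G_{q,r}(f)$ bounded below on the slab $\{|\operatorname{Re}z_1|<r/2\}$, which has infinite volume. Hence $f\notin\mathrm{IDA}_r^{s,q,0}$, and $H_f$ is unbounded by Theorem~\ref{Hfpq}(B).

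\textbf{How the paper avoids this.} Write $\varphi=\varphi_0+\operatorname{Re}F$ with $\varphi_0\in\mathcal W_0$. The map $U_Fg=e^{-F}g$ is an isometry of $L^r_\varphi$ onto $L^r_{\varphi_0}$ and of $F^r_\varphi$ onto $F^r_{\varphi_0}$. It intertwines $P_\varphi$ with $P_{\varphi_0}$: on $L^2$ by uniqueness of orthogonal projections, then on $L^r$ by density and Theorem~\ref{thm:projbnd}. It also commutes with $M_h$. Hence $U_FH^\varphi_h=H^{\varphi_0}_hU_F$ for $h=f,\overline f$. The Berger--Coburn theorem of \cite{HV23} for $\mathcal W_0$ then settles all $1\le p,q<\infty$ at once, without Theorem~\ref{Hfpq}.

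You could close your own route with the same citation. With $\rho\equiv1$, the conditions $\mathrm{VDA}_r^{q,2\gamma}$ and $\mathrm{IDA}_r^{s,q,0}$ do not depend on the weight. So Theorem~\ref{Hfpq} applied to $\varphi_0$, combined with \cite{HV23}, shows these conditions are invariant under $f\mapsto\overline f$ on $L^\infty$. At that point, however, the detour through the IDA characterization adds nothing.
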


Previous work for other classes of weights mainly considered the
simultaneous boundedness or compactness of $H_f$ and
$H_{\overline f}$, as well as Hankel operators with conjugate
holomorphic symbols, through mean oscillation and gradient conditions;
see, for example,
\cite{CH21,BC94,W05,HW18,LWH25,A86,LW24,SY}.
In Section~\ref{canonical}, Theorem~\ref{Hfpq} yields the corresponding
characterizations in our setting; see
Theorems~\ref{ffpq}, \ref{ffqp}, and \ref{fpq1}.

The proof of Theorem~\ref{Hfpq} faces two difficulties that do not
arise in the uniformly convex setting with $\rho\equiv1$ considered in
\cite{HV23}. First, the Fock spaces associated with different
exponents need not be nested. For example, let
$\varphi(z)=|z|^3$. Then
$\rho(z)\simeq(1+|z|)^{-1/2}$, and for every
$1<p<q<\infty$,
$$
F_\varphi^p\not\subset F_\varphi^q
\qquad\text{and}\qquad
F_\varphi^q\not\subset F_\varphi^p.
$$
See Remark~\ref{FpFq} for further discussion. Consequently, the cases
$p\leq q$ and $q<p$ must be treated separately. In particular, when
$p\leq q$, one can no longer use the inclusion available for
$\rho\equiv1$ to reduce part of the argument to the case $p=q$.
Moreover, the local quantities defining the IDA spaces must be
normalized by the varying scale $\rho$. The second difficulty concerns the construction of a solution operator for the $\bar\partial$ equation. For a sufficiently regular symbol $f$
and a holomorphic function $g$, the identity
$\bar\partial(H_fg)=g\bar\partial f$ suggests studying Hankel
operators through this equation. For $\varphi\in\mathcal W_0$, Hu and
Virtanen \cite{HV23} obtained weighted $L^p$ estimates for the
Berndtsson--Andersson operator $A_\varphi$, in the notation of
\cite[p.~92]{BA82}. Their argument relies essentially on the uniform convexity of $\varphi$, and hence does not apply directly to weights in $\mathcal W_{\mathrm{gc}}^*(\CC)$. Nevertheless, their use of the Berndtsson--Andersson operator provides the starting point for our construction. Indeed, for $\varphi\in\mathcal W_{\mathrm{gc}}^*(\CC)$, we choose an entire function $F$ and a $\rho$-regularization $\widetilde\varphi$ satisfying \eqref{RALhess}. We then introduce a new operator $T_\varphi$ obtained by conjugating
the Berndtsson--Andersson operator. For a $(0,1)$-form $\omega$, define
\[
T_{\varphi}(\omega):=e^F A_{\widetilde\varphi-\operatorname{Re}F}(e^{-F}\omega).
\]
We prove that
\[
\|T_{\varphi}(\omega)\|_{L_\varphi^p}\lesssim\|\rho\omega\|_{L_\varphi^p},\qquad 1\leq p\leq\infty,
\]
Under the hypotheses of Lemma~\ref{solve1}\textup{(B)}, we also have
$\bar\partial T_{\varphi}(g\bar\partial f)=g\bar\partial f.$
Thus $T_{\varphi}(g\bar\partial f)$ provides the particular solution required in the analysis of $H_f$ and leads to the subsequent
IDA characterizations.

The paper is organized as follows. Section~\ref{Pre} develops the
geometry induced by the scale function $\rho$, including consequences
of the doubling condition, covering lemmas, metric comparison
properties, and weighted mean value inequalities. In
Section~\ref{esti}, we establish global upper estimates and uniform
lower estimates near the diagonal for the Bergman kernel, prove
the equivalence between the Bergman distance and the distance
$d_\rho$ determined by the scale function, and derive weighted
$L^p$ estimates for the Bergman kernel. Section~\ref{sec:proj}
is devoted to the boundedness of the Bergman projection, duality, and
complex interpolation. Finally, Section~\ref{canonical} establishes weighted \(L^p\)
estimates for the \(\bar\partial\) equation, develops the required
IDA and Carleson measure characterizations, and applies them to
Hankel operators.

Throughout the paper, the notation $A\lesssim B$ means that $A\leq CB$ for a positive constant $C$ independent of the points, functions, and operators under consideration. We write $A\simeq B$ if both $A\lesssim B$ and $B\lesssim A$ hold. Unless explicitly stated otherwise, implicit constants may depend on the dimension and on the structural constants in the definitions of $\mathcal S$ and $\mathcal W^*(\CC)$.

\section{Preliminaries}\label{Pre}
In this section, we establish several foundational results that will
be used throughout the paper. These include geometric properties of
the scale function $\rho$ associated with weights
$\varphi\in\mathcal W^*(\CC)$, covering lemmas for metric balls
determined by $\rho$, and weighted mean value inequalities. We begin
by recalling the definition of a doubling measure.

\begin{defn}
A nonnegative Borel measure $\mu$ on $\CC$ is called doubling if there
exists a constant $C>0$ such that
\[
\mu(B(z,2r))\leq C\mu(B(z,r))
\]
for all $z\in\CC$ and $r>0$. We denote by $C_\mu$ the least constant
for which this inequality holds.
\end{defn}

The following result is an important and useful estimate for doubling measures.

\begin{lem}\label{estimate:doubling}
	Let $\mu$ be a nonatomic doubling measure on $\mathbb C^n$ with
$\operatorname{supp}\mu=\mathbb C^n$. Then there are constants $C>1$ and $0<\delta_1,\delta_2<\infty$, which only depend on $C_{\mu}$, such that
	 if $B$ and $B'$ are open balls of radii $r$ and $r'$, respectively, such that
	 $B'\subset B$ and $r'<r$, then 
	 $$
	 C^{-1}(r'/r)^{\delta_1}\mu(B)\le\mu(B')\le C(r'/r)^{\delta_2}\mu(B).               
	 $$
\end{lem}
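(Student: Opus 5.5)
The plan is to prove both inequalities by iterating the doubling inequality along a chain of comparable balls. Write $B=B(a,r)$ and $B'=B(a',r')$ with $B'\subset B$ and $r'<r$. Note that $B'\subset B$ forces $|a-a'|\le r-r'<r$.

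\emph{Lower bound.} Use $B\subset B(a',2r)$, valid since $|a-a'|<r$. Choose $k\ge 0$ minimal with $2^k r'\ge 2r$; then $2^k\le 4r/r'$. Applying doubling $k$ times at the centre $a'$ gives
\[
\mu(B)\le\mu(B(a',2^kr'))\le C_\mu^k\,\mu(B').
\]
Since $C_\mu^k=2^{k\log_2 C_\mu}\le (4r/r')^{\log_2C_\mu}$, this yields the left inequality with $\delta_1=\log_2 C_\mu$ and $C=4^{\delta_1}$.

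\emph{Upper bound.} This is the more delicate step, because doubling gives a lower bound for the mass of a subball but no upper bound for it. The plan is to show that a fixed fraction of the mass of a ball lies outside a concentric half-radius ball; for this one needs $\operatorname{supp}\mu=\mathbb C^n$, and nonatomicity is consistent with it.

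\textbf{Key claim.} There is $\eta\in(0,1)$, depending only on $C_\mu$, such that
\[
\mu(B(x,s/2))\le\eta\,\mu(B(x,s))\qquad\text{for all }x,s.
\]

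\textbf{Proof of the claim.} Pick a point $y$ with $|y-x|=3s/4$. Then $B(y,s/4)\subset B(x,s)\setminus B(x,s/2)$. Moreover $B(x,s)\subset B(y,2s)$, so three doublings give
\[
\mu(B(y,s/4))\ge C_\mu^{-3}\mu(B(y,2s))\ge C_\mu^{-3}\mu(B(x,s)).
\]
Subtracting, $\mu(B(x,s/2))\le(1-C_\mu^{-3})\mu(B(x,s))$, so $\eta=1-C_\mu^{-3}$ works. Full support guarantees that $\mu(B(x,s))>0$, so the relative comparison is meaningful.

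\textbf{Iterating the claim.} Now $B'\subset B(a',2r)$ is not directly comparable to $B$, so instead work with nested concentric balls $B(a',2^{-j}\cdot 2r)$, $j=0,\dots,m$. Choose $m$ maximal with $2^{-m}\cdot 2r\ge r'$, so that $2^{m}\ge r/r'$. Iterating the claim gives
\[
\mu(B')\le\eta^{m}\mu(B(a',2r))\le\eta^m C_\mu^2\,\mu(B),
\]
using $B(a',2r)\subset B(a,4r)$ and doubling twice at $a$. Since $\eta^m=2^{-m\log_2(1/\eta)}\le (r'/r)^{\delta_2}$ with $\delta_2=\log_2(1/\eta)>0$, the right inequality follows, and the constant $C$ is taken as the larger of the two constants obtained.

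All constants depend only on $C_\mu$. The only geometric input is the existence of the annular subball, which is available in $\mathbb C^n=\mathbb R^{2n}$.
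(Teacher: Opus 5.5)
Your proof is correct, and it is essentially the paper's approach: the paper gives no argument of its own but cites \cite{H01} for the lower bound and the reverse-doubling estimate \cite[Lemma~2.1]{C91} for the upper bound, and your iterated doubling with $\delta_1=\log_2 C_\mu$ and your annular-subball claim are exactly the standard proofs behind those citations. The one point you leave implicit is that $C_\mu>1$, which is what gives $\eta>0$ and $\delta_1>0$; it follows from full support, since $C_\mu=1$ would make your key claim give $\mu(B(x,s/2))=0$ for every ball.
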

\begin{proof}
The first inequality follows from \cite[(4.16)]{H01}.
The second inequality is the reverse doubling estimate for nonatomic
doubling measures with full support; see \cite[Lemma~2.1]{C91}.
\end{proof}

The measure $\mu_{\widetilde\varphi}$ used below is nonatomic and has
full support, since (1.3) makes it locally equivalent to the positive
density $\rho^{-2n}dV$.
We denote by $d_{\varphi}$ the Riemannian distance induced by the
K\"ahler metric
\[g_{\widetilde\varphi}(z,\xi):=\sum_{j,k=1}^n\frac{\partial^2\widetilde\varphi(z)}{\partial z_j\partial\overline z_k}\xi_j\overline{\xi}_k.
\]
Since
$i\partial\bar\partial\widetilde\varphi\simeq \omega_\rho,$
we have
$ d_{\varphi}(z,w)\simeq d_\rho(z,w),$
where
\[d_\rho(z,w):=\inf_\gamma\int_0^1\frac{|\gamma'(t)|}{\rho(\gamma(t))}\,dt,\]
and the infimum is taken over all piecewise $C^1$ curves joining $z$ and $w$. 
        
This auxiliary distance $d_\rho$ can be further estimated in terms of the Euclidean distance as follows. For convenience we write $B^r(w)=B(w,r\rho(w))$ and
$B(w)=B^1(w)$ for $w\in\CC$ and $r>0$. 
\begin{lem}\label{dzw}
Let $\varphi\in \mathcal W^*(\CC)$ with $\rho\in \mathcal{S}$. Then there exist constants $C>0$ and $0<\alpha\leq1\leq\beta$ such that for all $z,w\in \mathbb C^n$,
$$
C^{-1}\min\{t,t^\alpha\}
\leq d_\rho(z,w)
\leq C\max\{t,t^\beta\},\quad t=\frac{|z-w|}{\rho(w)}.
$$
\end{lem}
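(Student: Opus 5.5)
The plan is to first get polynomial growth and decay bounds for $\rho$ from the doubling of $\mu_{\widetilde\varphi}$, and then integrate $1/\rho$ along curves.

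\textbf{Step 1: growth bounds for $\rho$.} By \eqref{regu},
\[
\mu_{\widetilde\varphi}\simeq \rho^{-2n}\,dV .
\]
Together with \eqref{rho123}, this gives $\mu_{\widetilde\varphi}(B(w))\simeq 1$ for every $w$. Let $|z-w|=s$ and compare the balls $B(z,\rho(z))$ and $B(w,\rho(w))$ through a common larger ball of radius comparable to $s+\rho(z)+\rho(w)$. Lemma~\ref{estimate:doubling} then yields, for $t=|z-w|/\rho(w)\geq 1$,
\[
C^{-1}t^{-a}\leq \frac{\rho(z)}{\rho(w)}\leq C\,t^{b}.
\]
The exponents $a,b$ come from $\delta_1,\delta_2$ and the factor $2n$. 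For the upper bound, assume $\rho(z)\geq\rho(w)$ and let $R=s+\rho(z)$. The ball $B(w)$ lies in $B(z,2R)$, so its measure is at most $C(\rho(w)/R)^{\delta_2}$ times $\mu(B(z,2R))$. By doubling at $z$ from scale $\rho(z)$ up to $2R$, the latter is at most $C(R/\rho(z))^{\delta_1}$. Since $\mu(B(w))\simeq\mu(B(z))\simeq1$, this gives $(R/\rho(w))^{\delta_2}\lesssim (R/\rho(z))^{\delta_1}$. When $\rho(z)\geq s$, we have $R\simeq\rho(z)$, and this forces $\rho(z)/\rho(w)\lesssim 1$, a contradiction for large ratios. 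So $\rho(z)\lesssim s$, i.e.\ $\rho(z)/\rho(w)\lesssim t$, and we can take $b=1$. The lower bound $\rho(z)\gtrsim \rho(w)t^{-a}$ is obtained symmetrically: from $\mu(B(z))\simeq1$ and $B(z)\subset B(w,2(s+\rho(z)))$, the same comparison gives $(\rho(z)/(s+\rho(w)))^{\delta_1}\gtrsim (\rho(w)/(s+\rho(w)))^{\delta_2}$.

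\textbf{Step 2: the case $t\leq 1$.} For $t\leq1$ both inequalities are local. For the upper bound, use the straight segment: it lies in $B(w)$, where $\rho\simeq\rho(w)$ by \eqref{rho123}, so $d_\rho\lesssim t$. For the lower bound, any curve from $w$ to $z$ either stays in $B(w)$, where its cost is at least $c\,|z-w|/\rho(w)$, or exits $B(w)$, where its cost is at least $c$. In both cases $d_\rho\gtrsim t$.

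\textbf{Step 3: upper bound for $t\geq1$.} Again use the segment $\gamma(\tau)=w+\tau(z-w)$. Step 1 gives
\[
\rho(\gamma(\tau))\gtrsim \rho(w)\max\{1,\tau t\}^{-a},
\]
so
\[
d_\rho\leq \int_0^1\frac{t}{\rho(\gamma(\tau))}\rho(w)\,d\tau\lesssim t\int_0^1 \max\{1,\tau t\}^{a}\,d\tau\lesssim t^{1+a}.
\]
This gives $\beta=1+a$.

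\textbf{Step 4: lower bound for $t\geq1$ (the main obstacle).} The difficulty is that a curve could in principle save length by passing through regions where $\rho$ is large. Take any curve from $w$ to $z$ and consider the dyadic annuli $A_k=\{2^{k}\rho(w)\leq |\zeta-w|<2^{k+1}\rho(w)\}$ for $0\leq k<\log_2 t$. The curve must cross each $A_k$, which takes Euclidean length at least $2^k\rho(w)$. On $A_k$, Step 1 gives $\rho\lesssim \rho(w)2^{k}$, so each crossing costs $\gtrsim 1$, and summing gives $d_\rho\gtrsim \log t$. To get a power we should use the sharper exponent. The argument in Step 1 actually gives $\rho(\zeta)\lesssim \rho(w)\,t^{1-\alpha}$ for some $\alpha>0$ whenever $|\zeta-w|=t\rho(w)$: from $(R/\rho(w))^{\delta_2}\lesssim(R/\rho(\zeta))^{\delta_1}$ with $R\simeq s$ we get $\rho(\zeta)\lesssim \rho(w)\,t^{1-\delta_2/\delta_1}$, and we may arrange $\delta_2\leq\delta_1$. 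With this, the crossing of $A_k$ costs $\gtrsim 2^{k\alpha}$, and summing over $k$ gives $d_\rho\gtrsim t^\alpha$.

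The delicate point is justifying this refined growth bound with a positive exponent $\alpha=\delta_2/\delta_1\in(0,1]$; this is where the reverse doubling exponent $\delta_2>0$ is essential. If that step works, then combining Steps 2–4 gives the lemma.
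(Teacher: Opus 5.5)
Your proof is correct in structure and is essentially the paper's argument. Comparing the unit-mass balls $B(w)$ and $B(\zeta)$ inside a common ball of radius $\simeq|\zeta-w|$ via Lemma~\ref{estimate:doubling} yields, for $t=|\zeta-w|/\rho(w)\geq1$,
\[
\rho(w)t^{1-\delta_1/\delta_2}\lesssim\rho(\zeta)\lesssim\rho(w)t^{1-\delta_2/\delta_1}.
\]
These are the pointwise forms \eqref{w-z}, \eqref{z-w} of the ball estimates in the paper's proof. They are then integrated along the segment for the upper bound, and over a region of width $\simeq|z-w|$ that every curve must cross for the lower bound. The paper uses the single ball $B(z,|z-w|/4)$ for this. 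Your last dyadic annulus alone already suffices, and the refined bound you worry about in Step~4 does follow from your own comparison once $R\simeq s$ is known.

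One slip needs fixing: in Step~1 the exponents in the lower bound are interchanged. The symmetric comparison actually gives
\[
\bigl(\rho(z)/(s+\rho(w))\bigr)^{\delta_2}\gtrsim\bigl(\rho(w)/(s+\rho(w))\bigr)^{\delta_1},
\]
hence $a=\delta_1/\delta_2-1\geq0$ and $\beta=1+a=\delta_1/\delta_2$, as in the paper. Your displayed inequality would instead force $a\leq0$ and hence $d_\rho\lesssim t$. That fails, for instance, for $\varphi(z)=|z|^\gamma$ with $\gamma>2$, where $\rho$ decays at infinity.
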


\begin{proof}
For $t<1$, the conclusion follows immediately from the local comparability of $\rho$. Therefore, it remains to consider the case $t\geq 1$.

Set 
$$\mu(B(z,r)):=\int_{B(z,r)}(i\partial\bar\partial\widetilde\varphi)^n.$$
Then $\mu$ is a doubling measure on $\CC$. By the definition of $\rho$,
\begin{equation}\label{eqa1}
	\mu(B(z))=\int_{B(z)}(i\partial\bar\partial\widetilde\varphi)^n\simeq \int_{B(z)}(\omega_\rho)^n\simeq 1,\quad z\in\CC.
\end{equation}
Set $B_1 = B(z, |z - w|/4)$. Then by Lemma \ref{estimate:doubling} and \eqref{eqa1},
\begin{align}\label{5zwa}
\mu(B_1) &\simeq \mu(B(z, 2|z - w|))
\geq \mu(B(w, |z - w|))
\gtrsim \mu(B(w))\left[ |z - w| / \rho(w) \right]^{\delta_2}\simeq \left[ |z - w| / \rho(w) \right]^{\delta_2},
\end{align}
for $0<\delta_2 <\infty$ where $\delta_2$ is from Lemma \ref{estimate:doubling}. For any $x \in \overline{B}_1$, by Lemma \ref{estimate:doubling} and \eqref{eqa1},
\begin{align}\label{5zwb}
\mu(B_1) &\simeq \mu(B(x, |z - w|)) 
\lesssim \left( |z - w| / \rho(x) \right)^{M_1} ,
\end{align}
where $M_1=\delta_1$ if $|z-w|>\rho(x)$ and $M_1=\delta_2$ if $|z-w|\leq \rho(x)$. Thus, combining \eqref{5zwa} and \eqref{5zwb}, we obtain
$$
\left[ |z - w| / \rho(x) \right]^{M_1} \gtrsim \left[ |z - w| / \rho(w) \right]^{\delta_2},
$$
which implies that
\begin{align}\label{rho B}
\rho(x) \lesssim |z - w| \left[ |z - w| / \rho(w) \right]^{-\alpha},\quad x\in \overline{B}_1, 
\end{align}
where $\alpha=\min\{\delta_2/\delta_1,1\}$. By \eqref{rho B}, for any piecewise $C^1$ curve $\gamma : [0,1] \to \mathbb{C}^n$ with $\gamma(0) = z$ and $\gamma(1) = w$, 
$$\int_{0}^{1}\frac{|
	\gamma^{\prime}(t)|}{\rho(\gamma(t))}dt\geq \int_{\{t:\gamma(t)\in \overline{B}_1\}}\frac{|
	\gamma^{\prime}(t)|}{\rho(\gamma(t))}dt
\geq\inf_{x\in\overline{B}_1}\rho(x)^{-1}\int_{\{t:\gamma(t)\in \overline{B}_1\}}|
\gamma^{\prime}(t)|dt\gtrsim \left( \frac{|z - w|}{\rho(w)} \right)^{\alpha}. $$
It follows that
$$
d_\rho(z,w) \ge C^{-1}\Big(\frac{|z-w|}{\rho(w)}\Big)^\alpha .
$$

Set $B_2 = B(z, |z - w|)$. Then by Lemma \ref{estimate:doubling} and \eqref{eqa1},
\begin{align}\label{5zw}
	\mu(B_2) &\leq \mu(B(w, 2|z - w|))
	\lesssim \mu(B(w, |z - w|))
	\lesssim \mu(B(w))\left[ |z - w| / \rho(w) \right]^{\delta_1}\simeq \left[ |z - w| / \rho(w) \right]^{\delta_1}.
\end{align}
For any $x \in \overline{B}_2$, by Lemma \ref{estimate:doubling} and \eqref{eqa1},
\begin{align}\label{5zwc}
	\mu(B_2) &\simeq \mu(B(x, |z - w|)) 
	\gtrsim \left( |z - w| / \rho(x) \right)^{M_2} ,
\end{align}
where $M_2=\delta_2$ if $|z-w|>\rho(x)$ and $M_2=\delta_1$ if $|z-w|\leq \rho(x)$. Thus, combining \eqref{5zw} and \eqref{5zwc}, we obtain
$$
\left[ |z - w| / \rho(x) \right]^{M_2} \lesssim \left[ |z - w| / \rho(w) \right]^{\delta_1},
$$
which implies that
\begin{align}\label{rho Ba}
	\rho(x) \gtrsim |z - w| \left[ |z - w| / \rho(w) \right]^{-\beta},\quad x\in \overline{B}_2 ,
\end{align}
where $\beta=\max\{\delta_1/\delta_2,1\}$. Applying this and taking $\gamma(t) = z + t(w - z)$, we deduce that
$$d_\rho(z,w)\leq |z-w|\int_{0}^{1}\frac{dt}{\rho(\gamma(t))}\lesssim  \left(\frac{|z - w|}{\rho(w)} \right)^{\beta}.$$
Note that $\beta=1/\alpha\geq \alpha$. This completes the proof.
\end{proof}
\begin{lem}\label{rhoB}
Let $\varphi\in \mathcal W^*(\CC)$ with $\rho\in \mathcal{S}$. Then the following statements hold.\\
\textup{(A)} There exists a constant $C_r>0$ depending only on $r$ such that 
\begin{align}\label{rhozw}
C_r^{-1}\rho(w)\leq \rho(z)\leq C_r \rho(w)
\end{align}
for any $r>0$ and $z\in B^r(w)$. \\ 
\textup{(B)} Given $r>0$, there exist $m_1, m_2 > 0$, depending on $r$, such that
\begin{align}\label{Bzwz}
 B^r(z) \subseteq B^{m_1r}(w), \qquad B^r(w) \subseteq B^{m_2r}(z), \quad \text{whenever}\, z \in B^r(w).
\end{align}
\textup{(C)} There exist $c_1(r),c_2(r)>0$ such that 
\begin{align}\label{drhoB}
B_{\rho}(z, c_1(r)) \subseteq B^r(z) \subseteq B_{\rho}(z, c_2(r))
\end{align}
 for any $z \in \CC$ and $r>0$, where $B_\rho(z,r):=\{w:\,d_\rho(z,w)<r\}$.
\end{lem}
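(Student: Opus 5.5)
The three parts are nested consequences of one another. I will prove (A) first, derive (B) from it, and get (C) from (A) together with Lemma~\ref{dzw}.

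\textbf{Part (A).} For $r\le 1$ the claim is exactly \eqref{rho123}, with $C_r=c_0$. For $r>1$, take $z\in B^r(w)$ and join $w$ to $z$ by the segment $\gamma(t)=w+t(z-w)$. I would try to chain the local comparability along $\gamma$, but I expect this to be the main obstacle: the number of steps is not obviously bounded. Each step has length about $\rho(\gamma(t))$, and $\rho$ could shrink along the way.

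The cleanest fix is to avoid chaining and argue directly with the doubling measure $\mu=\mu_{\widetilde\varphi}$, as in the proof of Lemma~\ref{dzw}.
\begin{itemize}
\item Since $|z-w|<r\rho(w)$, we have $B(z)\subset B(w,(r+c_0')\rho(w))$. Here, if $\rho(z)\ge\rho(w)$, I first bound $\rho(z)$ from above; in any case $\rho(z)\le R$ for some $R$ where the ball is contained.
\item A cleaner route for the upper bound: by \eqref{rho Ba}, with $t=|z-w|/\rho(w)<r$ and $x=z\in\overline B(z,|z-w|)$, we get $\rho(z)\gtrsim |z-w|\,t^{-\beta}\ge \rho(w)\,t^{1-\beta}\ge r^{1-\beta}\rho(w)$ when $t\ge1$.
\item Symmetrically, \eqref{rho B} applied with $x=z\in\overline{B}_1$ (after swapping the roles so that the center is $z$) gives $\rho(z)\lesssim |z-w|\,t^{-\alpha}\le r^{1-\alpha}\rho(w)$.
\end{itemize}
These estimates hold with constants depending only on the doubling constant. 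This yields \eqref{rhozw} with $C_r\simeq \max\{r^{1-\alpha},r^{\beta-1},c_0\}$. If the bookkeeping about which point is the center of $B_1,B_2$ is awkward, I would instead apply Lemma~\ref{estimate:doubling} directly: $\mu(B(z))\simeq1\simeq\mu(B(w))$, and both balls sit inside $B(w,(r+C)\rho(w)+\rho(z))$, so comparing masses bounds the ratio of radii polynomially in $r$.

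\textbf{Part (B).} Let $z\in B^r(w)$ and $\zeta\in B^r(z)$. Then, using (A),
\[
|\zeta-w|<r\rho(z)+r\rho(w)\le r(C_r+1)\rho(w).
\]
So $m_1=C_r+1$ works. The second inclusion is identical, using $\rho(w)\le C_r\rho(z)$ and $|w-z|<r\rho(w)\le rC_r\rho(z)$.

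\textbf{Part (C).} First inclusion $B^r(z)\subseteq B_\rho(z,c_2(r))$: by the upper bound of Lemma~\ref{dzw} with the roles of $z,w$ arranged so that $t=|z-w|/\rho(z)<r$, we get $d_\rho<C\max\{r,r^\beta\}=:c_2(r)$. If Lemma~\ref{dzw} is normalized at the other point, I would convert between $\rho(z)$ and $\rho(w)$ via (A).

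Second inclusion $B_\rho(z,c_1(r))\subseteq B^r(z)$: choose $c_1(r)$ so small that $C\min\{r,r^\alpha\}>c_1(r)$. If $w\notin B^r(z)$, then $t\ge r$, and the lower bound in Lemma~\ref{dzw}, which is monotone in $t$, gives $d_\rho(z,w)\ge C^{-1}\min\{r,r^\alpha\}\ge c_1(r)$. Taking $c_1(r)=C^{-1}\min\{r,r^\alpha\}$ gives the contrapositive.
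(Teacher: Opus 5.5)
Your proposal is correct and follows the paper's proof essentially verbatim: (A) comes from \eqref{rho B} and \eqref{rho Ba} with $x=z$, together with \eqref{rho123} when $|z-w|<\rho(w)$; (B) is the same triangle-inequality argument; and (C) is read off from the two bounds in Lemma~\ref{dzw}, using the symmetry of $d_\rho$ and the monotonicity of $\min\{t,t^\alpha\}$ and $\max\{t,t^\beta\}$. No swapping of roles is needed, because $B_1$ and $B_2$ in the proof of Lemma~\ref{dzw} are already centered at $z$; the tentative first bullet and the mass-comparison fallback can simply be dropped, and the bullet you label the ``upper bound'' actually gives the lower bound $\rho(z)\gtrsim r^{1-\beta}\rho(w)$.
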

\begin{proof}
	In view of \eqref{rho B} with $x=z$, we have
	\begin{align}\label{z-w}
	\frac{\rho(z)}{\rho(w)}\lesssim \left(\frac{|z - w|}{\rho(w)} \right)^{1-\alpha},\quad \textup{whenever}\, |z-w|\ge \rho(w),
	\end{align}
	where $\alpha=\min\{\delta_2/\delta_1,1\}\leq 1.$ In particular, $\rho(z)\leq C_{1}(r) \rho(w)$ whenever $z\in B^r(w)\backslash B(w)$. Applying \eqref{rho Ba} with $x=z$, we have
	\begin{align}\label{w-z}
	\frac{\rho(z)}{\rho(w)}\gtrsim \left(\frac{|z - w|}{\rho(w)} \right)^{1-\beta}, \quad \textup{whenever}\, |z-w|\ge \rho(w),
	\end{align}
	where $\beta=\max\{\delta_1/\delta_2,1\}\geq 1.$ In particular, $\rho(z)\geq C_{2}(r) \rho(w)$ whenever $z\in B^r(w)\backslash B(w)$. Since $\rho\in \mathcal{S}$, we also have $\rho(z)\simeq \rho(w)$. This completes the proof of (A).
	
	For any $\xi \in B^r(z)$, by (A), we have $|\xi-w|\leq |\xi -z|+|z-w|< r\rho(z)+r\rho(w)\leq r(C_r+1)\rho(w)$. Thus, $B^r(z) \subseteq B^{m_1r}(w)$ with $m_1=C_r+1$. For any $\xi \in B^r(w)$, by (A), we have $|\xi-z|\leq |\xi -w|+|z-w|< r\rho(w)+r\rho(w)\leq 2rC_r\rho(z)$. Thus, $B^r(w) \subseteq B^{m_2r}(z)$ with $m_2=2C_r$. This proves (B).
	
	Choose $c_1(r)<C^{-1}\min\{r,r^\alpha\}$. If
$\xi \in B_{\rho}(z,c_1(r))$, then the lower estimate in
Lemma~\ref{dzw}, applied with center $z$, gives $|\xi-z|<r\rho(z)$. Hence
$B_{\rho}(z,c_1(r))\subseteq B^r(z)$. Conversely, if $\xi\in B^r(z)$,
Lemma~\ref{dzw} gives $d_\rho(z,\xi)\leq C\max\{r,r^\beta\}$.
Choosing $c_2(r)>C\max\{r,r^\beta\}$ gives
$B^r(z)\subseteq B_\rho(z,c_2(r))$. This proves \textup{(C)}.
\end{proof}

The following lemma is of the same type as the covering lemma in \cite[Theorem 2.1]{MP95} and is consistent with the formulation in \cite[Proposition 7]{DA15}.

\begin{lem}\label{cover}
Let $\varphi\in\mathcal W^*(\CC)$ with $\rho\in\mathcal S$. For
every $r>0$, there exist a constant $m=m(r)\in(0,1)$ and a sequence
$\{a_k\}_{k=1}^{\infty}\subset\CC$ such that

\smallskip

\noindent
\textup{(A)} $\CC=\bigcup_{k=1}^{\infty}B^r(a_k)$;

\noindent
\textup{(B)} $B^{mr}(a_k)\cap B^{mr}(a_j)=\varnothing$ whenever $k\neq j$;

\noindent
\textup{(C)} For every fixed $M\geq1$,
$\{B^{Mr}(a_k)\}_{k=1}^{\infty}$ is a covering of $\CC$ of finite
multiplicity.
\end{lem}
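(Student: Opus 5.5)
We will run the standard maximal-separated-set argument, using Lemma~\ref{rhoB} to compare radii at nearby centers and the doubling measure $\mu=\mu_{\widetilde\varphi}$, normalized by \eqref{eqa1}, to count overlaps.

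\emph{Choice of centers.} Fix $r>0$. By Lemma~\ref{rhoB}(A) applied with radius $r$, there is a constant $C_r\geq1$ with $\rho(z)\simeq\rho(w)$ up to $C_r$ whenever $z\in B^r(w)$. Set $m:=1/(2C_r+2)$, so that $m\in(0,1)$. By Zorn's lemma, choose a maximal family $\{a_k\}$ whose balls $B^{mr}(a_k)$ are pairwise disjoint; this gives (B) directly. The family is countable, since the balls are disjoint open sets in $\CC$.

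\emph{Covering, part (A).} Let $z\in\CC$. By maximality, $B^{mr}(z)\cap B^{mr}(a_k)\neq\varnothing$ for some $k$. Pick $\xi$ in this intersection, so that $|z-a_k|<mr(\rho(z)+\rho(a_k))$. We need $\rho(z)\le C_r\rho(a_k)$, which does not follow at once: we do not yet know that $z\in B^r(a_k)$. We argue as follows.
\begin{itemize}
\item[(i)] Since $\xi\in B^{mr}(a_k)\subset B^r(a_k)$, we get $\rho(\xi)\le C_r\rho(a_k)$.
\item[(ii)] Since $\xi\in B^{mr}(z)\subset B^r(z)$, we get $\rho(z)\le C_r\rho(\xi)$.
\end{itemize}
Together these give $\rho(z)\le C_r^2\rho(a_k)$. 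To absorb the square, choose $m\le 1/(2C_r^2+2)$. Then $|z-a_k|<mr(C_r^2+1)\rho(a_k)<r\rho(a_k)$, so $z\in B^r(a_k)$.

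\emph{Finite multiplicity, part (C).} This is the main obstacle. Fix $M\geq1$ and $z$, and let $J=\{k: z\in B^{Mr}(a_k)\}$. For $k\in J$, Lemma~\ref{rhoB}(A) with radius $Mr$ gives $\rho(a_k)\simeq\rho(z)$ with constant $C_{Mr}$. Part (B) of that lemma then gives $B^{mr}(a_k)\subset B^{R}(z)$ with $R\simeq Mr$. The balls $B^{mr}(a_k)$, $k\in J$, are disjoint and lie in $B^R(z)$, so
\[
\sum_{k\in J}\mu\bigl(B^{mr}(a_k)\bigr)\le\mu\bigl(B^R(z)\bigr).
\]
By Lemma~\ref{estimate:doubling} and \eqref{eqa1}, the right-hand side is at most a constant times $\max\{R^{\delta_1},1\}$. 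For the left-hand side, Lemma~\ref{estimate:doubling} applied to $B^{mr}(a_k)\subset B(a_k)$ or to $B(a_k)\subset B^{mr}(a_k)$, as appropriate, gives $\mu(B^{mr}(a_k))\gtrsim \min\{(mr)^{\delta_1},1\}$, uniformly in $k$. Therefore $\#J$ is bounded by a constant depending only on $r$, $M$, and the structural constants.

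Alternatively, one could avoid $\mu$ and compare Lebesgue volumes: $|B^{mr}(a_k)|\simeq (mr\rho(z))^{2n}$ and $|B^R(z)|\simeq(R\rho(z))^{2n}$. This version is even simpler, because $\rho(a_k)\simeq\rho(z)$ holds uniformly for $k\in J$. The only care needed throughout is to track which radius each comparability constant $C_r$ refers to.
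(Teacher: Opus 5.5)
Your proof is correct. The paper gives no argument for Lemma~\ref{cover}. It only says the lemma is of the type of the covering theorem \cite[Theorem~2.1]{MP95} and consistent with \cite[Proposition~7]{DA15}.

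That cited result is a Vitali-type ($5r$) selection, stated for families of balls with uniformly bounded radii. It therefore does not apply verbatim when $\rho$ is unbounded, e.g.\ $\rho(z)=(1+|z|)^{1-\alpha/2}$. Even where it applies, it gives only the covering and disjointness properties, not the finite multiplicity in (C).

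Your maximal $mr$-disjoint family avoids ordering the balls by size. The comparison through the intersection point $\xi$, which gives $\rho(z)\le C_r^2\rho(a_k)$, is exactly what replaces that ordering. Your packing count then supplies (C) with an explicit bound from Lebesgue volume alone, namely $N(r,M)\le\bigl((M+m)C_{Mr}^2/m\bigr)^{2n}$. In that count the doubling hypothesis enters only through Lemma~\ref{rhoB}(A) at radius $Mr$. This use is genuine: \eqref{rho123} by itself does not give comparability of $\rho$ on balls $B^{R}(z)$ with $R$ large.

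Three small points to tidy up:
\begin{itemize}
\item Fix $m=1/(2C_r^2+2)$ before invoking Zorn's lemma. The maximal family depends on $m$, so the provisional choice $1/(2C_r+2)$ should simply be dropped.
\item The family is infinite, which justifies the indexing $k=1,2,\ldots$, because finitely many bounded balls cannot cover $\CC$.
\item The covering half of (C) follows at once from (A), since $B^r(a_k)\subset B^{Mr}(a_k)$ for $M\geq 1$.
\end{itemize}
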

 A sequence $\{a_k\}_{k=1}^{\infty}$ satisfying
{\rm(A)}--{\rm(C)} is called an $r$-lattice. More precisely, for every
fixed $M\geq1$, there exists a constant $N=N(r,M)>0$, depending only
on $r$, $M$, the structural constants of $\rho$, and the dimension
$n$, such that
\begin{equation}\label{NNN}
1\leq
\sum_{k=1}^{\infty}
\chi_{B^{Mr}(a_k)}(z)
\leq N,
\qquad z\in\mathbb C^n,
\end{equation}
where $\chi_E$ denotes the characteristic function of
$E\subset\mathbb C^n$.

Using Lemmas~\ref{dzw} and~\ref{rhoB}, we obtain the following integral estimate, which will play a crucial role in establishing the $L^p_\varphi$-norm bounds for the Bergman kernel.
\begin{lem}\label{z-wk2n}
Let $\varphi\in \mathcal W^*(\CC)$ with $\rho\in \mathcal{S}$. Suppose $\varepsilon>0$, $k\ge0$ and $l\in\mathbb{R}$.\\
  \textup{(A)} There exists a constant $C>0$ such that 
  $$
\int_{\CC}|z-w|^k\rho(w)^le^{-\varepsilon d_\rho(z,w)}\,dV(w)\leq C \rho(z)^{k+2n+l},\quad z\in\CC.
$$
 \textup{(B)} 
For every $r\ge1$, there exists a constant $C_{\varepsilon,k,l}(r)>0$ such that 
$$
\int_{\CC\backslash B^r(z)}|z-w|^k\rho(w)^le^{-\varepsilon d_\rho(z,w)}dV(w)
\le C_{\varepsilon,k,l}(r)\,\rho(z)^{k+2n+l},\quad z\in\CC.
$$
Moreover, $ C_{\varepsilon,k,l}(r)\to 0$, as $r\to\infty$, for any fixed  $\varepsilon>0$, $k\ge0$ and $l\in\mathbb{R}$.
\end{lem}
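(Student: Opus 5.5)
We prove (A) and (B) together by decomposing $\CC$ into dyadic shells around $z$ measured in units of $\rho(z)$. For $j\geq0$ set
\[
E_0:=B^1(z),\qquad E_j:=B^{2^j}(z)\setminus B^{2^{j-1}}(z),\quad j\geq1,
\]
so that $|z-w|\simeq 2^j\rho(z)$ on $E_j$ for $j\geq1$. On each shell we must control three things: the factor $|z-w|^k$, the factor $\rho(w)^l$ (relative to $\rho(z)$), and the exponential factor. The key point is that all three are at most polynomial in $2^j$, except the exponential, which decays stretched-exponentially.

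\textbf{Step 1: comparing $\rho(w)$ with $\rho(z)$ on $E_j$.} This is the only step requiring care, since $\rho(w)$ may grow or decay with distance, and $l$ may have either sign. For $|z-w|\geq\rho(z)$, we apply \eqref{z-w} and \eqref{w-z} from the proof of Lemma~\ref{rhoB} with the roles of $z$ and $w$ interchanged (both statements are symmetric in form). This gives
\[
\Bigl(\tfrac{|z-w|}{\rho(z)}\Bigr)^{1-\beta}\lesssim\frac{\rho(w)}{\rho(z)}\lesssim\Bigl(\tfrac{|z-w|}{\rho(z)}\Bigr)^{1-\alpha}.
\]
Hence on $E_j$ we have $\rho(w)^l\lesssim 2^{jN}\rho(z)^l$ with $N:=|l|\max\{\beta-1,1-\alpha\}$. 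On $E_0$, Lemma~\ref{rhoB}(A) gives $\rho(w)\simeq\rho(z)$.

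\textbf{Step 2: the remaining factors on $E_j$.} For the exponential, Lemma~\ref{dzw}, applied with center $z$ (it is stated with $\rho(w)$ in the denominator, and the points may be swapped since $d_\rho$ is symmetric), gives
\[
d_\rho(z,w)\geq C^{-1}\min\{t,t^\alpha\}=C^{-1}t^\alpha,\qquad t=\frac{|z-w|}{\rho(z)}\geq 2^{j-1}\geq1 .
\]
Therefore $e^{-\varepsilon d_\rho(z,w)}\leq e^{-c\varepsilon 2^{j\alpha}}$ on $E_j$, with $c>0$ independent of $z$. For the volume, $|E_j|\lesssim 2^{2nj}\rho(z)^{2n}$, and $|z-w|^k\lesssim 2^{jk}\rho(z)^k$.

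\textbf{Step 3: summation.} Combining Steps 1 and 2, the integral over $E_j$ is bounded by
\[
C\,2^{j(k+2n+N)}e^{-c\varepsilon2^{j\alpha}}\rho(z)^{k+2n+l}.
\]
The integral over $E_0$ is bounded by $C\rho(z)^{k+2n+l}$ directly. Since $\sum_j 2^{j(k+2n+N)}e^{-c\varepsilon2^{j\alpha}}<\infty$, this proves (A).

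For (B), choose $J$ with $2^{J-1}\leq r<2^J$. Then $\CC\setminus B^r(z)\subset\bigcup_{j\geq J}E_j$, so we may take
\[
C_{\varepsilon,k,l}(r):=C\sum_{j\geq J}2^{j(k+2n+N)}e^{-c\varepsilon2^{j\alpha}}.
\]
This is a tail of a convergent series, so it tends to $0$ as $r\to\infty$. Note that the constants $C,c,N$ depend only on $\varepsilon,k,l$ and the structural constants, not on $z$, which is what makes the bound uniform in $z$.
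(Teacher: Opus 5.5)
Your argument is correct and is essentially the paper's proof. The paper also treats $B(z)$ directly and uses Lemma~\ref{dzw} to get $d_\rho(z,w)\gtrsim (|z-w|/\rho(z))^\alpha$ off $B(z)$, so that stretched-exponential decay beats polynomial growth; it just does the radial summation continuously, via $e^{-x}=\int_x^\infty e^{-s}\,ds$ and Fubini, where you use dyadic shells. Your Step~1, which compares $\rho(w)$ with $\rho(z)$ through \eqref{z-w} and \eqref{w-z}, makes explicit the polynomial bound $\int_{B^R(z)}|z-w|^k\rho(w)^l\,dV(w)\lesssim \rho(z)^{k+2n+l}R^{C}$ for $R\geq1$, which the paper uses without comment.
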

\begin{proof}
By \eqref{rhozw} and Lemma \ref{dzw}, we deduce that there exists a constant $C>0$ such that
 \begin{align}\label{Bz2n}
\int_{B(z)}|z-w|^k\rho(w)^le^{-\varepsilon d_\rho(z,w)}\,dV(w)\leq C \rho(z)^{k+2n+l}
\end{align}
for every $z\in \CC$.

Let $r\ge 1$. By Lemma~\ref{dzw}, there exist constants $C_1>0$ and $C_2(k,l)\in\mathbb R$ such that for every $z\in \CC$,
\begin{align*}
\int_{\CC\backslash B^r(z)}|z-w|^k\rho(w)^le^{-\varepsilon d_\rho(z,w)}\,dV(w)&\leq \int_{\CC\backslash B^r(z)}|z-w|^k\rho(w)^le^{-\varepsilon C_1\left(\frac{|z-w|}{\rho(z)}\right)^\alpha}\,dV(w)\\
& \leq\int_{\CC\backslash B^r(z)}|z-w|^k\rho(w)^l\int_{\varepsilon C_1\left(\frac{|z-w|}{\rho(z)}\right)^\alpha}^{\infty}e^{-s}\,ds\,dV(w)\\
& \leq \int_{C_1 \varepsilon r^\alpha}^{\infty}e^{-s}\int_{B^{\left(\frac{s}{C_1 \varepsilon}\right)^{1/\alpha}}(z)}|z-w|^k\rho(w)^l\,dV(w)\,ds\\
&\lesssim \rho(z)^{k+2n+l}\int_{C_1 \varepsilon r^\alpha}^{\infty} e^{-s}s^{C_2(k,l)}\,ds\\
&= C_{\varepsilon,k,l}(r)\rho(z)^{k+2n+l},
\end{align*}
where
$$ C_{\varepsilon,k,l}(r):=\int_{C_1 \varepsilon r^\alpha}^{\infty} e^{-s}s^{C_2(k,l)}\,ds.$$
Hence {\rm(B)} holds. The conclusion {\rm(A)} is obtained by specializing the second estimate to the case $r=1$ and invoking \eqref{Bz2n}. This completes the proof.
\end{proof}

 Note that for any $\varphi \in \mathcal W^*(\CC)$,  there exist a $\mathcal{C}^{2}$ strictly plurisubharmonic function $\widetilde\varphi$ and a constant $D>0$ such that
\begin{align}\label{varphi}
\|\varphi-\widetilde{\varphi}\|_{L^\infty(\CC)}\leq D,
\end{align}
which means that $F^p_\varphi=F^p_{\widetilde\varphi}$ for any $0<p\leq\infty$. Throughout this paper, for simplicity of notation, we write
\[
\partial_j:=\frac{\partial}{\partial z_j},
\qquad
\bar{\partial}_j:=\frac{\partial}{\partial \bar z_j},
\qquad 1\le j\le n,
\]
and define the complex gradient operator $ \nabla:=(\partial_1,\ldots,\partial_n).$
\begin{lem}\label{var=psi}
Let $\varphi \in \mathcal W^*(\CC)$ with $\rho \in \mathcal S$. Then, for every $w \in \mathbb C^n$ and $r>0$, there exist a $\mathcal{C}^{2}$ function $\psi: B^r(w) \to \mathbb R$ and a constant $C_r>0$, depending only on $r$, such that
$$
i\partial\bar\partial \psi
=
i\partial\bar\partial\widetilde\varphi
\quad \textup{on}\, B^r(w),
\quad \text{and} \quad
\left\||\psi|+\rho|\nabla\psi|\right\|_{L^\infty(B^r(w))}
\leq C_r,
$$
where $\widetilde\varphi$ is a $\rho$-regularization of $\varphi$.
\end{lem}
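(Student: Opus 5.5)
The plan is to subtract from $\widetilde\varphi$ its second-order Taylor-type pluriharmonic part on the ball, so that what remains is controlled purely by the curvature bound and scaling. Fix $w$ and $r>0$, and set $R:=r\rho(w)$. By Lemma~\ref{rhoB}(A), on $B^r(w)$ we have $\rho\simeq\rho(w)$ with constants depending only on $r$. Hence \eqref{regu} gives
\[
0\le i\partial\bar\partial\widetilde\varphi\le M\rho^{-2}\,i\partial\bar\partial|z|^2\lesssim_r R^{-2}\,i\partial\bar\partial|z|^2 \quad\text{on } B^r(w).
\]
Rescale by $\zeta\mapsto w+R\zeta$ and put $u(\zeta):=\widetilde\varphi(w+R\zeta)$ on the unit ball $\mathbb B$. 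Then $u$ is $\mathcal C^2$ and plurisubharmonic with $0\le i\partial\bar\partial u\le C_r\,i\partial\bar\partial|\zeta|^2$. It therefore suffices to find $v$ on $\mathbb B$ with $i\partial\bar\partial v=i\partial\bar\partial u$ and $\|v\|_{L^\infty}+\|\nabla v\|_{L^\infty}\le C_r$. Setting $\psi(z):=v((z-w)/R)$ then gives $|\psi|\le C_r$ and $\rho|\nabla\psi|\simeq R|\nabla\psi|=|\nabla v|\le C_r$ on $B^r(w)$.

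To construct $v$, I would first take the real-valued function $h:=u-\frac{C_r}{1}|\zeta|^2\cdot 0$, i.e. work directly with $u$. Then I would solve for the pluriharmonic part using a Riesz-type decomposition on a slightly larger ball; to have room, apply the rescaling on $B^{2r}(w)$, which only changes constants. Concretely, let $\Delta u=4\sum_j\partial_j\bar\partial_j u=:f$. The curvature bound gives $0\le f\le C_r$ on $2\mathbb B$. Define the Newtonian potential
\[
v_0(\zeta):=\int_{2\mathbb B}G(\zeta-\xi)f(\xi)\,dV(\xi),
\]
where $G$ is the fundamental solution of the Laplacian in $\mathbb R^{2n}$. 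Since $f$ is bounded, $v_0$ is $C^{1}$ on $\overline{\mathbb B}$ with $\|v_0\|_{C^1}\lesssim \|f\|_\infty\le C_r$. Then $u-v_0$ is harmonic on $2\mathbb B$, but harmonicity alone is not enough: I need $i\partial\bar\partial$ equality, not just $\Delta$ equality.

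This is the main obstacle. The fix I would try is to work with the full complex Hessian through the $\bar\partial$-structure, using the fact that on a ball every $d$-closed real $(1,1)$-form with bounded coefficients is $i\partial\bar\partial$ of a potential with controlled $C^{1}$ norm. Concretely, write $\theta:=i\partial\bar\partial u$ on $2\mathbb B$, which has coefficients bounded by $C_r$. Let $\alpha$ be the real $1$-form $d^c$-primitive given by the Poincaré homotopy operator, so that $d\alpha=\theta$ and $|\alpha|\le C_r$. Its $(0,1)$-part $\alpha^{0,1}$ then satisfies $\bar\partial\alpha^{0,1}=0$. Solve $\bar\partial g=\alpha^{0,1}$ on $\mathbb B$ via the Henkin--Ramírez kernel or the Bochner--Martinelli formula. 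The homotopy formula for bounded $\bar\partial$-closed data gives $\|g\|_{L^\infty}\lesssim C_r$. Setting $v:=2\operatorname{Im} g$ (up to normalization) then gives $i\partial\bar\partial v=\theta$, with $v$ bounded.

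For the gradient bound, $i\partial\bar\partial v=\theta$ implies $\Delta v=\operatorname{tr}\theta$, which is bounded. Interior $W^{2,p}$ estimates for all $p<\infty$ then give $\|\nabla v\|_{L^\infty(\mathbb B)}\lesssim \|v\|_{L^\infty(2\mathbb B)}+\|\operatorname{tr}\theta\|_\infty\le C_r$, after shrinking once more. Regularity is automatic: $v-u$ is pluriharmonic, hence smooth, so $v\in\mathcal C^2$, and $v$ can be taken real.

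A simpler alternative, which I would actually try first, avoids the $\bar\partial$ machinery entirely. Set $v:=u-\operatorname{Re}H$ with $H$ holomorphic, chosen as follows. The function $u-v_0$ is harmonic, but we want pluriharmonic. Take instead $v:=u-P[u]$, where $P[u]$ is the pluriharmonic function agreeing with the degree-$\le1$ Taylor data of $u$ at $0$. Then $|v(\zeta)|\le\sup|D^2_{\mathbb R}u|\,|\zeta|^2$, which fails since only the complex Hessian is bounded. So the $\bar\partial$ or Poincaré-lemma route above is the one to pursue, with the $L^\infty$ bound for $\bar\partial$ on the ball as the key cited input.
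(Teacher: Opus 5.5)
Your final route is correct and is essentially the paper's proof. After the same rescaling by $r\rho(w)$, the paper cites \cite[Lemma~3]{D98} for a uniformly bounded $i\partial\bar\partial$-potential on a ball, and your Poincar\'e-homotopy primitive followed by a Henkin-type $L^\infty$ solution of $\bar\partial$, with $v=2\operatorname{Im}g$, is exactly what such a lemma packages; as you do, the paper then gets $\mathcal C^2$ regularity from Weyl's lemma applied to the pluriharmonic difference and the gradient bound from an interior estimate for $\Delta$. In a write-up, drop the abandoned detours (the line defining $h$, the Newtonian potential, the Taylor-polynomial attempt).
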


\begin{proof}
For $z\in \overline{B(0,2)}$, we define
$\tau_w(z):=w+(z_1r\rho(w),\cdots,z_nr\rho(w))$,
$\widetilde\varphi_w(z):=\widetilde\varphi(\tau_w(z))$,
and $\omega_0(z):=|z|^2$ for $z\in\overline{B(0,2)}$.
Since $|\tau_w(z)-w|\leq 2r\rho(w)$, by Lemma \ref{rhoB},
$\rho(\tau_w(z))\simeq\rho(w)$.
Thus
$$
i\partial\bar\partial\widetilde\varphi_w(z)
=
[r\rho(w)]^2\cdot
i\partial\bar\partial\widetilde\varphi(\tau_w(z))
\simeq
i\partial\bar\partial\omega_0,
\quad \textup{on}\,\overline{B(0,2)}.
$$
By \cite[Lemma~3]{D98}, the continuous closed form $i\partial\bar\partial\widetilde\varphi_w$ admits a uniformly bounded potential $\psi_w$ on $B(0,3/2)$. Since $\psi_w-\widetilde\varphi_w$ is distributionally pluriharmonic, Weyl's lemma gives $\psi_w\in\mathcal C^2(B(0,3/2))$. The interior gradient estimate, applied to $\Delta\psi_w=\Delta\widetilde\varphi_w$, therefore yields
$$
i\partial\bar\partial\psi_w
=
i\partial\bar\partial\widetilde\varphi_w
\quad\textup{on}\, B(0,1)\quad\text{and}\quad
\left\|
|\psi_w|+|\nabla\psi_w|
\right\|_{L^\infty(B(0,1))}
\leq C.
$$
For $\xi\in B^r(w)$, we define
$\psi(\xi)=\psi_w(\tau_w^{-1}(\xi))$.
Write $\xi=\tau_w(z)$ for some $z\in B(0,1)$. Then
$$
\begin{aligned}
i\partial\bar\partial\psi(\xi)
&=
[r\rho(w)]^{-2}
\cdot
i\partial\bar\partial
\psi_w(\tau_w^{-1}(\xi))
\\
&=
[r\rho(w)]^{-2}
\cdot
i\partial\bar\partial
\widetilde\varphi_w(\tau_w^{-1}(\xi))
\\
&=
i\partial\bar\partial\widetilde\varphi(\xi).
\end{aligned}
$$
Hence
$$
i\partial\bar\partial\psi
=
i\partial\bar\partial\widetilde\varphi
\quad\textup{on}\,B^r(w).
$$
Meanwhile, by Lemma \ref{rhoB},
$\rho(\tau_w(z))\simeq\rho(w)\simeq\rho(\xi)$.
Thus,
$$
\begin{aligned}
|\psi(\xi)|+\rho(\xi)|\nabla\psi(\xi)|
&=
|\psi_w(z)|
+
\rho(\xi)[r\rho(w)]^{-1}
|\nabla\psi_w(z)|
\leq C_r .
\end{aligned}
$$
The proof is complete.
\end{proof}

The following lemma compares the values of $\varphi$ on a ball with its value at the center.

\begin{lem}\label{fx}
Let $\varphi \in \mathcal W^*(\CC)$ with $\rho \in \mathcal S$. Then, for each $r>0$, there exist a pluriharmonic function $f_x : B^r(x) \to \mathbb R$ and a constant $C_r>0$, depending only on $r$ but not on $x$, such that

\smallskip

\noindent
\textup{(A)} $f_x(x) = 0$;

\noindent
\textup{(B)} 
$|\varphi(y)-\varphi(x)-f_x(y)|
\le C_r$ for all $y\in B^r(x)$;

\noindent
\textup{(C)} 
$|\nabla\widetilde\varphi(y)-\nabla f_x(y)|
\le C_r\rho(x)^{-1}$ for all $y\in B^r(x)$.
\end{lem}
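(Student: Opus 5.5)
Fix $r>0$ and $x\in\CC$. We apply Lemma~\ref{var=psi} with center $w=x$ and radius $r$. It gives a $\mathcal C^2$ function $\psi$ on $B^r(x)$ with $i\partial\bar\partial\psi=i\partial\bar\partial\widetilde\varphi$ there and $\||\psi|+\rho|\nabla\psi|\|_{L^\infty(B^r(x))}\le C_r$. The difference $h:=\widetilde\varphi-\psi$ then satisfies $i\partial\bar\partial h=0$ on the ball $B^r(x)$. Since $h$ is $\mathcal C^2$ and the ball is convex, hence simply connected, $h$ is a real pluriharmonic function. We set
\[
f_x(y):=h(y)-h(x)=\widetilde\varphi(y)-\psi(y)-\widetilde\varphi(x)+\psi(x),\qquad y\in B^r(x).
\]
This $f_x$ is pluriharmonic and real-valued, and $f_x(x)=0$, which gives (A).

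For (B), write
\[
\varphi(y)-\varphi(x)-f_x(y)=\bigl(\varphi(y)-\widetilde\varphi(y)\bigr)-\bigl(\varphi(x)-\widetilde\varphi(x)\bigr)+\psi(y)-\psi(x).
\]
By \eqref{varphi}, each of the first two brackets is bounded by $D$. Each value of $\psi$ is bounded by $C_r$. Hence the left side is at most $2D+2C_r$, uniformly in $x$, and (B) follows after enlarging $C_r$.

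For (C), note that $\nabla\widetilde\varphi-\nabla f_x=\nabla\psi$ on $B^r(x)$. Lemma~\ref{var=psi} gives $\rho(y)|\nabla\psi(y)|\le C_r$. For $y\in B^r(x)$, Lemma~\ref{rhoB}(A) gives $\rho(y)\ge C_r^{-1}\rho(x)$. Combining these, $|\nabla\psi(y)|\le C_r^2\rho(x)^{-1}$, which is (C) after renaming the constant.

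The only points needing care are that all constants depend only on $r$, which holds because Lemmas~\ref{var=psi} and \ref{rhoB} give $x$-independent constants and $D$ is global, and that $h$ is genuinely pluriharmonic. The latter is immediate because $\psi$ and $\widetilde\varphi$ are both $\mathcal C^2$ with equal complex Hessians. I therefore expect no serious obstacle: the substantive work, namely the bounded potential with scaled gradient control, was already done in Lemma~\ref{var=psi}.
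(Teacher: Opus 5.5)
Your proposal is correct and is essentially the paper's own proof: the paper also sets $f_x:=\widetilde\varphi-\widetilde\varphi(x)+\psi(x)-\psi$ with $\psi$ from Lemma~\ref{var=psi}, and reads off (A)--(C) from $\|\varphi-\widetilde\varphi\|_{L^\infty}\le D$, the bound on $|\psi|$, and $\rho(y)\simeq\rho(x)$ on $B^r(x)$. One minor point: pluriharmonicity of $h$ already follows from $i\partial\bar\partial h=0$, and simple connectivity of the ball is needed only later, in Lemma~\ref{meaninequality}, to write $f_x=\operatorname{Re}F_x$.
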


\begin{proof}
Fix $x\in\mathbb C^n$ and define
$$
f_x:=\widetilde\varphi-\widetilde\varphi(x)+\psi(x)-\psi,
$$
where $\psi$ is defined as in Lemma~\ref{var=psi}. 
Then, for any $r>0$, $f_x$ is pluriharmonic on $B^r(x)$ and satisfies
$
f_x(x)=0.
$
Moreover, by \eqref{varphi}, Lemmas~\ref{var=psi} and \ref{rhoB}, we see that there exists a
constant $C_r>0$ such that for all $y\in B^r(x)$,
$$
|\varphi(y)-\varphi(x)-f_x(y)|\lesssim 1+|\widetilde\varphi(y)-\widetilde\varphi(x)-f_x(y)|
=
1+|\psi(y)-\psi(x)|
\le C_r,
$$
and
$$
|\nabla\widetilde\varphi(y)-\nabla f_x(y)|
=
|\nabla\psi(y)|
\le C_r\rho(x)^{-1}.
$$
This completes the proof.
\end{proof}

The following lemma provides pointwise and gradient estimates for holomorphic functions in the weighted space in terms of their local mean values.
\begin{lem}\label{meaninequality}
Let $\varphi\in\mathcal W^*(\CC)$ with $\rho\in\mathcal S$ and $0<p<\infty$.
For every $r>0$ there exists a constant $C_{r,p}>0$ such that the
following statements hold for $h\in H(\mathbb C^n)$.

\smallskip

\noindent
\textup{(A)} For every $x\in\mathbb C^n$
\[
|h(x)|^p e^{-p\varphi(x)}
\le
\frac{C_{r,p}}{|B^r(x)|}
\int_{B^r(x)}|h(y)|^p e^{-p\varphi(y)}\,dV(y).
\]

\noindent
\textup{(B)} For every $x\in\mathbb C^n$ such that $h(x)\ne0$,
\begin{align*}
\left|\nabla\bigl(|h|e^{-\widetilde\varphi}\bigr)(x)\right|
\le
\frac{C_{r,p}}{\rho(x)}
\left(
\frac{1}{|B^r(x)|}
\int_{B^r(x)}|h(y)|^p e^{-p\varphi(y)}\,dV(y)
\right)^{1/p}.
\end{align*}

\noindent
\textup{(C)} For every $x\in\mathbb C^n$ such that
$h(x)=0$,
\begin{equation}\label{zero-derivative-bound}
\operatorname{Lip}
\bigl(|h|e^{-\widetilde\varphi}\bigr)(x)
\le
\frac{C_{r,p}}{\rho(x)}
\left(
\frac{1}{|B^r(x)|}
\int_{B^r(x)}|h(y)|^p e^{-p\varphi(y)}\,dV(y)
\right)^{1/p},
\end{equation}
where \(\operatorname{Lip}G(x)\) denotes the upper pointwise
Lipschitz constant of a locally Lipschitz function \(G\) at \(x\),
defined by
\[
\operatorname{Lip}G(x)
:=
\limsup_{\substack{y\to x\\y\neq x}}
\frac{|G(y)-G(x)|}{|y-x|}.
\]
\end{lem}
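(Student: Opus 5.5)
The plan is to reduce all three statements to a local normalization on a ball $B^{2r}(x)$ in which the weight becomes, up to bounded error, the modulus of a holomorphic function. Fix $x$ and apply Lemma~\ref{fx} with radius $2r$. This gives a pluriharmonic $f_x$ on $B^{2r}(x)$, which is simply connected. Hence $f_x=\operatorname{Re}G_x$ for some holomorphic $G_x$ on $B^{2r}(x)$ with $G_x(x)=0$. Put $H:=he^{-G_x}$. Then $|H|=|h|e^{-f_x}$, and by Lemma~\ref{fx}(B),
\[
|h(y)|e^{-\varphi(y)}\simeq |H(y)|e^{-\varphi(x)},\qquad y\in B^{2r}(x),
\]
with constants depending only on $r$.

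For (A), $|H|^p$ is plurisubharmonic, so the sub-mean value property on $B^r(x)$ gives
\[
|H(x)|^p\le |B^r(x)|^{-1}\int_{B^r(x)}|H|^p\,dV.
\]
Translating back via the comparison above yields (A), since $H(x)=h(x)$.

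For (B) and (C), let $A$ denote the normalized average appearing on the right-hand side. Set $\psi:=\widetilde\varphi-\widetilde\varphi(x)-f_x$ on $B^{2r}(x)$. This is the function $\psi-\psi(x)$ from the proof of Lemma~\ref{fx} up to sign, so $|\psi|+\rho|\nabla\psi|\le C_r$ there. We then write
\[
|h|e^{-\widetilde\varphi}=e^{-\widetilde\varphi(x)}\,|H|\,e^{-\psi}.
\]
First, we bound $H$ and its gradient. Applying (A) at points $y\in B^{r/c}(x)$, for a suitable constant $c$ so that $B^{r/c}(y)\subset B^{r}(x)$ by Lemma~\ref{rhoB}, and using $|B^{r/c}(y)|\simeq|B^r(x)|$, we get
\[
\sup_{B^{r/c}(x)}|H|\,e^{-\varphi(x)}\lesssim A^{1/p}.
\]
The Cauchy estimates on the polydisc of radius comparable to $\rho(x)$ then give
\[
|\nabla H(x)|\,e^{-\varphi(x)}\lesssim \rho(x)^{-1}A^{1/p}.
\]

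Next, we differentiate. When $h(x)\neq0$, $|H|$ is smooth near $x$ with $|\nabla|H||\le|\nabla H|$. The product rule gives
\[
\bigl|\nabla(|h|e^{-\widetilde\varphi})(x)\bigr|\le e^{-\widetilde\varphi(x)}\bigl(|\nabla H(x)|+|H(x)|\,|\nabla\psi(x)|\bigr).
\]
Using $e^{-\widetilde\varphi(x)}\simeq e^{-\varphi(x)}$ from \eqref{varphi}, $|\nabla\psi|\lesssim\rho^{-1}$, and the bounds just obtained, this is $\lesssim\rho(x)^{-1}A^{1/p}$. When $h(x)=0$, we have $H(x)=0$, so
\[
\bigl|\,|h|e^{-\widetilde\varphi}(y)\bigr|=e^{-\widetilde\varphi(x)}|H(y)|e^{-\psi(y)}\lesssim e^{-\varphi(x)}|H(y)-H(x)|.
\]
Dividing by $|y-x|$ and letting $y\to x$ bounds $\operatorname{Lip}$ by $e^{-\varphi(x)}|\nabla H(x)|$ up to constants, which gives (C).

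The main obstacle I expect is bookkeeping rather than substance. One point is ensuring that the radius-$2r$ version of Lemma~\ref{fx} together with the gradient bound on $\psi$ is legitimately available, which is where Lemma~\ref{var=psi} is used with radius $2r$. The other is keeping all constants independent of $x$ via Lemma~\ref{rhoB}, especially the comparisons $|B^{r/c}(y)|\simeq|B^r(x)|$ and $\rho(y)\simeq\rho(x)$. Note also that $G_x$ exists because balls are simply connected, and that it is harmless that $\varphi$ itself need not be smooth: derivatives only ever fall on $\widetilde\varphi$.
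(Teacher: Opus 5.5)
Your proposal is correct and follows essentially the same route as the paper. Both arguments normalize locally with the pluriharmonic $f_x$ of Lemma~\ref{fx} and a holomorphic $F_x$ with $\operatorname{Re}F_x=f_x$, $F_x(x)=0$, apply the sub-mean value property to $|he^{-F_x}|^p$ for (A), and combine Cauchy estimates for $he^{-F_x}$ with $|\nabla\widetilde\varphi-\nabla f_x|\lesssim\rho(x)^{-1}$ for (B) and (C). Your factorization $|h|e^{-\widetilde\varphi}=e^{-\widetilde\varphi(x)}|H|e^{-\psi}$ and your explicit sup bound on $B^{r/c}(x)$ are just more detailed bookkeeping of the same steps; note also that your $\psi$ is exactly $\psi-\psi(x)$, with no sign change.
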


\begin{proof}
Fix $x\in\mathbb C^n$.  By Lemma~\ref{fx}, there is a
pluriharmonic function $f_x$ on $B^r(x)$ satisfying
Lemma~\ref{fx}(A)--(C).  Since $B^r(x)$ is simply connected, we may
choose a holomorphic function $F_x$ on $B^r(x)$ such that
\[
\operatorname{Re}F_x=f_x,
\qquad
F_x(x)=0,
\qquad
\partial_jF_x=2\partial_jf_x,
\quad 1\le j\le n.
\]
The function
\[
y\longmapsto
|h(y)e^{-F_x(y)}|^p e^{-p\varphi(x)}
\]
is plurisubharmonic on $B^r(x)$ for every $p>0$.  The mean value
inequality and Lemma~\ref{fx}(B) therefore give
\begin{align*}
|h(x)|^p e^{-p\varphi(x)}
&\le
\frac{1}{|B^r(x)|}
\int_{B^r(x)}
|h(y)|^p e^{-p\varphi(x)-pf_x(y)}\,dV(y)
\\
&\le
\frac{C_r}{|B^r(x)|}
\int_{B^r(x)}
|h(y)|^p e^{-p\varphi(y)}\,dV(y).
\end{align*}
This proves (A).

We next prove (B).  Assume that $h(x)\ne0$ and put
$u_x:=he^{-F_x}$.  Direct differentiation gives
\[
\left|\nabla\bigl(|h|e^{-\widetilde\varphi}\bigr)(x)\right|
=
\frac{e^{-\widetilde\varphi(x)}}{2}
\left(
\sum_{j=1}^n
\left|\partial_jh(x)-2h(x)\partial_j\widetilde\varphi(x)\right|^2
\right)^{1/2}.
\]
The Cauchy estimates,
followed by the submean value inequality for $|u_x|^p$, imply
\begin{align*}
\left(\sum_{j=1}^n|\partial_ju_x(x)|^2\right)^{1/2}
&\le
\frac{C_r}{\rho(x)}
\left(
\frac{1}{|B^r(x)|}
\int_{B^r(x)}|u_x(y)|^p\,dV(y)
\right)^{1/p}
\\
&\le
\frac{C_r e^{\varphi(x)}}{\rho(x)}
\left(
\frac{1}{|B^r(x)|}
\int_{B^r(x)}
|h(y)|^p e^{-p\varphi(y)}\,dV(y)
\right)^{1/p},
\end{align*}
where the second inequality follows from Lemma~\ref{fx}(B).
At the point $x$,
\begin{align*}
\partial_jh-2h\partial_j\widetilde\varphi
&=
\partial_j(he^{-F_x})
+2h\bigl(\partial_jf_x-\partial_j\widetilde\varphi\bigr).
\end{align*}
Hence Lemma~\ref{fx}(C), part (A), and
$\|\varphi-\widetilde\varphi\|_{L^\infty}<\infty$ yield
\begin{align*}
\left|\nabla\bigl(|h|e^{-\widetilde\varphi}\bigr)(x)\right|
&\lesssim
e^{-\widetilde\varphi(x)}
\left(
\left(\sum_{j=1}^n|\partial_ju_x(x)|^2\right)^{1/2}
+\frac{|h(x)|}{\rho(x)}
\right)
\\
&\lesssim
\frac{C_r}{\rho(x)}
\left(
\frac{1}{|B^r(x)|}
\int_{B^r(x)}
|h(y)|^p e^{-p\varphi(y)}\,dV(y)
\right)^{1/p}.
\end{align*}
This proves (B).

We now prove (C). Since \(h\) is holomorphic and
\(\widetilde\varphi\in C^2\), the function
\(|h|e^{-\widetilde\varphi}\) is locally Lipschitz. Moreover, since
\(h(x)=0\),
\[
h(x+v)
=
\sum_{j=1}^n\partial_jh(x)v_j+o(|v|),
\qquad v\to0.
\]
It follows that
\begin{align*}
\operatorname{Lip}
\bigl(|h|e^{-\widetilde\varphi}\bigr)(x)
&=
\limsup_{y\to x}
\frac{|h(y)|e^{-\widetilde\varphi(y)}}{|y-x|}
=
e^{-\widetilde\varphi(x)}
\sup_{|v|=1}
\left|
\sum_{j=1}^n\partial_jh(x)v_j
\right|
=
e^{-\widetilde\varphi(x)}
\left(
\sum_{j=1}^n|\partial_jh(x)|^2
\right)^{1/2}.
\end{align*}
Set \(u_x=he^{-F_x}\) as above. Since \(h(x)=0\) and \(F_x(x)=0\),
\[
\partial_ju_x(x)=\partial_jh(x),
\qquad 1\leq j\leq n.
\]
The preceding Cauchy estimate, which does not require
\(h(x)\neq0\), therefore gives
\[
\left(
\sum_{j=1}^n|\partial_jh(x)|^2
\right)^{1/2}
\leq
\frac{C_r e^{\varphi(x)}}{\rho(x)}
\left(
\frac{1}{|B^r(x)|}
\int_{B^r(x)}
|h(y)|^p e^{-p\varphi(y)}\,dV(y)
\right)^{1/p}.
\]
Multiplying by \(e^{-\widetilde\varphi(x)}\) and using
\(\|\varphi-\widetilde\varphi\|_{L^\infty}<\infty\) proves (C).
\end{proof}

\section{Estimates for the Bergman Kernel and the Bergman Metric}\label{esti}

This section is devoted to pointwise estimates for the Bergman kernel, the equivalence of the Bergman metric, and $L^p$ estimates for the Bergman kernel.

Recall that for $v \in C^2(\mathbb{C}^n)$, the $\bar{\partial}$ operator is given by
$$
\bar{\partial} v = \sum_{j=1}^{n} \frac{\partial v}{\partial \bar{z}_j}\, d\bar{z}_j.
$$
Let $\varphi \in \mathcal W^*(\CC)$ with $\rho \in \mathcal S$. For a $(0,1)$-form $u=\sum_{j=1}^{n}u_jd\overline{z}_j$, we define
$$
|u|^2_{i\partial\overline{\partial}\widetilde\varphi}
:=
\sum_{1\leq j,k\leq n}
a^{jk}u_j\overline{u}_k,
$$
where $(a^{jk})_{n\times n}$ is the inverse of the Hermitian matrix
$$
(a_{jk})_{n\times n}
=
\left(
\frac{\partial^2\widetilde\varphi}
{\partial z_j\partial\overline{z}_k}
\right)_{j,k=1}^n .
$$
Since $i\partial\overline{\partial}\widetilde\varphi \simeq \omega_\rho$, it follows that
$(a_{jk})_{n\times n}\simeq \rho^{-2}I_n,$
where $I_n$ is the $n\times n$ identity matrix. Then
$(a^{jk})_{n\times n} \simeq\rho^2I_n.$
A direct calculation shows that
\begin{equation}\label{ade1}
|u|^2_{i\partial\overline{\partial}\widetilde\varphi}
\simeq
|u|^2\rho^2,
\end{equation}
which will be used throughout the paper.

We recall the following estimate for the $\overline{\partial}$-equation established by Delin \cite{D98}. This result plays a crucial role in deriving the estimates for the Bergman kernel.
\begin{thm}\label{df=theta}
Assume that \( \theta \) is a closed \((0, 1)\)-form on a pseudoconvex domain \( \Omega \subset \mathbb{C}^n \) and that \( \varphi \) is a \( C^2 \) strictly plurisubharmonic function. Assume $\int_\Omega |\theta|_{i\partial\bar\partial\varphi}^2e^{-2\varphi}\,dV<\infty$. Let $\omega$ be a positive $\mathcal C^1$ weight on $\Omega$ satisfying
\[
|\partial \omega|_{i\partial\bar{\partial}\varphi} \leq \varepsilon \omega \quad \text{for some } \varepsilon \in (0, \sqrt{2}).
\]  
Then the \( L^2_{\varphi}(\Omega) \)-minimal solution \( f \) to the \( \bar{\partial} \)-equation  
\[
\bar{\partial}f = \theta
\]  
satisfies  
\[
\int_{\Omega} |f|^2 e^{-2\varphi} \omega \, dV \leq \frac{2}{( \sqrt{2}-\varepsilon )^2} \int_{\Omega} |\theta|_{i\partial\bar{\partial}\varphi}^2 e^{-2\varphi} \omega \, dV.
\]  
Here, \( |\theta|_{i\partial\bar{\partial}\varphi} \) denotes the norm of \( \theta \) in the Kähler metric induced by the potential function \( \varphi \).
\end{thm}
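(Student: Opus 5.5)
Since this is Delin's estimate from \cite{D98}, it could simply be cited. If I were to prove it, I would use Berndtsson's twisting argument, and the numbers in the constant $2/(\sqrt2-\varepsilon)^2$ come out exactly this way. Write $\omega=e^{\chi}$ and $\psi:=2\varphi$. Then the hypothesis $|\partial\omega|_{i\partial\bar\partial\varphi}\le\varepsilon\omega$ reads $|\partial\chi|^2_{i\partial\bar\partial\psi}\le \varepsilon^2/2<1$, since $|\cdot|^2_{i\partial\bar\partial\psi}=\frac12|\cdot|^2_{i\partial\bar\partial\varphi}$. By exhausting $\Omega$ with smoothly bounded strongly pseudoconvex domains and taking weak limits of minimal solutions, I may assume $\Omega$ is bounded and the right-hand side is finite. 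I may also assume $\int_\Omega|f|^2\omega e^{-2\varphi}<\infty$, first truncating $\omega$ to $\min(\omega,N)$, suitably smoothed, if needed.

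The first step is an orthogonality transfer. Let $f$ be the $L^2_\varphi$-minimal solution, so $f\perp \ker\bar\partial$ in $L^2(e^{-\psi})$. Put $v:=fe^{\chi}=f\omega$. For every holomorphic $h\in L^2(e^{-\psi-\chi})$ we have
\[
\int_\Omega v\,\bar h\,e^{-\psi-\chi}\,dV=\int_\Omega f\bar h\,e^{-\psi}\,dV=0 .
\]
Hence $v$ is the minimal solution of
\[
\bar\partial v=e^{\chi}\bigl(\theta+f\,\bar\partial\chi\bigr)
\]
in $L^2(e^{-\psi-\chi})$. There is a bookkeeping point here: I must check that $h$ ranges over the correct dense class, which the truncation above ensures.

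The second step, which I expect to be the main obstacle, is a weighted estimate for this minimal solution: I want
\[
\int_\Omega|v|^2e^{-\psi-\chi}\le\int_\Omega|\bar\partial v|^2_{i\partial\bar\partial\psi}e^{-\psi-\chi},
\]
with the metric still $i\partial\bar\partial\psi$, even though $\psi+\chi$ need not be plurisubharmonic. My first attempt is to use the duality characterization of minimal solutions. Then I would run the twisted Bochner--Kodaira identity (Donnelly--Fefferman/Ohsawa--Takegoshi) for the weight $\psi+\chi$ with twisting factor $e^{\chi}$. The goal is to show that the bad term $-i\partial\bar\partial\chi$ is absorbed, leaving precisely the curvature $i\partial\bar\partial\psi$. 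If this turns out to be delicate, I would instead apply Hörmander's estimate directly to the weight $\psi$ with the test form $\bar\partial v\,e^{-\chi}$, and pair it against $v$ via the orthogonality from the first step.

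The last step is the absorption. Given the estimate above, I expand $\bar\partial v$ and apply Minkowski's inequality together with $|f\bar\partial\chi|_{i\partial\bar\partial\psi}\le(\varepsilon/\sqrt2)|f|$:
\[
\Bigl(\int|v|^2e^{-\psi-\chi}\Bigr)^{1/2}\le\Bigl(\int|\theta|^2_{i\partial\bar\partial\psi}e^{\chi-\psi}\Bigr)^{1/2}+\frac{\varepsilon}{\sqrt2}\Bigl(\int|f|^2e^{\chi-\psi}\Bigr)^{1/2}.
\]
The left side equals $\bigl(\int|f|^2\omega e^{-2\varphi}\bigr)^{1/2}$, which is finite by the reduction. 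So the last term can be absorbed, using $\varepsilon<\sqrt2$. Since $|\theta|^2_{i\partial\bar\partial\psi}=\frac12|\theta|^2_{i\partial\bar\partial\varphi}$, squaring gives
\[
\int|f|^2e^{-2\varphi}\omega\le\frac{1}{2(1-\varepsilon/\sqrt2)^2}\int|\theta|^2_{i\partial\bar\partial\varphi}e^{-2\varphi}\omega=\frac{1}{(\sqrt2-\varepsilon)^2}\int|\theta|^2_{i\partial\bar\partial\varphi}e^{-2\varphi}\omega .
\]
This is in fact slightly better than the stated constant $2/(\sqrt2-\varepsilon)^2$, so the claimed inequality follows.
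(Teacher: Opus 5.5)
The paper gives no proof of this statement; it quotes it from Delin \cite{D98}. Your reduction, the orthogonality transfer and the final absorption are fine. You also correctly identified Step~2 as the crux, but Step~2 is false, so the argument breaks there.

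For bounded $\chi$, the map $v\mapsto ve^{-\chi}$ sends the orthogonal complement of $\ker\bar\partial$ in $L^2(e^{-\psi-\chi})$ onto the one in $L^2(e^{-\psi})$. So Step~2 amounts to the sharp H\"ormander estimate, with constant $1$ and metric $i\partial\bar\partial\psi$, for every minimal solution in $L^2(e^{-\psi-\chi})$. H\"ormander gives this when $i\partial\bar\partial(\psi+\chi)\geq i\partial\bar\partial\psi$, i.e.\ when $\chi=\log\omega$ is plurisubharmonic. That is exactly the hypothesis of the Berndtsson argument you are imitating. Here $\omega$ is only a positive $\mathcal C^1$ function with a gradient bound; in the paper it is $e^{\pm\varepsilon g_z}$ with $g_z$ a smoothed distance function, e.g.\ $\chi=-\varepsilon g_z$ in Proposition~\ref{Gue}.

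In that generality Step~2 fails. Take $n=1$, $\psi=|z|^2$, and $\chi=\delta\eta(|z|^2)$, where $\eta\not\equiv0$ is smooth, nonincreasing and compactly supported in $[0,\infty)$, and $\delta>0$ is small. Put $\Phi=\psi+\chi$. For $\alpha=d\bar z$ one has $\bar\partial^*_\Phi\alpha=\Phi_z$, and integrating by parts gives
\[
\|\bar\partial^*_\Phi\alpha\|_\Phi^2=\int_{\mathbb C}\Phi_{z\bar z}e^{-\Phi}\,dV=\int_{\mathbb C}e^{-\Phi}\,dV+\delta\int_{\mathbb C}\eta'(|z|^2)\bigl(1+\delta\eta'(|z|^2)\bigr)|z|^2e^{-\Phi}\,dV<\|\alpha\|_\Phi^2 .
\]
Hence the minimal solution $v$ of $\bar\partial v=d\bar z$ in $L^2(e^{-\Phi})$ satisfies
\[
\|v\|_\Phi\geq\|\alpha\|_\Phi^2/\|\bar\partial^*_\Phi\alpha\|_\Phi>\|\alpha\|_\Phi .
\]
Step~2, by contrast, asserts $\|v\|_\Phi^2\leq\int|d\bar z|^2_{i\partial\bar\partial\psi}e^{-\Phi}\,dV=\|\alpha\|_\Phi^2$. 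Moreover, $f=ve^{-\chi}$ is exactly the $L^2(e^{-\psi})$-minimal solution of $\bar\partial f=\theta$ with $\theta:=e^{-\chi}d\bar z-f\,\bar\partial\chi$. The weight $\omega=e^{\chi}$ satisfies the hypothesis with $\varepsilon$ as small as you like. So the failure occurs in precisely the situation your proof needs.

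Neither fallback repairs this.
\begin{itemize}
\item Twisting the weight $\psi+\chi$ by $e^{\chi}$ does cancel $i\partial\bar\partial\chi$, but only because it is the Bochner--Kodaira identity for $\psi$ rewritten. What remains is $-i\partial\chi\wedge\bar\partial\chi$ plus a cross term of first order in $\partial\chi$, and these cost an $\varepsilon$-dependent factor. The example above shows that no argument can give constant $1$.
\item Applying H\"ormander's theorem to $\theta+f\bar\partial\chi$ is impossible for $n\geq2$, since $\bar\partial(f\bar\partial\chi)=\theta\wedge\bar\partial\chi\neq0$ in general.
\end{itemize}

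Without second derivatives of $\chi$, the most you can hope for is Step~2 with some constant $C(\varepsilon)>1$. Your absorption would then require $\sqrt{C(\varepsilon)}\,\varepsilon/\sqrt2<1$, so both the admissible range of $\varepsilon$ and the final constant have to be reworked. In particular, your ``improved'' constant $1/(\sqrt2-\varepsilon)^2$ is not established. For the statement as given, citing \cite{D98}, as the paper does, is the appropriate route.
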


\begin{prop}[Global upper estimate]\label{Gue}
Let $\varphi \in \mathcal W^*(\CC)$ with $\rho \in \mathcal S$. Then there exist constants $C>0$ and $\varepsilon>0$ such that
\begin{equation}
    |K_z^\varphi(w)|
    \le
    C\,
    \frac{e^{\varphi(z)+\varphi(w)}}
         {\rho(z)^n\rho(w)^n}
    e^{-\varepsilon d_\rho(z,w)},
    \qquad z,w\in\CC.
    \label{eq:general-upper}
\end{equation}
\end{prop}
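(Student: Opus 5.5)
We prove the estimate first for the regularized weight $\widetilde\varphi$ and then pass to $\varphi$ by a Toeplitz argument, as announced in the introduction.

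\emph{Step 1: estimate for $K_{\widetilde\varphi}$.} Fix $z,w$ with $d_\rho(z,w)\ge 1$; otherwise Lemma~\ref{meaninequality}(A) together with $\|K^{\widetilde\varphi}_z\|^2_{L^2_{\widetilde\varphi}}=K_{\widetilde\varphi}(z,z)\lesssim e^{2\widetilde\varphi(z)}\rho(z)^{-2n}$ suffices. The diagonal bound itself follows from the mean value inequality applied to $K_z$ on $B(z)$, using $|B(z)|\simeq\rho(z)^{2n}$.

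Choose a cutoff $\chi$ supported in $B^{1}(w)$, equal to $1$ on $B^{1/2}(w)$, with $|\bar\partial\chi|\lesssim\rho(w)^{-1}$. We write
\[
\overline{K^{\widetilde\varphi}_z(w)}=\langle \chi K^{\widetilde\varphi}_w,K^{\widetilde\varphi}_z\rangle ,
\]
or equivalently represent $K_z(w)$ through the mean value property on $B^{1/2}(w)$. The key object is $u=\chi K_w-P_{\widetilde\varphi}(\chi K_w)$, which is the $L^2_{\widetilde\varphi}$-minimal solution of $\bar\partial u=K_w\bar\partial\chi$.

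Apply Theorem~\ref{df=theta} with the weight $\omega=e^{\varepsilon\delta(\cdot)}$, where $\delta$ is a smoothed version of $d_\varphi(w,\cdot)$ satisfying $|\partial\delta|_{i\partial\bar\partial\widetilde\varphi}\le 1$. Such a smoothing exists because $d_\varphi$ is Lipschitz in the Kähler metric, and we may regularize it by local convolution at scale $\rho$, using Lemma~\ref{rhoB}. Then $|\partial\omega|_{i\partial\bar\partial\widetilde\varphi}\le\varepsilon\omega$ with $\varepsilon<\sqrt2$. Since $\bar\partial\chi$ lives in $B^1(w)$ where $\omega\simeq1$, and using \eqref{ade1},
\[
\int|u|^2e^{-2\widetilde\varphi}e^{\varepsilon\delta}\lesssim\int_{B^1(w)}|K_w|^2\rho^{-2}\rho^2e^{-2\widetilde\varphi}\lesssim K_{\widetilde\varphi}(w,w)e^{-2\widetilde\varphi(w)}\cdot O(1).
\]
On $B^{1/2}(z)$, which is disjoint from the support of $\chi$ when $d_\rho(z,w)$ is large, $u=-P(\chi K_w)$ is holomorphic. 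Lemma~\ref{meaninequality}(A) then gives
\[
|u(z)|^2e^{-2\widetilde\varphi(z)}\lesssim\rho(z)^{-2n}e^{-\varepsilon d_\varphi(z,w)}\rho(w)^{-2n}.
\]
Finally, $P(\chi K_w)(z)=\langle\chi K_w,K_z\rangle$ equals $K_w(z)$ up to a term controlled by the same estimate with the roles of $z$ and $w$ exchanged. Alternatively, $u(z)=-\overline{\langle K_z,\chi K_w\rangle}$, and $\langle K_z,\chi K_w\rangle$ is bounded below by a constant times $K(w,z)$ after testing against the reproducing property on the ball. Using $d_\varphi\simeq d_\rho$, this yields \eqref{eq:general-upper} for $\widetilde\varphi$.

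\emph{Step 2: Toeplitz transfer.} Set $b=e^{-2(\varphi-\widetilde\varphi)}$, so $e^{-2D}\le b\le e^{2D}$. Then $\langle f,g\rangle_{\varphi}=\langle T_bf,g\rangle_{\widetilde\varphi}$ on $F^2$, where $T_b=P_{\widetilde\varphi}M_b$ is positive and invertible on $F^2_{\widetilde\varphi}$. It follows that $K^\varphi_z=T_b^{-1}K^{\widetilde\varphi}_z$. Write $T_b=c(I-R)$ with $c=(e^{2D}+e^{-2D})/2$, so that $\|R\|<1$ on $F^2_{\widetilde\varphi}$, and $T_b^{-1}=c^{-1}\sum_k R^k$.

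Define the kernel class $\mathcal K_\varepsilon$ of integral kernels $L(x,y)$ with
\[
|L(x,y)|\le A\,e^{\widetilde\varphi(x)+\widetilde\varphi(y)}\rho(x)^{-n}\rho(y)^{-n}e^{-\varepsilon d_\rho(x,y)}.
\]
By Step 1, the kernel of $R$ lies in this class. Lemma~\ref{z-wk2n}(A), together with the quasi-triangle behavior of $d_\rho$, gives the composition bound: composing two such kernels costs a factor $C A A'$ with a slightly smaller $\varepsilon$. This shows that $R^k$ has a kernel with constant $C^k$, which is too crude to sum the series.

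The \textbf{main obstacle} is making the Neumann series converge in a weighted norm. The plan is to work instead on the weighted spaces $L^2(e^{\pm\eta d_\rho(\cdot,z)}e^{-2\widetilde\varphi})$. By a Schur test using Lemma~\ref{z-wk2n}, $P_{\widetilde\varphi}$ is bounded there, with norm $\le\|P\|_{L^2}+O(\eta)$ as $\eta\to0$. Hence for small $\eta$, $\|R\|<1$ on these spaces as well, and
\[
\int|K^\varphi_z|^2e^{-2\widetilde\varphi}e^{\eta d_\rho(\cdot,z)}\lesssim\int|K^{\widetilde\varphi}_z|^2e^{-2\widetilde\varphi}e^{\eta d_\rho(\cdot,z)}\lesssim e^{2\widetilde\varphi(z)}\rho(z)^{-2n},
\]
the last step by Step 1 and Lemma~\ref{z-wk2n}(A).

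The continuity of the norm in $\eta$ is the delicate point. To prove it, compare the multiplication by $e^{\eta d}$ with the identity, using $|e^{\eta(d(x)-d(y))}-1|\le\eta d_\rho(x,y)e^{\eta d_\rho(x,y)}$ and the decay of the kernel of $P$. Finally, Lemma~\ref{meaninequality}(A) applied to $K^\varphi_z$ on $B(w)$, where $e^{\eta d_\rho(\cdot,z)}\simeq e^{\eta d_\rho(w,z)}$, gives the pointwise bound
\[
|K^\varphi_z(w)|\lesssim e^{\varphi(z)+\varphi(w)}\rho(z)^{-n}\rho(w)^{-n}e^{-\eta d_\rho(z,w)/2},
\]
which is \eqref{eq:general-upper}.
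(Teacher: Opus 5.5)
\textbf{The gap is at the end of your Step 1.} What you actually bound is $P_{\widetilde\varphi}(\chi K_w)(z)=-u(z)=\langle\chi K_w,K_z\rangle$. This is not $\overline{K_z(w)}=\langle K_w,K_z\rangle$; the two agree only when $\chi\equiv1$.

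The difference $\langle(1-\chi)K_w,K_z\rangle$ is an integral of $K_w\overline{K_z}e^{-2\widetilde\varphi}$ over the complement of a neighborhood of $w$. That region contains a neighborhood of $z$ and everything far from both points. Controlling it requires exactly the off-diagonal decay you are trying to prove, so ``the same estimate with the roles of $z$ and $w$ exchanged'' does not bound it.

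Your alternative claim $|\langle K_z,\chi K_w\rangle|\gtrsim|K(w,z)|$ fails for the same reason. The map $h\mapsto\int\chi h\overline{K_w}e^{-2\widetilde\varphi}\,dV$ is a different linear functional on holomorphic $h$ than $h\mapsto h(w)$. It is a multiple of point evaluation for the Gaussian weight with $\chi$ radial about $w$: there $\overline{K_w}e^{-2\varphi}$ is an antiholomorphic function times a function radial about $w$, and the mean value property applies. No such factorization exists for a general $\widetilde\varphi$, and the functional can vanish on holomorphic $h$ with $h(w)\neq0$. Since your Step 2 uses the exponential decay of the kernel of $P_{\widetilde\varphi}$ as its only input, the whole argument currently rests on this unproved step.

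\textbf{The missing idea is the quadratic trick used in the paper:} localize $K_z$ itself near $w$. By Lemma~\ref{meaninequality}(A),
\[
|K^{\widetilde\varphi}_z(w)|^2e^{-2\widetilde\varphi(w)}\lesssim\rho(w)^{-2n}\int_{\mathbb C^n}\chi_w|K^{\widetilde\varphi}_z|^2e^{-2\widetilde\varphi}\,dV=\rho(w)^{-2n}P_{\widetilde\varphi}(\chi_wK^{\widetilde\varphi}_z)(z).
\]
Moreover, $P_{\widetilde\varphi}(\chi_wK^{\widetilde\varphi}_z)=-v$ on $B^r(z)$, where $v$ is the minimal solution of $\bar\partial v=K^{\widetilde\varphi}_z\bar\partial\chi_w$. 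Your machinery now applies unchanged: Theorem~\ref{df=theta} with the exponential of a smoothed distance, then the mean value inequality at $z$, gives $|v(z)|\lesssim e^{\widetilde\varphi(z)}\rho(z)^{-n}e^{-c\,d_\rho(z,w)}K_{\widetilde\varphi}(z,z)^{1/2}$. The diagonal bound then closes Step 1. The paper uses the decaying weight $e^{-\varepsilon g_z}$, but your weight growing away from $w$ works equally well. Another repair is to represent evaluation at $w$ by a compactly supported non-holomorphic test function, built from the local mean value property with the holomorphic correction $e^{-F_w}$ of Lemma~\ref{fx}.

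\textbf{Step 2 is correct once Step 1 is repaired.} It is the paper's argument in a slightly different package. The paper inverts $A=I+\Pi M_{b-1}\Pi$ on all of $L^2(\nu)$ and perturbs $WAW^{-1}=A(I+A^{-1}E)$ using a truncated distance. You instead expand $T_b^{-1}=c^{-1}\sum_kR^k$ and use $\|R\|_\eta\le\tanh(2D)\,\|P_{\widetilde\varphi}\|_\eta<1$ for small $\eta$. Both hinge on the same $O(\eta)$ Schur bound for the conjugation error, whose kernel is $(e^{\eta(d(x)-d(y))}-1)$ times the kernel of $P_{\widetilde\varphi}$. Since $e^{\eta d}\ge1$, your weighted Neumann limit automatically coincides with $K^\varphi_z$, so you do not need the truncation. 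A minor point: your displayed bounds for $u$ drop the factor $e^{2\widetilde\varphi(w)}$ coming from $K_{\widetilde\varphi}(w,w)$.
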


\begin{proof}
We divide the proof into two steps.

\medskip
\noindent
\textbf{Step 1: the smooth weight $\widetilde{\varphi}$.}
 First, we prove an upper bound near the diagonal.  Applying Lemma \ref{meaninequality} to $K_z^{\widetilde{\varphi}}$ for $z\in\CC$, we obtain
\begin{align*}
    |K_z^{\widetilde{\varphi}}(z)|^2e^{-2\widetilde{\varphi}(z)}&\leq C\frac{1}{|B(z)|}\int_{B(z)}|K_z^{\widetilde{\varphi}}(y)|^2e^{-2\widetilde{\varphi}(y)}
    \,dV(y)\\
    &\leq C\frac{1}{\rho(z)^{2n}}\int_{\CC}|K_z^{\widetilde{\varphi}}(y)|^2
    e^{-2\widetilde{\varphi}(y)}\,dV(y)=C\frac{|K_z^{\widetilde{\varphi}}(z)|}
    {\rho(z)^{2n}},
\end{align*}
which implies that 
\begin{equation}\label{ade10}
	|K_z^{\widetilde{\varphi}}(z)|\leq C\frac{e^{2\widetilde{\varphi}(z)}}
	{\rho(z)^{2n}}.
\end{equation}
This, together with Cauchy--Schwarz inequality further implies that
$$
|K_z^{\widetilde{\varphi}}(w)| \leq \sqrt{K_z^{\widetilde{\varphi}}(z)K_w^{\widetilde{\varphi}}(w)} \leq C \frac{e^{\widetilde{\varphi}(z)+\widetilde{\varphi}(w)}}{\rho(z)^n\rho(w)^n}. 
$$
In particular, for any fixed $R$, we have
$$
|K_z^{\widetilde{\varphi}}(w)| \lesssim \frac{e^{\widetilde{\varphi}(z)+\widetilde{\varphi}(w)}}{\rho(z)^n\rho(w)^n} e^{-d_{\rho}(z,w)},\quad d_{\rho}(z,w) \leq R. 
$$

We now prove the off-diagonal upper bound when  $d_{\rho}(z,w) > R$. By \eqref{drhoB}, there exists $r>0$ such that
\begin{equation}\label{ade6}
	B^r(z) \cap B^r(w) = \emptyset.
\end{equation}
Fix a smooth cut-off function $\eta$ on $\mathbb{C}^n$ such that
\begin{align}\label{eta}
0 \leq \eta \leq 1, \quad
\eta \equiv 1 \ \text{on } B(0,1/2), \quad
\operatorname{supp}\eta \subset B(0,1), \quad
|\bar{\partial}\eta|^2 \leq C\eta .
\end{align}
 Set $\chi_w(\cdot) = \eta\left( \frac{\cdot - w}{r \rho(w)} \right)$. Then
$$
0 \leq \chi_w \leq 1, \quad \text{supp}\chi_w \subset B^r(w), \quad \chi_w|_{B^{r/2}(w)} \equiv 1, \quad |\bar{\partial}\chi_w|_{i\partial\bar{\partial}\widetilde{\varphi}} \lesssim 1.
$$
Applying Lemma \ref{meaninequality}, we obtain
\begin{equation}\label{ade7}
\begin{aligned}
|K_z^{\widetilde{\varphi}}(w)|^2 e^{-2\widetilde{\varphi}(w)} &\leq C \frac{1}{\rho(w)^{2n}} \int_{B^{r/2}(w)} |K_z^{\widetilde{\varphi}}(\xi)|^2 e^{-2\widetilde{\varphi}(\xi)}\, dV(\xi) \\
&\leq C \frac{1}{\rho(w)^{2n}} \int_{\CC} \chi_w(\xi) K_z^{\widetilde{\varphi}}(\xi) \overline{K_z^{\widetilde{\varphi}}(\xi)} e^{-2\widetilde{\varphi}(\xi)}\, dV(\xi) \\
&= C \frac{1}{\rho(w)^{2n}} P_{\widetilde{\varphi}}(\chi_w K_z^{\widetilde{\varphi}})(z),
\end{aligned}
\end{equation}
where $P_{\widetilde{\varphi}}$ is the orthogonal projection from $L_{\widetilde{\varphi}}^2$ onto $F_{\widetilde{\varphi}}^2$. Let
$$
v = (I - P_{\widetilde{\varphi}})(\chi_w K_z^{\widetilde{\varphi}}),
$$
then the function $v$ is the
$L^2_{\widetilde{\varphi}}$-minimal solution of this equation $\overline{\partial}v = K_z^{\widetilde{\varphi}} \overline{\partial}\chi_w$.
By \eqref{ade6}, we know that $\chi_w|_{B^r(z)} \equiv 0$. Thus, $|v(\xi)|=|P_{\widetilde\varphi}(\chi_w K_z^{\widetilde{\varphi}})(\xi)|$ for $\xi\in B^r(z)$. Since $P_{\widetilde\varphi}(\chi_w K_z^{\widetilde{\varphi}})\in H(\CC)$, we apply Lemma \ref{meaninequality} to get
\begin{align}\label{Pv}
|P_{\widetilde\varphi}(\chi_w K_z^{\widetilde{\varphi}})(z)|^2 e^{-2\widetilde{\varphi}(z)} 
&\lesssim \frac{1}{\rho(z)^{2n}} \int_{B^r(z)} |v(\xi)|^2 e^{-2\widetilde{\varphi}(\xi)} dV(\xi).
\end{align}
Choose a smooth Riemannian metric $g_0$ uniformly equivalent to
$i\partial\bar\partial\widetilde\varphi$, and denote its distance by
$d_0$. Lemma~\ref{dzw} gives
$d_0\simeq d_\varphi\simeq d_\rho$ and shows that $g_0$ is complete.
By the Greene--Wu approximation theorem \cite{GW79}, for each $z\in\CC$ there exists a smooth real-valued function $g_z$ such that
\[
|g_z(\xi)-d_0(\xi,z)|\leq1,
\qquad
|g_z(\xi)-g_z(w)|\leq2d_0(\xi,w).
\]
Consequently, there exist constants $c,C>0$ such that
\begin{equation}\label{ade5}
c d_\rho(\xi,z)-1\leq g_z(\xi)
\leq C d_\rho(\xi,z)+1,
\qquad
|g_z(\xi)-g_z(w)|\leq C d_\rho(\xi,w).
\end{equation}
Let $\omega=e^{-\varepsilon g_z}$. By \eqref{ade5}, $\omega(\xi)$ is bounded from below whenever $\xi\in B^r(z)$. From this, \eqref{ade7}, \eqref{Pv} and \eqref{varphi}, we deduce that
\begin{equation}\label{ade8}
	\begin{aligned}
		|K_z^{\widetilde{\varphi}}(w) e^{-\widetilde{\varphi}(w)}|^2 &\lesssim \frac{e^{\widetilde{\varphi}(z)}}{\rho^{2n}(w)\rho^{n}(z)} \left(\int_{B^r(z)} \omega(\xi)|v(\xi)|^2 e^{-2\widetilde{\varphi}(\xi)} dV(\xi)\right)^{1/2}\\
		&\lesssim \frac{e^{\widetilde{\varphi}(z)}}{\rho^{2n}(w)\rho^{n}(z)} \left(\int_{\CC} |v(\xi)|^2 e^{-2\widetilde{\varphi}(\xi)}\omega(\xi) \, dV(\xi)\right)^{1/2}.
	\end{aligned}
\end{equation}
Let $\Delta x=(\Delta x_1,0,\ldots,0)$. Combining \eqref{ade5} with Lemma~\ref{dzw}, the partial derivative \(\frac{\partial g_z}{\partial x_1}\) satisfies
\begin{align*}
\left|\frac{\partial g_z}{\partial x_1}(\xi)\right|& = \lim_{\Delta x_1 \to 0} \frac{|g_z(\xi + \Delta x) - g_z(\xi)| d_\rho(\xi, \xi + \Delta x)}{d_\rho(\xi, \xi + \Delta x) \cdot |\Delta x|}
\leq C \lim_{\Delta x_1 \to 0} \frac{d_\rho(\xi, \xi + \Delta x)}{|\Delta x|} \lesssim \frac{1}{\rho(\xi)}.
\end{align*}
 Similarly, for $1\leq i\leq 2n$ we have
$$\left|\frac{\partial g_z}{\partial x_i}(\xi)\right|\lesssim \frac{1}{\rho(\xi)},$$
which implies that
$$|\partial g_z|_{i\partial\bar{\partial}\widetilde{\varphi}}^2\simeq |\partial g_z|^2\rho^2\lesssim 1.$$
Moreover, we have $\partial \omega=-\varepsilon e^{-\varepsilon g_z}\partial g_z$. Then, we choose $\varepsilon>0$ so that 
$$|\partial \omega|_{i\partial\bar{\partial}\widetilde{\varphi}}=\varepsilon \omega|\partial g_z|_{i\partial\bar{\partial}\widetilde{\varphi}}\leq \varepsilon_1\omega$$
for some $\varepsilon_1\in (0, \sqrt{2})$.  From this, we may invoke Theorem \ref{df=theta} to obtain that
\begin{align}\label{KKKK}
\int_{\CC} |v(\xi)|^2 e^{-2\widetilde{\varphi}(\xi)}\omega(\xi) \, dV(\xi)\lesssim  \int_{\CC} |K_z^{\widetilde{\varphi}}(\xi)|^2|\overline{\partial}\chi_w(\xi)|_{i\partial\bar{\partial}\widetilde{\varphi}}^2 e^{-2\widetilde{\varphi}(\xi)}\omega(\xi) \, dV(\xi).
\end{align}
We claim that
\begin{equation}\label{ade9}
\omega(\xi) \lesssim e^{- \varepsilon d_\rho(z, w)},\quad \xi \in B^r(w).
\end{equation}
In fact, by the triangle inequality and \eqref{drhoB}, for  
\(\zeta \in B^r(w)\) we have 
$$d_\rho(z, \zeta) \geq d_\rho(z, w) - d_\rho(w, \zeta)
 \geq d_\rho(z, w) - c_r.$$ 
This and \eqref{ade5} imply \eqref{ade9}.
Plugging \eqref{ade9} into \eqref{KKKK}, then combining \eqref{KKKK} and \eqref{ade8}, we deduce that
\begin{align}\label{Kzwrho}
|K_z^{\widetilde{\varphi}}(w) e^{-\widetilde{\varphi}(w)}|^2 \leq C \frac{e^{\widetilde{\varphi}(z)}e^{- \varepsilon d_\rho(z, w)/2}}{\rho^{2n}(w)\rho^{n}(z)}\left(\int_{\CC} |K_z^{\widetilde{\varphi}}(\xi)|^2 e^{-2\widetilde{\varphi}(\xi)} \, dV(\xi)\right)^{1/2}= C \frac{e^{\widetilde{\varphi}(z)}e^{- \varepsilon d_\rho(z, w)/2}}{\rho^{2n}(w)\rho^{n}(z)} \sqrt{K_z^{\widetilde{\varphi}}(z)}.
\end{align}
This together with \eqref{ade10} gives us the off-diagonal upper bound when  $d_{\rho}(z,w) > R$.

\medskip
\noindent
\textbf{Step 2: passage from $\widetilde{\varphi}$ to $\varphi$.}
Set $b:=e^{-2(\varphi-\widetilde{\varphi})}.$
By \eqref{varphi}, we have $c:=e^{-2D}\le b\le e^{2D}=:C$ and 
$F^2_\varphi=F^2_{\widetilde{\varphi}}$.
On $F^2_{\widetilde{\varphi}}$, define $T_b:=P_{\widetilde{\varphi}}M_b$.
For $f,g\in F^2_{\widetilde{\varphi}}$,
\begin{align*}
    \langle T_bf,g\rangle_{\widetilde{\varphi}}
    &=\int_{\CC}b f\overline g
       e^{-2\widetilde{\varphi}}\,dV
      =\langle f,T_bg\rangle_{\widetilde{\varphi}},
\end{align*}
so $T_b$ is self-adjoint.  Moreover,
\begin{equation}
    \langle T_bf,f\rangle_{\widetilde{\varphi}}
    =\|f\|_\varphi^2
    \ge c\|f\|_{\widetilde{\varphi}}^2.
    \label{eq:Tb-coercive}
\end{equation}
It follows that
\[
    \|T_bf\|_{\widetilde{\varphi}}
    \ge c\|f\|_{\widetilde{\varphi}}.
\]
Hence $T_b$ is injective and has closed range.  Since it is
self-adjoint,
\[
    \overline{\operatorname{Ran} T_b}
    =(\ker T_b^*)^\perp
    =(\ker T_b)^\perp
    =F^2_{\widetilde{\varphi}}.
\]
Thus $T_b$ is onto and $\|T_b^{-1}\|\le c^{-1}$. For every $f\in F^2_{\widetilde{\varphi}}=F^2_\varphi$, the reproducing
properties give
\begin{align*}
    f(z)
    =\langle f,K_z^\varphi\rangle_\varphi
      =\langle bf,K_z^\varphi\rangle_{\widetilde{\varphi}}
    =\langle T_bf,K_z^\varphi\rangle_{\widetilde{\varphi}}
      =\langle f,T_bK_z^\varphi\rangle_{\widetilde{\varphi}},
\end{align*}
whereas
$f(z)=\langle f,K_z^{\widetilde{\varphi}}\rangle_{\widetilde{\varphi}}.$
The uniqueness of the Riesz representing vector therefore implies
\begin{equation}
    T_bK_z^\varphi=K_z^{\widetilde{\varphi}},
    \qquad
    K_z^\varphi=T_b^{-1}K_z^{\widetilde{\varphi}}.
    \label{eq:kernel-identity}
\end{equation}

We now prove the localization needed to use
\eqref{eq:kernel-identity}.  Define
\[
    \,d\nu(x):=\rho(x)^{-2n}\,dV(x)
\]
and the unitary map
\[
    U:L^2_{\widetilde{\varphi}}\longrightarrow L^2(\nu),
    \qquad
    Uf(x)=\rho(x)^ne^{-\widetilde{\varphi}(x)}f(x).
\]
Let $\Pi:=UP_{\widetilde{\varphi}}U^{-1}.$
The integral kernel of $\Pi$ with respect to $\nu$ is
\begin{equation}
    p(x,y):=
    \rho(x)^n\rho(y)^n
    e^{-\widetilde{\varphi}(x)-\widetilde{\varphi}(y)}
    K_y^{\widetilde{\varphi}}(x).
    \label{eq:normalized-kernel}
\end{equation}
By Step 1, there exists $C_0,a>0$ such that
\begin{equation}
    |p(x,y)|\le C_0e^{-a d_\rho(x,y)}.
    \label{eq:p-decay}
\end{equation}
Fix $0<\alpha<a$.  Lemma \ref{z-wk2n}(A) and \eqref{eq:p-decay} give
\begin{equation}
\begin{split}
    S_\alpha(p):=
    \sup_x\int_{\CC}|p(x,y)|e^{\alpha d_\rho(x,y)}\,d\nu(y)
    +\sup_y\int_{\CC}|p(x,y)|e^{\alpha d_\rho(x,y)}\,d\nu(x)
    <\infty.
\end{split}
    \label{eq:p-schur}
\end{equation}
Set $\mathcal H:=U(F_{\widetilde\varphi}^2)\subset L^2(\nu),$
and let $\Pi$ be the orthogonal projection from $L^2(\nu)$ onto
$\mathcal H$.  The composition $\Pi M_b\Pi$ is represented by
\[
L^2(\nu)
\xrightarrow{\ \Pi\ }
\mathcal H
\xrightarrow{\ M_b\ }
L^2(\nu)
\xrightarrow{\ \Pi\ }
\mathcal H.
\]
Its restriction to $\mathcal H$ is
\[
\left.\Pi M_b\Pi\right|_{\mathcal H}
=UT_bU^{-1}.
\]
We extend this operator to all of $L^2(\nu)$ by letting it act as the
identity on $\mathcal H^\perp$. More precisely, define
\begin{equation}
A:=(I-\Pi)+\Pi M_b\Pi
  =I+\Pi M_{b-1}\Pi
\label{eq:def-A}
\end{equation}
on $L^2(\nu)$. Then
\[
A|_{\mathcal H}=UT_bU^{-1},
\qquad
A|_{\mathcal H^\perp}=I.
\]
 Indeed, write
\[
h=h_0+h_1,
\qquad
h_0\in\mathcal H,\quad h_1\in\mathcal H^\perp.
\]
Since $\Pi h_0=h_0$ and $\Pi h_1=0$, we have
\[
Ah
=
\Pi M_bh_0+h_1.
\]
Moreover, since $\Pi=UP_{\widetilde\varphi}U^{-1}$ and multiplication
by $b$ commutes with $U$,
\[
\Pi M_bh_0
=
UT_bU^{-1}h_0.
\]
Consequently, with respect to the orthogonal decomposition
$L^2(\nu)=\mathcal H\oplus\mathcal H^\perp$, the operator $A$ has the
block diagonal form
\[
A=
\begin{pmatrix}
UT_bU^{-1} & 0\\
0 & I
\end{pmatrix}.
\]
It follows that
\begin{equation}
    A\ge m_0I,
    \qquad
    m_0=\min\{1,c\}>0,
    \qquad
    \|A^{-1}\|_{L^2(\nu)\rightarrow L^2(\nu)}\le m_0^{-1}.
    \label{eq:A-coercive}
\end{equation}
Set $Q:=A-I=\Pi M_{b-1}\Pi$.  Its kernel is
\begin{equation}
    q(x,y)=\int_{\CC}
       p(x,u)(b(u)-1)p(u,y)\,d\nu(u).
    \label{eq:q-kernel}
\end{equation}
Using the triangle inequality $e^{\alpha d_\rho(x,y)}\le e^{\alpha d_\rho(x,u)}e^{\alpha d_\rho(u,y)}$
and Fubini's theorem, \eqref{eq:p-schur} gives
\begin{equation}
\begin{split}
    S_\alpha(q):=
    \sup_x\int_{\CC}|q(x,y)|e^{\alpha d_\rho(x,y)}\,d\nu(y)
    +\sup_y\int_{\CC}|q(x,y)|e^{\alpha d_\rho(x,y)}\,d\nu(x)
    <\infty.
\end{split}
    \label{eq:q-schur}
\end{equation}
Fix $z\in\CC$ and, for $R>0$, set $h_{z,R}(x):=\min\{d_\rho(z,x),R\}.$
The function $h_{z,R}$ is bounded and $1$-Lipschitz with respect to
$d_\rho$.  For $\delta>0$, define the bounded invertible multiplication
operator
\[
    W_{\delta,z,R}F(x):=e^{\delta h_{z,R}(x)}F(x).
\]
Set
\[
E_{\delta,z,R}
:=
W_{\delta,z,R}AW_{\delta,z,R}^{-1}-A.
\]
Then the kernel of $E_{\delta,z,R}$ is $\left( e^{\delta(h_{z,R}(x)-h_{z,R}(y))}-1\right)q(x,y).$
Since $|h_{z,R}(x)-h_{z,R}(y)|\le d_\rho(x,y)$
and $|e^t-1|\le |t|e^{|t|}$, the kernel satisfies
\[
 |\left(
      e^{\delta(h_{z,R}(x)-h_{z,R}(y))}-1
    \right)q(x,y)|\leq   \delta d_\rho(x,y)e^{\delta d_\rho(x,y)}|q(x,y)|.
\]
If $0<\delta\le\alpha/2$, then
\[
    se^{\delta s}
    \le \frac{2}{e\alpha}e^{\alpha s},
    \qquad s\ge0.
\]
By \eqref{eq:q-schur} and Schur's test,
\begin{align}\label{EzR}
\|E_{\delta,z,R}\|_{L^2(\nu)\to L^2(\nu)}
\leq C_1\delta,
\end{align}
where $C_1$ is independent of $z$ and $R$. Choose
\begin{align}\label{323}
0<\delta<
\min\left\{
\frac{\alpha}{2},
\frac{m_0}{2C_1}
\right\}.
\end{align}
It follows from \eqref{eq:A-coercive} and \eqref{EzR} that
\[
\|A^{-1}E_{\delta,z,R}\|_{L^2(\nu)\to L^2(\nu)}
\leq
\frac{C_1\delta}{m_0}
<\frac12.
\]
Since $W_{\delta,z,R}AW_{\delta,z,R}^{-1}=A\bigl(I+A^{-1}E_{\delta,z,R}\bigr),$
the Neumann series converges in $\mathcal B(L^2(\nu))$ and gives
\[
\begin{aligned}
W_{\delta,z,R}A^{-1}W_{\delta,z,R}^{-1}
&=
\bigl(
W_{\delta,z,R}AW_{\delta,z,R}^{-1}
\bigr)^{-1} 
=
\sum_{k=0}^{\infty}
\bigl(-A^{-1}E_{\delta,z,R}\bigr)^kA^{-1}.
\end{aligned}
\]
Consequently,
\begin{align}\label{WAW}
\left\|
W_{\delta,z,R}A^{-1}W_{\delta,z,R}^{-1}
\right\|_{L^2(\nu)\to L^2(\nu)}
&\leq
\frac{
\|A^{-1}\|_{L^2(\nu)\to L^2(\nu)}
}{
1-
\|A^{-1}E_{\delta,z,R}\|_{L^2(\nu)\to L^2(\nu)}
} 
\leq
\frac{2}{m_0},
\end{align}
uniformly in $z$ and $R$.

Let $F_z:=UK_z^{\widetilde{\varphi}}$ and $G_z:=UK_z^\varphi.$ By \eqref{eq:kernel-identity} and \eqref{eq:def-A}, $G_z=A^{-1}F_z.$ Moreover, \eqref{eq:normalized-kernel} gives
\[F_z(x)=\frac{e^{\widetilde{\varphi}(z)}}{\rho(z)^n}p(x,z).\]
Therefore, after decreasing $\delta$ if necessary so that $\delta<a$,
Lemma \ref{z-wk2n}(A) and \eqref{eq:p-decay} give
\begin{equation}
    \|W_{\delta,z,R}F_z\|_{L^2(\nu)}
    \lesssim
    \frac{e^{\widetilde{\varphi}(z)}}{\rho(z)^n},
    \label{eq:weighted-F}
\end{equation}
uniformly in $R$.  Combining \eqref{WAW} and \eqref{eq:weighted-F}, we obtain
\[
    \|W_{\delta,z,R}G_z\|_{L^2(\nu)}\leq \left\|
W_{\delta,z,R}A^{-1}W_{\delta,z,R}^{-1}
\right\|_{L^2(\nu)\to L^2(\nu)} \|W_{\delta,z,R}F_z\|_{L^2(\nu)}
    \lesssim
    \frac{e^{\widetilde{\varphi}(z)}}{\rho(z)^n}.
\]
Letting $R\to\infty$ and applying the monotone convergence theorem
gives
\begin{equation}
    \int_{\CC}|K_z^\varphi(x)|^2
       e^{-2\widetilde{\varphi}(x)}
       e^{2\delta d_\rho(z,x)}\,dV(x)
    \lesssim
    \frac{e^{2\widetilde{\varphi}(z)}}{\rho(z)^{2n}}.
    \label{eq:weighted-Kphi}
\end{equation}
Finally, applying Lemma \ref{meaninequality} to $K_z^\varphi$
gives
\begin{align*}
    |K_z^\varphi(w)|^2e^{-2\widetilde{\varphi}(w)}
    &\lesssim
    \frac{1}{\rho(w)^{2n}}
    \int_{B^r(w)}|K_z^\varphi(x)|^2
       e^{-2\widetilde{\varphi}(x)}\,dV(x).
\end{align*}
For $x\in B^r(w)$, we have $d_\rho(z,x)\ge d_\rho(z,w)-L_r.$
It follows from \eqref{eq:weighted-Kphi} that
\[
    |K_z^\varphi(w)|^2e^{-2\widetilde{\varphi}(w)}
    \lesssim
    \frac{e^{2\widetilde{\varphi}(z)}}
         {\rho(z)^{2n}\rho(w)^{2n}}
    e^{-2\delta d_\rho(z,w)}.
\]
This proves \eqref{eq:general-upper}.
\end{proof}
 
To derive the lower estimate, we shall need the following theorem on one-point interpolation with uniform $L^2$ estimates.
\begin{lem}\label{peak}
Let $\varphi \in \mathcal W^*(\CC)$ with $\rho \in \mathcal S$. Then there exists a constant $C>0$ such that for every $x \in \mathbb{C}^n$, one can find a holomorphic function $f \in H(\mathbb{C}^n)$ satisfying
$$
f(x)=\frac{e^{\varphi(x)}}{\rho(x)^n}, \qquad \|f\|_{F^2_\varphi} \leq C.
$$
\end{lem}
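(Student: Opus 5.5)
The plan is the standard cut-off plus $\bar\partial$-correction construction, carried out at the scale $\rho(x)$ and with a singular weight that forces the correction to vanish at $x$.

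\textbf{Local holomorphic model.} Fix $x$ and apply Lemma~\ref{fx} with a fixed small $r$. This gives a pluriharmonic $f_x$ on $B^r(x)$ with $f_x(x)=0$ and
\[
|\varphi(y)-\varphi(x)-f_x(y)|\le C_r \quad\text{on } B^r(x).
\]
As in the proof of Lemma~\ref{meaninequality}, choose $F_x$ holomorphic on $B^r(x)$ with $\operatorname{Re}F_x=f_x$ and $F_x(x)=0$. Set
\[
g(y):=\frac{e^{\varphi(x)}}{\rho(x)^n}\,e^{F_x(y)},\qquad y\in B^r(x).
\]
Then $g(x)=e^{\varphi(x)}/\rho(x)^n$ and $|g(y)|e^{-\varphi(y)}\lesssim \rho(x)^{-n}$ on $B^r(x)$.

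Take the cut-off $\chi_x(y)=\eta\bigl((y-x)/(r\rho(x))\bigr)$ with $\eta$ as in \eqref{eta}. Then $\chi_x g$ is globally defined and
\[
\int|\chi_x g|^2e^{-2\varphi}\,dV\lesssim \rho(x)^{-2n}|B^r(x)|\lesssim 1 .
\]
Moreover $|\bar\partial\chi_x|\lesssim \rho(x)^{-1}$, with support in $B^r(x)\setminus B^{r/2}(x)$.

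\textbf{$\bar\partial$-correction vanishing at $x$.} Solve $\bar\partial u=g\bar\partial\chi_x$ in $L^2$ with respect to a modified weight
\[
\phi:=\widetilde\varphi+n\,\tau\!\left(\frac{|y-x|}{r\rho(x)}\right).
\]
Here $\tau(t)$ is a plurisubharmonic profile equal to $\log t$ for $t\le 1/2$ and to a constant for $t\ge 1$. The singular term $e^{-2n\log|y-x|}$ is non-integrable at $x$, so any solution $u$ with finite weighted norm must vanish at $x$ (since $u$ is holomorphic near $x$). Then $f:=\chi_xg-u$ is entire with $f(x)=g(x)=e^{\varphi(x)}/\rho(x)^n$.

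To obtain the estimate, work directly with Hörmander's theorem with weight $\phi$. On the region $|y-x|\le r\rho(x)/2$ the added term is plurisubharmonic. On the annulus, a function of the form $\log|y-x|$ glued to a constant has a negative Hessian part of size $\lesssim (r\rho(x))^{-2}$. I will compensate by adding $K\,|y-x|^2/(r\rho(x))^2$ cut off at larger scale, or better, by replacing the profile with $\log\bigl(|y-x|^2+\epsilon\bigr)$-type constructions. The cleanest choice is the compactly supported modification
\[
n\log\frac{|y-x|^2}{(r\rho(x))^2}\ \ \text{for } |y-x|<r\rho(x),\qquad 0\ \text{outside}.
\]
Since $\log(|y-x|^2/R^2)$ is plurisubharmonic and its max with $0$ is plurisubharmonic, I use $\phi=\widetilde\varphi+\tfrac n2\max\{\log(|y-x|^2/R^2),\,0\}$-style truncations. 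I expect this is precisely where care is needed: the positivity must come from $i\partial\bar\partial\widetilde\varphi\ge m\omega_\rho$, which absorbs the bounded curvature error of a smoothed truncation, and I will take the smoothing scale proportional to $\rho(x)$ so the error is $\lesssim \rho(x)^{-2}$. With $r$ small relative to $m$, this keeps $i\partial\bar\partial\phi\ge \tfrac m2\omega_\rho$.

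Away from $B^r(x)$ the weights $\phi$ and $\widetilde\varphi$ coincide. On $\operatorname{supp}\bar\partial\chi_x$ the singular term is bounded. Hence
\[
\int|u|^2e^{-2\phi}\lesssim\int|g|^2|\bar\partial\chi_x|^2\rho^2e^{-2\widetilde\varphi}\lesssim 1 ,
\]
using \eqref{ade1}. Since $\phi\le\widetilde\varphi+O(1)$ everywhere, this yields $\|u\|_{L^2_\varphi}\lesssim1$, and so $\|f\|_{F^2_\varphi}\le C$.

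\textbf{Main obstacle.} The main obstacle is constructing the singular weight with uniform constants when $\rho(x)$ is arbitrarily small or large. Rescaling by $\tau_x(z)=x+r\rho(x)z$, as in Lemma~\ref{var=psi}, reduces everything to the unit ball with curvature $\simeq i\partial\bar\partial|z|^2$. There one fixed smoothed profile $\log|z|^2$ works, with its negative curvature absorbed by choosing $r$ small depending only on $m$ and $M$. This is why all constants are independent of $x$.
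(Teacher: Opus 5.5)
Your architecture is the paper's: the local model $e^{\varphi(x)+F_x}/\rho(x)^n$, a cutoff on $B^r(x)$, and a $\bar\partial$-correction in a weight with an $n\log|y-x|$ pole that forces $u(x)=0$. Making the singular term bounded above would even spare you the growing factor $e^{\varepsilon g_x}$. But the curvature step fails as written.

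No plurisubharmonic profile equal to $\log t$ near $0$ and constant for $t\ge1$ exists. Its restriction to a complex line through $x$ would be subharmonic on $\mathbb C$ and constant outside a disc, hence constant. The $\max\{\log(|y-x|^2/R^2),0\}$ you then write is plurisubharmonic, but it vanishes near $x$ and grows at infinity; the weight your estimates actually use is the $\min$.

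The defect survives any smoothing. On each complex line through $x$, the Laplacian of $n\log|\zeta|$ glued to a constant has a point mass $2\pi n$ at $0$ and total mass $0$. Hence the complex Hessian of the profile in the radial direction is $\le -\frac n2R^{-2}$ somewhere in the gluing annulus. Take $R=r\rho(x)$ with $r\le1$. Then \eqref{rho123} gives $i\partial\bar\partial\widetilde\varphi\le Mc_0^2\rho(x)^{-2}\,i\partial\bar\partial|z|^2$ there. So for $r^2<n/(2Mc_0^2)$ your $\phi$ is not even plurisubharmonic: shrinking $r$ makes the defect worse, not better. In your rescaled picture, the curvature of $\widetilde\varphi\circ\tau_x$ is $\simeq r^2\,i\partial\bar\partial|z|^2$, not $\simeq i\partial\bar\partial|z|^2$, while the defect of the fixed profile is of order one.

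The repair is to make the truncation radius large. This uses an ingredient you did not invoke: the sublinear growth of $\rho$. By \eqref{z-w}, $\rho(y)\lesssim r^{1-\alpha}\rho(x)$ for $|y-x|\le r\rho(x)$ and $r\ge1$. So there $m\rho(y)^{-2}\gtrsim m\,r^{2\alpha-2}\rho(x)^{-2}$, which dominates the defect $\lesssim n\,r^{-2}\rho(x)^{-2}$ once $r^{2\alpha}\gg n/m$. Since $\alpha>0$, such an $r$ depends only on the structural constants. Lemma~\ref{fx} with this fixed $r$ still gives $|g|e^{-\varphi}\lesssim\rho(x)^{-n}$ on $B^r(x)$.

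A convenient smooth choice is $\phi_\delta=\widetilde\varphi+\frac n2\log\frac{|y-x|^2+\delta\rho(x)^2}{|y-x|^2+r^2\rho(x)^2}$. It has three useful properties:
\begin{itemize}
\item its negative curvature is at most $\frac n2\,(|y-x|^2+r^2\rho(x)^2)^{-1}\,i\partial\bar\partial|z|^2$, which \eqref{z-w} lets you absorb at every $y$ for such $r$;
\item it is bounded on $\operatorname{supp}\bar\partial\chi_x$;
\item it satisfies $\phi_\delta\le\widetilde\varphi$.
\end{itemize}
You then apply Theorem~\ref{df=theta} with $\omega\equiv1$; that theorem needs a $\mathcal C^2$ strictly plurisubharmonic weight, so the singular weight itself is not admissible. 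You get $\|u_\delta\|_{L^2_{\widetilde\varphi}}\lesssim1$ directly from $\phi_\delta\le\widetilde\varphi$. Finally, pass to a weak limit as $\delta\to0$ exactly as the paper does to obtain $u(x)=0$.

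Repaired this way, your route genuinely differs from the paper's. The paper uses the globally plurisubharmonic $\widetilde\varphi+\frac n2\log(|w-x|^2/\rho(x)^2+\delta)$, which costs no curvature but loses the factor $\rho(x)^{2n}/|w-x|^{2n}$ at infinity. It recovers that factor with Delin's twisted estimate for $\omega_x=e^{\varepsilon g_x}$. You trade that for the large-radius absorption.
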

\begin{proof}
  Fix $x \in \mathbb{C}^n$. Let $F_x$ be a holomorphic function on $B(x)$ such that $F_x(x)=0$ and $\operatorname{Re} F_x = f_x$, as in Lemma \ref{fx}.
On $B(x)$, define the $(0,1)$ form
\[
\theta(w)
:= \frac{e^{F_x(w)+\varphi(x)}}{\rho(x)^n}\,\bar{\partial}\chi_x(w),
\]
where
$
\chi_x(w)
=
\eta\!\left(\frac{w-x}{\rho(x)}\right)
$
with $\eta$ as defined in \eqref{eta}. Since $F_x$ is holomorphic, the $(0,1)$-form $\theta$ is $\bar\partial$-closed. Because $\operatorname{supp}\chi_x$ is compactly contained in
$B(x)$, we extend $\theta$ by zero to $\mathbb C^n$. This extension
is still $\bar\partial$ closed.
Then $\theta$ is supported in $B(x)\backslash B^{1/2}(x)$, and$
|\bar{\partial}\chi_x| \leq C/\rho(x).$ By \eqref{ade1}, we have
\begin{equation}\label{ade3} |\theta|_{i\partial\bar{\partial}\widetilde\varphi}^2=\frac{e^{2f_x+2\varphi(x)}}{\rho(x)^{2n}}
	|\bar{\partial}\chi_x|^2_{i\partial\bar{\partial}\widetilde\varphi}\simeq
	\frac{e^{2f_x+2\varphi(x)}}{\rho(x)^{2n}}|\bar{\partial}\chi_x|^2\rho^2.
\end{equation}
For every fixed $ x\in\CC$ and $\delta\in(0,1]$, define 
$$\psi_{x,\delta}(w):=\widetilde\varphi(w)+\frac{n}{2}\ln \left(\frac{|w-x|^{2}}{\rho(x)^{2}}+\delta\right),\quad w\in\CC.$$
Since the function
\[
w\longmapsto
\log\left(
\frac{|w-x|^2}{\rho(x)^2}+\delta
\right)
\]
is smooth and plurisubharmonic, we have
\begin{equation}\label{ade2}
	i\partial\bar\partial\psi_{x,\delta}\geq i\partial\bar\partial\widetilde\varphi\simeq \omega_\rho.
\end{equation}
In particular, $\psi_{x,\delta}$ is a \(C^2\) strictly
plurisubharmonic function. For $\varepsilon>0$, we let
$\omega_x:=e^{\varepsilon g_x}$, where $g_x$ is as in
\eqref{ade5}. Moreover, by \eqref{ade5},
\begin{align}\label{gx}
c d_\rho(\xi,x)-1
\leq g_x(\xi)
\leq C d_\rho(\xi,x)+1,
\qquad x,\xi\in\CC.
\end{align}
The estimate following \eqref{ade5} also gives $|\partial g_x|_{i\partial\bar\partial\widetilde\varphi}
\lesssim1.$
Hence we can choose $\varepsilon>0$ so that
\[
|\partial\omega_x|_{i\partial\bar\partial\widetilde\varphi}
=
\varepsilon\omega_x
|\partial g_x|_{i\partial\bar\partial\widetilde\varphi}
\leq\varepsilon_1\omega_x
\]
for some $\varepsilon_1\in(0,\sqrt{2})$.  From this and Theorem~\ref{df=theta}, for every $0<\delta\leq1$ we obtain a function $u_\delta$ such that $\bar\partial u_\delta=\theta$ and
\begin{align}\label{ade4}
\int_{\CC}|u_\delta(w)|^2e^{-2\psi_{x,\delta}(w)}\omega_x(w)
\,dV(w)\leq C\int_{\CC}|\theta(w)|_{i\partial \overline{\partial}\psi_{x,\delta}}^2e^{-2\psi_{x,\delta}(w)}\omega_x(w)
\,dV(w),
\end{align}
where the constant $C>0$ is independent of $x$ and $\delta$. Since $\theta$ is supported
on $B(x)\backslash B^{1/2}(x)$, equations \eqref{ade3}, \eqref{ade2}, \eqref{gx}, and \eqref{varphi}, together with Lemma~\ref{fx}, give
\begin{align*}
\int_{\CC}|\theta(w)|_{i\partial \overline{\partial}\psi_{x,\delta}}^2e^{-2\psi_{x,\delta}(w)}\omega_x(w)
\,dV(w) &\lesssim\int_{\CC}\frac{|\theta(w)|_{i\partial \overline{\partial}\widetilde\varphi}^2e^{-2\varphi(w)+\varepsilon g_x(w)}}
 {\left(\frac{|w-x|^{2}}{\rho(x)^{2}}+\delta\right)^n}\,dV(w)\\
 &\lesssim \int_{B(x)\backslash B^{1/2}(x)} e^{-2(\varphi(w)-\varphi(x)-f_x(w))}\rho(w)^{-2n}\,dV(w)\lesssim 1,
\end{align*}
where the implicit constants are independent of $x$ and $\delta$. For $0<\delta\leq 1$, set
\[
W_\delta(w):=e^{-2\psi_{x,\delta}(w)}\omega_x(w).
\]
Thus, by \eqref{ade4}, we have
\[
\int_{\mathbb C^n}|u_\delta(w)|^2W_\delta(w)\,dV(w)\leq C,
\]
where $C$ is independent of $x$ and $\delta$. 

For $0<\delta\leq1$, we have $W_\delta\geq W_1$. Since  $W_1$ has a positive lower bound on every compact subset of $\mathbb C^n$, the family $\{u_\delta: 0<\delta\leq 1\}$ is bounded
in $L^2_{\mathrm{loc}}(\mathbb C^n)$. Choose a sequence $\delta_j\downarrow0$. By applying weak compactness on an exhaustion of $\CC$ by compact
balls and then using a diagonal argument, we may assume that, for
every $m\in\mathbb N$,  \[
u_{\delta_j}\rightharpoonup u
\quad\text{weakly in }L^2(\overline{B(0,m)}).
\]
Passing to the limit against compactly supported test forms gives
$\bar\partial u=\theta$ in the sense of distributions.

Fix $\tau\in(0,1)$ and put $K_m=\overline{B(0,m)}$. For all
sufficiently large $j$, $\delta_j\leq\tau$, and hence
$W_\tau\leq W_{\delta_j}$. Since $W_\tau$ is bounded above and below
by positive constants on $K_m$, weak lower semicontinuity yields
\begin{align*}
\int_{K_m}|u(w)|^2W_\tau(w)\,dV(w)
&\leq
\liminf_{j\to\infty}
\int_{K_m}|u_{\delta_j}(w)|^2W_\tau(w)\,dV(w)\\
&\leq
\liminf_{j\to\infty}
\int_{\mathbb C^n}
|u_{\delta_j}(w)|^2W_{\delta_j}(w)\,dV(w)\leq C.
\end{align*}
First letting $m\to\infty$ and then letting $\tau\downarrow0$,
the monotone convergence theorem gives
\begin{align}\label{u=0}
\int_{\mathbb C^n}
|u(w)|^2e^{-2\widetilde\varphi(w)}
\frac{\rho(x)^{2n}}{|w-x|^{2n}}
\omega_x(w)\,dV(w)
\leq C.
\end{align}

By \eqref{gx}, for $w\in \CC$ we have 
$$\omega_x(w)\geq e^{\varepsilon(c d_\rho(w,x)-1)}\geq e^{-\varepsilon}.$$ 
Combining this with \eqref{u=0} and \eqref{varphi},  we obtain
\begin{align}\label{u=00}
\int_{B^{1/2}(x)}
\frac{|u(w)|^2}{|w-x|^{2n}}
e^{-2\widetilde\varphi(w)} \rho(x)^{2n}\, dV(w)\lesssim 1.
\end{align}

Since $\theta=0$ on $B^{1/2}(x)$, we have
$\bar\partial u=0$ there in the sense of distributions. Since $u\in L^2_{\mathrm{loc}}(\CC)$, it defines a distribution,
and hence a $(0,0)$-current, on $B^{1/2}(x)$. By the
Dolbeault--Grothendieck lemma for currents
\cite[Chapter I, Section 3.E, Theorem 3.29(a)]{DJ12},  $u$ agrees almost
everywhere on $B^{1/2}(x)$ with a holomorphic function, which we
continue to denote by $u$. If $u(x)\neq0$, continuity gives
$|u(w)|\geq c>0$ near $x$, whereas $|w-x|^{-2n}$ is not locally
integrable in $\mathbb R^{2n}$; this contradicts \eqref{u=00}. Hence
$u(x)=0$.

Let $a_x$ denote the extension by zero to $\mathbb C^n$ of the
function $e^{F_x+\varphi(x)}\chi_x/\rho(x)^n$ on $B(x)$, and set
$$g:=a_x-u.$$
Then $\bar{\partial}g=0$ in the sense of distributions on $\CC$. Another application of the Dolbeault--Grothendieck lemma shows that
$g$ agrees almost everywhere with an entire function, denoted by
$f$.

 It remains to estimate its norm. By Lemma \ref{dzw} and \eqref{gx}, there exist $c_1,\alpha,C >0$ such that, with $t=|w-x|/\rho(x)$,
$$\omega_x(w)^{-1}\frac{|w-x|^{2n}}{\rho(x)^{2n}}
\lesssim t^{2n}e^{-c_1\min\{t,t^\alpha\}}\lesssim 1$$
for every $x,w\in\CC$.
Thus in view of \eqref{u=0}, we have
\[
\begin{aligned}
\|u\|_{L_\varphi^2}^2
&\lesssim
\int_{\CC}
|u(w)|^2e^{-2\widetilde\varphi(w)}
\frac{\rho(x)^{2n}}{|w-x|^{2n}}\omega_x(w)  \\
&\hspace{35mm}\times
\omega_x(w)^{-1}
\frac{|w-x|^{2n}}{\rho(x)^{2n}}
\,dV(w)
\lesssim1.
\end{aligned}
\]
Together with \eqref{rhozw}, this implies that
\begin{align*}
  \|f\|_{L^2_\varphi}
  & \lesssim \left(\int_{\CC}\left|\frac{e^{F_x(w)+\varphi(x)}}{\rho(x)^{n}}\chi_x(w)\right|^2
  e^{-2\varphi(w)}\,dV(w)\right)^{1/2}+\|u\|_{L^2_\varphi}\lesssim 1.
\end{align*} 
Finally, $\chi_x\equiv1$ on $B^{1/2}(x)$, and hence
\[
f(w)
=
\frac{e^{F_x(w)+\varphi(x)}}{\rho(x)^n}-u(w)
\]
almost everywhere on $B^{1/2}(x)$. Both sides are holomorphic there, so the identity holds everywhere. Since $F_x(x)=0$ and $u(x)=0$, we
conclude that
\[
f(x)=
\frac{e^{F_x(x)+\varphi(x)}}{\rho(x)^n}=\frac{e^{\varphi(x)}}{\rho(x)^n}.
\]
This completes the proof of this lemma.
\end{proof}
 
\begin{prop}[Near-diagonal lower bounds]
Let $\varphi \in \mathcal W^*(\CC)$ with $\rho \in \mathcal S$. Then there exist constants $C,r>0$ such that
\begin{equation}
    |K_z^\varphi(w)|\geq C\frac{e^{\varphi(z)+\varphi(w)}}{\rho(z)^n\rho(w)^n}
    \label{eq:general-upper1}
\end{equation}
whenever $d_\rho(z,w)<r$.
\end{prop}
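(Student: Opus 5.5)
The plan is to establish the lower bound on the diagonal first and then propagate it to a small neighbourhood by the gradient estimate of Lemma~\ref{meaninequality}(B).

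\emph{Diagonal bound.} Fix $z$ and take $f$ from Lemma~\ref{peak}. By the extremal characterization of the Bergman kernel,
\[
K_z^\varphi(z)=\sup\{|g(z)|^2:\ g\in F^2_\varphi,\ \|g\|_{F^2_\varphi}\le1\}\ge \frac{|f(z)|^2}{\|f\|^2_{F^2_\varphi}}\ge C^{-2}\frac{e^{2\varphi(z)}}{\rho(z)^{2n}}.
\]
The upper bound in Proposition~\ref{Gue}, with $w=z$, gives the reverse inequality. Hence
\[
K_z^\varphi(z)\simeq \frac{e^{2\varphi(z)}}{\rho(z)^{2n}},
\]
and so $|K_z^\varphi(z)|e^{-\widetilde\varphi(z)}\simeq \rho(z)^{-n}e^{\varphi(z)}$ by \eqref{varphi}.

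\emph{Propagation off the diagonal.} Set
\[
G(w):=|K_z^\varphi(w)|e^{-\widetilde\varphi(w)}.
\]
This function is locally Lipschitz. At points where $K_z^\varphi\neq0$ we use Lemma~\ref{meaninequality}(B) with $p=2$, and at zeros we use Lemma~\ref{meaninequality}(C). Together with Proposition~\ref{Gue}, or simply the bound $\|K_z^\varphi\|_{F^2_\varphi}^2=K_z^\varphi(z)$, this gives for every $y$
\[
\operatorname{Lip}G(y)\lesssim \frac{1}{\rho(y)}\Big(\frac{1}{|B(y)|}\int_{B(y)}|K_z^\varphi|^2e^{-2\varphi}\,dV\Big)^{1/2}\lesssim \frac{1}{\rho(y)^{1+n}}\,\frac{e^{\varphi(z)}}{\rho(z)^n}.
\]
For $y\in B(z)$ we have $\rho(y)\simeq\rho(z)$, so $\operatorname{Lip}G(y)\le C_3\,\rho(z)^{-1}\rho(z)^{-2n}e^{\varphi(z)}$. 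If $|w-z|<s\rho(z)$ with $s\le1$, then integrating along the segment from $z$ to $w$ (a locally Lipschitz function is controlled by the integral of its pointwise Lipschitz constant along the segment) gives
\[
G(w)\ge G(z)-C_3\, s\,\rho(z)^{-2n}e^{\varphi(z)}\ge (c-C_3 s)\,\rho(z)^{-2n}e^{\varphi(z)},
\]
where $c>0$ comes from the diagonal bound. Choosing $s=c/(2C_3)$ yields $G(w)\gtrsim \rho(z)^{-2n}e^{\varphi(z)}$.

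\emph{Converting back and fixing $r$.} Since $\rho(w)\simeq\rho(z)$ and $e^{-\widetilde\varphi(w)}\simeq e^{-\varphi(w)}$, the last bound becomes
\[
|K_z^\varphi(w)|\gtrsim \frac{e^{\varphi(z)+\varphi(w)}}{\rho(z)^n\rho(w)^n}
\qquad\text{for } w\in B^s(z).
\]
By Lemma~\ref{rhoB}(C), $B_\rho(z,c_1(s))\subseteq B^s(z)$, so we may take $r=c_1(s)$.

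The main obstacle is the uniform diagonal lower bound, but it is supplied by Lemma~\ref{peak}. What remains is to check that every constant is independent of $z$. This holds because each of Lemma~\ref{meaninequality}, Lemma~\ref{rhoB} and Lemma~\ref{peak} gives uniform constants.
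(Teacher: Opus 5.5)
Your proposal is correct and follows essentially the same route as the paper. It obtains the diagonal lower bound from the peak function of Lemma~\ref{peak}, propagates it to a ball $B^s(z)$ by integrating the pointwise Lipschitz bound of Lemma~\ref{meaninequality}(B),(C) along the segment, and fixes $r$ via Lemma~\ref{rhoB}(C); the paper takes $p=1$ and then applies Cauchy--Schwarz where you take $p=2$ directly, which makes no difference. One harmless slip: the diagonal bound should read $|K_z^\varphi(z)|e^{-\widetilde\varphi(z)}\simeq \rho(z)^{-2n}e^{\varphi(z)}$ rather than $\rho(z)^{-n}e^{\varphi(z)}$, and $\rho(z)^{-2n}$ is the form you actually use afterwards.
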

\begin{proof}
By Lemma~\ref{rhoB}\textup{(C)}, it is enough, after changing
the value of $r$, to prove the estimate for $w\in B^r(z)$ with $r$
sufficiently small. From the fact that
\[
K_z^\varphi(z) = \sup\{|f(z)|^2 : f \in F_{\varphi}^2, \|f\|_{F_{\varphi}^2} \leq 1\}
\]
and Lemma \ref{peak}, we see that \(K_z^{\varphi}(z) \geq  \frac{C_1e^{2\widetilde{\varphi}(z)}}{\rho(z)^{2n}}\). 
 Set
\[
G_z(\xi):=|K_z^\varphi(\xi)|e^{-\widetilde\varphi(\xi)}.
\]
The function $G_z$ is locally Lipschitz.  If $w\in B^r(z)$ and
$\gamma(t)=z+t(w-z)$, then
$t\mapsto K_z^\varphi(\gamma(t))$ is not identically zero because
$K_z^\varphi(z)>0$. Consequently,
\[
|G_z(w)-G_z(z)|
\le
2|w-z|\int_0^1
|\nabla G_z(\gamma(t))|\,dt.
\]
From Lemma \ref{meaninequality} (B) and (C), Cauchy--Schwarz inequality and \eqref{eq:general-upper}, we deduce that there exists $C_2, C_3>0$ such that for $ w \in B^r(z)$,
\begin{align*}
|G_z(w)-G_z(z)|
&\le
C_2\frac{|z-w|}{\rho(z)^{2n+1}}
\int_{B^R(z)}
|K_z^\varphi(\xi)|e^{-\varphi(\xi)}\,dV(\xi)
\\
&\le
\frac{C_2r}{\rho(z)^n}
\left(
\int_{B^R(z)}
|K_z^\varphi(\xi)|^2e^{-2\varphi(\xi)}\,dV(\xi)
\right)^{1/2}
\\
&\le
\frac{C_3r e^{\widetilde\varphi(z)}}{\rho(z)^{2n}}.
\end{align*} 
If we pick $r$ sufficiently small such that $C_1-C_3r>0$, then 
$$|K^\varphi_z(w)|e^{-\widetilde\varphi(w)}\geq |K^\varphi_z(z)|e^{-\widetilde\varphi(z)}-\frac{C_3re^{\widetilde\varphi(z)}}{\rho(z)^{2n}}\geq (C_1-C_3 r)\frac{e^{\widetilde\varphi(z)}}{\rho(z)^{2n}}.$$
This together with \eqref{rhozw} and \eqref{varphi} implies \eqref{eq:general-upper1}. The proof is complete.
\end{proof}

 Next we show that the distance $d_\rho$ is comparable to the Bergman distance $\varrho$.  It is well known, see \cite[Page 1]{N03} that the Bergman metric $\beta$ admits the extremal characterization
\[\beta(z;w)=\frac{\sup\left\{|df_z(w)|:f\in F_\varphi^2,\ f(z)=0,\ \|f\|_{F_\varphi^2}=1
\right\}}{\sqrt{K^\varphi_z(z)}},\quad  df_z(w)=\sum_{k=1}^n w_k\partial_k f(z).\]
  We denote by $\varrho$ the distance induced by the Bergman metric $\beta$.
 \begin{lem}\label{bergmetric}
Let $\varphi \in \mathcal W^*(\CC)$ with $\rho \in \mathcal S$. Then there exists a constant $C>0$, independent of $z, w$, such
that
$$C^{-1}d_{\rho}(z,w)\leq \varrho(z,w)\leq Cd_{\rho}(z,w).$$
\end{lem}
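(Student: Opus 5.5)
Both distances are length distances: $d_\rho$ comes from the metric $\rho^{-1}|\xi|$, and $\varrho$ from the Bergman metric $\beta(z;\xi)$. It therefore suffices to prove the pointwise comparison
\[
\beta(z;\xi)\simeq \frac{|\xi|}{\rho(z)},\qquad z\in\CC,\ \xi\in\CC,
\]
with constants independent of $z$ and $\xi$. Integrating along piecewise $C^1$ curves and taking infima then gives $C^{-1}d_\rho\le\varrho\le Cd_\rho$. We use the extremal characterization of $\beta$ above, together with the on-diagonal bound
\[
K_z^\varphi(z)\simeq \frac{e^{2\varphi(z)}}{\rho(z)^{2n}}.
\]
The upper half of this bound is \eqref{eq:general-upper} with $w=z$, and the lower half follows from Lemma~\ref{peak}.

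\emph{Upper bound.} Let $f\in F^2_\varphi$ with $f(z)=0$ and $\|f\|_{F^2_\varphi}=1$. Lemma~\ref{meaninequality}(C) with $p=2$ gives
\[
|\nabla f(z)|e^{-\widetilde\varphi(z)}\lesssim \rho(z)^{-1}\rho(z)^{-n}.
\]
Hence $|df_z(\xi)|\lesssim |\xi|e^{\varphi(z)}\rho(z)^{-n-1}$. Dividing by $\sqrt{K^\varphi_z(z)}\gtrsim e^{\varphi(z)}\rho(z)^{-n}$ yields $\beta(z;\xi)\lesssim|\xi|/\rho(z)$.

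\emph{Lower bound.} Fix $z$ and a unit vector $\xi$. We construct $f$ with $f(z)=0$, $\|f\|_{F^2_\varphi}\lesssim1$ and $|df_z(\xi)|\gtrsim e^{\varphi(z)}\rho(z)^{-n-1}$. To do this, we rerun the proof of Lemma~\ref{peak} with the local holomorphic datum
\[
h(w)=\frac{e^{F_z(w)+\varphi(z)}}{\rho(z)^n}\cdot\frac{\langle w-z,\bar\xi\rangle}{\rho(z)}
\]
in place of $e^{F_z+\varphi(z)}/\rho(z)^n$. We solve $\bar\partial u=h\bar\partial\chi_z$ using the more singular weight
\[
\psi_{z,\delta}=\widetilde\varphi+\frac{n+1}{2}\ln\!\left(\frac{|w-z|^2}{\rho(z)^2}+\delta\right).
\]
This weight is still plurisubharmonic, with $i\partial\bar\partial\psi_{z,\delta}\ge i\partial\bar\partial\widetilde\varphi$, and the right-hand side is supported in the annulus $B(z)\setminus B^{1/2}(z)$, where the logarithmic term is bounded. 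Theorem~\ref{df=theta} (with the same $\omega_z=e^{\varepsilon g_z}$) and the limiting argument $\delta\downarrow0$ then give
\[
\int |u|^2e^{-2\widetilde\varphi}\frac{\rho(z)^{2n+2}}{|w-z|^{2n+2}}\,\omega_z\,dV\lesssim1 .
\]
Since $|w-z|^{-2n-2}$ fails to be integrable against $|u|^2$ unless $u$ vanishes to second order at $z$, we get $u(z)=0$ and $\nabla u(z)=0$. The entire function $f=h\chi_z-u$ therefore satisfies $f(z)=0$ and $df_z(\xi)=dh_z(\xi)=e^{\varphi(z)}\rho(z)^{-n-1}$, using $F_z(z)=0$. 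The bound $\|f\|_{F^2_\varphi}\lesssim1$ follows exactly as in Lemma~\ref{peak}, because $t^{2n+2}e^{-c\min\{t,t^\alpha\}}$ is bounded. Normalizing $f$ and dividing by $\sqrt{K^\varphi_z(z)}\lesssim e^{\varphi(z)}\rho(z)^{-n}$ gives $\beta(z;\xi)\gtrsim1/\rho(z)$.

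The main obstacle is this second-order vanishing step. Once $u$ is known to be holomorphic near $z$ (by the Dolbeault–Grothendieck lemma, as before), one needs to check that integrability of $|u|^2|w-z|^{-2n-2}$ forces $u(z)=0$ and $\nabla u(z)=0$. Indeed, if the first nonzero Taylor term has degree $k\le1$, then $|u|^2\gtrsim|w-z|^{2k}$ on a cone of directions, which is not integrable against $|w-z|^{-2n-2}$ in $\mathbb R^{2n}$. The other delicate point is that all constants in Theorem~\ref{df=theta} and in the annulus estimate must remain uniform in $z$, $\xi$ and $\delta$. This holds because of the scaling by $\rho(z)$ and Lemma~\ref{fx}.
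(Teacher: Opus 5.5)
Your proof is correct. The upper bound $\beta(z;v)\lesssim |v|/\rho(z)$ and the passage from the infinitesimal comparison to the distances are exactly as in the paper. The difference is in the lower bound $\beta(z;v)\gtrsim |v|/\rho(z)$.

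The paper reruns no $\bar\partial$ argument there. It sets $g_{z,v}(\xi)=L_{z,v}(\xi)K_z^\varphi(\xi)$ with $L_{z,v}(\xi)=\sum_j\overline{v_j}(\xi_j-z_j)$, and notes that $g_{z,v}(z)=0$ and $d(g_{z,v})_z(v)=|v|^2K_z^\varphi(z)$. It then bounds $\|g_{z,v}\|_{F^2_\varphi}^2\lesssim |v|^2\rho(z)^2K_z^\varphi(z)$ using the off-diagonal decay \eqref{eq:general-upper} and Lemma~\ref{z-wk2n}(A) with $k=2$, $l=-2n$.

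You instead do a one-jet interpolation. The heavier singular weight $\frac{n+1}{2}\ln(\cdot)$ forces the correction $u$ to vanish to second order at $z$. Your supporting checks all hold with constants uniform in $z$, $\xi$, $\delta$:
\begin{itemize}
\item $i\partial\bar\partial\psi_{z,\delta}\geq i\partial\bar\partial\widetilde\varphi$, so the dual norms only decrease and Delin's hypothesis on $\omega_z$ is preserved;
\item the logarithmic term is bounded on the annulus $B(z)\setminus B^{1/2}(z)$;
\item the cone argument correctly rules out $u(z)\neq0$ or $\nabla u(z)\neq0$;
\item $t^{2n+2}e^{-c\min\{t,t^\alpha\}}$ is bounded, which gives $\|f\|_{F^2_\varphi}\lesssim1$.
\end{itemize}

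What each route buys: the paper's argument is a few lines long, but it depends on the global off-diagonal decay of $K^\varphi$. For nonsmooth $\varphi$ that decay required the Toeplitz transfer in Proposition~\ref{Gue}. Your argument needs only the on-diagonal bounds; the upper one already follows from Lemma~\ref{meaninequality}(A). Combined with the $\bar\partial$ machinery of Lemma~\ref{peak}, this makes it independent of the off-diagonal estimate, at the cost of repeating the $\delta\downarrow0$ limiting argument. It also gives uniform interpolation of prescribed first-order jets, which is slightly more than the lemma needs.

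One notational point: read $\langle w-z,\bar\xi\rangle$ as the bilinear pairing $\sum_j\overline{\xi_j}(w_j-z_j)$, matching the paper's convention in $\langle\partial\psi(\xi),z-\xi\rangle$. With a Hermitian pairing you would get $dh_z(\xi)\propto\sum_j\xi_j^2$, which can vanish for unit $\xi$.
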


 \begin{proof}
We first compare the corresponding infinitesimal metrics. We claim that
there exists a constant \(C\geq1\) such that
\begin{equation}\label{bergman-infinitesimal}
C^{-1}\frac{|v|}{\rho(z)}
\leq
\beta(z;v)
\leq
C\frac{|v|}{\rho(z)},
\qquad z,v\in\mathbb C^n.
\end{equation}
By Theorem~\ref{main},
\begin{equation}\label{diagonal-kernel-equivalence}
\sqrt{K_z^\varphi(z)}
\simeq
\frac{e^{\varphi(z)}}{\rho(z)^n}.
\end{equation}
Let \(f\in F_\varphi^2\) satisfy \(f(z)=0\) and
\(\|f\|_{F_\varphi^2}=1\). By Lemma~\ref{meaninequality}(C),
\[
|df_z(v)|
\lesssim
\frac{e^{\varphi(z)}}{\rho(z)^{n+1}}|v|.
\]
 Using
\eqref{diagonal-kernel-equivalence}, we obtain
\[
\beta(z;v)
\lesssim
\frac{|v|}{\rho(z)}.
\]
It remains to prove the reverse inequality. The case \(v=0\) is
immediate, so assume that \(v\neq0\). Define
\[
L_{z,v}(\xi)
:=
\sum_{j=1}^n\overline{v_j}(\xi_j-z_j),
\qquad \xi\in\mathbb C^n,
\]
and set $g_{z,v}(\xi):=L_{z,v}(\xi)K_z^\varphi(\xi).$
Then \(g_{z,v}\in F_\varphi^2\), \(g_{z,v}(z)=0\), and
\[
d(g_{z,v})_z(v)
=
|v|^2K_z^\varphi(z).
\]
We next estimate the norm of \(g_{z,v}\). By \eqref{eq:general-upper} and Lemma~\ref{z-wk2n}(A),
\[
\begin{aligned}
\|g_{z,v}\|_{F_\varphi^2}^2
&\leq
|v|^2
\int_{\mathbb C^n}
|\xi-z|^2
|K_z^\varphi(\xi)|^2e^{-2\varphi(\xi)}
\,dV(\xi)
\\
&\lesssim
|v|^2
\frac{e^{2\varphi(z)}}{\rho(z)^{2n}}
\int_{\mathbb C^n}
|\xi-z|^2
\rho(\xi)^{-2n}
e^{-2\varepsilon d_\rho(z,\xi)}
\,dV(\xi)\\
&\lesssim
|v|^2\rho(z)^2
\frac{e^{2\varphi(z)}}{\rho(z)^{2n}}
\lesssim
|v|^2\rho(z)^2K_z^\varphi(z).
\end{aligned}
\]
Let
\[
h_{z,v}:=
\frac{g_{z,v}}{\|g_{z,v}\|_{F_\varphi^2}}.
\]
Then \(h_{z,v}(z)=0\) and
\(\|h_{z,v}\|_{F_\varphi^2}=1\). Hence, by the extremal
characterization of the Bergman metric,
\[
\begin{aligned}
\beta(z;v)
&\geq
\frac{|d(h_{z,v})_z(v)|}
{\sqrt{K_z^\varphi(z)}}
=
\frac{|v|^2K_z^\varphi(z)}
{\|g_{z,v}\|_{F_\varphi^2}\sqrt{K_z^\varphi(z)}}
\gtrsim
\frac{|v|}{\rho(z)}.
\end{aligned}
\]
This proves \eqref{bergman-infinitesimal}.

Now let \(\gamma:[0,1]\to\mathbb C^n\) be a piecewise
\(C^1\) curve joining \(z\) to \(w\). By
\eqref{bergman-infinitesimal},
\[
C^{-1}
\int_0^1
\frac{|\gamma'(t)|}{\rho(\gamma(t))}
\,dt
\leq
\int_0^1
\beta(\gamma(t);\gamma'(t))
\,dt
\leq
C
\int_0^1
\frac{|\gamma'(t)|}{\rho(\gamma(t))}
\,dt.
\]
Taking the infimum over all such curves \(\gamma\) yields
\[
C^{-1}d_\rho(z,w)
\leq
\varrho(z,w)
\leq
Cd_\rho(z,w).
\]
This completes the proof.
\end{proof}

We next derive $L^p$ estimates for the Bergman kernel $K^\varphi_z$ and $z\in\CC$.
\begin{prop} \label{prop:normkz} 
Let $\varphi \in \mathcal W^*(\CC)$ with $\rho \in \mathcal S$. For $0< p \leq \infty$, we have 
\begin{align}\label{Kzrho}
\|K^\varphi_z\|_{L^p_\varphi} \simeq e^{\varphi(z)} \rho(z)^{\frac{2n}{p}-2n}
\end{align}
for every $z\in \CC$.
\end{prop}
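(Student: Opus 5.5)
The upper bound in \eqref{Kzrho} will come from the global upper estimate of Proposition~\ref{Gue} combined with the integral estimate of Lemma~\ref{z-wk2n}(A). The lower bound will come from the near-diagonal lower estimate, integrated over a small ball around $z$ whose volume is comparable to $\rho(z)^{2n}$.

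\textbf{Upper bound, $0<p<\infty$.} By \eqref{eq:general-upper},
\[
|K_z^\varphi(w)|^pe^{-p\varphi(w)}\lesssim \frac{e^{p\varphi(z)}}{\rho(z)^{np}}\,\rho(w)^{-np}e^{-p\varepsilon d_\rho(z,w)}.
\]
We integrate in $w$ and apply Lemma~\ref{z-wk2n}(A) with $k=0$, $l=-np$ and decay parameter $p\varepsilon$. The integral is then bounded by $\rho(z)^{2n-np}$. Hence
\[
\|K_z^\varphi\|_{L^p_\varphi}^p\lesssim e^{p\varphi(z)}\rho(z)^{2n-2np},
\]
and taking $p$-th roots gives the claimed upper bound.

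\textbf{Upper bound, $p=\infty$.} Here \eqref{eq:general-upper} gives
\[
|K_z^\varphi(w)|e^{-\varphi(w)}\lesssim \frac{e^{\varphi(z)}}{\rho(z)^n\rho(w)^n}e^{-\varepsilon d_\rho(z,w)}.
\]
The only issue is that $\rho(w)^{-n}$ may be large. We control it with Lemma~\ref{rhoB}, estimate \eqref{w-z}: for $|z-w|\ge\rho(z)$ we have $\rho(z)/\rho(w)\lesssim t^{\beta-1}$ with $t=|z-w|/\rho(z)$, while for $t<1$ we have $\rho(w)\simeq\rho(z)$. By Lemma~\ref{dzw}, $d_\rho(z,w)\gtrsim \min\{t,t^\alpha\}$, so the polynomial factor $t^{n(\beta-1)}$ is absorbed by $e^{-\varepsilon d_\rho}$. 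This yields $\sup_w|K_z^\varphi(w)|e^{-\varphi(w)}\lesssim e^{\varphi(z)}\rho(z)^{-2n}$, which matches the exponent $2n/p-2n=-2n$. Alternatively, apply Lemma~\ref{meaninequality}(A) with exponent $2$ at the point $w$ together with the $L^2$ bound already obtained, provided the bound for $\rho(w)^{-n}$ is made uniform.

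\textbf{Lower bound.} By Lemma~\ref{rhoB}(C) there is $r_0>0$ such that $B^{r_0}(z)\subset\{w:d_\rho(z,w)<r\}$, with $r$ the radius in \eqref{eq:general-upper1}. On this ball $\rho(w)\simeq\rho(z)$ by \eqref{rhozw}, so \eqref{eq:general-upper1} gives
\[
|K_z^\varphi(w)|e^{-\varphi(w)}\gtrsim e^{\varphi(z)}\rho(z)^{-2n}.
\]
For $p<\infty$ we integrate over $B^{r_0}(z)$, whose volume is $\simeq\rho(z)^{2n}$, and obtain
\[
\|K_z^\varphi\|_{L^p_\varphi}^p\gtrsim e^{p\varphi(z)}\rho(z)^{2n-2np}.
\]
For $p=\infty$ we simply evaluate at $w=z$. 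All constants are uniform in $z$.

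The main obstacle is the $p=\infty$ upper bound, specifically the uniform control of $\rho(w)^{-n}$ far from $z$ via \eqref{w-z}. The rest follows directly from the estimates established earlier.
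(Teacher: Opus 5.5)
Your proof is correct. For $0<p<\infty$, and for both lower bounds, you argue exactly as the paper does: the global upper estimate is combined with Lemma~\ref{z-wk2n}(A), and the near-diagonal lower estimate is integrated over a ball of volume $\simeq\rho(z)^{2n}$ (or evaluated at $w=z$ when $p=\infty$).

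The only genuine difference is the upper bound for $p=\infty$.

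\textbf{Your route.} You bound $\sup_w$ of the right-hand side of \eqref{eq:general-upper} directly. You control $\rho(z)/\rho(w)\lesssim t^{\beta-1}$, $t=|z-w|/\rho(z)\ge1$, which is \eqref{w-z} with the roles of $z$ and $w$ interchanged. You then absorb this polynomial factor into $e^{-\varepsilon d_\rho(z,w)}$ using Lemma~\ref{dzw}, applied with the points swapped via the symmetry of $d_\rho$. This is valid.

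\textbf{The paper's route.} It uses the symmetry $|K_z^\varphi(w)|=|K_w^\varphi(z)|$ and applies Lemma~\ref{meaninequality}(A) with exponent $1$ at the point $z$ to the function $K_w^\varphi$. It then invokes the already established case $p=1$, for which $\|K_w^\varphi\|_{L^1_\varphi}\simeq e^{\varphi(w)}$ carries no power of $\rho(w)$. The factor $e^{\varphi(w)}$ cancels against $e^{-\varphi(w)}$, leaving $e^{\varphi(z)}\rho(z)^{-2n}$. No comparison of $\rho$ at distant points is needed; that comparison enters only implicitly, inside the proof of Lemma~\ref{z-wk2n}.

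\textbf{What each buys.} Your route is more hands-on and makes explicit how the polynomial growth control of $\rho$ coming from doubling is used. The paper's route is shorter and reduces $p=\infty$ to $p=1$ by kernel symmetry.

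Your suggested alternative (mean value inequality at $w$ plus the $L^2$ bound) just reproduces the troublesome factor $\rho(w)^{-n}$. What makes the mean-value approach work is applying it at $z$ rather than at $w$, with exponent $1$.
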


\begin{proof}
\textbf{Case 1}: $0< p < \infty$. From Theorem \ref{main}, we know that
\begin{align*}
  \|K^\varphi_z\|_{L^p_\varphi}^p \geq \int_{B^r(z)}|K^\varphi_z(w)|^pe^{-p\varphi(w)}\,dV(w)
  \gtrsim e^{p\varphi(z)} \rho(z)^{2n-2pn},
\end{align*}
where $r$ is from Theorem \ref{main}. By Theorem \ref{main}  and  Lemma \ref{z-wk2n}, we deduce that
\begin{align*}
  \|K^\varphi_z\|_{L^p_\varphi}^p \lesssim \frac{e^{p\varphi(z)}}{\rho(z)^{np}}\int_{\CC}\frac{e^{- p\varepsilon d_\rho(z, w)}}{\rho(w)^{np}}\,dV(w)\lesssim e^{p\varphi(z)} \rho(z)^{2n-2pn}.
\end{align*}
This gives us \eqref{Kzrho} when $0< p < \infty$. 

\textbf{Case 2}: $p=\infty$. By Lemma \ref{meaninequality} and \eqref{Kzrho} with $p=1$, we obtain
\begin{align*}
	|K^\varphi_z(w)e^{-\varphi(w)}| =e^{-\varphi(w)+\varphi(z)} |K_w^\varphi(z)e^{-\varphi(z)}| \lesssim \frac{e^{-\varphi(w)+\varphi(z)}}{\rho(z)^{2n}} \int_{B(z)} |K_w^\varphi(\xi)|e^{-\varphi(\xi)}\,d V(\xi) \lesssim  \frac{e^{\varphi(z)}}{\rho(z)^{2n}},
\end{align*}
which implies that $\|K^\varphi_z\|_{L^\infty_\varphi}\lesssim e^{\varphi(z)}\rho(z)^{-2n}$. On the other hand, by  Theorem \ref{main},
$$\|K^\varphi_z\|_{L^\infty_\varphi}\geq |K^\varphi_z(z)e^{-\varphi(z)}|\gtrsim e^{\varphi(z)}\rho(z)^{-2n}.$$
This completes the proof.  
\end{proof}

\section{Bergman Projection, Duality and Complex Interpolation} \label{sec:proj}
In this section, we study the Bergman projection on the weighted Fock spaces $F^p_\varphi$, together with its duality and complex interpolation properties, both of which follow from the boundedness of the Bergman projection.

It is straightforward to see from the reproducing property of the Bergman kernel for $F^2_\varphi$ that the Bergman projection $P_\varphi \colon L^2_\varphi \to F^2_\varphi$ is the integral operator given by 
$$P_\varphi f(z) := \int_\CC f(w) \overline{K^\varphi_z(w)} e^{-2\varphi(w)} \,dV(w), \qquad f\in L^2_\varphi, z\in\CC.$$ 
The following theorem proves that the Bergman projection $P_{\varphi}$ is bounded. 
\begin{thm} \label{thm:projbnd} 
Let $\varphi \in \mathcal W^*(\CC)$ with $\rho \in \mathcal S$. Then the Bergman projection $P_\varphi$ is a bounded linear operator from $L^p_\varphi$ to $F^p_\varphi$ for any $1\leq p \leq \infty$. Moreover, $P_\varphi$ is a bounded linear operator on $F^p_\varphi$ for any $0< p \leq \infty$.  
\end{thm}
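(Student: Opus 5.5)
The plan is to prove $L^p_\varphi$-boundedness of $P_\varphi$ for $1\le p\le\infty$ by Schur's test with the upper estimate \eqref{eq:general-upper}. Then, for the statement on $F^p_\varphi$, we show that $P_\varphi$ reproduces $F^p_\varphi$ for every $0<p\le\infty$, so that it acts as the identity there.

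\textbf{Step 1: reduction to a positive kernel on $L^p(dV)$.} Conjugating by $e^{\varphi}$, boundedness of $P_\varphi$ on $L^p_\varphi$ is equivalent to boundedness on $L^p(dV)$ of the integral operator with kernel
\[
G(z,w):=|K_z^\varphi(w)|e^{-\varphi(z)-\varphi(w)}\lesssim \rho(z)^{-n}\rho(w)^{-n}e^{-\varepsilon d_\rho(z,w)},
\]
where the bound is \eqref{eq:general-upper}. By Lemma~\ref{z-wk2n}(A) with $k=0$ and $l=-n$,
\[
\sup_z\int_{\CC}G(z,w)\,dV(w)\lesssim \sup_z\rho(z)^{-n}\rho(z)^{2n-n}=C .
\]
By the symmetry $d_\rho(z,w)=d_\rho(w,z)$, the same bound holds in the other variable. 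Schur's test with test function $1$ then gives boundedness on $L^1$ and $L^\infty$, and interpolation (or Schur directly) gives boundedness on $L^p$ for $1\le p\le\infty$. The image consists of entire functions: for fixed $f\in L^p_\varphi$, the integral $\int f(w)\overline{K_z^\varphi(w)}e^{-2\varphi(w)}dV(w)$ converges locally uniformly in $z$, because $K_w^\varphi(z)=\overline{K_z^\varphi(w)}$ is holomorphic in $z$ and the bounds above are locally uniform. Holomorphy then follows by Morera or by differentiating under the integral. Hence $P_\varphi:L^p_\varphi\to F^p_\varphi$ is bounded.

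\textbf{Step 2: $P_\varphi f=f$ for $f\in F^p_\varphi$, $0<p\le\infty$.} First, by Lemma~\ref{meaninequality}(A) together with an $r$-lattice (Lemma~\ref{cover}), we show that $F^p_\varphi\subset F^\infty_{\varphi}$ locally quantitatively. More precisely, $|f(w)|e^{-\varphi(w)}\lesssim \rho(w)^{-2n/p}\|f\|_{F^p_\varphi}$, so $f$ has at most polynomial-in-$\rho$ growth relative to $e^{\varphi}$. Combining this with the kernel decay and Lemma~\ref{z-wk2n}, the integral defining $P_\varphi f(z)$ converges absolutely for every $z$.

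To prove reproduction, we approximate $f$ by elements of $F^2_\varphi$. We take dilations or truncations of the form $f_R:=f\cdot h_R$, where $h_R$ is holomorphic, $h_R\to 1$ locally uniformly, and $f_R\in F^2_\varphi\cap F^p_\varphi$. One natural choice is $f_R:=P_\varphi(\chi_R f)$ with $\chi_R$ a cutoff. The cleaner route, which I would try first, avoids approximation: for $0<p\le 1$ we use the inclusion $F^p_\varphi\hookrightarrow F^1_{\varphi,\rho}$, obtained from the lattice estimate $\int |f|e^{-\varphi}\rho^{2n(1-1/p)}\,dV\lesssim\|f\|_{F^p_\varphi}$, and show that the weighted space $\{f:\ \int|f|e^{-\varphi}\rho^{-s}<\infty\}$ for suitable $s$ embeds in $F^2_\varphi$ locally. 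A better approach is to use density of $\operatorname{span}\{K_z^\varphi\}$. By Proposition~\ref{prop:normkz}, $K_z^\varphi\in F^p_\varphi$ for all $p$, and $P_\varphi K_z^\varphi=K_z^\varphi$. Then we prove that the set $\{f\in F^p_\varphi: P_\varphi f=f\}$ is closed under the weak convergence $f_j\to f$ pointwise with $\sup\|f_j\|_{F^p_\varphi}<\infty$, by dominated convergence in the kernel integral, using the pointwise bound above. Finally, we show that every $f\in F^p_\varphi$ is such a limit, for instance via $f_j:=P_\varphi(\chi_{B(0,j)}f)$ when $p\ge1$. For $p<1$ we instead use the atomic-type approximation $\sum_k c_k K_{a_k}^\varphi$ over a fine lattice, or the direct local pluriharmonic-correction argument. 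Once $P_\varphi f=f$ is known, boundedness on $F^p_\varphi$ for $0<p<1$ is trivial.

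\textbf{Main obstacle.} The main obstacle is Step 2 for $p<1$ and $p=\infty$, where $F^p_\varphi\not\subset F^2_\varphi$ in general (cf.\ the non-nesting remark). For $p=\infty$ I would use that $f(\xi)\,\overline{K^\varphi_z(\xi)}\in L^1_{2\varphi}$, and then apply the reproducing formula to $f\cdot g_j$, where $g_j\in F^2_\varphi$-compatible cutoffs are produced by Lemma~\ref{peak}-type $\bar\partial$ corrections with decaying errors.
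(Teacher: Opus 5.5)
Your Step~1 is correct and is essentially the paper's argument for $1\le p\le\infty$. Schur's test with test function $1$ is exactly the paper's Fubini/H\"older computation using $\|K_w^\varphi\|_{L^1_\varphi}\simeq e^{\varphi(w)}$ from Proposition~\ref{prop:normkz}. For $1\le p\le\infty$, boundedness on $F^p_\varphi$ is just the restriction of Step~1, so Step~2 is needed only for $0<p<1$.

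There the proposal has a genuine gap. You reduce everything to $P_\varphi f=f$ on $F^p_\varphi$. Your closure observation is correct: bounded pointwise limits preserve $P_\varphi f_j=f_j$, by dominated convergence with $|f_j(w)|e^{-\varphi(w)}\lesssim\rho(w)^{-2n/p}$. But you never produce an approximating sequence.
\begin{itemize}
\item The candidate $f_j=P_\varphi(\chi_{B(0,j)}f)$ is circular. It lies in $F^2_\varphi$, but by the same dominated convergence $f_j(z)\to P_\varphi f(z)$. So concluding $f_j\to f$ presupposes the identity you want.
\item The atomic route needs every $f\in F^p_\varphi$, $p<1$, to be a limit of kernel combinations. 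Lemma~\ref{atomic} gives only the converse direction. Corollary~\ref{dense} rests on duality, which is unavailable for $p<1$. Standard proofs of such representations use the reproducing formula or $F^p$-boundedness of $P_\varphi$, which is what you are proving.
\item The ``pluriharmonic correction'' and ``peak-type corrections'' are not arguments.
\end{itemize}
Your embedding also has the wrong sign. The correct estimate is $\int|f|e^{-\varphi}\rho^{2n(1/p-1)}\,dV\lesssim\|f\|_{F^p_\varphi}$.

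The paper avoids reproduction for $0<p<1$ and estimates $P_\varphi f$ directly. Take a lattice $\{a_k\}$ and use $(\sum_k x_k)^p\le\sum_k x_k^p$. Bound each $\int_{B(a_k)}|fK_z^\varphi|e^{-2\varphi}\,dV$ by $\rho(a_k)^{2n}\sup_{B(a_k)}|fK_z^\varphi|e^{-2\varphi}$. Apply Lemma~\ref{meaninequality}(A) to the entire function $w\mapsto f(w)K_z^\varphi(w)$ with weight $2\varphi$, whose scale is $\simeq\rho$. Finite overlap then gives
\[
|P_\varphi f(z)|^p\lesssim\int_{\CC}\rho(u)^{2np-2n}|f(u)|^p|K_z^\varphi(u)|^pe^{-2p\varphi(u)}\,dV(u).
\]
Integrate against $e^{-p\varphi(z)}\,dV(z)$ and use Proposition~\ref{prop:normkz} with exponent $p<1$, namely $\|K_u^\varphi\|_{L^p_\varphi}^p\simeq e^{p\varphi(u)}\rho(u)^{2n-2np}$. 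This yields $\|P_\varphi f\|_{L^p_\varphi}\lesssim\|f\|_{F^p_\varphi}$ without knowing $P_\varphi f=f$.

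If you want to keep your route, the missing approximation must come from a $\bar\partial$ argument as in Theorem~\ref{Pf=fp}. Write $P_\varphi(fh_j)=fh_j-u_j$, where $u_j$ is the $L^2_\varphi$-minimal solution of $\bar\partial u_j=f\bar\partial h_j$. Then show $u_j\to0$ locally uniformly via Theorem~\ref{df=theta} with an exponential weight. This works for $p<1$ because $|f|e^{-\varphi}\lesssim\rho^{-2n/p}\|f\|_{F^p_\varphi}$ still holds, and $P_\varphi(fh_j)\to P_\varphi f$ pointwise by dominated convergence. That gives $P_\varphi=I$ on $F^p_\varphi$, which is stronger than needed, but it requires this $\bar\partial$ input, absent from your proposal.
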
 
 
\begin{proof} 
For $f \in L^1_\varphi$, Fubini's theorem and Proposition \ref{prop:normkz} yield,  
\begin{align*} 
\|P_\varphi f\|_{L^1_\varphi} &\le \int_{\mathbb{C}^n} e^{-\varphi(z)} \, dV(z) \int_{\CC} |K^\varphi_z(w)f(w)| e^{-2\varphi(w)} \, dV(w) \\ 
&= \int_{\CC} |f(w)| e^{-2\varphi(w)} \, dV(w) \int_{\mathbb{C}^n} |K^\varphi_z(w)| e^{-\varphi(z)} \, dV(z) \\ 
&\le C \|f\|_{L^1_\varphi}. 
\end{align*} 
Similarly,  for $f \in L^\infty_\varphi$, 
\begin{align*} 
\|P_\varphi f\|_{L^\infty_\varphi} &\le \sup_{z \in \CC} e^{-\varphi(z)} \int_{\CC} |K^\varphi_z(w)f(w)| e^{-2\varphi(w)} \, dV(w) \\ 
&\le \|f\|_{L^\infty_\varphi} \sup_{z \in \CC} e^{-\varphi(z)} \int_{\CC} |K^\varphi_z(w)| e^{-\varphi(w)} \, dV(w) \\ 
&\le C \|f\|_{L^\infty_\varphi}. 
\end{align*} 
Thus, $P_\varphi$ is bounded from $L^p_\varphi$ to $F^p_\varphi$ when $p=1$ and $p=\infty$. 
 
For $f\in L_\varphi^p$ with $1<p<\infty$, H\"older's inequality and 
Fubini's theorem give 
\begin{align*} 
\|P_\varphi f\|_{L_\varphi^p}^p 
&\leq 
\int_{\mathbb C^n} e^{-p\varphi(z)} 
\left( 
\int_{\mathbb C^n} 
|K^\varphi_z(w)f(w)|e^{-2\varphi(w)}\,dV(w) 
\right)^p dV(z) 
\\ 
&\leq 
\int_{\mathbb C^n}\int_{\mathbb C^n} 
|f(w)|^p e^{-p\varphi(w)} 
|K^\varphi_z(w)|e^{-\varphi(w)}\,dV(w)\, 
\|K^\varphi_z\|_{L_\varphi^1}^{p-1} 
e^{-p\varphi(z)}\,dV(z) 
\\ 
&\leq 
C\int_{\mathbb C^n}e^{-\varphi(z)}\,dV(z) 
\int_{\mathbb C^n} 
|f(w)|^p e^{-p\varphi(w)} 
|K^\varphi_z(w)|e^{-\varphi(w)}\,dV(w) 
\\ 
&\leq 
C\int_{\mathbb C^n} 
|f(w)|^p e^{-p\varphi(w)}e^{-\varphi(w)}\,dV(w) 
\int_{\mathbb C^n} 
|K^\varphi_z(w)|e^{-\varphi(z)}\,dV(z) 
\\ 
&\leq 
C\|f\|_{L_\varphi^p}^p. 
\end{align*} 
Thus, $P_\varphi$ is bounded from $L_\varphi^p$ to $F_\varphi^p$ 
for every $1\leq p\leq\infty$. 
  
 Next, we treat the case $0<p<1$. First, we claim that $P_\varphi$ is well defined on $F^p_\varphi$. In fact, given any $f \in F^p_\varphi$, by Lemma \ref{meaninequality}, Theorem \ref{main} and Lemma \ref{z-wk2n}, we obtain for every fixed $z\in\CC$ , 
\begin{align*} 
   \int_{\CC} |K^\varphi_z(w)f(w)|e^{-2\varphi(w)} \, dV(w) 
    &\le C \|f\|_{F^p_\varphi} \int_{\CC} \rho(w)^{-\frac{2n}{p}} |K^\varphi_z(w)| e^{-\varphi(w)} \, dV(w) \\ 
    &\le C \|f\|_{F^p_\varphi} e^{\varphi(z)} \rho(z)^{-n} \int_{\CC} \rho(w)^{-\frac{2n}{p}-n} e^{-\varepsilon d_\rho(z,w)} \, dV(w) \\ 
    &\le C \|f\|_{F^p_\varphi} e^{\varphi(z)} \rho(z)^{-\frac{2n}{p}} <\infty. 
\end{align*} 
 
Now, we prove the boundedness of $P_\varphi$. Let $\{a_k\}_{k=1}^\infty$ be the $1$-lattice in $\CC$. For any  $z\in\CC$ and $f \in F^p_\varphi$ with $0<p<1$, we get 
\begin{align*} 
    |P_\varphi f(z)|^p &\le \left( \sum_{k=1}^\infty \int_{B(a_k)} |f(w)K^\varphi_z(w)|e^{-2\varphi(w)} \, dV(w) \right)^p \\ 
    &\le \sum_{k=1}^\infty \left(\int_{B(a_k)} |f(w)K^\varphi_z(w)| e^{-2\varphi(w)} \, dV(w)\right)^p \\ 
    &\le \sum_{k=1}^\infty \rho(a_k)^{2np} \left( \sup_{w \in B(a_k)} |f(w)K^\varphi_z(w)|e^{-2\varphi(w)} \right)^p. 
\end{align*} 
 
From the definition of $\varphi$, if we replace the weight $\varphi$ by $2\varphi$, then there exists $\rho_{2\varphi}\in \mathcal{S}$ such that $i\partial\bar{\partial}(2\widetilde\varphi)\simeq \omega_{\rho_{2\varphi}}$ with $\rho_{2\varphi}\simeq \rho$. Applying  Lemma \ref{meaninequality} with weight $2\varphi$ instead of $\varphi$, there is some constant $C > 0$ such that $|P_\varphi f(z)|^p$ is bounded by $C$ times
$$ \sum_{k=1}^\infty \rho(a_k)^{2np-2n} \sup_{w \in B(a_k)} \int_{B(w)} |f(u)|^p |K^\varphi_z(u)|^p e^{-2p\varphi(u)} \, dV(u). $$ 
Combining this with \eqref{Bzwz} and \eqref{NNN}, we obtain 
\begin{align*} 
    |P_\varphi f(z)|^p &\le C \sum_{k=1}^\infty \int_{B^{m_1}(a_k)} \rho(u)^{2np-2n} |f(u)|^p |K^\varphi_z(u)|^p e^{-2p\varphi(u)} \, dV(u) \\ 
    &\le CN \int_{\mathbb{C}^n} \rho(u)^{2np-2n} |f(u)|^p |K^\varphi_z(u)|^p e^{-2p\varphi(u)} \, dV(u). 
\end{align*} 
Therefore, it follows from Fubini's theorem and Proposition \ref{prop:normkz} that  
\begin{align*} 
    &\int_{\CC} |P_\varphi f(z)|^p e^{-p\varphi(z)} \, dV(z)  \\ 
    &\le C \int_{\CC} \int_{\CC} |K^\varphi_z(u)|^p e^{-p\varphi(z)} \, dV(z) \rho(u)^{2np-2n} |f(u)|^p e^{-2p\varphi(u)} \, dV(u) \\ 
    &\le C \int_{\CC} |f(u)|^p e^{-p\varphi(u)} \, dV(u). 
\end{align*} 
This means that $P_\varphi$ is bounded on $F^p_\varphi$ for $0<p<1$. The proof is complete. 
\end{proof} 

 We equip $L_\varphi^2$ with the inner product
$$\langle f, g \rangle_\varphi:= \int_{\mathbb C^n} f(w) \overline{g(w)} e^{-2\varphi(w)} \, dV(w),\quad f,g\in L^2_\varphi.$$
\begin{cor} \label{fggf}
If $1 \le p \le \infty$ and $p'$ is the conjugate exponent of $p$, then
$$ \langle P_\varphi f, g \rangle_\varphi = \langle f, P_\varphi g \rangle_\varphi $$
 for every  $f \in L^p_\varphi$  and  $g \in L^{p'}_\varphi$.
 \end{cor}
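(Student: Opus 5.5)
The plan is to write both sides as double integrals against the kernel and interchange the order of integration with Fubini's theorem. The only work is to justify absolute integrability.

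By definition and the Hermitian symmetry $K_\varphi(z,w)=\overline{K_\varphi(w,z)}$, i.e. $\overline{K_z^\varphi(w)}=K_w^\varphi(z)$, we have
\[
\langle P_\varphi f,g\rangle_\varphi=\int_{\CC}\int_{\CC} f(w)\overline{K_z^\varphi(w)}\,\overline{g(z)}\,e^{-2\varphi(w)}e^{-2\varphi(z)}\,dV(w)\,dV(z),
\]
and similarly
\[
\langle f,P_\varphi g\rangle_\varphi=\int_{\CC}\int_{\CC} f(w)\overline{g(z)}\,K_w^\varphi(z)\,e^{-2\varphi(z)}e^{-2\varphi(w)}\,dV(z)\,dV(w).
\]
The integrands agree, so the identity follows once we know that
\[
I:=\int_{\CC}\int_{\CC}|f(w)|e^{-\varphi(w)}\,|g(z)|e^{-\varphi(z)}\,|K_z^\varphi(w)|e^{-\varphi(w)-\varphi(z)}\,dV(w)\,dV(z)<\infty .
\]

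To bound $I$, set $F=|f|e^{-\varphi}\in L^p$, $G=|g|e^{-\varphi}\in L^{p'}$ and $k(z,w)=|K_z^\varphi(w)|e^{-\varphi(z)-\varphi(w)}$. The kernel $k$ is symmetric, and Proposition~\ref{prop:normkz} with $p=1$ gives
\[
\sup_z\int_{\CC} k(z,w)\,dV(w)=\sup_z e^{-\varphi(z)}\|K_z^\varphi\|_{L^1_\varphi}\lesssim 1 .
\]
By symmetry the same bound holds with the roles of $z$ and $w$ exchanged.

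Schur's test then shows that the positive operator $F\mapsto\int_{\CC} k(\cdot,w)F(w)\,dV(w)$ is bounded on $L^p$ for every $1\le p\le\infty$. This is the same Hölder computation already carried out in the proof of Theorem~\ref{thm:projbnd}. Hölder's inequality in $z$ then gives $I\lesssim\|F\|_{L^p}\|G\|_{L^{p'}}=\|f\|_{L^p_\varphi}\|g\|_{L^{p'}_\varphi}<\infty$. At the endpoints $p=1$ or $p=\infty$, the bound follows directly from the uniform $L^1$ bounds on the kernel.

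Given $I<\infty$, Fubini's theorem allows the interchange and yields the corollary. Both sides are well defined because $P_\varphi$ maps $L^p_\varphi$ into $L^p_\varphi$ by Theorem~\ref{thm:projbnd}. There is no serious obstacle. The only points needing care are the conjugate symmetry of the kernel, which follows from the reproducing property of $F^2_\varphi$, and uniform treatment of the endpoint cases.
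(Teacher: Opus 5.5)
Your proposal is correct and follows the paper's proof, which simply invokes Fubini's theorem justified by H\"older's inequality and the $L^p$ bounds from Theorem~\ref{thm:projbnd}. Your version spells out that the bound actually needed is for the positive kernel $|K_z^\varphi(w)|e^{-\varphi(z)-\varphi(w)}$. That bound is exactly what the proof of Theorem~\ref{thm:projbnd} establishes, and it is what the paper's one-line justification implicitly relies on.
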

\begin{proof} 
It follows from Fubini's theorem. Note that the hypothesis of Fubini's theorem holds due to Hölder's inequality and the $L^p$-boundedness of the Bergman projection $P_\varphi$.
\end{proof}

\begin{thm} \label{Pf=fp}
Let $1 \le p \le \infty$. Then $f = P_\varphi f$, for every $f \in F^p_\varphi$.
\end{thm}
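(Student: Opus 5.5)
The idea is to approximate $f\in F_\varphi^p$ by functions in $F_\varphi^2$, for which $P_\varphi$ acts as the identity, and then pass to the limit pointwise in $P_\varphi f_j(z)=\int f_j\overline{K_z^\varphi}e^{-2\varphi}\,dV$ by dominated convergence. The approximants come from a $\bar\partial$-correction of a cut-off of $f$, solved by Theorem~\ref{df=theta} with an exponentially decaying auxiliary weight.

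\emph{Preliminary growth bounds.} Fix $f\in F_\varphi^p$ with $1\le p\le\infty$. By Lemma~\ref{meaninequality}(A), $|f(w)|e^{-\varphi(w)}\lesssim \rho(w)^{-2n/p}\|f\|_{F^p_\varphi}$. Lemma~\ref{dzw} and \eqref{z-w}, \eqref{w-z} give polynomial bounds $(1+|w|)^{1-\beta}\lesssim\rho(w)\lesssim(1+|w|)^{1-\alpha}$. Hence $|f|e^{-\varphi}$ has at most polynomial growth. On the other hand, Lemma~\ref{dzw} gives $d_\rho(0,w)\gtrsim |w|^{\alpha'}$ for large $|w|$, for some $\alpha'>0$. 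So $e^{-\varepsilon d_\rho(0,w)}$ beats every polynomial.

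\emph{Construction of $f_j$.} Let $\chi_j(w)=\eta(w/j)$ with $\eta$ as in \eqref{eta}, and set $\theta_j=f\,\bar\partial\chi_j$. This form is $\bar\partial$-closed and supported in the annulus $A_j=\{j/2\le|w|\le j\}$. Let $u_j$ be the $L^2_{\widetilde\varphi}$-minimal solution of $\bar\partial u_j=\theta_j$. Applying Theorem~\ref{df=theta} with $\omega\equiv1$ shows $u_j\in L^2_\varphi$, because $\theta_j$ is compactly supported. Hence $f_j:=\chi_jf-u_j\in F^2_\varphi$ and $P_\varphi f_j=f_j$.

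The key point is to apply Theorem~\ref{df=theta} a second time, to the same minimal solution, now with $\omega=e^{-\varepsilon g_0}$. Here $g_0$ is the Greene--Wu function from \eqref{ade5} centered at $0$, and $\varepsilon$ is small, exactly as in Step~1 of Proposition~\ref{Gue}. Using \eqref{ade1}, this yields
\[
\int_{\mathbb C^n}|u_j|^2e^{-2\widetilde\varphi}e^{-\varepsilon g_0}\,dV\lesssim \int_{A_j}|f|^2e^{-2\varphi}\,j^{-2}\rho^2 e^{-\varepsilon g_0}\,dV\lesssim j^{N}e^{-c j^{\alpha'}}\longrightarrow0 .
\]
Since $u_j=\chi_j f-f_j$ is holomorphic on $B(0,j/2)$, Lemma~\ref{meaninequality}(A) turns this into $u_j\to0$ locally uniformly. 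Consequently $f_j\to f$ pointwise.

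\emph{Domination.} The same two estimates give a uniform bound $|f_j(w)|e^{-\varphi(w)}\lesssim \rho(w)^{-n}e^{C\varepsilon d_\rho(0,w)}(1+|w|)^{N}$. This follows from $|\chi_j f|e^{-\varphi}$ polynomial, plus Lemma~\ref{meaninequality}(A) applied to $f_j$ on $B(w)$ with $e^{-\varepsilon g_0}\simeq e^{-\varepsilon g_0(w)}$ there; the latter holds by \eqref{ade5} and Lemma~\ref{rhoB}(C). By Proposition~\ref{Gue}, $|K_z^\varphi(w)|e^{-\varphi(w)}\lesssim_z \rho(w)^{-n}e^{-\varepsilon_0 d_\rho(z,w)}$. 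Choose $\varepsilon$ small compared with $\varepsilon_0$ and use $d_\rho(z,w)\ge d_\rho(0,w)-d_\rho(0,z)$. The product is then dominated by $\rho(w)^{-2n}(1+|w|)^N e^{-\varepsilon_0 d_\rho(0,w)/2}$, which is integrable by Lemma~\ref{z-wk2n}(A) together with the polynomial bounds on $\rho$.

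\emph{Conclusion and main obstacle.} Dominated convergence now gives $P_\varphi f(z)=\lim_j P_\varphi f_j(z)=\lim_j f_j(z)=f(z)$ for every $z$. The same argument works for $p=\infty$. I expect the main obstacle to be the domination step. The naive unweighted Hörmander bound $\|u_j\|_{L^2_\varphi}$ only gives polynomial growth in $j$, with no decay. The argument therefore hinges on the fact that Theorem~\ref{df=theta} controls the \emph{same} minimal solution in the auxiliary weight $e^{-\varepsilon g_0}$. One must also check that the exponential loss $e^{C\varepsilon d_\rho(0,w)}$ is absorbed by the off-diagonal decay of $K_z^\varphi$.
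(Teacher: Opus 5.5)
Your proof is correct, but it places the work differently from the paper.

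\textbf{The paper's route.} The paper uses the $L^2_\varphi$-minimal solution $u_j=(I-P_\varphi)(fh_j)$. Then $P_\varphi(fh_j)=fh_j-u_j$ holds by construction, and the passage $P_\varphi(fh_j)\to P_\varphi f$ is immediate: use boundedness of $P_\varphi$ on $L^p_\varphi$ for $p<\infty$, and domination by $|f|$ for $p=\infty$. The price is that Theorem~\ref{df=theta} only controls the $L^2_{\widetilde\varphi}$-minimal solution $v_j$. So the paper proves the weighted bound \eqref{weighted-projection-estimate} for $P_\varphi$ by Schur's test and transfers it through $u_j=(I-P_\varphi)v_j$. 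It takes the weight $\omega_z$ centred at the evaluation point, which gives the $z$-uniform rate \eqref{uj-decay-estimate}.

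\textbf{Your route.} You keep the $L^2_{\widetilde\varphi}$-minimal solution, so $f_j\in F^2_\varphi$ and $P_\varphi f_j=f_j$ come for free, and no weighted projection estimate is needed. The cost moves to the limit step, which now needs a $j$-uniform majorant for the corrected functions $f_j$. Your single weight $e^{-\varepsilon g_0}$ centred at $0$ does both jobs:
\begin{itemize}
\item It gives superpolynomial smallness on the annulus, using the polynomial bounds on $\rho$ and $d_\rho(0,w)\gtrsim|w|^{\alpha}$.
\item The exponential loss $e^{C\varepsilon d_\rho(0,w)}$ in the majorant is absorbed by the decay in Proposition~\ref{Gue} once $\varepsilon$ is small relative to $\varepsilon_0$. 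This choice is compatible with the hypothesis of Theorem~\ref{df=theta}, since decreasing $\varepsilon$ only helps there.
\end{itemize}
You are also right that the same minimal solution satisfies Delin's estimate for both $\omega\equiv1$ and $\omega=e^{-\varepsilon g_0}$, since minimality is taken in $L^2_{\widetilde\varphi}$ independently of $\omega$.

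\textbf{Trade-offs.} Your route treats all $1\le p\le\infty$ uniformly by dominated convergence and never uses Theorem~\ref{thm:projbnd}. The paper's route gives a clean quantitative estimate for the canonical correction $(I-P_\varphi)(fh_j)$. It also avoids explicit growth bounds on $\rho$, relying only on Lemma~\ref{z-wk2n}(A).
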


\begin{proof}
Choose \(R>1\) and \(h_0\in C_0^\infty(\mathbb C^n)\) such that \(h_0=1\) on \(B(0,R)\), \(h_0=0\) outside \(B(0,R+1)\), and \(|\bar\partial h_0|\lesssim1\). Set
\[
h_j(w):=h_0(w/j),\qquad A_j:=\{w:jR<|w|<j(R+1)\}.
\]
Then \(\operatorname{supp}\bar\partial h_j\subset A_j\) and \(|\bar\partial h_j|\lesssim j^{-1}\). Since \(fh_j\in L_\varphi^2\), we have
\[
P_\varphi(fh_j)=fh_j-u_j,\qquad u_j:=(I-P_\varphi)(fh_j),
\]
where \(u_j\) is the \(L_\varphi^2\)-minimal solution of
\[
\bar\partial u_j=\theta_j,\qquad \theta_j:=f\bar\partial h_j.
\]

Let \(\varepsilon_0>0\) be the constant in \eqref{eq:general-upper}. For each \(z\in\mathbb C^n\), take \(g_z\) as in \eqref{ade5} and set
\[
\omega_z(w):=e^{-\delta g_z(w)},
\]
where \(\delta>0\) is sufficiently small. Since
\[
|g_z(x)-g_z(w)|\lesssim d_\rho(x,w),\qquad
|\partial g_z|_{i\partial\bar\partial\widetilde\varphi}\lesssim1,
\]
we may choose \(\delta\) so that
\[
|\partial\omega_z|_{i\partial\bar\partial\widetilde\varphi}
\leq\varepsilon_1\omega_z,\qquad 0<\varepsilon_1<\sqrt2,
\quad
\text{and}
\quad
\frac{\omega_z(x)^{1/2}}{\omega_z(w)^{1/2}}
\leq e^{\varepsilon_0d_\rho(x,w)/2}.
\]
Consequently, \eqref{eq:general-upper} gives
\[
|K_x^\varphi(w)|e^{-\varphi(x)-\varphi(w)}
\frac{\omega_z(x)^{1/2}}{\omega_z(w)^{1/2}}
\lesssim
\frac{e^{-\varepsilon_0d_\rho(x,w)/2}}
{\rho(x)^n\rho(w)^n}.
\]
Lemma~\ref{z-wk2n}(A) and Schur's test therefore imply
\begin{equation}\label{weighted-projection-estimate}
\int_{\mathbb C^n}|P_\varphi q|^2e^{-2\varphi}\omega_z\,dV
\lesssim
\int_{\mathbb C^n}|q|^2e^{-2\varphi}\omega_z\,dV,
\end{equation}
with a constant independent of \(z\).

Let \(v_j\) be the \(L_{\widetilde\varphi}^2\)-minimal solution of \(\bar\partial v_j=\theta_j\). Theorem~\ref{df=theta}, applied with \(\omega_z\), gives
\[
\int_{\mathbb C^n}|v_j|^2e^{-2\widetilde\varphi}\omega_z\,dV
\lesssim
\int_{\mathbb C^n}
|\theta_j|_{i\partial\bar\partial\widetilde\varphi}^2
e^{-2\widetilde\varphi}\omega_z\,dV.
\]
Applying the same theorem with \(\omega\equiv1\) shows that \(v_j\in L_\varphi^2\). Since \(fh_j-v_j\in F_\varphi^2\), we have
\[
u_j=(I-P_\varphi)(fh_j)=(I-P_\varphi)v_j.
\]
Combining this identity with \eqref{weighted-projection-estimate}, \(\|\varphi-\widetilde\varphi\|_\infty<\infty\), and \eqref{ade1}, we obtain
\begin{equation}\label{weighted-uj-estimate}
\int_{\mathbb C^n}|u_j(w)|^2e^{-2\varphi(w)}\omega_z(w)\,dV(w)
\lesssim
\int_{\mathbb C^n}\rho(w)^2|\theta_j(w)|^2e^{-2\varphi(w)}
\omega_z(w)\,dV(w).
\end{equation}

Fix a compact set \(E\subset\mathbb C^n\). For all sufficiently large \(j\), \(h_j=1\) on \(E\) and \(B(z)\cap A_j=\varnothing\) for every \(z\in E\). Hence \(u_j\) is holomorphic on \(B(z)\). Moreover, \(\omega_z\simeq1\) on \(B(z)\) and \(\omega_z(w)\lesssim e^{-c\delta d_\rho(z,w)}\). Lemma~\ref{meaninequality}(A) and \eqref{weighted-uj-estimate} therefore yield
\[
|u_j(z)|^2e^{-2\varphi(z)}
\lesssim
\frac{1}{\rho(z)^{2n}}
\int_{\mathbb C^n}\rho(w)^2|\theta_j(w)|^2e^{-2\varphi(w)}
e^{-c\delta d_\rho(z,w)}\,dV(w).
\]
Since \(|\theta_j(w)|\lesssim j^{-1}|f(w)|\chi_{A_j}(w)\), this becomes
\[
|u_j(z)|^2e^{-2\varphi(z)}
\lesssim
\frac{1}{j^2\rho(z)^{2n}}
\int_{\mathbb C^n}|f(w)|^2e^{-2\varphi(w)}
\rho(w)^2e^{-c\delta d_\rho(z,w)}\,dV(w).
\]
For \(1\leq p\leq\infty\), with the convention \(1/\infty=0\), Lemma~\ref{meaninequality}(A) gives
\[
|f(w)|^2e^{-2\varphi(w)}
\lesssim
\|f\|_{L_\varphi^p}^2\rho(w)^{-4n/p}.
\]
Using Lemma~\ref{z-wk2n}(A) with \(l=2-4n/p\), we conclude that
\begin{equation}\label{uj-decay-estimate}
|u_j(z)|^2e^{-2\varphi(z)}
\lesssim
\frac{\|f\|_{L_\varphi^p}^2}{j^2}
\rho(z)^{2-4n/p}.
\end{equation}
Thus \(u_j\to0\) locally uniformly on \(\mathbb C^n\).

Suppose first that \(1\leq p<\infty\). The boundedness of \(P_\varphi\) gives
\[
\|P_\varphi f-P_\varphi(fh_j)\|_{L_\varphi^p}
\lesssim
\|f(1-h_j)\|_{L_\varphi^p}\longrightarrow0.
\]
By Lemma~\ref{meaninequality}(A), \(P_\varphi(fh_j)(z)\to P_\varphi f(z)\) locally uniformly. On the other hand, for every fixed \(z\) and all sufficiently large \(j\), \(h_j(z)=1\), so
\[
P_\varphi(fh_j)(z)=f(z)-u_j(z)\longrightarrow f(z)
\]
by \eqref{uj-decay-estimate}. Hence \(P_\varphi f=f\).

If \(p=\infty\), then for every fixed \(z\),
\[
P_\varphi(fh_j)(z)
=\int_{\mathbb C^n}f(w)h_j(w)\overline{K_z^\varphi(w)}
e^{-2\varphi(w)}\,dV(w).
\]
Since
\[
|f(w)h_j(w)\overline{K_z^\varphi(w)}|e^{-2\varphi(w)}
\leq
\|f\|_{L_\varphi^\infty}|K_z^\varphi(w)|e^{-\varphi(w)}
\]
and the right-hand side is integrable, dominated convergence gives \(P_\varphi(fh_j)(z)\to P_\varphi f(z)\). At the same time, \eqref{uj-decay-estimate} gives \(P_\varphi(fh_j)(z)=f(z)-u_j(z)\to f(z)\). Therefore \(P_\varphi f=f\). This completes the proof.
\end{proof}

As a consequence of the boundedness of the Bergman projection, we obtain the following two theorems on the complex interpolation and duality of $F^p_\varphi$. 

\begin{thm}\label{cominter}
If $1 \leq p_1 \leq p_2 \leq \infty$ and $0 < \theta < 1$, then $L_{\varphi}^{p_{\theta}} = [L_{\varphi}^{p_1}, L_{\varphi}^{p_2}]_{\theta}$ and $F_{\varphi}^{p_{\theta}} = [F_{\varphi}^{p_1}, F_{\varphi}^{p_2}]_{\theta}$ with equivalent norms, where
\[
\frac{1}{p_{\theta}} = \frac{1 - \theta}{p_1} + \frac{\theta}{p_2}.
\]
\end{thm}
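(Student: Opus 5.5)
The statement has two parts. The $L^p$ part is a weighted-space version of the Riesz--Thorin / Stein--Weiss interpolation theorem. The $F^p$ part follows from it because $P_\varphi$ is a bounded projection that is simultaneously defined on both endpoint spaces. So the plan is the standard retract argument.

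\emph{The $L^p$ identity.} Consider the isometric isomorphism
\[
J:L^p_\varphi\to L^p(dV),\qquad Jf=fe^{-\varphi}.
\]
It is one and the same map for every $p$, so it intertwines the compatible couples $(L^{p_1}_\varphi,L^{p_2}_\varphi)$ and $(L^{p_1}(dV),L^{p_2}(dV))$. The classical identity $[L^{p_1},L^{p_2}]_\theta=L^{p_\theta}$ holds isometrically, including the case $p_2=\infty$ on $\sigma$-finite measure spaces. Pulling it back through $J$ gives $[L^{p_1}_\varphi,L^{p_2}_\varphi]_\theta=L^{p_\theta}_\varphi$ with equal norms.

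\emph{The $F^p$ identity via the retract.} The inclusion $\iota:F^{p_j}_\varphi\to L^{p_j}_\varphi$ is bounded for $j=1,2$. By Theorem~\ref{thm:projbnd}, $P_\varphi:L^{p_j}_\varphi\to F^{p_j}_\varphi$ is also bounded for $j=1,2$. By Theorem~\ref{Pf=fp}, $P_\varphi\iota=\mathrm{id}$ on $F^{p_j}_\varphi$.

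We must check that these maps are compatible, i.e.\ that $P_\varphi$ is a single operator on $L^{p_1}_\varphi+L^{p_2}_\varphi$. The integral formula for $P_\varphi f(z)$ converges absolutely for $f$ in either space, since $K^\varphi_z\in L^{p'}_\varphi$ for every exponent by Proposition~\ref{prop:normkz}. Hence the same formula defines $P_\varphi$ on the sum, and it agrees with each endpoint operator.

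Given this, $F^{p_j}_\varphi$ is a complemented subspace (retract) of $L^{p_j}_\varphi$. The abstract retract theorem (Triebel, \S1.2.4) then gives
\[
[F^{p_1}_\varphi,F^{p_2}_\varphi]_\theta=P_\varphi\bigl([L^{p_1}_\varphi,L^{p_2}_\varphi]_\theta\bigr)=P_\varphi(L^{p_\theta}_\varphi)=F^{p_\theta}_\varphi,
\]
with equivalent norms. The last equality uses the boundedness of $P_\varphi$ on $L^{p_\theta}_\varphi$ together with $P_\varphi f=f$ for $f\in F^{p_\theta}_\varphi$. Concretely, the two inclusions are:
\begin{itemize}
\item for $f\in F^{p_\theta}_\varphi$, take an $L$-valued analytic family $G(\zeta)$ with $G(\theta)=f$; then $P_\varphi G(\zeta)$ is an admissible $F$-valued family, and $P_\varphi G(\theta)=f$;
\item the converse inclusion is immediate from $[F^{p_1}_\varphi,F^{p_2}_\varphi]_\theta\subset[L^{p_1}_\varphi,L^{p_2}_\varphi]_\theta$ together with holomorphy.
\end{itemize}

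\emph{Main obstacle.} The only delicate point is the endpoint $p_2=\infty$. There the Calder\'on construction involves the non-separable space $L^\infty_\varphi$, and one must ensure that the analytic families $P_\varphi G(\zeta)$ retain the continuity and boundedness required in the definition of the interpolation space. I would handle this by noting that $P_\varphi$ is a bounded operator on the sum space $L^{p_1}_\varphi+L^\infty_\varphi$, with norm controlled by the $L^1$ bound on the kernel, $\sup_z\|K^\varphi_z\|_{L^1_\varphi}e^{-\varphi(z)}<\infty$. Composition with a bounded operator on the sum preserves analyticity and continuity in the strip, so the retract argument applies verbatim.
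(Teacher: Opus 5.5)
Your proposal is correct and follows essentially the same route as the paper: you transfer the unweighted identity $[L^{p_1},L^{p_2}]_\theta=L^{p_\theta}$ through the isometry $f\mapsto fe^{-\varphi}$, then run the retraction argument with $P_\varphi$, using Theorem~\ref{thm:projbnd} for boundedness and Theorem~\ref{Pf=fp} for $P_\varphi f=f$. Your extra checks (that the integral formula defines one operator on $L^{p_1}_\varphi+L^{p_2}_\varphi$, and that composing with $P_\varphi$ preserves admissible analytic families) spell out what the paper compresses into ``the standard retraction argument''; the endpoint $p_2=\infty$ needs no special care, because the abstract retract theorem holds for any compatible Banach couple.
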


\begin{proof}
Let $1\leq p_{\theta}\leq \infty$. We define two bounded linear operators $T : L_{\varphi}^{p_{\theta}} \to L^{p_{\theta}}(\CC)$ by $Tf = f e^{-\varphi}$ and $S : L^{p_{\theta}}(\CC) \to L_{\varphi}^{p_{\theta}}$ by $Sg = g e^{\varphi}$. It is obvious that $ST = I$, where $I$ is the identity on $L_{\varphi}^{p_{\theta}}$.
It follows from \cite[Theorem 2.5]{Zhu07} that $L^{p_{\theta}}(\CC) = [L^{p_1}(\CC), L^{p_2}(\CC)]_{\theta}$.  Thus,
\[
[L_{\varphi}^{p_1}, L_{\varphi}^{p_2}]_{\theta} = S\bigl([L^{p_1}(\CC), L^{p_2}(\CC)]_{\theta}\bigr) = S\bigl(L^{p_{\theta}}(\CC)\bigr) = L_{\varphi}^{p_{\theta}},
\]
with equivalent norms. Since $P_\varphi$ is a bounded projection from $L_{\varphi}^p$ onto $F_{\varphi}^p$ when $1 \leq p \leq \infty$, the standard retraction argument for complex interpolation gives
\[
[F_{\varphi}^{p_1}, F_{\varphi}^{p_2}]_{\theta} = P_\varphi\bigl([L_{\varphi}^{p_1}, L_{\varphi}^{p_2}]_{\theta}\bigr) = P_\varphi\bigl(L_{\varphi}^{p_{\theta}}\bigr) = F_{\varphi}^{p_{\theta}},
\]
with equivalent norms. This completes the proof.
\end{proof}

\begin{thm} \label{dual}
Let $1\leq p<\infty$, and let $p'$ be determined by
$1/p+1/p'=1,$
with the convention that $p'=\infty$ when $p=1$. Then the dual of $F^p_\varphi$ is identified with $F^{p'}_\varphi$ under the integral pairing $\langle \cdot, \cdot \rangle_\varphi$.
\end{thm}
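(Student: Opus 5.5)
Every $g\in F^{p'}_\varphi$ defines a bounded functional $\Lambda_g(f):=\langle f,g\rangle_\varphi$ on $F^p_\varphi$; by H\"older, $\|\Lambda_g\|\le\|g\|_{F^{p'}_\varphi}$. We must show that $g\mapsto\Lambda_g$ is injective, surjective, and has norm comparable to $\|g\|_{F^{p'}_\varphi}$. The tools are the boundedness of $P_\varphi$ on $L^{p}_\varphi$ and $L^{p'}_\varphi$ (Theorem~\ref{thm:projbnd}), the self-adjointness identity of Corollary~\ref{fggf}, and the reproducing property $P_\varphi g=g$ for $g\in F^{p'}_\varphi$ (Theorem~\ref{Pf=fp}, valid for $p'=\infty$ too).

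\emph{Surjectivity and the norm bound.} Let $\Lambda\in(F^p_\varphi)^*$. Since $F^p_\varphi$ is a closed subspace of $L^p_\varphi$, Hahn--Banach extends $\Lambda$ to $L^p_\varphi$ with the same norm. Because $1\le p<\infty$, the map $Tf=fe^{-\varphi}$ is an isometry onto $L^p(\CC)$, and the Riesz representation $(L^p)^*=L^{p'}$ (Lebesgue measure being $\sigma$-finite, so $p=1$ is allowed) yields $h\in L^{p'}_\varphi$ with
\[
\Lambda(f)=\int_{\CC} f\,\overline h\,e^{-2\varphi}\,dV=\langle f,h\rangle_\varphi,\qquad \|h\|_{L^{p'}_\varphi}=\|\Lambda\|.
\]
Put $g:=P_\varphi h\in F^{p'}_\varphi$; then $\|g\|_{F^{p'}_\varphi}\lesssim\|\Lambda\|$ by Theorem~\ref{thm:projbnd}. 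For $f\in F^p_\varphi$, Theorem~\ref{Pf=fp} and Corollary~\ref{fggf} give
\[
\langle f,g\rangle_\varphi=\langle f,P_\varphi h\rangle_\varphi=\langle P_\varphi f,h\rangle_\varphi=\langle f,h\rangle_\varphi=\Lambda(f).
\]
Hence $\Lambda=\Lambda_g$, and $\|g\|_{F^{p'}_\varphi}\lesssim\|\Lambda_g\|$.

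\emph{Injectivity.} Suppose $\Lambda_g=0$ for some $g\in F^{p'}_\varphi$. By Proposition~\ref{prop:normkz}, $K^\varphi_z\in F^p_\varphi$ for every $z$. By Theorem~\ref{Pf=fp} and the integral formula for $P_\varphi$,
\[
g(z)=P_\varphi g(z)=\int_{\CC} g\,\overline{K^\varphi_z}\,e^{-2\varphi}\,dV=\langle g,K^\varphi_z\rangle_\varphi=\overline{\Lambda_g(K^\varphi_z)}=0.
\]
So $g\equiv0$. Combined with the estimate above, this gives $\|\Lambda_g\|\simeq\|g\|_{F^{p'}_\varphi}$, which identifies $(F^p_\varphi)^*$ with $F^{p'}_\varphi$.

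\emph{Main obstacle.} The delicate step is applying Corollary~\ref{fggf} and Theorem~\ref{Pf=fp} at the endpoints $p=1$, $p'=\infty$. There the absolute convergence in Fubini depends on $\sup_w\|K^\varphi_w\|_{L^1_\varphi}e^{-\varphi(w)}<\infty$, which Proposition~\ref{prop:normkz} supplies. Those results already include this case, so the argument goes through uniformly for $1\le p<\infty$.
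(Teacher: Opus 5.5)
Your proof is correct and follows the paper's argument: Hahn--Banach extension to $L^p_\varphi$, Riesz representation by some $h\in L^{p'}_\varphi$, then $g=P_\varphi h$ together with Corollary~\ref{fggf} and Theorem~\ref{Pf=fp}. Your explicit injectivity step via $g(z)=\overline{\Lambda_g(K^\varphi_z)}$, which gives the two-sided norm equivalence, is left implicit in the paper's proof (the same reasoning appears in its proof of Corollary~\ref{dense}).
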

\begin{proof} For any $g \in F^{p'}_\varphi$, we define an operator $A$ by $A_g(\cdot) = \langle \cdot, g \rangle_\varphi$. Then for any $f \in F^p_\varphi$, by Hölder's inequality, we have $|A_g(f)| \le \|g\|_{L^{p'}_\varphi} \|f\|_{L^p_\varphi}$. It follows that $A_g$ is a bounded linear functional on $F^p_\varphi$ with norm at most $\|g\|_{L^{p'}_\varphi}$.

On the other hand, let $A$ be a bounded linear functional on $F^p_\varphi$. Since $F^p_\varphi$ is a closed subspace of $L^p_\varphi$, the Hahn-Banach extension theorem shows that $A$ can be extended to a bounded linear functional $\tilde{A}$ on $L^p_\varphi$. The usual weighted $L^p$ duality identifies the dual of $L^p_\varphi$ with $L^{p'}_\varphi$, so one can find $h\in L_\varphi^{p'}$ such that
$$\widetilde A(f)=\langle f,h\rangle_\varphi
\quad\text{and}\quad
\|h\|_{L_\varphi^{p'}}=\|\widetilde A\|.$$
Set $g=P_\varphi h$. Then by Theorem \ref{thm:projbnd}, we have $\|g\|_{L^{p'}_\varphi} \lesssim \|h\|_{L^{p'}_\varphi} = \|A\|$.  Therefore, by Corollary \ref{fggf}, for $f \in F^p_\varphi$,
$$ A(f) = \tilde{A}(f) = \langle f, h \rangle_\varphi = \langle P_\varphi f, h \rangle_\varphi = \langle f, P_\varphi h \rangle_\varphi = \langle f, g \rangle_\varphi = A_g(f). $$
This completes the proof. 
\end{proof}
\begin{cor} \label{dense}
For any $1 \le p < \infty$, the linear span $E$ of all the Bergman kernels $\{K_z^\varphi,z \in \CC\}$, is dense in $F^p_\varphi$.
\end{cor}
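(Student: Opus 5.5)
The plan is a duality argument via Hahn--Banach, using Theorem~\ref{dual}. First, $E\subset F^p_\varphi$ for every $1\le p<\infty$ by Proposition~\ref{prop:normkz}, since $\|K_z^\varphi\|_{L^p_\varphi}\simeq e^{\varphi(z)}\rho(z)^{2n/p-2n}<\infty$. So the closure $\overline E$ of $E$ in $F^p_\varphi$ is a closed subspace. If $\overline E\neq F^p_\varphi$, Hahn--Banach gives a nonzero bounded linear functional $A$ on $F^p_\varphi$ that vanishes on $E$.

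By Theorem~\ref{dual}, there is $g\in F^{p'}_\varphi$ with $A(f)=\langle f,g\rangle_\varphi$ for all $f\in F^p_\varphi$. Here $p'\in(1,\infty]$, with $p'=\infty$ when $p=1$. Vanishing on $E$ means
\[
0=\langle K_z^\varphi,g\rangle_\varphi=\overline{\langle g,K_z^\varphi\rangle_\varphi},\qquad z\in\CC.
\]
By definition of the projection, $\langle g,K_z^\varphi\rangle_\varphi=\int_{\CC} g(w)\overline{K_z^\varphi(w)}e^{-2\varphi(w)}\,dV(w)=P_\varphi g(z)$. This integral converges absolutely because $g\in L^{p'}_\varphi$ and $K_z^\varphi\in L^{p}_\varphi$, by H\"older and Proposition~\ref{prop:normkz}.

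The one substantive step is the reproducing formula for $g$ outside $L^2_\varphi$. Theorem~\ref{Pf=fp} gives exactly $P_\varphi g=g$ for $g\in F^{p'}_\varphi$ with $1\le p'\le\infty$, including $p'=\infty$. Hence $g(z)=P_\varphi g(z)=0$ for all $z$, so $g\equiv0$ and $A=0$, a contradiction. Therefore $E$ is dense in $F^p_\varphi$.

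I expect no real obstacle. The only care needed is the case $p=1$, where the dual is $F^\infty_\varphi$ and density of $E$ cannot come from a Hilbert-space argument. That case is covered because Theorem~\ref{dual} includes $p=1$ and Theorem~\ref{Pf=fp} includes $p'=\infty$.
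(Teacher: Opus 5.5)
Your proposal is correct and is essentially the paper's own proof: Hahn--Banach together with the duality $(F^p_\varphi)^*\cong F^{p'}_\varphi$ from Theorem~\ref{dual} reduces density to showing that an annihilating $g\in F^{p'}_\varphi$ vanishes, and Theorem~\ref{Pf=fp} then gives $g(z)=P_\varphi g(z)=\langle g,K_z^\varphi\rangle_\varphi=0$. Your check that $E\subset F^p_\varphi$ via Proposition~\ref{prop:normkz} fills in a detail the paper leaves implicit.
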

\begin{proof} By Theorem \ref{dual} and the Hahn-Banach theorem, we only need to prove that if $f \in F^{p'}_\varphi$ satisfies $\langle f, g \rangle_\varphi = 0$ for every $g \in E$, then $f = 0$. From Theorem \ref{Pf=fp}, it follows that $f(z) = P_\varphi f(z) = \langle f, K_z^\varphi \rangle_\varphi = 0$ for every $z \in \CC$ and the proof is complete.
\end{proof}

\section{\texorpdfstring{$L^p$ Estimates for the $\bar{\partial}$ Equation and
 Applications to Hankel Operators}{Lp Estimates for the d bar Equation and Applications to Hankel Operators}}\label{canonical}
 In this section, we study the globally convexifiable subclass
$\mathcal W_{\mathrm{gc}}^*(\mathbb C^n)$ of
$\mathcal W^*(\mathbb C^n)$. We first establish $L^p$ estimates for
the $\bar\partial$ equation and then apply them to characterize the
boundedness and compactness of Hankel operators with general, possibly
unbounded, symbols.

More precisely, we identify $\mathbb C^n$ with $\mathbb R^{2n}$ by
writing $z=(z_1,\ldots,z_n)$ with $z_j=x_{2j-1}+ix_{2j},$
and associating \(z\) with
$x=(x_1,\ldots,x_{2n})\in\mathbb R^{2n}.$
For a function \(u\in\mathcal C^2(\mathbb R^{2n})\), let
\[
D_{\mathbb R}^2u(z)
:=
\left(
\frac{\partial^2u}
{\partial x_j\partial x_k}(x)
\right)_{j,k=1}^{2n}
\]
denote its real Hessian at \(z\). Let $\varphi\in\mathcal W^*(\CC)$ with $\rho\in\mathcal S$. We say that $\varphi$ is globally $\rho$-convexifiable if there exist a $\rho$-regularization $\widetilde\varphi$ with doubling Monge--Amp\`ere measure, an entire function $F\in H(\mathbb C^n)$ and constants $0<m\leq M<\infty$ such that
\begin{align}\label{111}
m\rho(z)^{-2}I_{2n}
\leq
D_{\mathbb R}^2
\bigl(\widetilde\varphi-\operatorname{Re}F\bigr)(z)
\leq
M\rho(z)^{-2}I_{2n},
\qquad z\in\mathbb C^n.
\end{align}
Here $I_{2n}$ is the $2n\times2n$ identity matrix. For symmetric matrices
$A$ and $B$, we use the convention that $A\leq B$ if $B-A$ is positive
semidefinite. Let $\mathcal W_{\mathrm{gc}}^*(\CC)$ denote the class of all weights $\varphi\in\mathcal W^*(\CC)$ satisfying \eqref{111}. This class contains the Gaussian weights $\varphi(z)=\alpha|z|^2/2, \alpha>0,$
as well as plurisubharmonic weights at bounded distance from $\varphi(z)=|z|^2-\frac{1}{2}\log(1+|z|^2),$
which induce norms equivalent to those of the Fock--Sobolev spaces, see \cite{Zhu12,Cho12}. More importantly, this class contains weights whose curvature does not
satisfy uniform upper and lower bounds. For example, let $\alpha>1$
with $\alpha\neq2$, and set
$\varphi_\alpha(z)=|z|^\alpha+\operatorname{Re}(z_1^2)$ with
$ z\in\CC.$

For $\varphi\in \mathcal W_{\mathrm{gc}}^*(\CC)$, we let $\psi=\widetilde{\varphi}-\operatorname{Re}F$.
For $t=(t_{1},t_{2},\ldots,t_{2n})\in\mathbb{R}^{2n}$, write
$\xi_{j}=t_{2j-1}+it_{2j}$ and
$\xi=(\xi_{1},\xi_{2},\ldots,\xi_{n})$. An elementary calculation
similar to that on page 125 of \cite{K92} shows that
\[
\operatorname{Re}
\sum_{j,k=1}^{n}
\frac{\partial^{2}\psi}
     {\partial z_{j}\partial z_{k}}(z)
\xi_{j}\xi_{k}
+
\sum_{j,k=1}^{n}
\frac{\partial^{2}\psi}
     {\partial z_{j}\partial\overline{z}_{k}}(z)
\xi_{j}\overline{\xi}_{k}
=
\frac{1}{2}
\sum_{j,k=1}^{2n}
\frac{\partial^{2}\psi}
     {\partial x_{j}\partial x_{k}}(x)
t_{j}t_{k}
\geq
\frac{1}{2}m\rho(z)^{-2}|\xi|^{2}.
\]
Replacing $\xi$ with $i\xi$ in the above inequality gives
\[
-\operatorname{Re}
\sum_{j,k=1}^{n}
\frac{\partial^{2}\psi}
     {\partial z_{j}\partial z_{k}}(z)
\xi_{j}\xi_{k}
+
\sum_{j,k=1}^{n}
\frac{\partial^{2}\psi}
     {\partial z_{j}\partial\overline{z}_{k}}(z)
\xi_{j}\overline{\xi}_{k}
\geq
\frac{1}{2}m\rho(z)^{-2}|\xi|^{2}.
\]
Thus,
\[
\sum_{j,k=1}^{n}
\frac{\partial^{2}\psi}
     {\partial z_{j}\partial\overline{z}_{k}}(z)
\xi_{j}\overline{\xi}_{k}
\geq
\frac{1}{2}m\rho(z)^{-2}|\xi|^{2}.
\]
Similarly, we have an upper bound for the complex Hessian of
$\psi$. Therefore,
$
\omega_\rho\simeq i\partial\bar{\partial}\psi,
$
where
$
\omega_\rho(z)=\rho(z)^{-2}i\partial\bar\partial|z|^2$ for $z\in\CC$.

Given two nonnegative integers $s,t\leq n$, we write
\begin{align}\label{2-10}
\omega
=
\sum_{|\alpha|=s,\,|\beta|=t}
\omega_{\alpha,\beta}\,
dz^{\alpha}\wedge d\overline{z}^{\beta}
\end{align}
for a differential form of type $(s,t)$ as in \cite{K92}.
We denote by $L_{s,t}$ the family of all $(s,t)$-forms $\omega$
as in \eqref{2-10} with coefficients $\omega_{\alpha,\beta}$
measurable on $\mathbb{C}^n$, and set
\begin{equation}
|\omega|
=
\sum_{|\alpha|=s,\,|\beta|=t}
|\omega_{\alpha,\beta}|,
\qquad
\|\omega\|_{L^p_\psi}
=
\left\||\omega|\right\|_{L^p_\psi}.
\end{equation} 

Following \cite[Proposition~10]{BA82}, for $\omega\in L_{0,1}$, we define the normalized Berndtsson--Andersson operator by
\begin{align}\label{A}
A_{\psi}(\omega)(z)
:={}&
c_n\int_{\mathbb{C}^{n}}
e^{\langle 2\partial\psi(\xi),z-\xi\rangle}
\sum_{j=0}^{n-1}
\omega(\xi)\wedge
\frac{
\partial|\xi-z|^{2}
\wedge
\bigl(2\bar{\partial}\partial\psi(\xi)\bigr)^{j}
\wedge
\bigl(\bar{\partial}\partial|\xi-z|^{2}\bigr)^{n-1-j}
}{
j!\,|\xi-z|^{2n-2j}
},
\end{align}
 where
\[
\left\langle \partial\psi(\xi),z-\xi\right\rangle
=
\sum_{j=1}^{n}
\frac{\partial\psi}{\partial\xi_{j}}(\xi)
(z_{j}-\xi_{j}),
\]
and $c_n\neq0$ is the dimensional normalization constant. Under the
integrability hypothesis in \cite[Proposition~10]{BA82}, the corresponding
representation formula gives $\bar\partial A_\psi(\omega)=\omega$ for every
$\bar\partial$-closed $(0,1)$ form $\omega$.

Set
\[
\Gamma:=\left\{\sum_{j=1}^{N}a_jK^\varphi_{z_j}:N\in\mathbb{N},\;a_j\in\mathbb{C},\;z_j\in\CC
\text{ for }1\leq j\leq N\right\}.
\]
Corollary \ref{dense} tells us that $\Gamma$ is dense in
$F^p_\varphi$ for all $1\leq p<\infty$.

\begin{lem}\label{solve1}
Suppose $1\leq p\leq\infty$ and $\varphi\in \mathcal W_{\mathrm{gc}}^*(\CC)$. Let  $\psi=\widetilde{\varphi}-\operatorname{Re}F$.  For $\omega\in L_{0,1}$, set $$T_{\varphi}(\omega):=e^FA_{\psi}(e^{-F}\omega).$$
\textup{(A)}
For $\omega\in L_{0,1}$ with $\rho\omega\in L_\varphi^p$, there is a constant $C>0$ such that
\[
\|T_{\varphi}(\omega)\|_{L^p_\varphi}
\leq
C\|\omega\rho\|_{L^p_\varphi}.
\]
\textup{(B)}
Let $\kappa\in\mathbb R$. For every $g\in\Gamma$ and every
$f\in\mathcal C^2(\mathbb C^n)$ satisfying
$\rho^\kappa\bar\partial f\in L^p,$
we have
\[
\bar{\partial}T_{\varphi}
\bigl(g\bar{\partial}f\bigr)
=
g\bar{\partial}f.
\]
\end{lem}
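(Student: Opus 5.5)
The plan is to reduce part (A) to a pointwise kernel bound for $A_\psi$ in the weight $\psi=\widetilde\varphi-\operatorname{Re}F$, and then apply a Schur-type test using Lemma~\ref{z-wk2n}. Since $|e^{F}|=e^{\operatorname{Re}F}$ and $\|\varphi-\widetilde\varphi\|_\infty<\infty$, the map $u\mapsto e^{-F}u$ is an isometry (up to constants) from $L^p_\varphi$ onto $L^p_\psi$. Hence (A) is equivalent to
\[
\|A_\psi(\eta)\|_{L^p_\psi}\lesssim\|\rho\eta\|_{L^p_\psi},\qquad \eta=e^{-F}\omega .
\]
From \eqref{A}, the coefficients of $2\bar\partial\partial\psi(\xi)$ are $O(\rho(\xi)^{-2})$, because the complex Hessian of $\psi$ equals that of $\widetilde\varphi$ and is $\simeq\omega_\rho$. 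The forms $\partial|\xi-z|^2$ and $\bar\partial\partial|\xi-z|^2$ contribute $|\xi-z|$ and $O(1)$, respectively. This gives
\[
|A_\psi\eta(z)|\lesssim\int_{\CC}\bigl|e^{\langle2\partial\psi(\xi),z-\xi\rangle}\bigr|\,|\eta(\xi)|\sum_{j=0}^{n-1}\frac{\rho(\xi)^{-2j}}{|\xi-z|^{2n-1-2j}}\,dV(\xi).
\]

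The key step is to control $\operatorname{Re}\langle2\partial\psi(\xi),z-\xi\rangle+\psi(\xi)-\psi(z)$. By Taylor's formula in real coordinates, $\operatorname{Re}\langle 2\partial\psi(\xi),z-\xi\rangle$ is exactly the real directional derivative $D\psi(\xi)\cdot(z-\xi)$. Therefore the quantity above equals $-\int_0^1(1-t)D^2_{\mathbb R}\psi(\xi+t(z-\xi))[z-\xi,z-\xi]\,dt$. By \eqref{111} it is bounded above by $-m\int_0^1(1-t)\rho(\xi+t(z-\xi))^{-2}\,dt\,|z-\xi|^2$. I would then show that this is $\lesssim -c\,d_\rho(z,\xi)$ once $|z-\xi|\ge\rho(\xi)$. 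Writing $\gamma(t)=\xi+t(z-\xi)$, Cauchy--Schwarz on $t\in[0,1/2]$ gives
\[
\Bigl(\int_0^{1/2}\frac{|z-\xi|}{\rho(\gamma)}\Bigr)^2\le\frac12\int_0^{1/2}\frac{|z-\xi|^2}{\rho(\gamma)^2},
\]
and the lower bound of Lemma~\ref{dzw} (the $t^\alpha$ growth) then yields an exponential factor $e^{-c\,d_\rho(z,\xi)^{\min(2,\cdot)}}$. At minimum this bound is at least $e^{-c\,(|z-\xi|/\rho(\xi))^{2\alpha}}$, which is enough for Lemma~\ref{z-wk2n}-type integrability. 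For $|z-\xi|<\rho(\xi)$ the exponent is bounded. The result is
\[
e^{-\psi(z)}|A_\psi\eta(z)|\lesssim\int k(z,\xi)\,\rho(\xi)|\eta(\xi)|e^{-\psi(\xi)}\,dV(\xi),
\]
with
\[
k(z,\xi)=e^{-\varepsilon d_\rho(z,\xi)}\sum_j\rho(\xi)^{-2j-1}|\xi-z|^{2j+1-2n}.
\]

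For the Schur test I need $\sup_z\int k\,dV(\xi)$ and $\sup_\xi\int k\,dV(z)$ both finite. Near the diagonal, $|\xi-z|^{2j+1-2n}$ is locally integrable in $\mathbb R^{2n}$. On $B(\xi)$ its integral is $\rho(\xi)^{2j+1}$, and this cancels $\rho(\xi)^{-2j-1}$; the local comparability of $\rho$ (Lemma~\ref{rhoB}) handles integration in either variable. Away from the diagonal, $|\xi-z|^{-k}\le\rho^{-k}$, so Lemma~\ref{z-wk2n}(A) with $l=-2n$ gives $O(1)$. The Schur test then yields the estimate for $1\le p\le\infty$, which proves (A).

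For (B), I would apply the Berndtsson--Andersson representation \cite[Proposition~10]{BA82} to the $\bar\partial$-closed form $e^{-F}g\bar\partial f$ and verify its integrability hypothesis. Since $g\in\Gamma$, Theorem~\ref{main} gives $|g|e^{-\varphi}\lesssim e^{-\varepsilon d_\rho(\cdot,\cdot)}$-type decay, combined with polynomially controlled $\rho$ via Lemma~\ref{dzw} and \eqref{z-w}. Together with $\rho^\kappa\bar\partial f\in L^p$ and the Gaussian-type decay of the kernel established above, this should give absolute convergence and justify differentiation under the integral. Conjugating back by $e^{F}$ (which is holomorphic) gives $\bar\partial T_\varphi(g\bar\partial f)=g\bar\partial f$.

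I expect the main obstacle to be the exponent estimate, i.e. converting the real-Hessian lower bound along segments into decay in $d_\rho$ when $\rho$ varies along the segment.
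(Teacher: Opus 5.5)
Your route for (A) is the same as the paper's. You conjugate by $e^{F}$, bound the form by $\rho(\xi)^{-2j}|\xi-z|^{2j+1-2n}$, control the exponent through the Taylor remainder $R_\psi(z,\xi)=\int_0^1(1-s)D^2_{\mathbb R}\psi(\xi+sx)[x,x]\,ds$ using \eqref{111}, and run a Schur test. Your Cauchy--Schwarz argument on $[0,1/2]$ together with Lemma~\ref{dzw} correctly gives $R_\psi(z,\xi)\gtrsim\min\{t^2,t^{2\alpha}\}$ with $t=|z-\xi|/\rho(\xi)$. This is exactly the bound the paper obtains from \eqref{z-w} along the segment.

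\textbf{The gap.} It is in the next step, where you replace this bound by a kernel carrying $e^{-\varepsilon d_\rho(z,\xi)}$ and invoke Lemma~\ref{z-wk2n}(A) for both Schur integrals. Nothing in your argument gives $R_\psi\gtrsim d_\rho(z,\xi)$. Cauchy--Schwarz only controls $d_\rho(\xi,m)^2$ for the midpoint $m$. Lemma~\ref{dzw} gives $d_\rho(\xi,m)\gtrsim t^{\alpha}$, but only $d_\rho(z,\xi)\lesssim t^{\beta}$ with $\beta=1/\alpha$. These bounds therefore yield $R_\psi\gtrsim d_\rho(z,\xi)$ only when $2\alpha\geq 1/\alpha$. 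So your kernel $k(z,\xi)$ is not established, and neither are the Schur bounds read off from Lemma~\ref{z-wk2n}(A).

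\textbf{How to repair it.} What you actually have is decay $e^{-c\min\{t^2,t^{2\alpha}\}}$, measured in the scale $\rho(\xi)$ of the point $\xi$.
\begin{itemize}
\item For $\sup_\xi\int\cdots\,dV(z)$ this is harmless: substitute $u=(z-\xi)/\rho(\xi)$.
\item For $\sup_z\int\cdots\,dV(\xi)$ it is not, because $\rho(\xi)$ may be polynomially larger than $\rho(z)$. The paper sets $s=|z-\xi|/\rho(z)\geq 1$ and applies Lemma~\ref{dzw} with both base points. This gives $t\gtrsim s^{\alpha/\beta}$ and $\rho(z)/\rho(\xi)\lesssim s^{\beta/\alpha-1}$, so the stretched-exponential decay in $s$ absorbs the polynomial loss.
\item Equivalently, the upper bound in Lemma~\ref{dzw} gives $e^{-ct^{2\alpha}}\lesssim e^{-c'd_\rho(z,\xi)^{2\alpha/\beta}}$. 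The proof of Lemma~\ref{z-wk2n}(A) carries over verbatim to $e^{-\varepsilon d_\rho^{\theta}}$ for any $\theta>0$, since it only uses $d_\rho\gtrsim(|z-w|/\rho(z))^{\alpha}$.
\end{itemize}
The obstacle you flagged is thus resolved by not passing through exponential decay in $d_\rho$ at all.

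\textbf{On (B).} The integrability condition the paper verifies for \cite[Proposition~10]{BA82} is \eqref{gf111}, which contains no exponential kernel factor. So the Gaussian decay of the kernel plays no role there. The decay comes from $|K^\varphi_{z_0}(\xi)|e^{-\varphi(\xi)}\leq C(z,z_0)\rho(\xi)^{-n}e^{-\varepsilon d_\rho(z,\xi)}$, combined with H\"older's inequality against $\rho^\kappa\bar\partial f\in L^p$ and Lemma~\ref{z-wk2n}(A). Near $\xi=z$ you also need local boundedness of $\bar\partial f$, which comes from $f\in\mathcal C^2$. The hypothesis $\rho^\kappa\bar\partial f\in L^p$ alone does not make $|\xi-z|^{1-2n}|\bar\partial f(\xi)|$ integrable near $z$ when $p\leq 2n$.
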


\begin{proof}
Let \(0<\alpha\leq1\leq\beta\) be as in
Lemma~\ref{dzw}. Put \(x=z-\xi\) and
\(t=|x|/\rho(\xi)\). Since
\(i\partial\bar\partial\psi\simeq\omega_\rho\), we have
\(|\bar\partial\partial\psi(\xi)|\lesssim\rho(\xi)^{-2}\).
Moreover,
\[
|\partial|\xi-z|^2|\lesssim|\xi-z|,
\qquad
|\bar\partial\partial|\xi-z|^2|\lesssim1.
\]
Consequently, for \(0\leq j\leq n-1\),
\begin{align}\label{AA}
\left|\omega(\xi)\wedge\frac{\partial|\xi-z|^2\wedge
\bigl(2\bar\partial\partial\psi(\xi)\bigr)^j\wedge
\bigl(\bar\partial\partial|\xi-z|^2\bigr)^{n-1-j}}{j!\,|\xi-z|^{2n-2j}}
\right|
\lesssim
\frac{
|\omega(\xi)|\rho(\xi)^{-2j}
}{
|\xi-z|^{2n-2j-1}
}.
\end{align}
Define
\[
R_\psi(z,\xi)
:=
\psi(z)-\psi(\xi)
-2\operatorname{Re}
\langle\partial\psi(\xi),z-\xi\rangle.
\]
Taylor's formula and \eqref{111} give
\[
R_\psi(z,\xi)
=
\int_0^1(1-s)
D_{\mathbb R}^2\psi(\xi+sx)[x,x]\,ds
\gtrsim
|x|^2\int_0^1
\frac{1-s}{\rho(\xi+sx)^2}\,ds.
\]
If \(t\leq1\), Lemma~\ref{rhoB}\textup{(A)} gives
\(R_\psi(z,\xi)\gtrsim t^2\). If \(t>1\), applying
\eqref{z-w} along the segment from \(\xi\) to \(z\) gives
\(R_\psi(z,\xi)\gtrsim t^{2\alpha}\). Thus
\begin{align}\label{B(z)^c}
R_\psi(z,\xi)
\gtrsim
\min\{t^2,t^{2\alpha}\}.
\end{align}
It follows that
\[
\left|
e^{\langle2\partial\psi(\xi),z-\xi\rangle}
\right|e^{-\psi(z)}
=
e^{-\psi(\xi)}e^{-R_\psi(z,\xi)}
\lesssim
e^{-\psi(\xi)}
\left(e^{-ct^2}+e^{-ct^{2\alpha}}\right).
\]
Since
\[
\sum_{j=0}^{n-1}
t^{2j}
\left(e^{-ct^2}+e^{-ct^{2\alpha}}\right)
\lesssim
e^{-c_0t^2}+e^{-c_0t^{2\alpha}}
\]
for some \(c_0>0\), all the terms in the finite sum in
\eqref{A} can be estimated simultaneously.

For \(\sigma\in\{\alpha,1\}\), set
\[
\mathcal H_\sigma(z,\xi)
:=
\frac{1}{|\xi-z|^{2n-1}}
\exp\left\{
-c_0
\left(\frac{|z-\xi|}{\rho(\xi)}\right)^{2\sigma}
\right\}.
\]
Combining \eqref{A}, \eqref{AA}, and \eqref{B(z)^c}, we obtain
\begin{align}\label{A1}
|A_\psi(\omega)(z)|e^{-\psi(z)}
&\lesssim
\int_{\CC}
\frac{|\omega(\xi)|e^{-\psi(\xi)}}{|\xi-z|^{2n-1}}
\sum_{j=0}^{n-1}t^{2j}
\left(e^{-ct^2}+e^{-ct^{2\alpha}}\right)dV(\xi)\\
&\lesssim
\sum_{\sigma\in\{\alpha,1\}}
\int_{\CC}
|\omega(\xi)|e^{-\psi(\xi)}
\mathcal H_\sigma(z,\xi)\,dV(\xi).
\end{align}
We next verify, for both \(\sigma=\alpha\) and \(\sigma=1\),
\begin{equation}
\sup_{\xi\in\CC}
\frac{1}{\rho(\xi)}
\int_{\CC}\mathcal H_\sigma(z,\xi)\,dV(z)
<\infty,
\qquad
\sup_{z\in\CC}
\int_{\CC}
\frac{\mathcal H_\sigma(z,\xi)}{\rho(\xi)}\,dV(\xi)
<\infty.
\end{equation}
For the first integral, the substitution
\(u=(z-\xi)/\rho(\xi)\) gives
\begin{align}
\frac{1}{\rho(\xi)}
\int_{\CC}\mathcal H_\sigma(z,\xi)\,dV(z)
=
\int_{\CC}
\frac{e^{-c_0|u|^{2\sigma}}}{|u|^{2n-1}}\,dV(u)
<\infty.
\end{align}
For the second integral, write
\begin{align}
\int_{\CC}
\frac{\mathcal H_\sigma(z,\xi)}{\rho(\xi)}\,dV(\xi)
=
\left[
\int_{B(z)}
+
\int_{\CC\setminus B(z)}
\right]
\frac{\mathcal H_\sigma(z,\xi)}{\rho(\xi)}\,dV(\xi).
\end{align}
If \(\xi\in B(z)\), then
\(\rho(\xi)\simeq\rho(z)\) by
Lemma~\ref{rhoB}\textup{(A)}, and hence
\[
\int_{B(z)}
\frac{\mathcal H_\sigma(z,\xi)}{\rho(\xi)}\,dV(\xi)
\lesssim
\frac{1}{\rho(z)}
\int_{|\xi-z|<\rho(z)}
\frac{dV(\xi)}{|\xi-z|^{2n-1}}
\lesssim1.
\]
Now suppose that \(\xi\notin B(z)\), and set
\(s=|z-\xi|/\rho(z)\geq1\). Applying
Lemma~\ref{dzw} with the pairs \((z,\xi)\) and
\((\xi,z)\), we obtain
\[
t\gtrsim s^{\alpha/\beta},
\qquad
t\lesssim s^{\beta/\alpha},
\qquad
\frac{\rho(z)}{\rho(\xi)}
=\frac{t}{s}
\lesssim s^{\beta/\alpha-1}.
\]
Therefore,
\begin{align}
\int_{\CC\setminus B(z)}
\frac{\mathcal H_\sigma(z,\xi)}{\rho(\xi)}\,dV(\xi)
&\lesssim
\frac{1}{\rho(z)^{\beta/\alpha}}
\int_{\CC\setminus B(z)}
\frac{
\exp\left\{
-c_1
\left(\frac{|z-\xi|}{\rho(z)}\right)^{2\sigma\alpha/\beta}
\right\}
}{
|z-\xi|^{2n-\beta/\alpha}
}\,dV(\xi)\\
&=
\int_{\CC\setminus B(0,1)}
\frac{
e^{-c_1|u|^{2\sigma\alpha/\beta}}
}{
|u|^{2n-\beta/\alpha}
}\,dV(u)
<\infty.
\end{align}
This proves both integral estimates for each
\(\sigma\in\{\alpha,1\}\).

Let \(v(\xi)=\rho(\xi)|\omega(\xi)|e^{-\psi(\xi)}\).
Fubini's theorem and the first integral estimate give
\[
\|A_\psi(\omega)\|_{L_\psi^1}
\lesssim
\sum_{\sigma\in\{\alpha,1\}}
\int_{\CC}
v(\xi)
\left[
\frac{1}{\rho(\xi)}
\int_{\CC}\mathcal H_\sigma(z,\xi)\,dV(z)
\right]dV(\xi)
\lesssim
\|\rho\omega\|_{L_\psi^1}.
\]
Similarly, the second integral estimate gives
\[
\|A_\psi(\omega)\|_{L_\psi^\infty}
\lesssim
\|\rho\omega\|_{L_\psi^\infty}.
\]
Interpolation therefore yields
\(\|A_\psi(\omega)\|_{L_\psi^p}
\lesssim\|\rho\omega\|_{L_\psi^p}\) for
\(1\leq p\leq\infty\). Since
\(\|\varphi-\widetilde\varphi\|_{L^\infty}<\infty\), we have
\[
\|T_\varphi(\omega)\|_{L_\varphi^p}
\simeq
\|A_\psi(e^{-F}\omega)\|_{L_\psi^p}
\lesssim
\|e^{-F}\rho\omega\|_{L_\psi^p}
\simeq
\|\rho\omega\|_{L_\varphi^p}.
\]
This proves \textup{(A)}.

We now prove \textup{(B)}. Since \(g\in\Gamma\), it suffices by
linearity to consider \(g=K_{z_0}^{\varphi}\). Put
\(h=\rho^\kappa|\bar\partial f|\), and let \(p'\) be the conjugate
exponent of \(p\). Since
\(e^{-\operatorname{Re}F}e^{-\psi}
=e^{-\widetilde\varphi}\simeq e^{-\varphi}\), the curvature
estimate for \(\psi\) gives
\begin{align*}
I_{z,z_0}
&:=
\int_{\CC}
|K_{z_0}^{\varphi}(\xi)\bar\partial f(\xi)e^{-F(\xi)}|
e^{-\psi(\xi)}
\sum_{j=0}^{n-1}
\frac{|\bar\partial\partial\psi(\xi)|^j}
{j!\,|\xi-z|^{2n-2j-1}}\,dV(\xi)\\
&\lesssim
\int_{\CC}
|K_{z_0}^{\varphi}(\xi)\bar\partial f(\xi)|
e^{-\varphi(\xi)}
\sum_{j=0}^{n-1}
\frac{\rho(\xi)^{-2j}}
{|\xi-z|^{2n-2j-1}}\,dV(\xi)\\
&\lesssim
\left[
\int_{B(z)}+\int_{\CC\setminus B(z)}
\right]
|K_{z_0}^{\varphi}(\xi)\bar\partial f(\xi)|
e^{-\varphi(\xi)}
\left[
\frac{1}{|\xi-z|^{2n-1}}
+
\frac{\rho(\xi)^{-2n+2}}{|\xi-z|}
\right]dV(\xi)=: I_1+I_2.
\end{align*}

By Lemma~\ref{rhoB}\textup{(A)},
\begin{align*}
I_1
&\lesssim
\sup_{\xi\in B(z)}
|K_{z_0}^{\varphi}(\xi)\rho(\xi)\bar\partial f(\xi)|
e^{-\varphi(\xi)}
\int_{B(z)}
\left[
\frac{1}{\rho(\xi)|\xi-z|^{2n-1}}
+
\frac{1}{\rho(\xi)^{2n-1}|\xi-z|}
\right]dV(\xi)\\
&\lesssim
\sup_{\xi\in B(z)}
|K_{z_0}^{\varphi}(\xi)\rho(\xi)\bar\partial f(\xi)|
e^{-\varphi(\xi)}.
\end{align*}
Theorem~\ref{main} and the local comparability of \(\rho\) then
imply
\[
I_1
\lesssim
\frac{e^{\varphi(z_0)}}
{\rho(z_0)^n\rho(z)^{n+\kappa-1}}
\sup_{\xi\in B(z)}
\rho(\xi)^\kappa|\bar\partial f(\xi)|
<\infty.
\]

For \(\xi\notin B(z)\), we have
\(|\xi-z|\geq\rho(z)\). Hence
\begin{align*}
I_2
&\lesssim
\frac{1}{\rho(z)}
\int_{\CC\setminus B(z)}
|K_{z_0}^{\varphi}(\xi)\bar\partial f(\xi)|
e^{-\varphi(\xi)}
\left[
\rho(z)^{-2n+2}
+
\rho(\xi)^{-2n+2}
\right]dV(\xi).
\end{align*}
By Theorem~\ref{main} and the triangle inequality for \(d_\rho\),
\[
|K_{z_0}^{\varphi}(\xi)|e^{-\varphi(\xi)}
\lesssim
C(z,z_0)\rho(\xi)^{-n}
e^{-\varepsilon d_\rho(z,\xi)},
\]
where
\(C(z,z_0)
=Ce^{\varphi(z_0)}
\rho(z_0)^{-n}e^{\varepsilon d_\rho(z,z_0)}\).
Therefore,
\begin{align*}
I_2
&\lesssim
\frac{C(z,z_0)}{\rho(z)}
\int_{\CC}
h(\xi)e^{-\varepsilon d_\rho(z,\xi)}
\left[
\rho(z)^{-2n+2}\rho(\xi)^{-n-\kappa}
+
\rho(\xi)^{-3n+2-\kappa}
\right]dV(\xi).
\end{align*}
If $1<p\leq\infty$, H\"older's inequality and
Lemma~\ref{z-wk2n}\textup{(A)} give
\begin{align*}
I_2
\lesssim
\frac{C(z,z_0)}{\rho(z)}
\|h\|_{L^p}
\left[
\rho(z)^{-2n+2}
\rho(z)^{\frac{2n}{p'}-n-\kappa}
+
\rho(z)^{\frac{2n}{p'}-3n+2-\kappa}
\right]
\lesssim
\frac{C(z,z_0)}
{\rho(z)^{n+\kappa-1+\frac{2n}{p}}}
\|\rho^\kappa\bar\partial f\|_{L^p}
<\infty.
\end{align*}
When \(p=1\), Lemma~\ref{dzw} gives the corresponding supremum estimate, and hence
\[
I_2\lesssim
\frac{C(z,z_0)}{\rho(z)}\|h\|_{L^1}
\left[
\rho(z)^{-2n+2}\rho(z)^{-n-\kappa}
+\rho(z)^{-3n+2-\kappa}
\right]
\lesssim
\frac{C(z,z_0)}{\rho(z)^{3n+\kappa-1}}
\|\rho^\kappa\bar\partial f\|_{L^1}<\infty.
\]

Combining the estimates for \(I_1\) and \(I_2\), and then using
linearity, we obtain for every \(g\in\Gamma\) and \(z\in\CC\)
\begin{align}\label{gf111}
\int_{\CC}
|g(\xi)\bar\partial f(\xi)e^{-F(\xi)}|e^{-\psi(\xi)}
\sum_{j=0}^{n-1}
\frac{|\bar\partial\partial\psi(\xi)|^j}
{j!\,|\xi-z|^{2n-2j-1}}\,dV(\xi)
<\infty.
\end{align}
Since \(e^{-F}g\) is holomorphic,
\(e^{-F}g\bar\partial f\) is \(\bar\partial\) closed. Thus
\eqref{gf111} and \cite[Proposition~10]{BA82} imply
\[
\bar\partial A_\psi
\bigl(e^{-F}g\bar\partial f\bigr)
=
e^{-F}g\bar\partial f.
\]
Since \(e^F\) is holomorphic, we conclude that
\[
\bar\partial T_\varphi
\bigl(g\bar\partial f\bigr)
=
e^F\bar\partial A_\psi
\bigl(e^{-F}g\bar\partial f\bigr)
=
g\bar\partial f.
\]
This proves \textup{(B)}.
\end{proof}

For $1\leq p<\infty$, define the symbol class
\begin{align}\label{Qp}
\mathcal{Q}_p:=\left\{ f \text{ measurable on } \CC : fg \in  L^p_\varphi \text{ for } g \in \Gamma \right\}.
\end{align}
For a fixed $1\leq p<\infty$ and $f\in\mathcal Q_p$, the formula
$H_f=(\mathrm I-P_\varphi)M_f$ defines $H_f$ from $\Gamma$ into
$L_\varphi^p$. Since $\Gamma$ is dense in $F_\varphi^p$ by
Corollary~\ref{dense}, this gives a densely defined operator on
$F_\varphi^p$.

The following theorem connects the preceding $\bar\partial$ estimates with the analysis of Hankel operators. 

\begin{thm}\label{HfNf1} Let $1\leq p <q\leq \infty$, $\kappa\in\mathbb R$, and $\varphi\in\mathcal W_{\mathrm{gc}}^*(\CC)$. Suppose that $f \in \mathcal{Q}_p\cap \mathcal{C}^2(\CC)$ with $|\rho^\kappa\bar{\partial} f| \in L^q$. Then, for every $g\in \Gamma$,
\begin{align*}
H_f(g) = T_{\varphi}(g\bar{\partial} f) - P_\varphi(T_{\varphi}( g\bar{\partial} f)),
\end{align*}
where $P_\varphi$ denotes the Bergman projection introduced in Section \ref{sec:proj}.
\end{thm}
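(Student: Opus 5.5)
Fix $g\in\Gamma$ and put $u:=T_\varphi(g\bar\partial f)$. The plan is to show that $fg-u$ is an entire function lying in $F^p_\varphi$. Once this is known, Theorem~\ref{Pf=fp} gives $P_\varphi(fg-u)=fg-u$. Since $fg\in L^p_\varphi$ because $f\in\mathcal Q_p$, we get
\[
H_f g=fg-P_\varphi(fg)=fg-\bigl(fg-u\bigr)-P_\varphi u+ \bigl[(fg-u)-P_\varphi(fg-u)\bigr]=u-P_\varphi u ,
\]
which is the claimed identity. The linearity of $P_\varphi$ on $L^p_\varphi$ (Theorem~\ref{thm:projbnd}) is what legitimizes splitting $P_\varphi(fg)=P_\varphi(fg-u)+P_\varphi u$. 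So two facts are needed: $u\in L^p_\varphi$, and $\bar\partial(fg-u)=0$.

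Holomorphy comes from Lemma~\ref{solve1}(B), applied with the exponent $q$ in place of $p$. Its hypothesis $\rho^\kappa\bar\partial f\in L^q$ is given, and it yields $\bar\partial u=g\bar\partial f$ in the sense of distributions. Since $g$ is entire, $\bar\partial(fg)=g\bar\partial f$, so $\bar\partial(fg-u)=0$ distributionally. Local integrability of $u$ follows from the pointwise bound \eqref{gf111}. The Dolbeault--Grothendieck lemma, used exactly as in Lemma~\ref{peak}, then shows that $fg-u$ agrees a.e.\ with an entire function.

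The main obstacle is showing $u\in L^p_\varphi$ with $p<q$. By Lemma~\ref{solve1}(A) at exponent $p$, it suffices to show $\rho\, g\bar\partial f\in L^p_\varphi$. By linearity we may take $g=K^\varphi_{z_0}$. Hölder's inequality with $1/p=1/q+1/s$ gives
\[
\|\rho g\bar\partial f\|_{L^p_\varphi}\le \|\rho^\kappa\bar\partial f\|_{L^q}\,\bigl\|\rho^{1-\kappa}K^\varphi_{z_0}e^{-\varphi}\bigr\|_{L^s}.
\]
The last factor is bounded by Theorem~\ref{main} by
\[
e^{\varphi(z_0)}\rho(z_0)^{-n}\Bigl(\int \rho(\xi)^{(1-\kappa-n)s}e^{-\varepsilon s d_\rho(z_0,\xi)}dV(\xi)\Bigr)^{1/s},
\]
which is finite by Lemma~\ref{z-wk2n}(A) with $l=(1-\kappa-n)s$. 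When $q=\infty$ we have $s=p$ and the same argument applies. Hence $u\in L^p_\varphi$, and therefore $fg-u\in L^p_\varphi\cap H(\CC)=F^p_\varphi$.

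The remaining subtle point is making sure that the $\bar\partial$-solution from Lemma~\ref{solve1}(B) is the same function $u$ that is estimated by Lemma~\ref{solve1}(A). This holds because both parts use the same integral operator. The integrals defining $u$ converge absolutely by \eqref{gf111}, so $u$ is well defined pointwise and no ambiguity arises.
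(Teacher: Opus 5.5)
Your proposal is correct and follows the paper's proof essentially step for step. You use Lemma~\ref{solve1}(B) at exponent $q$ to get $\bar\partial u=g\bar\partial f$. You use Lemma~\ref{solve1}(A) at exponent $p$, together with H\"older's inequality for $1/p=1/q+1/s$, the kernel decay, and Lemma~\ref{z-wk2n}(A), to get $u\in L^p_\varphi$. You then apply Theorem~\ref{Pf=fp} to $fg-u\in F^p_\varphi$.

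Your explicit Dolbeault--Grothendieck step fills in a detail the paper leaves implicit. One small point: local integrability of $u$ follows more directly from $u\in L^p_\varphi$, since $\varphi$ is locally bounded above, than from \eqref{gf111}, which only asserts pointwise finiteness.
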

\begin{proof}
 Let $u:=T_{\varphi}(g\bar{\partial} f)$. Lemma~\ref{solve1}\textup{(B)}, applied with exponent $q$, gives
$\bar\partial u=g\bar\partial f$. Set $s$ by $1/p=1/q+1/s$, namely,
\[
s=\begin{cases}
\dfrac{pq}{q-p},&q<\infty,\\[4pt]
p,&q=\infty.
\end{cases}
\]
Lemma~\ref{solve1}\textup{(A)} with exponent $p$, followed by H\"older's inequality, gives
\[
\|u\|_{L^p_\varphi}
\lesssim\|\rho g\bar\partial f\|_{L^p_\varphi}
\leq \|\rho^\kappa\bar\partial f\|_{L^q}
\|g\rho^{1-\kappa}\|_{L^s_\varphi}.
\]
Proposition~\ref{Gue} and Lemma~\ref{z-wk2n}\textup{(A)} imply that
$g\rho^{1-\kappa}\in L^s_\varphi$ for every $g\in\Gamma$. Thus
$u\in L^p_\varphi$. Combining this with $f\in \mathcal{Q}_p$, we deduce that  $fg - u \in L^p_\varphi$ and 
$$
\bar\partial(fg-u)=g\bar\partial f-\bar\partial u=0.
$$
This means that $fg - u \in F^p_\varphi$. By Theorem \ref{Pf=fp}, we have
$$
P_\varphi(fg - u) = fg - u.
$$
Therefore,
$$
H_f(g) - (u - P_\varphi(u)) = fg - P_\varphi(fg) - (u - P_\varphi(u)) = (fg - u) - P_\varphi(fg - u) = 0,
$$
and the proof is complete.
\end{proof}

\begin{lem}\label{partition}
Given $r>0$, there exist a sequence $\{z_j\}_{j=1}^\infty$ and a smooth partition
of unity $\{\psi_j\}_{j=1}^\infty$ with the following properties.\\
\textup{(A)} $\CC=\bigcup_{j=1}^\infty B^r(z_j)$, and there is an $m>0$
such that $B^{mr}(z_j)\cap B^{mr}(z_k)=\varnothing$ whenever $j\ne k$.\\
\textup{(B)} $\operatorname{supp}\psi_j\subseteq B^{2r}(z_j)$,
$0\leq\psi_j\leq1$, $\sum_{j=1}^\infty\psi_j=1$, and
$|\bar\partial\psi_j(w)|\lesssim\rho(w)^{-1}$ for $w\in\CC$.
\end{lem}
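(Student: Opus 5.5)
We take $\{z_j\}$ to be an $r$-lattice from Lemma~\ref{cover}. Then (A) is exactly Lemma~\ref{cover}(A),(B), with $m=m(r)$. It remains to build the partition of unity by normalizing scaled bump functions.

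Fix $\eta_0\in C_c^\infty(\mathbb C^n)$ with $0\le\eta_0\le1$, $\eta_0\equiv1$ on $B(0,1)$, and $\operatorname{supp}\eta_0\subset B(0,2)$. Set
\[
\eta_j(w):=\eta_0\!\left(\frac{w-z_j}{r\rho(z_j)}\right).
\]
Then $\operatorname{supp}\eta_j\subseteq B^{2r}(z_j)$, $\eta_j\equiv1$ on $B^r(z_j)$, and
\[
|\bar\partial\eta_j(w)|\lesssim \frac{1}{r\rho(z_j)}.
\]
For $w\in B^{2r}(z_j)$, Lemma~\ref{rhoB}(A) gives $\rho(z_j)\simeq\rho(w)$, with constants depending on $r$. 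Hence $|\bar\partial\eta_j(w)|\lesssim\rho(w)^{-1}$ uniformly in $j$.

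Define $\Phi:=\sum_k\eta_k$. By \eqref{NNN} with $M=2$, at most $N=N(r,2)$ of the $\eta_k$ are nonzero at any point. So the sum is locally finite, $\Phi$ is smooth, and $\Phi\le N$. Since the balls $B^r(z_k)$ cover $\mathbb C^n$ and $\eta_k\equiv1$ on $B^r(z_k)$, we also have $\Phi\ge1$. Set $\psi_j:=\eta_j/\Phi$.

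These functions are smooth, satisfy $0\le\psi_j\le1$ and $\sum_j\psi_j=1$, and have $\operatorname{supp}\psi_j\subseteq B^{2r}(z_j)$. For the gradient bound we compute
\[
\bar\partial\psi_j=\frac{\bar\partial\eta_j}{\Phi}-\frac{\eta_j\,\bar\partial\Phi}{\Phi^2}.
\]
Using $\Phi\ge1$, the bound $|\bar\partial\eta_k(w)|\lesssim\rho(w)^{-1}$, and the fact that at most $N$ terms contribute to $\bar\partial\Phi(w)$, we get $|\bar\partial\psi_j(w)|\lesssim(1+N)\rho(w)^{-1}$.

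The only delicate point is the uniformity of the gradient bound. This requires the local comparability $\rho(z_j)\simeq\rho(w)$ on the enlarged balls $B^{2r}(z_j)$, rather than just on $B(z_j)$, which is why Lemma~\ref{rhoB}(A) is needed instead of \eqref{rho123}. It also requires the uniform finite multiplicity \eqref{NNN}. Both are already available, so no real obstacle remains.
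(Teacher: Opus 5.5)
Your proposal is correct and follows the paper's proof essentially verbatim. Both take an $r$-lattice from Lemma~\ref{cover}, use rescaled bumps equal to $1$ on $B^r(z_j)$ and supported in $B^{2r}(z_j)$, normalize by $\sum_k\eta_k\geq1$, and get the gradient bound from Lemma~\ref{rhoB}(A) and the multiplicity bound \eqref{NNN} with $M=2$. One small point: smoothness of $\Phi$ needs local finiteness on neighborhoods, not only the pointwise count in \eqref{NNN}; this follows from \eqref{NNN} with a larger $M$ together with Lemma~\ref{rhoB}(A), and the paper does not spell it out either.
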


\begin{proof}
Let $\{z_j\}_{j=1}^\infty$ be an $r$ lattice as in Lemma~\ref{cover}, and set
\[
\chi_{z_k}=\eta\left(\frac{\cdot-z_k}{2r\rho(z_k)}\right),
\qquad
\psi_j=\frac{\chi_{z_j}}{\sum_{k=1}^\infty\chi_{z_k}},
\]
where $\eta$ is as in \eqref{eta}. Then
$0\leq\chi_{z_k}\leq1$, $\operatorname{supp}\chi_{z_k}\subseteq
B^{2r}(z_k)$, and $\sum_k\chi_{z_k}\geq1$. Thus
$\{\psi_j\}$ is a partition of unity with the stated support properties.
For $1\leq i\leq n$, Lemma~\ref{cover}\textup{(C)} and the local
comparability of $\rho$ give
\begin{align}\label{3e15}
\left|\frac{\partial\psi_j}{\partial w_i}(w)\right|
\leq 2\sum_{\{k:w\in B^{2r}(z_k)\}}
\left|\frac{\partial\chi_{z_k}}{\partial w_i}(w)\right|
\lesssim \rho(w)^{-1}
\sum_{k=1}^\infty\chi_{B^{2r}(z_k)}(w)
\lesssim N(r,2)\rho(w)^{-1}.
\end{align}
Similarly,
\begin{align}\label{3e16}
\left|\frac{\partial\psi_j}{\partial\overline w_i}(w)\right|
\lesssim\rho(w)^{-1}.
\end{align}
This proves the lemma.
\end{proof}
 To this end, we introduce the associated IDA spaces and  establish a suitable
decomposition theorem. Let $q \ge 1$ and $r > 0$. For $f \in L_{\mathrm{loc}}^q(\CC)$, define the function $G_{q,r}(f)$ to be
$$
G_{q,r}(f)(z) = \inf \left\{ \left( \frac{1}{|B^r(z)|} \int_{B^r(z)} |f - h|^q \, dV \right)^{1/q} : h \in H(B^r(z)) \right\}, \quad z \in \CC.
$$
 For $0 < s \le \infty$, $\alpha \in \mathbb{R}$, the space $\mathrm{IDA}_r^{s,q,\alpha}$ consists of all $f \in L^q_{\mathrm{loc}}(\CC)$ such that
$$
\|f\|_{\mathrm{IDA}_r^{s,q,\alpha}} = \|\rho^\alpha G_{q,r}(f)\|_{L^s} < \infty.
$$
The space $\mathrm{IDA}_r^{\infty,q,\alpha}$ is also denoted by $\mathrm{BDA}_r^{q,\alpha}$. The space $\mathrm{VDA}_r^{q,\alpha}$ consists of all $f \in \mathrm{BDA}_r^{q,\alpha}$ such that
$$
\lim_{z \to \infty} \rho(z)^\alpha G_{q,r}(f)(z) = 0.
$$
Note that the space $\mathrm{IDA}_r^{s,q,\alpha}$ (integral distance to holomorphic functions) is closely related to the space of bounded mean oscillation, which has played an important role in operator theory and related areas (see \cite{Zhu07}). 

Next, we discuss a decomposition theorem for the spaces $\mathrm{IDA}_r^{s,q,\alpha}$.
Let $0 < q < \infty$. For $z \in \mathbb{C}^n$, $f \in L^q_{\mathrm{loc}}(\CC)$ and $r > 0$, we define the $q$-th mean of $|f|$ over $B^r(z)$ by setting
$$
M_{q,r}(f)(z) = \left( \frac{1}{|B^r(z)|} \int_{B^r(z)} |f|^q \, dV \right)^{1/q}.
$$
For $\omega \in L_{0,1}$, we set $M_{q,r}(\omega)(z) = M_{q,r}(|\omega|)(z)$.
Fix $r>0$ and a sufficiently large constant $R_0>1$. Choose
$\delta>0$ so small that
\[
z\in B^\delta(z_j)
\quad\Longrightarrow\quad
B^{R_0\delta}(z_j)\subset B^r(z).
\]
This is possible by Lemma~\ref{rhoB}\textup{(A)}. By
Lemma~\ref{partition}, choose a partition of unity
$\{\psi_j\}_{j=1}^\infty$ subordinate to the $\delta/2$-lattice
$\{B^{\delta/2}(z_j)\}_{j=1}^\infty$ such that
\[
\operatorname{supp}\psi_j\subset B^\delta(z_j),\qquad
0\leq\psi_j\leq1,\qquad
\sum_{j=1}^\infty\psi_j=1,\qquad
|\bar\partial\psi_j(w)|\lesssim\rho(w)^{-1}.
\]
By the choice of the infimum, we may pick
$h_j\in H(B^{R_0\delta}(z_j))$ so that
\begin{align}\label{MhG}
M_{q,R_0\delta}(f-h_j)(z_j)
\leq2G_{q,R_0\delta}(f)(z_j).
\end{align}
 Moreover, we decompose $f=f_1+f_2$ with
\begin{align}\label{f_1f_2}
f_1 = \sum_{j=1}^\infty h_j \psi_j \quad \text{and} \quad f_2 = f - f_1.
\end{align}
If $z\in\operatorname{supp}\psi_j$, then
$\rho(z)\simeq\rho(z_j)$ and
$B^{R_0\delta}(z_j)\subset B^r(z)$. Hence ball inclusion and volume
comparison give
\begin{align*}
G_{q,R_0\delta}(f)(z_j)
\leq C\inf_{h\in H(B^r(z))}
\left(\frac{1}{|B^r(z)|}
\int_{B^r(z)}|f-h|^q\,dV\right)^{1/q}
=C G_{q,r}(f)(z).
\end{align*}
If $\operatorname{supp}\psi_j\cap
\operatorname{supp}\psi_k\ne\varnothing$, Lemmas~\ref{rhoB}\textup{(A)}
and \ref{cover}\textup{(C)} give
$\rho(z_j)\simeq\rho(z_k)$ and place the corresponding balls in a
fixed enlargement of either one. Thus \eqref{MhG} and
the preceding estimate control $h_j-h_k$ on every overlap by
$G_{q,r}(f)(z)$. Applying \eqref{3e16} and the argument in
\cite[Lemma~3.6]{HV23}, after rescaling by $\rho(z_j)$, gives constants
$t,C>0$ such that
\begin{align}\label{local-decompo}
\rho(z)|\bar\partial f_1(z)|
+M_{q,t}(\rho\bar\partial f_1)(z)
+M_{q,t}(f_2)(z)
\leq C G_{q,r}(f)(z),
\qquad z\in\CC.
\end{align}
For the corresponding results on doubling Fock spaces in one complex
variable, see \cite[Theorem~3.6]{LW24} and
\cite[Theorem~1.1]{AHV24}.

\begin{thm}\label{decompo} 
Suppose $1 \le q < \infty$, $0 < s \le \infty$, and $\alpha \in \mathbb{R}$. A function $f \in L^q_{\mathrm{loc}}(\CC)$ belongs to $\mathrm{IDA}_r^{s,q,\alpha}$ if and only if $f$ admits a decomposition $f = f_1 + f_2$ such that $f_1 \in \mathcal{C}^2(\CC)$ and
\begin{align}\label{ff12ff12}
\rho^{1+\alpha} \left| \bar{\partial} f_1 \right| \in L^s, \, \rho^{\alpha} \, M_{q,r}(\rho\bar{\partial} f_1) + \rho^\alpha \, M_{q,r}(f_2) \in L^s 
\end{align}
for some $r > 0$. Furthermore,
$$
\|f\|_{\mathrm{IDA}_r^{s,q,\alpha}} \simeq \inf \left\{\left\|\rho^{1+\alpha} | \bar{\partial} f_1|\right\|_{L^s}+\left\| \rho^{\alpha} \, M_{q,r}(\rho\bar{\partial} f_1) \right\|_{L^s} + \left\| \rho^\alpha \, M_{q,r}(f_2) \right\|_{L^s} \right\},
$$
where the infimum is taken over all decompositions $f = f_1 + f_2$ that satisfy \eqref{ff12ff12}.
\end{thm}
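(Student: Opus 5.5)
Necessity uses the decomposition \eqref{f_1f_2} constructed just before the statement; sufficiency uses the pointwise relation between $G_{q,r}$ and local averages, plus a scale-change lemma so that a single radius $r$ can be used throughout.

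\textbf{Necessity.} Suppose $f\in\mathrm{IDA}_r^{s,q,\alpha}$. Take $f=f_1+f_2$ from \eqref{f_1f_2}. Since $f_1=\sum_j h_j\psi_j$ is a locally finite sum of holomorphic functions times smooth cut-offs, $f_1\in\mathcal C^2(\CC)$ (in fact $\mathcal C^\infty$). Estimate \eqref{local-decompo} gives
\[
\rho^{1+\alpha}|\bar\partial f_1|+\rho^\alpha M_{q,t}(\rho\bar\partial f_1)+\rho^\alpha M_{q,t}(f_2)\lesssim \rho^\alpha G_{q,r}(f)
\]
pointwise. Taking $L^s$ quasinorms shows that the infimum is bounded by a constant times $\|f\|_{\mathrm{IDA}_r^{s,q,\alpha}}$, with the radius $t$ in place of $r$.

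To restate this with an arbitrary radius, I will prove a comparison lemma for $0<t<t'$:
\[
\|\rho^\alpha M_{q,t}(g)\|_{L^s}\simeq\|\rho^\alpha M_{q,t'}(g)\|_{L^s},
\qquad
\|\rho^\alpha G_{q,t}(f)\|_{L^s}\simeq\|\rho^\alpha G_{q,t'}(f)\|_{L^s}.
\]
The inequality $M_{q,t}(g)(z)\lesssim M_{q,t'}(g)(z)$ is immediate from $|B^t(z)|\simeq|B^{t'}(z)|$. For the reverse direction, cover $B^{t'}(z)$ by boundedly many balls $B^{t}(w_k)$ with $w_k\in B^{t'}(z)$; this uses Lemma~\ref{cover} together with doubling, and $\rho(w_k)\simeq\rho(z)$ from Lemma~\ref{rhoB}. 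This gives $M_{q,t'}(g)(z)^q\lesssim\sum_k M_{q,t}(g)(w_k)^q$. Then integrate in $z$ over each lattice cell, where $M_{q,t}(g)(w)$ for $w$ near $w_k$ controls $M_{q,t/2}$-type averages. For $s=\infty$ this is direct. For $s<\infty$ I will use the standard discretization $\|\rho^\alpha M_{q,t}g\|_{L^s}^s\simeq\sum_k\rho(a_k)^{\alpha s+2n}M_{q,t}(g)(a_k)^s$ over a lattice $\{a_k\}$, proved via Lemma~\ref{rhoB}(B) and \eqref{NNN}. Running the same argument for $G$, the main point is that gluing is not needed: $G_{q,t'}$ is controlled by $M_{q,t'}(f_2)$ plus $\bar\partial f_1$ terms, as in the sufficiency step below. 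I expect this radius-independence to be the main technical obstacle, and I will follow \cite[Lemma~3.6]{HV23} after rescaling by $\rho(z)$.

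\textbf{Sufficiency.} Given any admissible decomposition, $G_{q,r}(f)\le G_{q,r}(f_1)+M_{q,r}(f_2)$, because $h=0$ is an admissible competitor for $f_2$. For $f_1$, solve $\bar\partial u=\bar\partial f_1$ on $B^r(z)$ with the local Euclidean $\bar\partial$ estimate on a ball: rescale $B^r(z)$ to the unit ball and use the Bochner--Martinelli or Henkin kernel, which is bounded on $L^q$ with gain of one derivative. Because $\rho\simeq\rho(z)$ on the ball, this yields
\[
\left(\frac1{|B^r(z)|}\int_{B^r(z)}|u|^q\,dV\right)^{1/q}\lesssim M_{q,r}(\rho\bar\partial f_1)(z).
\]
Since $f_1-u$ is holomorphic on $B^r(z)$, we get $G_{q,r}(f_1)(z)\lesssim M_{q,r}(\rho\bar\partial f_1)(z)$. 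Multiplying by $\rho^\alpha$ and taking $L^s$ norms gives $\|f\|_{\mathrm{IDA}_r^{s,q,\alpha}}\lesssim$ the expression inside the infimum. With the comparison lemma, this holds for any radius, which proves the norm equivalence.
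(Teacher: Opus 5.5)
Your proposal is essentially the paper's proof: necessity uses the decomposition \eqref{f_1f_2}, the pointwise bound \eqref{local-decompo} and the radius comparison \eqref{mean-radius}, and sufficiency uses $G_{q,r}(f)\le G_{q,r}(f_1)+M_{q,r}(f_2)$ together with a rescaled local $L^q$ estimate for $\bar\partial$ on balls. The one difference is the ball in the local estimate: the paper quotes the estimate from \cite{HV23} on an enlarged ball $B^{Rr}(z)$ and absorbs the change of radius via \eqref{mean-radius}, whereas your same-ball version must use Henkin's solution operator with its $L^q$ bounds on the unit ball plus scaling, since the Bochner--Martinelli integral does not solve $\bar\partial$ for non-compactly supported forms when $n\ge 2$, and no derivative gain is actually needed.
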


\begin{proof}
Lemmas~\ref{rhoB} and \ref{partition} give, for any \(a,b>0\),
\begin{align}\label{mean-radius}
\|\rho^\alpha M_{q,a}(u)\|_{L^s}
\simeq
\|\rho^\alpha M_{q,b}(u)\|_{L^s}.
\end{align}
If \(f\in\mathrm{IDA}_r^{s,q,\alpha}\), multiplying
\eqref{local-decompo} by \(\rho^\alpha\), taking \(L^s\) norms, and
using \eqref{mean-radius}, we obtain
\[
\|\rho^{1+\alpha}|\bar\partial f_1|\|_{L^s}
+\|\rho^\alpha M_{q,r}(\rho\bar\partial f_1)\|_{L^s}
+\|\rho^\alpha M_{q,r}(f_2)\|_{L^s}
\lesssim
\|f\|_{\mathrm{IDA}_r^{s,q,\alpha}}.
\]

Conversely, for each $z\in\CC$, set
$F_z(\zeta)=f_1(z+\rho(z)\zeta)$. By the local
$\bar\partial$ estimate \cite[(3.22) and (3.25)]{HV23},
for some $R>1$ we have
\begin{align*}
G_{q,r}(f_1)(z)
&=\inf_{h\in H(B(0,r))}
\left(\frac{1}{|B(0,r)|}\int_{B(0,r)}|F_z-h|^q\,dV\right)^{1/q}\\
&\lesssim
\left(\frac{1}{|B(0,Rr)|}\int_{B(0,Rr)}
|\bar\partial_\zeta F_z(\zeta)|^q\,dV(\zeta)\right)^{1/q}\\
&=\left(\frac{1}{|B^{Rr}(z)|}\int_{B^{Rr}(z)}
|\rho(z)\bar\partial f_1(w)|^q\,dV(w)\right)^{1/q}\\
&\lesssim M_{q,Rr}(\rho\bar\partial f_1)(z).
\end{align*}
The last inequality follows from
$\rho(w)\simeq\rho(z)$ on $B^{Rr}(z)$ by
Lemma~\ref{rhoB}\textup{(A)}.
Hence
\[
G_{q,r}(f)
\leq
G_{q,r}(f_1)+M_{q,r}(f_2)
\lesssim
M_{q,Rr}(\rho\bar\partial f_1)+M_{q,r}(f_2).
\]
Taking \(L^s\) norms and applying \eqref{mean-radius} proves the
reverse estimate. The norm equivalence and the independence of the
radius follow at once.
\end{proof}

We shall also need the corresponding characterization of Carleson measures. Before proving the theorem, we establish several key auxiliary results. Write \( k^\varphi_{z,p}:= K^\varphi_z/\|K^\varphi_z\|_{L^p_\varphi} \) to denote the normalized Bergman kernels in \( F^p_\varphi \).

\begin{lem}\label{kz0} 
For every $0<p\leq\infty$, the set \(\{k^\varphi_{z,p} : z \in \CC\}\) is bounded in \(F^p_\varphi\), and \(k^\varphi_{z,p} \to 0\) uniformly on compact subsets of \(\CC\) as \(|z| \to \infty\). 
\end{lem}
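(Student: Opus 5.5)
Boundedness is immediate: by definition $\|k^\varphi_{z,p}\|_{L^p_\varphi}=1$ for every $z$, and $k^\varphi_{z,p}$ is entire, so the family lies in the unit sphere of $F^p_\varphi$. The real content is the uniform convergence to zero on compact sets, and for this I would combine the upper estimate \eqref{main1} of Theorem~\ref{main} with the norm asymptotics of Proposition~\ref{prop:normkz}.

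Fix a compact set $E\subset\CC$ and take $w\in E$. By \eqref{main1} and \eqref{Kzrho},
\[
|k^\varphi_{z,p}(w)|
\lesssim
\frac{e^{\varphi(z)+\varphi(w)}}{\rho(z)^n\rho(w)^n}e^{-\varepsilon d_\varphi(z,w)}
\cdot\frac{\rho(z)^{2n-\frac{2n}{p}}}{e^{\varphi(z)}}
=
e^{\varphi(w)}\rho(w)^{-n}\,\rho(z)^{n-\frac{2n}{p}}e^{-\varepsilon d_\varphi(z,w)}.
\]
The factors $e^{\varphi(w)}$ and $\rho(w)^{-n}$ are bounded on $E$. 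For $\varphi$, this holds because $\varphi$ is plurisubharmonic, hence upper semicontinuous and locally bounded above. For $\rho$, it holds because Lemma~\ref{rhoB}(A) gives local comparability of $\rho$, and $E$ is covered by finitely many balls $B(w_i)$. Fix a base point $w_0\in E$. Since $d_\varphi\simeq d_\rho$ and, by Lemma~\ref{rhoB}(C), $E$ has bounded $d_\rho$-diameter, we have $d_\varphi(z,w)\geq c\,d_\rho(z,w_0)-C_E$ for all $w\in E$. It therefore suffices to show
\[
\rho(z)^{\,n-\frac{2n}{p}}\,e^{-\varepsilon c\, d_\rho(z,w_0)}\longrightarrow0,\qquad |z|\to\infty.
\]

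The key step is a polynomial control of $\rho(z)^{\pm1}$ in terms of $d_\rho(z,w_0)$. From \eqref{z-w} and \eqref{w-z} with $w=w_0$ and $|z-w_0|\ge\rho(w_0)$, $\rho(z)/\rho(w_0)$ lies between $c\,t^{1-\beta}$ and $C\,t^{1-\alpha}$, where $t=|z-w_0|/\rho(w_0)$. So $\rho(z)^{\pm1}$ grows at most polynomially in $t$. Lemma~\ref{dzw} gives $d_\rho(z,w_0)\gtrsim t^{\alpha}$ for $t\ge1$. Hence
\[
\rho(z)^{\,n-\frac{2n}{p}}\lesssim_{w_0}(1+t)^{N}
\]
for some $N$ depending on $n$, $p$, $\alpha$, $\beta$, while the exponential factor is bounded by $e^{-c' t^{\alpha}}$. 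The product therefore tends to zero as $t\to\infty$, that is, as $|z|\to\infty$, uniformly in $w\in E$. The case $p=\infty$ is included, with exponent $n$.

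The only delicate point I expect is this step: making sure the possibly unbounded growth or decay of $\rho$ (the exponent $n-2n/p$ may have either sign) is beaten by the stretched-exponential decay. The comparison above, via \eqref{z-w}, \eqref{w-z} and Lemma~\ref{dzw}, handles both signs simultaneously, since polynomial growth in $t$ is always dominated by $e^{-c't^\alpha}$ with $\alpha>0$.
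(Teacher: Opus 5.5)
Your proof is correct and follows essentially the same route as the paper. Like the paper, you normalize the upper bound \eqref{main1} by the norm asymptotics of Proposition~\ref{prop:normkz}, control the factor $\rho(z)^{n-2n/p}$ polynomially (for either sign of the exponent), and dominate it by the stretched-exponential decay from Lemma~\ref{dzw}. The only cosmetic difference is how the power bounds on $\rho(z)$ are obtained: the paper derives $|z|^{1-1/(k\delta)}\lesssim\rho(z)\lesssim|z|^{1-k\delta}$ afresh from the doubling estimates centered at the origin, whereas you reuse \eqref{z-w} and \eqref{w-z} at a fixed base point $w_0\in E$.
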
 
 
\begin{proof} 
Let $d\mu=(i\partial\bar\partial\widetilde\varphi)^n$. The boundedness assertion follows from the normalization. Applying Lemma \ref{estimate:doubling} to $B(0,\rho(0))$ and $B(0,|z|)$, we have, for $|z|>\rho(0)$, there exists $\delta\in(0,1)$ such that 
$$|z|^\delta\lesssim\mu(B(0,|z|))\lesssim |z|^{1/\delta}.$$ 
Applying Lemma \ref{estimate:doubling} again to $B(0,2|z|)$ and $B(z,\rho(z))$, we have, for $|z|>\rho(z)$, there exists $k\in(0,1)$ such that 
$$\left(\frac{|z|}{\rho(z)}\right)^k\lesssim\mu(B(0,|z|))\lesssim \left(\frac{|z|}{\rho(z)}\right)^{1/k}.$$ 
If $|z|<\rho(z)$, then $0\in B(z)$. By \eqref{rhozw}, we have $\rho(0)\simeq \rho(z)$ and so $|z|<C$ for some $C>0$. This implies that there exists $\sigma>1$ such that for $|z|>\sigma$, 
\begin{align*} 
|z|^{1-\frac{1}{k\delta}} \lesssim \rho(z)\lesssim |z|^{1-k\delta}. 
\end{align*} 
From Theorem \ref{main}, Proposition \ref{prop:normkz} and Lemma \ref{dzw}, we deduce that for $|z|>|w|$ 
$$|k^\varphi_{z,p}(w)|\leq Ce^{\varphi(w)}\rho(w)^{-n}|z|^{cn}e^{-\left(\frac{|z|-|w|}{\rho(w)}\right)^\varepsilon}$$ 
where 
$$c =  
\begin{cases}  
\left(1-\frac{1}{k\delta}\right)\left(1 - \dfrac{2}{p}\right) & \text{if } p < 2, \\[6pt] 
(1-k\delta)\left(1 - \dfrac{2}{p}\right) & \text{if } p \geq 2. 
\end{cases}$$ 
Hence, $k^\varphi_{z,p}\rightarrow 0$ uniformly on any compact subset of $\CC$ as $|z|\rightarrow\infty$. This completes the proof. 
\end{proof} 
 
 \begin{lem}\label{atomic}
Let \(\varphi\in\mathcal W^*(\CC)\) with \(\rho\in\mathcal S\),
and let \(\{z_j\}_{j=1}^{\infty}\) be an \(r\)-lattice.
For \(0<p\leq\infty\) and
\(\lambda=\{\lambda_j\}_{j=1}^{\infty}\in\ell^p\), the series
\[
f(z)=\sum_{j=1}^{\infty}
\lambda_j k_{z_j,p}^{\varphi}(z)
\]
converges locally uniformly in \(\CC\) and defines a function
\(f\in F_\varphi^p\). Moreover,
\[
\|f\|_{F_\varphi^p}
\lesssim
\|\lambda\|_{\ell^p}.
\]
\end{lem}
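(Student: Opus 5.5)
By Proposition~\ref{prop:normkz} and Theorem~\ref{main}, each normalized kernel satisfies the pointwise bound
\[
|k^\varphi_{z_j,p}(w)|e^{-\varphi(w)}
\lesssim
\rho(z_j)^{2n-\frac{2n}{p}}\rho(z_j)^{-n}\rho(w)^{-n}e^{-\varepsilon d_\rho(z_j,w)},
\]
which we call \textup{(}$\ast$\textup{)}. All the estimates below come from \textup{(}$\ast$\textup{)}, combined with the lattice geometry of Lemma~\ref{cover} and the integral estimates of Lemma~\ref{z-wk2n}. We split the argument according to whether $0<p\leq1$, $1<p<\infty$, or $p=\infty$.

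\emph{Local uniform convergence.} Fix a compact set $E$. On $E$, $\rho$ is bounded above and below by Lemma~\ref{rhoB}\textup{(A)}. Since $|\lambda_j|\leq\|\lambda\|_{\ell^p}$, it suffices to show that
\[
\sum_j \rho(z_j)^{n-2n/p}e^{-\varepsilon d_\rho(z_j,w)}
\]
converges uniformly for $w\in E$. The balls $B^{mr}(z_j)$ are disjoint, and on each of them $\rho\simeq\rho(z_j)$ and $d_\rho(\cdot,w)\geq d_\rho(z_j,w)-c$. We can therefore replace the sum by a constant multiple of
\[
\int_{\CC}\rho(u)^{n-2n/p-2n}e^{-\varepsilon d_\rho(u,w)}\,dV(u),
\]
which is finite by Lemma~\ref{z-wk2n}\textup{(A)}. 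Its tails over $j$ with $z_j\notin B^R(w)$ tend to zero uniformly in $w\in E$ by Lemma~\ref{z-wk2n}\textup{(B)}. Hence the series converges locally uniformly and $f$ is entire. Once the norm bound below is proved, we also get $f\in F^p_\varphi$, first for finite sums and then by Fatou's lemma.

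\emph{Norm estimate for $0<p\leq1$.} Here $p$-subadditivity gives
\[
\|f\|_{F^p_\varphi}^p\leq\sum_j|\lambda_j|^p\|k^\varphi_{z_j,p}\|^p_{F^p_\varphi}=\sum_j|\lambda_j|^p,
\]
which is the claim.

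\emph{Norm estimate for $p=\infty$.} Using \textup{(}$\ast$\textup{)} with $p=\infty$,
\[
|f(w)|e^{-\varphi(w)}\leq\|\lambda\|_\infty\sum_j\rho(z_j)^{n}\rho(w)^{-n}e^{-\varepsilon d_\rho(z_j,w)}.
\]
By the same lattice-to-integral comparison, this sum is at most a constant multiple of
\[
\rho(w)^{-n}\int_{\CC}\rho(u)^{-n}e^{-\varepsilon d_\rho(u,w)}\,dV(u)\lesssim1,
\]
again by Lemma~\ref{z-wk2n}\textup{(A)}.

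\emph{Norm estimate for $1<p<\infty$.} Write $a_j(w)=\rho(z_j)^{n-2n/p}\rho(w)^{-n}e^{-\varepsilon d_\rho(z_j,w)}$ for the right-hand side of \textup{(}$\ast$\textup{)}, so that $|f(w)|e^{-\varphi(w)}\lesssim\sum_j|\lambda_j|a_j(w)$. Split $e^{-\varepsilon d}=e^{-\varepsilon d/p}e^{-\varepsilon d/p'}$ and apply H\"older's inequality in $j$:
\[
\Bigl(\sum_j|\lambda_j|a_j(w)\Bigr)^p
\leq
\Bigl(\sum_j|\lambda_j|^p\rho(z_j)^{\beta p}\rho(w)^{-\gamma p}e^{-\varepsilon d_\rho(z_j,w)}\Bigr)
\Bigl(\sum_j\rho(z_j)^{\beta' p'}\rho(w)^{-\gamma' p'}e^{-\varepsilon d_\rho(z_j,w)}\Bigr)^{p/p'},
\]
where the exponents satisfy $\beta+\beta'=n-2n/p$ and $\gamma+\gamma'=n$. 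Choose them so that the second factor is bounded by the lattice-to-integral comparison and Lemma~\ref{z-wk2n}\textup{(A)}. This requires $\beta'p'+2n-\gamma'p'=0$, for example $\beta'=0$ and $\gamma'=2n/p'$, which gives $\beta=n-2n/p$ and $\gamma=n-2n/p'$. Integrating the first factor in $w$ via Lemma~\ref{z-wk2n}\textup{(A)} then contributes
\[
\rho(z_j)^{\beta p}\rho(z_j)^{2n-\gamma p}.
\]
The resulting exponent is $(n-2n/p)p+2n-(n-2n/p')p=0$, so the total is $\lesssim\sum_j|\lambda_j|^p$. The main obstacle is exactly this exponent bookkeeping together with the replacement of lattice sums by integrals; the latter needs the disjointness in Lemma~\ref{cover}\textup{(B)} and the local comparability of $\rho$ and $d_\rho$ from Lemmas~\ref{rhoB} and~\ref{dzw}.
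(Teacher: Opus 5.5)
Your approach is valid but differs from the paper's, and as written the exponent bookkeeping for $1<p<\infty$ contains two errors that happen to cancel.

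\textbf{Comparison of routes.} The paper handles $0<p\le1$ by $p$-subadditivity, exactly as you do. For $1<p\le\infty$ it never uses the off-diagonal decay of $K^\varphi$ directly. It combines the sampling inequality $\sum_j|h(z_j)|^te^{-t\varphi(z_j)}\rho(z_j)^{2n}\lesssim\|h\|_{F^t_\varphi}^t$ (from Lemma~\ref{meaninequality}(A) and finite overlap) with the duality $F^p_\varphi=(F^{p'}_\varphi)^*$ of Theorem~\ref{dual}. For $p=\infty$ it gets pointwise convergence by applying the same inequality with $t=1$ to $K_z^\varphi$, and then uses Montel's theorem.

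Your argument is instead a direct Schur-type estimate built from $(\ast)$, the disjoint balls $B^{mr}(z_j)$, and Lemma~\ref{z-wk2n}. It avoids duality and the Bergman projection, and gives absolute, locally uniform convergence together with the norm bound. The price is that it depends on the full off-diagonal decay and on exact exponent bookkeeping.

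\textbf{The bookkeeping error for $1<p<\infty$.} The lattice-to-integral comparison carries the density factor $\rho(u)^{-2n}$, which you used correctly in the other two cases:
\[
\sum_j\rho(z_j)^a e^{-\varepsilon d_\rho(z_j,w)}
\lesssim
\int_{\CC}\rho(u)^{a-2n}e^{-\varepsilon d_\rho(u,w)}\,dV(u)
\lesssim\rho(w)^a,
\qquad a\in\mathbb R .
\]
\begin{itemize}
\item Your second H\"older factor is therefore $\lesssim\rho(w)^{(\beta'-\gamma')p'}$. It is bounded when $\beta'=\gamma'$, not when $\beta'p'+2n-\gamma'p'=0$.
\item With your choice $\beta'=0$, $\gamma'=2n/p'$, that factor is comparable to $\rho(w)^{-2n}$. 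This is unbounded whenever $\rho\to0$.
\item Your closing identity is also false: $(n-2n/p)p+2n-(n-2n/p')p=2n(p-1)$, not $0$.
\end{itemize}
The two mistakes cancel. If you keep the factor $\rho(w)^{-2n(p-1)}$ inside the $w$-integral, the exponent becomes $0$, so your conclusion is true. More generally, any split works provided every power of $\rho(w)$ stays inside the integral, because the total exponent is $(\beta+\beta')p-(\gamma+\gamma')p+2n=0$.

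The cleanest fix is $\beta'=\gamma'=0$. Then the second factor is $\sum_j e^{-\varepsilon d_\rho(z_j,w)}\lesssim1$, you have $\beta=n-2n/p$ and $\gamma=n$, and integrating the first factor in $w$ gives $\rho(z_j)^{(n-2n/p)p+2n-np}=1$.

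\textbf{Two smaller points.}
\begin{itemize}
\item In the lattice comparison you need $d_\rho(u,w)\le d_\rho(z_j,w)+c$ for $u\in B^{mr}(z_j)$, so that $e^{-\varepsilon d_\rho(z_j,w)}\lesssim e^{-\varepsilon d_\rho(u,w)}$. The inequality you wrote goes the other way. Both follow from the triangle inequality and Lemma~\ref{rhoB}(C).
\item For local uniform convergence, the Weierstrass M-test is cleaner than your tail argument via Lemma~\ref{z-wk2n}(B). Fix $w_0\in E$; then $d_\rho(\cdot,w_0)$ is bounded on $E$. So the supremum over $E$ of the $j$-th term is bounded, up to a constant depending on $E$, by $\rho(z_j)^{n-2n/p}e^{-\varepsilon d_\rho(z_j,w_0)}$. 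This is summable by the comparison above at $w_0$.
\end{itemize}
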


\begin{proof}
By Lemma~\ref{meaninequality}\textup{(A)} and \eqref{NNN},
\[
\sum_{j=1}^{\infty}
|h(z_j)|^t e^{-t\varphi(z_j)}
\rho(z_j)^{2n}
\lesssim
\|h\|_{F_\varphi^t}^t
\]
for every \(0<t<\infty\) and \(h\in F_\varphi^t\).

Set
\[
S_N=\sum_{j=1}^{N}\lambda_jk_{z_j,p}^{\varphi}.
\]
If \(0<p\leq1\), then
\[
\|S_N-S_M\|_{F_\varphi^p}^p
\leq
\sum_{j=M+1}^{N}|\lambda_j|^p.
\]
Hence \(S_N\) converges in \(F_\varphi^p\), and
$\|f\|_{F_\varphi^p}\leq\|\lambda\|_{\ell^p}.$
Lemma~\ref{meaninequality}\textup{(A)} also gives uniform
convergence on compact subsets.

Now suppose that \(1<p\leq\infty\), and let \(p'\) be the
conjugate exponent of \(p\), where \(p'=1\) if \(p=\infty\).
For \(g\in F_\varphi^{p'}\), Proposition~\ref{prop:normkz}
and H\"older's inequality give
\begin{align*}
\bigl|\langle S_N,g\rangle_\varphi\bigr|
\leq
\sum_{j=1}^{N}
\frac{|\lambda_j|\,|g(z_j)|}
{\|K_{z_j}^{\varphi}\|_{L_\varphi^p}}
\lesssim
\|\{\lambda_j\}_{j=1}^{N}\|_{\ell^p}
\left(
\sum_{j=1}^{N}
|g(z_j)|^{p'}e^{-p'\varphi(z_j)}
\rho(z_j)^{2n}
\right)^{1/p'}
\lesssim
\|\{\lambda_j\}_{j=1}^{N}\|_{\ell^p}
\|g\|_{F_\varphi^{p'}}.
\end{align*}
Theorem~\ref{dual}, applied with exponent \(p'\), therefore yields
\[
\|S_N\|_{F_\varphi^p}
\lesssim
\|\{\lambda_j\}_{j=1}^{N}\|_{\ell^p}.
\]

If \(1<p<\infty\), the same estimate applied to \(S_N-S_M\)
shows that \(S_N\) converges in \(F_\varphi^p\). The desired norm
estimate and uniform convergence on compact subsets follow from
Theorem~\ref{dual} and Lemma~\ref{meaninequality}\textup{(A)}.

Finally, let \(p=\infty\). Proposition~\ref{prop:normkz} and the
preceding sampling estimate with \(t=1\) imply that
\[
\sum_{j=1}^{\infty}
|\lambda_jk_{z_j,\infty}^{\varphi}(z)|
\lesssim
\|\lambda\|_{\ell^\infty}
\sum_{j=1}^{\infty}
|K_z^\varphi(z_j)|e^{-\varphi(z_j)}
\rho(z_j)^{2n}
\lesssim
\|\lambda\|_{\ell^\infty}
\|K_z^\varphi\|_{F_\varphi^1}.
\]
Thus the series converges pointwise. The preceding uniform bound
for \(S_N\), together with Montel's theorem, gives uniform
convergence on compact subsets. Taking the limit yields
$
\|f\|_{F_\varphi^\infty}
\lesssim
\|\lambda\|_{\ell^\infty}.
$
This completes the proof.
\end{proof}

We first recall the definition of Fock Carleson measures.
\begin{defn}
Suppose \( \mu \) is a positive Borel measure on \( \CC \) and \( 0 < p, q < \infty \). If the embedding \( \mathrm{Id} : F_{\varphi}^p \to L^q(\CC, e^{-q\varphi} \, d\mu) \) is continuous (or compact) then \( \mu \) is said to be a \( q \)-Carleson measure (or a vanishing \( q \)-Carleson measure) for \( F_{\varphi}^p \).
\end{defn}
For every $r>0$, the  $r$-averaging transform of $\mu$ is defined by
 $$
 \widehat{\mu}_r (z) := \frac{\mu(B^r(z))}{ |B^r(z)|}\simeq \frac{\mu(B^r(z))}{\rho(z)^{2n}} \qquad z\in \CC.
 $$

The following proposition follows from the arguments in
\cite[Theorems~3.1, 3.2, and~3.3]{HL11}. See also
\cite{WTH,AC19}. We record the estimates involving the local scale
\(\rho\).
\begin{prop}\label{car}
Let $\varphi\in \mathcal{W}^*(\CC)$ with $\rho\in \mathcal{S}$ and let \(\mu\) be a positive Borel measure on \(\CC\).\\
\textup{(A)} For \(1\leq p \leq q < \infty\), \(\mu\) is a \(q\)-Carleson measure for \(F_{\varphi}^p\) if and only if
\[
\sup_{z \in \CC} \widehat{\mu}_r(z) \, \rho(z)^{2n(1-\frac{q}{p})} < \infty
\]
for some (or any) \(r >0\). In addition, \(\mu\) is a vanishing \(q\)-Carleson measure for \(F_{\varphi}^p\) if and only if
\[
\lim_{|z| \to \infty} \widehat{\mu}_r(z) \, \rho(z)^{2n(1-\frac{q}{p})} = 0
\]
for some (or any) \(r>0\). Moreover, 
\[
\| \mathrm{Id} \|_{F_{\varphi}^p \to L^q(\CC, e^{-q\varphi} d\mu)} \simeq 
\bigl\| (\widehat{\mu}_r)^{\frac{1}{q}} \rho(z)^{2n(\frac{1}{q}-\frac{1}{p})} \bigr\|_{L^\infty}.
\]
\textup{(B)} For \(1\leq q < p < \infty\), \(\mu\) is a \(q\)-Carleson measure for \(F_{\varphi}^p\) if and only if \(\mu\) is a vanishing \(q\)-Carleson measure for \(F_{\varphi}^p\) if and only if
\[
\widehat{\mu}_r \in L^{\frac{p}{p-q}}
\]
for some (or any) \(r>0\). Moreover,
\[
\| \mathrm{Id} \|_{F_{\varphi}^p \to L^q(\CC, e^{-q\varphi} d\mu)} \simeq 
\bigl\| (\widehat{\mu}_r)^{\frac{1}{q}} \bigr\|_{L^{\frac{pq}{p-q}}}.
\]
\end{prop}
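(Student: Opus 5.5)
We would follow the standard Luecking/Hu--Lhamu scheme, localized at the scale $\rho$. The basic tool is a discretization inequality. Fix an $r$-lattice $\{a_k\}$ from Lemma~\ref{cover}. Lemma~\ref{meaninequality}(A), applied to $h\in F^p_\varphi$ with $p$ replaced by $q$, together with Lemma~\ref{rhoB}(B) gives
\[
\sup_{z\in B^r(a_k)}|h(z)|^q e^{-q\varphi(z)}\lesssim \rho(a_k)^{-2n}\int_{B^{m_1 r}(a_k)}|h|^q e^{-q\varphi}\,dV .
\]
Summing over $k$ and using the finite multiplicity \eqref{NNN} yields
\[
\int|h|^q e^{-q\varphi}d\mu\lesssim \sum_k \mu(B^r(a_k))\,\rho(a_k)^{-2n}\int_{B^{m_1r}(a_k)}|h|^qe^{-q\varphi}dV .
\]
We also record once that $\widehat\mu_r$ for different radii are pointwise comparable up to shifting centres within a lattice, by Lemma~\ref{rhoB}. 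This gives the "some (or any) $r$" clauses.

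\textbf{Case (A), $p\le q$.} For sufficiency, bound $\mu(B^r(a_k))\rho(a_k)^{-2n}\le S\,\rho(a_k)^{2n(q/p-1)}$, where $S$ is the supremum in the statement. It remains to show
\[
\sum_k \rho(a_k)^{2n(q/p-1)}\int_{B^{m_1r}(a_k)}|h|^qe^{-q\varphi}\lesssim \|h\|_{F^p_\varphi}^q .
\]
To do this, write $|h|^q=|h|^{q-p}|h|^p$ and use Lemma~\ref{meaninequality}(A) again: $|h|^{q-p}e^{-(q-p)\varphi}\lesssim \rho(a_k)^{-2n(q-p)/p}\|h\|_p^{q-p}$ on the enlarged ball. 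The powers of $\rho$ cancel exactly, leaving $\|h\|_p^{q-p}\sum_k\int_{B^{m_1r}(a_k)}|h|^pe^{-p\varphi}\lesssim\|h\|_p^q$. This is the point where the varying scale must cancel, and the exponent $2n(1-q/p)$ is designed for that.

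For necessity in (A), test on $h=k^\varphi_{z,p}$. Theorem~\ref{main} gives the lower bound on $B^r(z)$ for small $r$, namely $|K_z|e^{-\varphi}\gtrsim e^{\varphi(z)}\rho(z)^{-2n}$. Proposition~\ref{prop:normkz} gives $\|K_z\|_p\simeq e^{\varphi(z)}\rho(z)^{2n/p-2n}$. Hence $\|\mathrm{Id}\|^q\gtrsim \mu(B^r(z))\rho(z)^{-2nq/p}$, which is the stated quantity. For the vanishing statement, compactness together with Lemma~\ref{kz0} (weak nullity of $k_{z,p}$ for $p>1$; for $p=1$ use local uniform convergence and boundedness) forces $\|k_{z,p}\|_{L^q(\mu)}\to0$. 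Conversely, split $\mu=\mu\chi_{B(0,R)}+\mu\chi_{B(0,R)^c}$. The tail part has small norm by the sufficiency estimate, and the compact part is compact by Montel together with Lemma~\ref{meaninequality}(A).

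\textbf{Case (B), $q<p$.} For sufficiency, apply Hölder in the discretized sum with exponents $p/q$ and $p/(p-q)$:
\[
\sum_k \widehat\mu(a_k)\int_{B^{m_1r}(a_k)}|h|^qe^{-q\varphi}\le\Big(\sum_k\widehat\mu(a_k)^{\frac p{p-q}}\rho(a_k)^{2n}\Big)^{\frac{p-q}p}\Big(\sum_k\rho(a_k)^{2n}\big(\rho(a_k)^{-2n}\!\int|h|^qe^{-q\varphi}\big)^{p/q}\Big)^{q/p}.
\]
In the second factor, Jensen bounds the inner average to the power $p/q$ by the average of $|h|^pe^{-p\varphi}$, so the factor is $\lesssim\|h\|_p^q$. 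The first factor is $\simeq\|\widehat\mu_r\|_{L^{p/(p-q)}}$ after comparing sums over the lattice with integrals (Lemma~\ref{rhoB}). Compactness then follows from the same tail splitting, since the tail of the $L^{p/(p-q)}$ norm tends to $0$.

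The main obstacle is necessity in (B). Here we would use Luecking's Khinchine argument. Take $h_t=\sum_j\lambda_j r_j(t)k^\varphi_{a_j,p}$ with Rademacher functions $r_j$ and $\lambda\in\ell^p$. Lemma~\ref{atomic} gives $\|h_t\|_p\lesssim\|\lambda\|_{\ell^p}$. Integrating in $t$ and applying Khinchine's inequality gives
\[
\int\Big(\sum_j|\lambda_j|^2|k_{a_j,p}|^2\Big)^{q/2}e^{-q\varphi}d\mu\lesssim\|\mathrm{Id}\|^q\|\lambda\|_{\ell^p}^q .
\]
Restricting to $B^r(a_j)$ (bounded overlap) and using the near-diagonal lower bound from Theorem~\ref{main} with $r$ small, we get
\[
\sum_j|\lambda_j|^q\,\mu(B^r(a_j))\rho(a_j)^{-2nq/p}\lesssim\|\mathrm{Id}\|^q\|\lambda\|_{\ell^p}^q .
\]
By $\ell^{p/q}$–$\ell^{p/(p-q)}$ duality, this says $\{\mu(B^r(a_j))\rho(a_j)^{-2nq/p}\}_j\in\ell^{p/(p-q)}$. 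Rewriting this sequence as $\widehat\mu_r(a_j)\rho(a_j)^{2n(p-q)/p}$, it is exactly the lattice discretization of $\|\widehat\mu_r\|_{L^{p/(p-q)}}$. The care needed is in checking that the lattice lower bound for the kernel holds uniformly with the correct powers of $\rho$, and in passing between sums over lattices of different radii.
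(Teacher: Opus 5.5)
Your proposal is correct and follows essentially the same route as the paper. For necessity in (A) you test on $k^\varphi_{z,p}$ with the near-diagonal lower bound, for necessity in (B) you use the Rademacher/Khinchine argument with Lemma~\ref{atomic} and $\ell^{p/q}$--$\ell^{p/(p-q)}$ duality, and for compactness you truncate $\mu$. The only cosmetic difference is in the sufficiency estimates: you discretize over a lattice, whereas the paper uses the averaged inequality $\int|g|^qe^{-q\varphi}\,d\mu\lesssim\int|g|^qe^{-q\varphi}\widehat\mu_r\,dV$ and then applies the pointwise bound (for $p\le q$) or H\"older's inequality (for $q<p$) directly to the integral.
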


\begin{proof}
By Lemma~\ref{rhoB}\textup{(A)} and \eqref{NNN}, changing \(r\)
only changes the quantities below by a constant. We therefore fix
\(r>0\) sufficiently small. Lemma~\ref{meaninequality}\textup{(A)}
and Fubini's theorem give
\begin{align}\label{car-mean}
\int_{\CC}|g|^q e^{-q\varphi}\,d\mu
\lesssim
\int_{\CC}|g(z)|^q e^{-q\varphi(z)}
\widehat{\mu}_r(z)\,dV(z).
\end{align}

Suppose \(p\leq q\) and set
\[
A_r(z)=\widehat{\mu}_r(z)^{1/q}
\rho(z)^{2n(\frac1q-\frac1p)}.
\]
Theorem~\ref{main} and Proposition~\ref{prop:normkz} yield
\[
A_r(z)^q
\lesssim
\int_{B^r(z)}
|k_{z,p}^\varphi(w)|^q e^{-q\varphi(w)}\,d\mu(w)
\leq
\|\mathrm{Id}(k_{z,p}^\varphi)\|_{L^q(\mu)}^q.
\]
Moreover, Lemma~\ref{meaninequality}\textup{(A)} and \eqref{NNN}
give
\begin{align}\label{FpFq-weight}
\int_{\CC}|g|^q e^{-q\varphi}
\rho^{2n(\frac qp-1)}\,dV
\lesssim
\|g\|_{F_\varphi^p}^q.
\end{align}
Consequently, by \eqref{car-mean} and \eqref{FpFq-weight},
\[
\|\mathrm{Id}(g)\|_{L^q(\mu)}^q
\lesssim
\|A_r\|_{L^\infty}^q\|g\|_{F_\varphi^p}^q.
\]
Suppose $p=1$ and the embedding $\mathrm{Id}$ is compact. If
$|z_j|\to\infty$, then a subsequence of
$\mathrm{Id}(k_{z_j,1}^\varphi)$ converges in $L^q(\mu)$. By
Lemma~\ref{kz0}, it converges pointwise to zero. Hence
a further subsequence converges almost everywhere to its norm
limit, and this limit is zero. Consequently,
\[
\|\mathrm{Id}(k_{z,1}^\varphi)\|_{L^q(\mu)}\longrightarrow0,
\qquad |z|\longrightarrow\infty.
\]
The preceding lower estimate gives $A_r(z)\to0$. The remaining
compactness implications follow from Lemma~\ref{kz0} and
\cite[Theorem~3.2]{HL11}. Thus the estimates above prove
\textup{(A)}.

Now suppose \(q<p\). By \eqref{car-mean} and H\"older's inequality,
\[
\|\mathrm{Id}(g)\|_{L^q(\mu)}^q
\lesssim
\|g\|_{F_\varphi^p}^q
\|\widehat{\mu}_r\|_{L^{p/(p-q)}}.
\]
Conversely, let \(\{z_j\}\) be an \(r\) lattice. Lemma~\ref{atomic},
Khinchine's inequality, and the lower estimate in
Theorem~\ref{main} give, for every finitely supported
\(\lambda\in\ell^p\),
\[
\sum_j|\lambda_j|^q A_r(z_j)^q
\lesssim
\int_0^1
\left\|
\mathrm{Id}\left(
\sum_j\lambda_j\varepsilon_j(\tau)k_{z_j,p}^\varphi
\right)
\right\|_{L^q(\mu)}^q\,d\tau
\lesssim
\|\mathrm{Id}\|^q\|\lambda\|_{\ell^p}^q.
\]
Setting \(\sigma=pq/(p-q)\), the diagonal operator criterion and
Lemma~\ref{rhoB} imply
\[
\bigl\|(\widehat{\mu}_r)^{1/q}\bigr\|_{L^\sigma}^\sigma
\simeq
\sum_j A_r(z_j)^\sigma
\lesssim
\|\mathrm{Id}\|^\sigma.
\]
The compactness assertion follows by truncating \(\mu\), as in
\cite[Theorem~3.3]{HL11}. The preceding estimates also give the
stated norm equivalences.
\end{proof}

\begin{re}\label{FpFq}
Fock spaces associated with different exponents need not be nested.
Indeed, let \(\beta>2\) and
\[
\varphi_\beta(z)=|z|^\beta+\operatorname{Re}z_1,
\qquad z\in\mathbb C^n.
\]
Then \(\varphi_\beta\in\mathcal W^*(\mathbb C^n)\), and since
\(\operatorname{Re}z_1\) is pluriharmonic, one may take
\[
\rho_\beta(z)\simeq(1+|z|)^{1-\frac{\beta}{2}}
\longrightarrow0
\qquad\text{as }|z|\to\infty.
\]
Fix \(1<p<q<\infty\). For \(d\mu=dV\), we have
\(\widehat\mu_r\equiv1\). Proposition~\ref{car} gives
\[
\sup_{z\in\mathbb C^n}
\rho_\beta(z)^{2n(1-q/p)}=\infty
\quad\text{and}\quad
1\notin L^{\frac{q}{q-p}}(\mathbb C^n).
\]
Hence neither identity map
\[
F_{\varphi_\beta}^p\longrightarrow F_{\varphi_\beta}^q,
\qquad
F_{\varphi_\beta}^q\longrightarrow F_{\varphi_\beta}^p
\]
is bounded. Since convergence in each Fock space implies local
uniform convergence, the closed graph theorem yields
\[
F_{\varphi_\beta}^p\not\subset F_{\varphi_\beta}^q
\qquad\text{and}\qquad
F_{\varphi_\beta}^q\not\subset F_{\varphi_\beta}^p.
\]
\end{re}

Our analysis on the Hankel operators \( H_f \) from \( F_\varphi^p \) to \( L_\varphi^q \) will be carried out in two cases: \( 1 \leq p \leq q < \infty \) and \( 1 \leq q < p < \infty \). We always set \( \gamma = n(1/q - 1/p) \).

\begin{thm}\label{Hfpq1}
Let \( 1 \leq p \leq q < \infty \). Suppose \( f \in\mathcal{Q}_p \) and $\varphi\in \mathcal W_{\mathrm{gc}}^*(\CC)$. Then \\
 \textup{(i)} \( H_f : F_\varphi^p \to L_\varphi^q \) is bounded if and only if \( f \in \mathrm{BDA}_r^{q, 2\gamma} \) for some (or any) $r>0$. Furthermore, 
\[
\|H_f\|_{F_\varphi^p \to L_\varphi^q} \simeq \|f\|_{\mathrm{BDA}_r^{q, 2\gamma }}.
\]
 \textup{(ii)}  \( H_f : F_\varphi^p \to L_\varphi^q \) is compact if and only if \( f \in \mathrm{VDA}_r^{q, 2\gamma} \) for some (or any) $r>0$. 
\end{thm}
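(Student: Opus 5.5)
We follow the scheme of Hu and Virtanen, with all local quantities rescaled by $\rho$. Throughout, $\gamma=n(1/q-1/p)\le 0$. The weight $\rho^{2\gamma}$ is exactly the factor in Proposition~\ref{car}(A), where $\rho(z)^{2n(1/q-1/p)}=\rho(z)^{2\gamma}$.

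\emph{Sufficiency in (i).} Let $f\in\mathrm{BDA}_r^{q,2\gamma}$ and apply Theorem~\ref{decompo} with $s=\infty$ and $\alpha=2\gamma$. This gives $f=f_1+f_2$ with $f_1\in\mathcal C^2$,
\[
\rho^{1+2\gamma}|\bar\partial f_1|\in L^\infty,
\qquad
\rho^{2\gamma}M_{q,r}(f_2)\in L^\infty .
\]
Both norms are controlled by $\|f\|_{\mathrm{BDA}_r^{q,2\gamma}}$. Take $g\in\Gamma$.
\begin{itemize}
\item[] For the $f_2$ part: $H_{f_2}g=f_2g-P_\varphi(f_2g)$. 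By Theorem~\ref{thm:projbnd} on $L^q_\varphi$ it suffices to bound $\|f_2g\|_{L^q_\varphi}$. Set $d\mu=|f_2|^q\,dV$. Then $\widehat\mu_r=M_{q,r}(f_2)^q$, so $\sup_z\widehat\mu_r(z)\rho(z)^{2n(1-q/p)}<\infty$. Proposition~\ref{car}(A) then gives $\|f_2g\|_{L^q_\varphi}\lesssim\|g\|_{F^p_\varphi}$. This also shows $f_2\in\mathcal Q_p$, hence $f_1=f-f_2\in\mathcal Q_p$.
\item[] For the $f_1$ part: use Theorem~\ref{HfNf1} and Lemma~\ref{solve1}, with $\kappa=1+2\gamma$ and exponent $\infty$, so that $\rho^\kappa\bar\partial f_1\in L^\infty$ (if $p=q$ then $\gamma=0$ and $\kappa=1$; the case $p<q=\infty$ of Theorem~\ref{HfNf1} is not needed). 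This gives $H_{f_1}g=(I-P_\varphi)T_\varphi(g\bar\partial f_1)$. Lemma~\ref{solve1}(A) with exponent $q$ gives
\[
\|H_{f_1}g\|_{L^q_\varphi}\lesssim\|\rho g\bar\partial f_1\|_{L^q_\varphi}\le\|\rho^{1+2\gamma}\bar\partial f_1\|_{L^\infty}\,\|g\rho^{-2\gamma}\|_{L^q_\varphi}.
\]
Then \eqref{FpFq-weight} gives $\|g\rho^{-2\gamma}\|_{L^q_\varphi}\lesssim\|g\|_{F^p_\varphi}$, since $-2\gamma q=2n(q/p-1)$.
\end{itemize}
To apply Theorem~\ref{HfNf1} we need $p<q$. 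When $p=q$ we instead verify the identity directly: apply Lemma~\ref{solve1}(B) with exponent $\infty$, then argue exactly as in the proof of Theorem~\ref{HfNf1}, using $u\in L^p_\varphi$. Density of $\Gamma$ (Corollary~\ref{dense}) extends the bound to all of $F^p_\varphi$.

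\emph{Necessity in (i).} For $z\in\CC$ and a fixed small $r$, test $H_f$ on $k^\varphi_{z,p}$. By Theorem~\ref{main}, $|K^\varphi_z|\simeq e^{\varphi(z)+\varphi(w)}\rho(z)^{-2n}$ on $B^r(z)$. Write $K_z^\varphi=e^{\Phi_z}$ with $\Phi_z$ holomorphic on $B^r(z)$; this is possible since the kernel is zero-free there and the ball is simply connected. Set
\[
h_z:=P_\varphi(fK^\varphi_z)/K^\varphi_z\in H(B^r(z)).
\]
Then
\[
\int_{B^r(z)}|f-h_z|^q\,dV
\simeq\rho(z)^{2nq}e^{-q\varphi(z)}\int_{B^r(z)}|H_fK^\varphi_z|^qe^{-q\varphi}\,dV.
\]
Using Proposition~\ref{prop:normkz}, $\|K^\varphi_z\|_{L^p_\varphi}\simeq e^{\varphi(z)}\rho(z)^{2n/p-2n}$. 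Combining, $G_{q,r}(f)(z)\rho(z)^{2\gamma}\lesssim\|H_fk^\varphi_{z,p}\|_{L^q_\varphi}\le\|H_f\|$. The powers should balance:
\[
\rho^{2n}\cdot\rho^{2n/p-2n}\cdot\rho^{-2n/q}=\rho^{2n/p-2n/q}=\rho^{-2\gamma}.
\]
So $\rho^{2\gamma}G_{q,r}(f)\lesssim\|H_f\|$, which is the lower bound. Independence of $r$ follows from \eqref{mean-radius}-type comparisons for $s=\infty$, together with Lemma~\ref{rhoB}.

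\emph{Part (ii).} For sufficiency, we get $\rho^{2\gamma}G_{q,r}(f)\to0$ and truncate. Split $f=f\chi_{B(0,R)}+\cdots$ at the level of the decomposition, i.e.\ apply \eqref{local-decompo} with $G_{q,r}(f)$ small outside a large ball. Then $H_f$ is a norm limit of operators whose $f_2$ part gives a vanishing Carleson measure (compact embedding by Proposition~\ref{car}(A)). The $\bar\partial f_1$ part is compact because the multiplier $\rho^{1+2\gamma}\bar\partial f_1$ is compactly supported up to small error, and composing with the local uniform convergence from Lemma~\ref{meaninequality} yields compactness. For necessity, $k^\varphi_{z,p}\to0$ weakly / uniformly on compacts by Lemma~\ref{kz0}. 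For $p>1$ compactness gives $\|H_fk^\varphi_{z,p}\|\to0$ directly. For $p=1$ we use the subsequence/a.e.\ argument as in Proposition~\ref{car}: $H_fk_{z_j}\to$ limit in $L^q_\varphi$, and the limit vanishes by pointwise convergence. The previous lower bound then gives the VDA condition.

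\emph{Expected main obstacle.} The hard step is the $f_1$ part when $p<q$. One must place $\rho$-weights correctly between Lemma~\ref{solve1} and \eqref{FpFq-weight}, and justify that $H_{f_1}$ and $H_{f_2}$ are separately defined on $\Gamma$, i.e.\ that $f_1,f_2\in\mathcal Q_p$. A second delicate point is the compactness sufficiency, since $F^p_\varphi$ and $F^q_\varphi$ are not nested; it should be handled via Carleson measures rather than inclusions.
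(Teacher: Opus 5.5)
Your architecture matches the paper's. The lower bound comes from testing $H_f$ on $k^\varphi_{z,p}$, and your bookkeeping $\rho^{2n}\rho^{2n/p-2n}\rho^{-2n/q}=\rho^{-2\gamma}$ is right. The upper bound uses Theorem~\ref{decompo} with $s=\infty$, $\alpha=2\gamma$, a Carleson embedding for $f_2$, and, for $f_1$, Theorem~\ref{HfNf1} with $\kappa=1+2\gamma$ followed by Lemma~\ref{solve1}(A) in $L^q_\varphi$ and \eqref{FpFq-weight}. Your separate treatment of $p=q$ is unnecessary: Theorem~\ref{HfNf1} is invoked with upper exponent $\infty>p$, which already covers $p=q$.

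\textbf{First gap: $f_2\in\mathcal Q_p$.} The claim that Proposition~\ref{car}(A) ``also shows $f_2\in\mathcal Q_p$'' does not follow. That proposition gives $f_2g\in L^q_\varphi$, whereas $\mathcal Q_p$ requires $f_2g\in L^p_\varphi$. For $p<q$ the first does not imply the second. Nor does $\rho^{2\gamma}M_{q,r}(f_2)\in L^\infty$ make $|f_2|^p\,dV$ a $p$-Carleson measure when $\rho^{-2\gamma}$ is unbounded. Since Theorem~\ref{HfNf1} needs $f_1=f-f_2\in\mathcal Q_p$, this membership must be proved separately, as in \eqref{957}. Apply \eqref{car-mean} with $d\lambda=|f_2|^p\,dV$ and Jensen's inequality (this is where $p\leq q$ is used):
\[
\int_{\CC}|gf_2|^pe^{-p\varphi}\,dV
\lesssim\int_{\CC}|g|^pe^{-p\varphi}M_{q,r}(f_2)^p\,dV
\lesssim\|f\|_{\mathrm{BDA}_r^{q,2\gamma}}^p\int_{\CC}|g|^p\rho^{-2\gamma p}e^{-p\varphi}\,dV .
\]
The last integral is finite for $g\in\Gamma$ because of the off-diagonal decay in Proposition~\ref{Gue} combined with Lemma~\ref{z-wk2n}(A). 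It is not bounded in terms of $\|g\|_{F^p_\varphi}$.

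\textbf{Second gap: necessity in (ii) for $p=1$.} The argument of Proposition~\ref{car} does not transfer. There the operator is the inclusion, so the limit of $k^\varphi_{z_j,1}$ is identified by pointwise convergence. Here $H_f(k^\varphi_{z_j,1})=fk^\varphi_{z_j,1}-P_\varphi(fk^\varphi_{z_j,1})$, and only the first term is known to tend to zero pointwise. For an unbounded $f\in\mathcal Q_1$ you have no pointwise control of $P_\varphi(fk^\varphi_{z_j,1})$, so ``the limit vanishes by pointwise convergence'' is unjustified. The missing idea is to identify the $L^q_\varphi$-limit $u$ structurally.
\begin{enumerate}
\item $f\in L^1_{\mathrm{loc}}$ follows from $fK^\varphi_a\in L^1_\varphi$ and the near-diagonal lower bound of Theorem~\ref{main}. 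Hence $fk^\varphi_{z_j,1}\to0$ in $L^1_{\mathrm{loc}}$ by Lemma~\ref{kz0}.
\item Since $\bar\partial H_f(k^\varphi_{z_j,1})=\bar\partial(fk^\varphi_{z_j,1})$ and $L^q_\varphi$-convergence implies distributional convergence, $\bar\partial u=0$, so $u\in F^q_\varphi$.
\item $P_\varphi H_f(k^\varphi_{z_j,1})=0$ by Theorem~\ref{Pf=fp}, since $P_\varphi(fk^\varphi_{z_j,1})\in F^1_\varphi$. Boundedness of $P_\varphi$ on $L^q_\varphi$ then gives $P_\varphi u=0$, and therefore $u=P_\varphi u=0$.
\end{enumerate}

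\textbf{Sufficiency in (ii).} Your sketch is fine in substance, provided the truncation is done on the pieces of the decomposition and not on $f$ itself. Cutting $f$ off by $\chi_{B(0,R)}$ can make $\rho^{2\gamma}G_{q,r}$ large near $\partial B(0,R)$. The clean version is the paper's: the pointwise bound \eqref{local-decompo} makes $\mu,\nu$ in \eqref{munu} vanishing Carleson measures by Proposition~\ref{car}(A), and $H_{f_1},H_{f_2}$ factor through these compact embeddings.
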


\begin{proof}
(i): Assume that \( H_f : F_\varphi^p \to L_\varphi^q \) is bounded. Fix \(r>0\) sufficiently small. It follows from Theorem~\ref{main} and Proposition~\ref{prop:normkz} that
\[
\inf_{w \in B^r(z)} |k^\varphi_{z,p}(w)| e^{-\varphi(w)} \approx \rho(z)^{-2n/p} > 0,
\]
which means that $\frac{1}{k^\varphi_{z,p}} P_\varphi(fk^\varphi_{z,p}) \in H(B^r(z))$. This implies that
\begin{align*}
\|H_f(k^\varphi_{z,p})\|_{L_{\varphi}^q}^q &= \int_{\CC} |fk^\varphi_{z,p}(\xi) - P_\varphi(fk^\varphi_{z,p})(\xi)|^q e^{-q\varphi(\xi)}\, dV(\xi) \\
&\geq \int_{B^r(z)} |k^\varphi_{z,p}(\xi)|^q \left| f(\xi) - \frac{1}{k^\varphi_{z,p}(\xi)} P_\varphi(fk^\varphi_{z,p})(\xi) \right|^q e^{-q\varphi(\xi)}\, dV(\xi) \\
&\geq C\rho(z)^{-2n\frac{q}{p}} \int_{B^r(z)} \left| f(\xi) - \frac{1}{k^\varphi_{z,p}(\xi)} P_\varphi(fk^\varphi_{z,p})(\xi) \right|^q\, dV(\xi) \\
&\geq C\{\rho(z)^{2\gamma} G_{q,r}(f)(z)\}^q.
\end{align*}
Therefore,
$$\|f\|_{\mathrm{BDA}_r^{q, 2\gamma}}=\left\|\rho^{2\gamma} G_{q,r}(f)\right\|_{L^\infty}\leq C\|H_f\|_{F_\varphi^p \to L_\varphi^q}.$$

Conversely, if $f\in \mathrm{BDA}_r^{q, 2\gamma}$, then applying Theorem \ref{decompo} to $s=\infty$, we conclude that $f$ admits a decomposition $f = f_1 + f_2$ such that $f_1 \in \mathcal{C}^2(\CC)$ and
\begin{align}\label{ff12}
\rho^{1+2\gamma}| \bar{\partial} f_1 | \in L^\infty, \, \rho^{2\gamma} \, M_{q,r}(f_2) \in L^\infty 
\end{align}
for some $r > 0$. Set 
\begin{align}\label{munu}
d\mu:=\rho^q| \bar{\partial} f_1 |^q\,dV,\,d\nu:=|f_2|^q\,dV.
\end{align}
By Proposition \ref{car}, we know that both $\mu$ and $\nu$ are $q$-Fock Carleson measures for $F^p_\varphi$ and the corresponding embedding operators satisfy 
\begin{align}\label{IDf1f2}
\| \mathrm{Id} \|_{F_{\varphi}^p \to L^q(\CC, e^{-q\varphi} d\mu)} \lesssim 
\bigl\| \rho^{1+2\gamma}| \bar{\partial} f_1 | \bigr\|_{L^\infty},\, \| \mathrm{Id} \|_{F_{\varphi}^p \to L^q(\CC, e^{-q\varphi} d\nu)} \lesssim \bigl\| \rho^{2\gamma} \, M_{q,r}(f_2) \bigr\|_{L^\infty}.
\end{align}
We claim that both $f_1,f_2\in \mathcal{Q}_p$. Indeed, apply \eqref{car-mean} to $d\lambda=|f_2|^p\,dV$. Since $p\leq q$, for any $g\in \Gamma$
\begin{align}\label{957}
\int_{\CC}|g(w)f_2(w)|^pe^{-p\varphi(w)}\,dV(w)&\lesssim \int_{\CC}|g(w)|^pe^{-p\varphi(w)}M_{1,r}(|f_2|^p)(w)\,dV(w)\notag\\
&\lesssim \int_{\CC}|g(w)|^pe^{-p\varphi(w)}M_{q,r}(f_2)(w)^p\,dV(w)\notag\\
&\lesssim \|f\|_{\mathrm{BDA}_r^{q, 2\gamma}}^p\|g\rho^{-2\gamma}\|_{L^p_\varphi}<\infty.
\end{align}
Here the last quantity is finite by Proposition~\ref{Gue} and
Lemma~\ref{z-wk2n}\textup{(A)}. Hence $f_2\in \mathcal{Q}_p$ and so also $f_1=f-f_2\in \mathcal{Q}_p$.
Finally, for $g\in \Gamma$, let $u:=T_{\varphi}(g\bar{\partial} f_1)$, where $T_{\varphi}$ is as in Lemma \ref{solve1}. Applying Theorem \ref{HfNf1} with its upper exponent equal to $\infty$ and with $\kappa=1+2\gamma$, we obtain 
$$H_{f_1}(g)=u-P_\varphi(u).$$
Since $\Gamma$ is dense in $F_\varphi^p$, this, together with \eqref{IDf1f2}, yields
\begin{align}\label{HF1111}
\|H_{f_1}(g)\|_{L^q_\varphi}\leq (1+\|P_\varphi\|_{L^q_\varphi\rightarrow L^q_\varphi})\|u\|_{L^q_\varphi}\lesssim \|g\rho\bar{\partial} f_1\|_{L^q_\varphi}\lesssim \| \mathrm{Id} \|_{F_{\varphi}^p \to L^q(\CC, e^{-q\varphi} d\mu)} \|g\|_{L^p_\varphi}
\end{align}
and 
\begin{align}\label{HF2222}
 \|H_{f_2}(g)\|_{L^q_\varphi}\leq (1+\|P_\varphi\|_{L^q_\varphi\rightarrow L^q_\varphi})\|gf_2\|_{L^q_\varphi}\lesssim \| \mathrm{Id} \|_{F_{\varphi}^p \to L^q(\CC, e^{-q\varphi} d\nu)}\|g\|_{L^p_\varphi}.
\end{align}
This, together with \eqref{IDf1f2}, yields
$$\|H_f\|_{F^p_\varphi\rightarrow L^q_\varphi}\lesssim \bigl\| \rho^{1+2\gamma}| \bar{\partial} f_1 | \bigr\|_{L^\infty}+\bigl\| \rho^{2\gamma} \, M_{q,r}(f_2) \bigr\|_{L^\infty}.$$

 (ii): Suppose first that \(H_f:F_\varphi^p\to L_\varphi^q\) is compact.
If $1<p<\infty$, then for every $g\in F_\varphi^{p'}$,
the reproducing formula and Proposition~\ref{prop:normkz} give
\[
|\langle k_{z,p}^{\varphi},g\rangle_\varphi|
\lesssim |g(z)|e^{-\varphi(z)}\rho(z)^{2n/p'}.
\]
Then the local mean value estimate implies that
\[
|g(z)|^{p'}e^{-p'\varphi(z)}\rho(z)^{2n}
\lesssim
\int_{B(z)}|g(w)|^{p'}e^{-p'\varphi(w)}\,dV(w)
\longrightarrow0,
\]
where the last limit follows from the estimates in the proof of
Lemma~\ref{kz0}. Those estimates also imply that, for every compact
$E\subset\CC$, $B(z)\cap E=\varnothing$ when $|z|$ is sufficiently
large. Hence $k_{z,p}^{\varphi}$ converges weakly to zero in
$F_\varphi^p$ as $|z|\to\infty$.

Let $p=1$. For any sequence $|z_j|\to\infty$, compactness gives a
subsequence, still denoted by $z_j$, such that
$H_f(k_{z_j,1}^{\varphi})\to u$ in $L_\varphi^q$.
We first verify that $f\in L_{\mathrm{loc}}^1(\CC)$. For each
$a\in\CC$, the definition of $\mathcal Q_1$ gives
$fK_a^\varphi\in L_\varphi^1$. By Theorem~\ref{main}, for some
$r>0$,
\[
|K_a^\varphi(w)|e^{-\varphi(w)}\gtrsim
e^{\varphi(a)}\rho(a)^{-2n},
\qquad w\in B^r(a).
\]
Thus $f\in L^1(B^r(a))$, and hence $f\in L_{\mathrm{loc}}^1(\CC)$.
For every compact set $E\subset\CC$, Lemma~\ref{kz0} now gives
\[
\int_E|f k_{z_j,1}^\varphi|\,dV
\leq \sup_E|k_{z_j,1}^\varphi|
\int_E|f|\,dV\longrightarrow0.
\]
Since
\[
H_f(k_{z_j,1}^{\varphi})
=f k_{z_j,1}^{\varphi}-P_\varphi(f k_{z_j,1}^{\varphi}),
\]
for every compact $E\subset\CC$, \eqref{varphi} and the continuity
of $\widetilde\varphi$ give, for $v\in L_\varphi^q$,
\[
\|v\|_{L^1(E)}\leq C_E\|v\|_{L_\varphi^q}.
\]
Thus convergence in $L_\varphi^q$ implies distributional convergence,
and consequently $\bar\partial u=0$. Moreover,
$P_\varphi H_f(k_{z_j,1}^{\varphi})=0$, so the boundedness of
$P_\varphi$ on $L_\varphi^q$ gives $P_\varphi u=0$. Hence
$u=P_\varphi u=0$ by Theorem~\ref{Pf=fp}. Every convergent subsequence
therefore has limit zero, and compactness implies
\[
\big\|H_f(k_{z,p}^{\varphi})\big\|_{L_\varphi^q}
\rightarrow 0
\qquad\text{as } |z|\rightarrow\infty.
\]
Combining this with the estimate obtained in the necessity part of
\textup{(i)},
\[
\rho(z)^{2\gamma}G_{q,r}(f)(z)
\lesssim
\big\|H_f(k_{z,p}^{\varphi})\big\|_{L_\varphi^q}\rightarrow 0,
\]
as $|z|\rightarrow\infty$. Thus \(f\in\operatorname{VDA}_{r}^{q,2\gamma}\).

Conversely, suppose that
\(f\in\mathrm{VDA}_r^{q,2\gamma}\). Choose the decomposition
\(f=f_1+f_2\) furnished by the construction leading to
\eqref{local-decompo}, and define \(\mu\) and \(\nu\) by
\eqref{munu}. Then \eqref{local-decompo} and
Proposition~\ref{car}\textup{(A)} show that both measures are
vanishing \(q\)-Carleson measures for \(F_\varphi^p\). As in the proof of
\textup{(i)}, $f_1,f_2\in\mathcal Q_p$. The estimates
\eqref{HF1111} and \eqref{HF2222} show that $H_{f_1}$ and $H_{f_2}$
factor through the corresponding compact embeddings. Thus both
operators are compact from $F_\varphi^p$ into $L_\varphi^q$. Hence
$H_f=H_{f_1}+H_{f_2}$
is compact. This completes the proof.
\end{proof}

\begin{thm}\label{Hfqp}
Let \( 1 \leq q < p < \infty \) and set $s=\dfrac{pq}{p-q}$. Suppose \( f \in\mathcal{Q}_q \) and $\varphi\in \mathcal W_{\mathrm{gc}}^*(\CC)$. Then the following statements are equivalent:\\
 \textup{(i)} \( H_f : F_{\varphi}^p \to L_{\varphi}^q \) is bounded;\\
 \textup{(ii)} \( H_f : F_{\varphi}^p \to L_{\varphi}^q \) is compact;\\
  \textup{(iii)} \( f \in \mathrm{IDA}_{r}^{s, q, 0} \) for some (or any) $r>0$.
Furthermore,
\[
\|H_f\|_{F_{\varphi}^p \to L_{\varphi}^q} \simeq\|f\|_{\mathrm{IDA}_{r}^{s, q, 0}}.
\]
\end{thm}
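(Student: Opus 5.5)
I will prove the cycle (iii)$\Rightarrow$(ii)$\Rightarrow$(i)$\Rightarrow$(iii), imitating the proof of Theorem~\ref{Hfpq1} but replacing the $L^\infty$-type Carleson criterion by Proposition~\ref{car}\textup{(B)}.

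\textbf{Sufficiency, (iii)$\Rightarrow$(ii).} Given $f\in\mathrm{IDA}_r^{s,q,0}$, Theorem~\ref{decompo} with $\alpha=0$ and exponent $s$ gives $f=f_1+f_2$ with $\rho|\bar\partial f_1|\in L^s$ and $M_{q,r}(\rho\bar\partial f_1)+M_{q,r}(f_2)\in L^s$, with norms $\lesssim\|f\|_{\mathrm{IDA}_r^{s,q,0}}$. Set $d\mu=\rho^q|\bar\partial f_1|^q\,dV$ and $d\nu=|f_2|^q\,dV$. Then $\widehat\mu_r\simeq M_{q,r}(\rho\bar\partial f_1)^q$ and $\widehat\nu_r=M_{q,r}(f_2)^q$, and both lie in $L^{s/q}=L^{p/(p-q)}$. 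By Proposition~\ref{car}\textup{(B)}, $\mu$ and $\nu$ are vanishing $q$-Carleson measures for $F^p_\varphi$, with embedding norms $\lesssim\|f\|_{\mathrm{IDA}}$.

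Next I check $f_1,f_2\in\mathcal Q_q$. For $g\in\Gamma$, \eqref{car-mean} and H\"older give $\int|gf_2|^qe^{-q\varphi}\lesssim\|g\|_{F^p_\varphi}^q\|\widehat\nu_r\|_{L^{p/(p-q)}}<\infty$, since $\Gamma\subset F^p_\varphi$ by Proposition~\ref{prop:normkz}. Hence $f_2\in\mathcal Q_q$, and so $f_1=f-f_2\in\mathcal Q_q$.

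For $g\in\Gamma$, I apply Theorem~\ref{HfNf1} with lower exponent $q$ and upper exponent $s$ (note $q<s$), taking $\kappa=1$; this is legitimate because $\rho\bar\partial f_1\in L^s$. It gives $H_{f_1}g=u-P_\varphi u$ with $u=T_\varphi(g\bar\partial f_1)$. Lemma~\ref{solve1}\textup{(A)} at exponent $q$ and Theorem~\ref{thm:projbnd} then give
\[
\|H_{f_1}g\|_{L^q_\varphi}\lesssim\|g\rho\bar\partial f_1\|_{L^q_\varphi}=\|\mathrm{Id}(g)\|_{L^q(e^{-q\varphi}d\mu)}.
\]
Similarly $\|H_{f_2}g\|_{L^q_\varphi}\lesssim\|gf_2\|_{L^q_\varphi}$. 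Thus both $H_{f_j}$ factor through compact embeddings, so $H_f$ extends compactly from the dense set $\Gamma$ and $\|H_f\|\lesssim\|f\|_{\mathrm{IDA}}$. The implication (ii)$\Rightarrow$(i) is trivial.

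\textbf{Necessity, (i)$\Rightarrow$(iii).} This is the main obstacle: testing against a single kernel only yields pointwise (BDA-type) information, whereas an $L^s$ condition is needed. I would use Khinchine averaging over a lattice, as in the proof of Proposition~\ref{car}\textup{(B)}. Fix a small $r$ and an $r$-lattice $\{z_j\}$. For finitely supported $\lambda\in\ell^p$ and Rademacher functions $\varepsilon_j(\tau)$, put $g_\tau=\sum_j\lambda_j\varepsilon_j(\tau)k^\varphi_{z_j,p}$; Lemma~\ref{atomic} gives $\|g_\tau\|_{F^p_\varphi}\lesssim\|\lambda\|_{\ell^p}$. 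Khinchine's inequality and the finite multiplicity \eqref{NNN} then yield
\[
\sum_j|\lambda_j|^q\int_{B^r(z_j)}|H_f k^\varphi_{z_j,p}|^qe^{-q\varphi}\,dV\lesssim\int_0^1\|H_fg_\tau\|^q_{L^q_\varphi}\,d\tau\lesssim\|H_f\|^q\|\lambda\|_{\ell^p}^q.
\]
As in the proof of Theorem~\ref{Hfpq1}(i), the $j$-th local integral is $\gtrsim\rho(z_j)^{-2nq/p}\rho(z_j)^{2n}G_{q,r}(f)(z_j)^q=(\rho(z_j)^{2n/s}G_{q,r}(f)(z_j))^q$, using $\rho^{2\gamma}\cdot\rho^{2n/s}$ bookkeeping with $1/q-1/p=1/s$. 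Duality of $\ell^{p/q}$ with $\ell^{p/(p-q)}$ therefore gives $\sum_j\rho(z_j)^{2n}G_{q,r}(f)(z_j)^s\lesssim\|H_f\|^s$. Finally, Lemma~\ref{rhoB} and a comparison of $G_{q,r}(f)(z)$ with $G_{q,Rr}(f)(z_j)$ for $z\in B^r(z_j)$ convert this sum into $\|G_{q,r'}(f)\|_{L^s}^s$. Independence of the radius then follows from \eqref{mean-radius} together with Theorem~\ref{decompo}, which yields $\|f\|_{\mathrm{IDA}}\lesssim\|H_f\|$.
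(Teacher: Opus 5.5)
Your proposal follows the paper's proof essentially step for step: the decomposition from Theorem~\ref{decompo}, the measures $\mu,\nu$ with Proposition~\ref{car}\textup{(B)}, Theorem~\ref{HfNf1} with exponents $q<s$ and $\kappa=1$ for sufficiency, and Rademacher/Khinchine testing over a lattice with $\ell^{p/q}$--$\ell^{p/(p-q)}$ duality for necessity. The one point to fix is the radius bookkeeping at the end of the necessity argument. As written, your sum controls $G_{q,r}(f)(z_j)$, but your comparison step needs $G_{q,Rr}(f)(z_j)$, and these are not comparable in that direction. Decouple the two scales as the paper does: take the lattice at a smaller scale $r_0$ with $B^{r_0}(z)\subset B^{r}(z_j)$ for $z\in B^{r_0}(z_j)$, or equivalently apply the near-diagonal lower bound on $B^{Rr}(z_j)$ with $Rr$ still small. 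This gives $G_{q,r_0}(f)(z)\lesssim G_{q,r}(f)(z_j)$ directly.
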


\begin{proof}
The implication \textup{(ii)}$\Rightarrow$\textup{(i)} is immediate.

Assume \textup{(i)}. Choose \(r>0\) small enough that the
near-diagonal lower estimate holds on \(B^r(a)\). By
Lemma~\ref{rhoB}\textup{(B)}, choose \(0<r_0<r\) such that
\[
B^{r_0}(z)\subset B^r(a_k)
\qquad\text{whenever }z\in B^{r_0}(a_k),
\]
and let \(\{a_k\}\) be an \(r_0\)-lattice. For every finitely
supported sequence \(\lambda=\{\lambda_k\}\in\ell^p\), set
\[
g_t(z)=\sum_k\lambda_k\gamma_k(t)
k_{a_k,p}^{\varphi}(z),
\]
where \(\{\gamma_k\}\) is a sequence of Rademacher functions.
Lemma~\ref{atomic} gives
\[
\|g_t\|_{F_\varphi^p}\lesssim\|\lambda\|_{\ell^p}.
\]
Hence
\[
\int_0^1\|H_f(g_t)\|_{L_\varphi^q}^q\,dt
\lesssim
\|H_f\|_{F_\varphi^p\to L_\varphi^q}^q
\|\lambda\|_{\ell^p}^q.
\]
On the other hand, Fubini's theorem, Khinchine's inequality, the
finite multiplicity of \(\{B^r(a_k)\}\), and the local estimate used
in the necessity part of Theorem~\ref{Hfpq1}\textup{(i)} yield
\[
\int_0^1\|H_f(g_t)\|_{L_\varphi^q}^q\,dt
\gtrsim
\sum_k|\lambda_k|^q
\left[
\rho(a_k)^{2n(\frac1q-\frac1p)}
G_{q,r}(f)(a_k)
\right]^q.
\]
The characterization of diagonal operators from \(\ell^p\) into
\(\ell^q\) therefore gives
\[
\sum_k
\rho(a_k)^{2n}G_{q,r}(f)(a_k)^s
\lesssim
\|H_f\|_{F_\varphi^p\to L_\varphi^q}^s,
\]
where we used
\[
2ns\left(\frac1q-\frac1p\right)=2n.
\]
Moreover, for \(z\in B^{r_0}(a_k)\),
\[
G_{q,r_0}(f)(z)\lesssim G_{q,r}(f)(a_k).
\]
The covering and finite-overlap properties of the lattice now imply
\[
\|G_{q,r_0}(f)\|_{L^s}^s
\lesssim
\sum_k\rho(a_k)^{2n}G_{q,r}(f)(a_k)^s
\lesssim
\|H_f\|_{F_\varphi^p\to L_\varphi^q}^s.
\]
Thus \(f\in\mathrm{IDA}_{r_0}^{s,q,0}\), and
\[
\|f\|_{\mathrm{IDA}_{r_0}^{s,q,0}}
\lesssim
\|H_f\|_{F_\varphi^p\to L_\varphi^q}.
\]

Conversely, suppose that
\(f\in\mathrm{IDA}_r^{s,q,0}\). By
Theorem~\ref{decompo}, we may write \(f=f_1+f_2\), where
\(f_1\in\mathcal C^2(\CC)\) and
\[
\|\rho\bar\partial f_1\|_{L^s}
+
\|M_{q,r}(\rho\bar\partial f_1)\|_{L^s}
+
\|M_{q,r}(f_2)\|_{L^s}
\lesssim
\|f\|_{\mathrm{IDA}_r^{s,q,0}}.
\]
Define \(\mu\) and \(\nu\) by \eqref{munu} for this decomposition.
Then
\[
\bigl(\widehat{\mu}_r\bigr)^{1/q}
=
M_{q,r}(\rho\bar\partial f_1),
\qquad
\bigl(\widehat{\nu}_r\bigr)^{1/q}
=
M_{q,r}(f_2).
\]
Since \(s/q=p/(p-q)\), it follows that
$
\widehat{\mu}_r,\widehat{\nu}_r
\in L^{\frac{p}{p-q}}.
$
Proposition~\ref{car}\textup{(B)} shows that \(\mu\) and \(\nu\)
are vanishing \(q\)-Carleson measures for \(F_\varphi^p\).
Hence their corresponding embedding operators are compact.

Applying \eqref{car-mean} with exponent $q$ and
$d\lambda=|f_2|^q\,dV$, and then using H\"older's inequality with
exponents $p/q$ and $p/(p-q)$, we obtain for $g\in\Gamma$
\[
\|gf_2\|_{L_\varphi^q}^q
\lesssim
\int_{\CC}|g|^qe^{-q\varphi}M_{q,r}(f_2)^q\,dV
\lesssim
\|g\|_{L_\varphi^p}^q\|M_{q,r}(f_2)\|_{L^s}^q<\infty.
\]
Thus $f_2\in\mathcal Q_q$, and $f_1=f-f_2\in\mathcal Q_q$.

By Theorem~\ref{HfNf1}, applied with its lower exponent equal to
$q$, its upper exponent equal to $s$, and \(\kappa=1\), and the
estimates leading to \eqref{HF1111} and \eqref{HF2222},
\(H_{f_1}\) and \(H_{f_2}\) factor through these compact
embeddings. Therefore both operators are compact from
\(F_\varphi^p\) into \(L_\varphi^q\), and hence
$H_f=H_{f_1}+H_{f_2}$
is compact. The same estimates and Proposition~\ref{car}\textup{(B)}
also give
\[
\|H_f\|_{F_\varphi^p\to L_\varphi^q}
\lesssim
\|M_{q,r}(\rho\bar\partial f_1)\|_{L^s}
+
\|M_{q,r}(f_2)\|_{L^s}
\lesssim
\|f\|_{\mathrm{IDA}_r^{s,q,0}}.
\]
Combining this with the necessity estimate proves the asserted norm
equivalence and completes the proof.
\end{proof}

\begin{cor}[Berger--Coburn phenomenon]\label{berger-coburn}
Let $\varphi=\varphi_0+\operatorname{Re}F$, where
$\varphi_0\in\mathcal W_0$ and $F\in H(\CC)$. If
$f\in L^\infty(\CC)$ and $1\leq p,q<\infty$, then
\[
H_f:F_\varphi^p\to L_\varphi^q
\text{ is compact}
\quad\Longleftrightarrow\quad
H_{\overline f}:F_\varphi^p\to L_\varphi^q
\text{ is compact}.
\]
\end{cor}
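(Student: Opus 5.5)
Since $\varphi=\varphi_0+\operatorname{Re}F$ with $\varphi_0\in\mathcal W_0$, we have $\varphi\in\mathcal W_{\mathrm{gc}}^*(\CC)$ with $\rho\equiv1$. Take $\widetilde\varphi=\varphi_0+\operatorname{Re}F$ itself. Then $D_{\mathbb R}^2(\widetilde\varphi-\operatorname{Re}F)=D^2_{\mathbb R}\varphi_0$ is pinched between $mI_{2n}$ and $MI_{2n}$. The complex Hessian of $\widetilde\varphi$ equals that of $\varphi_0$, hence is comparable to $\omega_1$, and its Monge--Amp\`ere measure is comparable to Lebesgue measure, so it is doubling. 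A bounded $f$ lies in both $\mathcal Q_p$ and $\mathcal Q_q$, because $\Gamma\subset F^p_\varphi\cap F^q_\varphi$ by Proposition~\ref{prop:normkz}. The same holds for $\overline f$. Moreover $\gamma$ plays no role when $\rho\equiv1$, since $\rho^{2\gamma}\equiv1$. Theorems~\ref{Hfpq1} and~\ref{Hfqp} therefore reduce compactness to the following. When $p\le q$, $H_f$ is compact if and only if $G_{q,r}(f)(z)\to0$ as $z\to\infty$. When $q<p$, $H_f$ is compact if and only if $G_{q,r}(f)\in L^s$ with $s=pq/(p-q)$. The same criteria hold for $\overline f$. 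It therefore suffices to show that, for bounded $f$, the vanishing of $G_{q,r}(f)$ at infinity (respectively $G_{q,r}(f)\in L^s$) is equivalent to the same property for $G_{q,r}(\overline f)$. Equivalently, we need it for the mean oscillation $\mathrm{MO}_{q,r}(f)(z)=\bigl(|B^r(z)|^{-1}\int_{B^r(z)}|f-\widehat f_r(z)|^q\,dV\bigr)^{1/q}$, which is symmetric under conjugation.

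The key step is a local comparison on Euclidean balls of fixed radius, valid for bounded $f$. Let $h\in H(B(z,r))$ be a near-minimizer for $G_{q,r}(f)(z)$. The plan is to show that, up to shrinking the radius, $\mathrm{MO}_{q,r/2}(f)(z)\lesssim G_{q,r}(f)(z)^{\theta}$ for some $\theta>0$, with constants depending only on $\|f\|_\infty$. Combined with the trivial bound $G_{q,r}(f)\le \mathrm{MO}_{q,r}(f)$, this gives the equivalence. The argument runs as follows. By the triangle inequality, $\|h\|_{L^q(B(z,r))}$ is controlled by $\|f\|_\infty+G_{q,r}(f)(z)$. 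By the mean value inequality, $h$ is then uniformly bounded on $B(z,3r/4)$. Since $\overline f$ is close to $\overline h$ in $L^q$ and $\overline h$ is antiholomorphic, one estimates the oscillation of $\operatorname{Im}h$, or of the holomorphic function itself, on a smaller ball. This is the classical argument of Hu--Virtanen \cite{HV23} for $\mathcal W_0$: for bounded symbols, a small $G_{q,r}(f)$ forces $h$ to be nearly constant on $B(z,r/2)$.

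I would carry this out in two substeps. First, take $\operatorname{Re}$ and $\operatorname{Im}$ parts: $f-h$ small in $L^q$ means $\operatorname{Re}f-\operatorname{Re}h$ and $\operatorname{Im}f-\operatorname{Im}h$ are small. The $\overline f$ problem has a near-minimizer only if $\overline h$ is close to a holomorphic function. Hence the content of the claim is that a bounded holomorphic $h$ which is $L^q$-close to a bounded function must have small gradient. To see this, apply $|\nabla h(z)|^2\lesssim |B|^{-1}\int_B|h-c|^2$ and relate this to the Dirichlet energy via $\int_B|\nabla h|^2\phi=\int (h-c)\,\cdot$ (derivatives of a cutoff $\phi$). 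Uniform boundedness of $h$ then converts the $L^q$ smallness of $f-h$, through interpolation against $L^\infty$ bounds, into smallness of the local oscillation of $f$. I expect this quantitative rigidity step to be the main obstacle. If it proves delicate, I would instead quote the corresponding lemma from \cite{HV23} verbatim, since with $\rho\equiv1$ all balls are Euclidean of fixed radius and nothing depends on the weight.

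Finally, with the pointwise comparison $G_{q,r'}(\overline f)\lesssim \mathrm{MO}\lesssim G_{q,r}(f)^\theta$ established, I would transfer it to the two conditions. Vanishing at infinity is immediate. For the $L^s$ condition, a power $\theta<1$ would break integrability, so I would aim for $\theta=1$ by using the $L^\infty$ bound only to pass between $q$-means. If that fails, I would fall back on the $L^{s}$ characterization through the decomposition in Theorem~\ref{decompo}, using $|\bar\partial f_1|=|\partial\overline{f_1}|$ together with the bounded-symbol estimate.
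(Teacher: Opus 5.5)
Your reduction is sound. Indeed $\varphi\in\mathcal W^*_{\mathrm{gc}}(\CC)$ with $\rho\equiv1$, bounded symbols lie in $\mathcal Q_p\cap\mathcal Q_q$, and Theorems~\ref{Hfpq1} and~\ref{Hfqp} reduce the corollary to one claim: for $f\in L^\infty$, the conditions $G_{q,r}(f)(z)\to0$ and $G_{q,r}(f)\in L^s$ are invariant under $f\mapsto\overline f$.

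The gap is your ``key step''. The local estimate $\mathrm{MO}_{q,r/2}(f)(z)\lesssim G_{q,r}(f)(z)^{\theta}$, with constants depending only on $\|f\|_\infty$, is false for every $\theta>0$. Take $R>|z_1|+r$ and set $f(w)=w_1$ for $|w_1|\le R$ and $f(w)=Rw_1/|w_1|$ otherwise. Then $f$ is bounded and holomorphic on $B(z,r)$, so $G_{q,r}(f)(z)=0$. On the other hand, $\mathrm{MO}_{q,r/2}(f)(z)=c_{n,q}\,r>0$.

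For the same reason, your rigidity claim fails: a bounded holomorphic $h$ that is $L^q$-close to a bounded function need not have small gradient (take $h=f$ on the ball). The cutoff argument only yields the Cauchy bound $|\nabla h|\lesssim\|f\|_\infty/r$. So no lemma of this form can be quoted from \cite{HV23}. Your last fallback does not close the gap either. The identity $|\bar\partial f_1|=|\partial\overline{f_1}|$ is a tautology, whereas you would need control of $\bar\partial\overline{f_1}=\overline{\partial f_1}$, that is, of $\partial f_1$ by $\bar\partial f_1$.

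Conjugation invariance for bounded symbols is a global phenomenon and ultimately rests on Liouville's theorem. In the vanishing case, one can run a limiting argument along $z_k\to\infty$: the local holomorphic approximants are uniformly bounded, so the translates converge to a bounded entire function, which must be constant. In the case $q<p$, one needs an estimate such as $\|\partial f_1\|_{L^s}\lesssim\|\bar\partial f_1\|_{L^s}$ for bounded $f_1$, obtained for instance from Beurling transforms in each variable combined with Liouville. Your plan supplies no such ingredient.

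What you can legitimately import is the global equivalence itself. Since $\rho\equiv1$, the two conditions do not involve the weight. Applying the IDA characterization and the Berger--Coburn theorem of \cite{HV23} to a single weight in $\mathcal W_0$ (say $|z|^2/2$) therefore shows that the conditions are conjugation invariant, and your reduction then finishes the proof.

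The paper avoids the IDA machinery altogether. The map $U_Fg=e^{-F}g$ sends $L^r_\varphi$ isometrically onto $L^r_{\varphi_0}$ and $F^r_\varphi$ onto $F^r_{\varphi_0}$, hence $U_FP_\varphi=P_{\varphi_0}U_F$. Since $U_F$ commutes with $M_h$, this gives $U_FH^{\varphi}_h=H^{\varphi_0}_hU_F$ for $h=f,\overline f$. Compactness therefore transfers directly to the $\mathcal W_0$ case of \cite{HV23}.
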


\begin{proof}
For $1\leq r<\infty$, define $U_Fg:=e^{-F}g.$
Since $\varphi=\varphi_0+\operatorname{Re}F$, we have
\[
\|U_Fg\|_{L_{\varphi_0}^r}
=
\|g\|_{L_\varphi^r}.
\]
Thus $U_F$ is an isometry from $L_\varphi^r$ onto
$L_{\varphi_0}^r$ and from $F_\varphi^r$ onto
$F_{\varphi_0}^r$. In particular, $U_F$ is unitary on the
corresponding $L^2$ spaces and maps $F_\varphi^2$ onto
$F_{\varphi_0}^2$. Therefore, by the uniqueness of orthogonal
projections,
\[
U_FP_\varphi=P_{\varphi_0}U_F
\qquad\text{on }L_\varphi^2.
\]
Since $L_\varphi^2\cap L_\varphi^r$ is dense in $L_\varphi^r$ and
both Bergman projections are bounded on the corresponding $L^r$
spaces by Theorem~\ref{thm:projbnd}, the same identity holds on
$L_\varphi^r$. Moreover, $U_FM_h=M_hU_F$. Hence, for $h=f$ or
$h=\overline f$,
\[
U_FH_h^\varphi
=
U_F(I-P_\varphi)M_h
=
(I-P_{\varphi_0})M_hU_F
=
H_h^{\varphi_0}U_F.
\]
Consequently, $H_h^\varphi:F_\varphi^p\to L_\varphi^q$ is compact
if and only if
$H_h^{\varphi_0}:F_{\varphi_0}^p\to L_{\varphi_0}^q$ is compact.
The conclusion now follows from the Berger--Coburn theorem for
$\mathcal W_0$ in \cite{HV23}.
\end{proof}

As an application we discuss the characterizations of the symbols $f$ for which the induced Hankel operators \( H_f \) and \(H_{\overline{f}} \) are simultaneously bounded (or compact) from \( F_{\varphi}^p \) to \( L_{\varphi}^q \). For this purpose we need mean oscillation functions and some related spaces. For \( f \in L_{\mathrm{loc}}^q(\CC) \) and \( r > 0 \), set
\[
\mathrm{MO}_{q,r}(f)(z) = \left( \frac{1}{|B^r(z)|} \int_{B^r(z)} |f - \widehat{f}_r(z)|^q \, dV \right)^{1/q},
\]
where 
$$\widehat f_{r}(z) = \frac{1}{|B^r(z)|} \int_{B^r(z)} f \, dV.$$
For continuous $f$, also set
\[
\mathrm{MO}_{\infty,r}(f)(z) = \sup_{w \in B^r(z)} |f(w) - f(z)|.
\]
Given $1\leq q<\infty$, $1\leq s\leq\infty$, \( \sigma \in \mathbb{R} \) and \( r > 0 \), we define space \( \mathrm{IMO}_{r}^{q,s,\sigma} \) to be the family of those \( f \in L_{\mathrm{loc}}^q(\CC) \) such that
\[
\|f\|_{q,s,\sigma,r} = \bigl\| \rho^{\sigma} \mathrm{MO}_{q,r}(f) \bigr\|_{L^s} < \infty.\]
Recall that $\gamma = n(1/q-1/p)$. If \( H_f, H_{\overline{f}} : F_{\varphi}^p \to L_{\varphi}^q \) are simultaneously bounded, then for $1\leq p\leq q<\infty$, Theorem \ref{Hfpq1} implies
$$\rho(z)^{2\gamma} G_{q,r}(f)(z) \leq C \|H_f\|_{F_\varphi^p \to L_\varphi^q} \quad \text{and} \quad \rho(z)^{2\gamma} G_{q,r}(\overline{f})(z) \leq C \|H_{\overline{f}}\|_{F_\varphi^p \to L_\varphi^q}.$$
An argument similar to that of \cite[Proposition~3.6]{LWH25} gives
$$\rho(z)^{2\gamma} \mathrm{MO}_{q,r}(f)(z) \leq C \left\{\|H_f\|_{F_\varphi^p \to L_\varphi^q} + \|H_{\overline{f}}\|_{F_\varphi^p \to L_\varphi^q}\right\}.$$
Conversely,  since $\mathrm{MO}_{q,r}(f)(z)=\mathrm{MO}_{q,r}(\overline{f} )(z)$, it is trivial to see that
$$ G_{q,r}(f)(z)+G_{q,r}(\overline{f})(z)\leq 2\mathrm{MO}_{q,r}(f)(z)$$
which, together with Theorem \ref{Hfpq} implies that
$$\|H_f\|_{F_\varphi^p \to L_\varphi^q} + \|H_{\overline{f}}\|_{F_\varphi^p \to L_\varphi^q}\simeq\bigl\| \rho^{2\gamma} \mathrm{MO}_{q,r}(f) \bigr\|_{L^\infty}.$$
Similarly, for $1\leq q<p<\infty$, Theorem \ref{Hfqp} implies that
$$\|H_f\|_{F_\varphi^p \to L_\varphi^q} + \|H_{\overline{f}}\|_{F_\varphi^p \to L_\varphi^q}\simeq \| \mathrm{MO}_{q,r}(f)\|_{L^{n/\gamma}}.$$
For every $1\leq t<\infty$, $f\in\mathcal Q_t$ if and only if
$\overline f\in\mathcal Q_t$, since $|\overline f|=|f|$. Based on the above facts, the following two theorems follow immediately from Theorems \ref{Hfpq1} and \ref{Hfqp}, respectively.

\begin{thm} \label{ffpq}
Let \( 1 \leq p \leq q < \infty \).  Suppose \( f \in\mathcal{Q}_p \) and $\varphi\in \mathcal W_{\mathrm{gc}}^*(\CC)$. Then \\
 \textup{(A)} \( H_f, H_{\overline{f}} : F_{\varphi}^p \to L_{\varphi}^q \) are simultaneously bounded if and only if \( f \in \mathrm{IMO}_r^{q, \infty, 2\gamma} \)
for some (or any) \( r > 0 \). Furthermore,
\[
\|H_f\|_{F_{\varphi}^p \to L_{\varphi}^q}+ \|H_{\overline f}\|_{F_{\varphi}^p \to L_{\varphi}^q}\approx \bigl\| \rho^{2\gamma} \mathrm{MO}_{q,r}(f) \bigr\|_{L^\infty}.
\]
 \textup{(B)} \( H_f, H_{\overline{f}} : F_{\varphi}^p \to L_{\varphi}^q \) are simultaneously compact if and only if $$\lim_{|z|\rightarrow\infty} \rho^{2\gamma}(z) \mathrm{MO}_{q,r}(f)(z)=0 $$ for some (or any) \( r > 0 \).
\end{thm}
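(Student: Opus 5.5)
The plan is to reduce Theorem~\ref{ffpq} to Theorem~\ref{Hfpq1} applied separately to $f$ and $\overline f$, combined with a pointwise two-sided comparison between $G_{q,r}(f)+G_{q,r}(\overline f)$ and $\mathrm{MO}_{q,r}(f)$. First note that $f\in\mathcal Q_p$ if and only if $\overline f\in\mathcal Q_p$, since $|\overline f|=|f|$. So Theorem~\ref{Hfpq1} applies to both symbols. It shows that simultaneous boundedness is equivalent to $\rho^{2\gamma}(G_{q,r}(f)+G_{q,r}(\overline f))\in L^\infty$, and simultaneous compactness is equivalent to $\rho^{2\gamma}(z)(G_{q,r}(f)+G_{q,r}(\overline f))(z)\to0$ as $|z|\to\infty$. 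The norms satisfy $\|H_f\|+\|H_{\overline f}\|\simeq\|\rho^{2\gamma}(G_{q,r}(f)+G_{q,r}(\overline f))\|_{L^\infty}$. The "some (or any) $r$" clause is inherited from Theorem~\ref{Hfpq1}, since each condition there is independent of $r$.

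The easy direction of the comparison is $G_{q,r}(f)+G_{q,r}(\overline f)\le 2\,\mathrm{MO}_{q,r}(f)$. Constants are holomorphic on $B^r(z)$, so we may take $h=\widehat f_r(z)$ for $f$ and $h=\overline{\widehat f_r(z)}$ for $\overline f$. We then use $\mathrm{MO}_{q,r}(\overline f)=\mathrm{MO}_{q,r}(f)$.

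For the reverse direction, I would aim for the pointwise bound
\[
\mathrm{MO}_{q,r'}(f)(z)\lesssim G_{q,r}(f)(z)+G_{q,r}(\overline f)(z)
\]
for a suitably smaller radius $r'$, in the spirit of \cite[Proposition~3.6]{LWH25}. Rescale by $\zeta\mapsto z+r\rho(z)\zeta$, which by Lemma~\ref{rhoB}(A) turns $B^r(z)$ into the unit ball with all comparability constants uniform in $z$. Pick near-minimizers $h_1,h_2\in H(B(0,1))$ for $f$ and $\overline f$. Then $h_1-\overline{h_2}=(f-\overline{h_2})-(f-h_1)$, and its $L^q(B(0,1))$ norm is controlled by $G(f)+G(\overline f)$. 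Note $h_1-\overline{h_2}$ is pluriharmonic, so its real and imaginary parts are harmonic. By the mean value property and interior estimates for harmonic functions, on $B(0,1/2)$ we can write $h_1=\overline{h_2}+P$ with $P$ pluriharmonic and $\|P-P(0)\|_{L^q(B(0,1/2))}\lesssim\|P\|_{L^q(B(0,1))}$. The step I expect to be the real obstacle is the next one: showing that $h_1$ is close in $L^q$ to a constant. I would handle it via the classical observation that a holomorphic function $h_1$ with $h_1-\overline{h_2}$ small satisfies $\partial h_1=\partial P$. Indeed, $\partial\overline{h_2}=0$, so $\partial h_1=\partial P$. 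Cauchy estimates for $P$ then give $\sup_{B(0,1/2)}|\nabla h_1|\lesssim\|P\|_{L^q}$, and hence $\|h_1-h_1(0)\|_{L^q(B(0,1/2))}\lesssim G(f)+G(\overline f)$.

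With this in hand, $\|f-h_1(0)\|_{L^q(B(0,1/2))}$ is bounded by $G(f)+G(\overline f)$. Replacing $h_1(0)$ by the average $\widehat f$ at most doubles this bound. Undoing the rescaling gives the displayed inequality with $r'=r/2$, and the independence of $r$ in Theorem~\ref{Hfpq1} removes the change of radius. Multiplying by $\rho^{2\gamma}$ and taking the supremum yields (A), including the norm equivalence. Taking $\limsup_{|z|\to\infty}$ instead, and using the VDA characterization in Theorem~\ref{Hfpq1}(ii) for both $f$ and $\overline f$, yields (B).
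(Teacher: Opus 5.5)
Your proposal is correct and follows the same route as the paper. You apply Theorem~\ref{Hfpq1} to both $f$ and $\overline f$ (using $|\overline f|=|f|$ for the $\mathcal Q_p$ hypothesis), compare $G_{q,r}(f)+G_{q,r}(\overline f)$ with $\mathrm{MO}_{q,r}(f)$, and use constants for the easy bound. The only difference is that the paper defers the reverse bound to ``an argument similar to'' \cite[Proposition~3.6]{LWH25}, whereas you prove it directly via $\partial h_1=\partial(h_1-\overline{h_2})$ and interior harmonic estimates, absorbing the halving of the radius through the ``some (or any) $r$'' clause of Theorem~\ref{Hfpq1}, which is valid.
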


\begin{thm}\label{ffqp}
Let \( 1 \leq q < p < \infty \). Suppose \( f \in\mathcal{Q}_q \) and $\varphi\in \mathcal W_{\mathrm{gc}}^*(\CC)$. Then the following statements are equivalent:\\
 \textup{(A)}\( H_f, H_{\overline{f}} : F_{\varphi}^p \to L_{\varphi}^q \) are simultaneously bounded.\\
 \textup{(B)} \( H_f, H_{\overline{f}} : F_{\varphi}^p \to L_{\varphi}^q \) are simultaneously compact.\\
 \textup{(C)} \( f \in \mathrm{IMO}_r^{q,n/\gamma,0} \) for some (or any) \( r > 0 \).
\end{thm}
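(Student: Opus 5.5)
The statement to prove is Theorem~\ref{ffqp}. The plan is to reduce it to Theorem~\ref{Hfqp}, applied separately to $f$ and to $\overline f$, and then to compare the two IDA quantities with the mean oscillation $\mathrm{MO}_{q,r}(f)$.

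\textbf{Step 1: preliminaries on exponents and symbol classes.} Since $|\overline f|=|f|$, we have $\overline f\in\mathcal Q_q$ whenever $f\in\mathcal Q_q$. We also note that $s=pq/(p-q)$ and $\gamma=n(1/q-1/p)$ satisfy $s=n/\gamma$. Hence $\mathrm{IMO}_r^{q,n/\gamma,0}$ is exactly the space of $f$ with $\mathrm{MO}_{q,r}(f)\in L^s$.

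\textbf{Step 2: (A)$\Leftrightarrow$(B).} By Theorem~\ref{Hfqp} applied to $f$ and to $\overline f$, boundedness of each Hankel operator is equivalent to its compactness. Therefore simultaneous boundedness and simultaneous compactness coincide. Moreover, both are equivalent to
\[
f\in\mathrm{IDA}_r^{s,q,0}\quad\text{and}\quad\overline f\in\mathrm{IDA}_r^{s,q,0},
\]
with
\[
\|H_f\|+\|H_{\overline f}\|\simeq\|G_{q,r}(f)\|_{L^s}+\|G_{q,r}(\overline f)\|_{L^s}.
\]

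\textbf{Step 3: (C)$\Rightarrow$(A).} Take the constant $h=\widehat f_r(z)$, which is holomorphic on $B^r(z)$. This gives
\[
G_{q,r}(f)(z)\le \mathrm{MO}_{q,r}(f)(z),
\]
and, using $\mathrm{MO}_{q,r}(\overline f)=\mathrm{MO}_{q,r}(f)$, the same bound for $G_{q,r}(\overline f)(z)$. Integrating in $L^s$ shows that both symbols lie in $\mathrm{IDA}_r^{s,q,0}$.

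\textbf{Step 4: (A)$\Rightarrow$(C), the main obstacle.} We need the pointwise-in-$z$ estimate
\[
\mathrm{MO}_{q,r}(f)(z)\lesssim G_{q,R}(f)(z)+G_{q,R}(\overline f)(z)
\]
for some larger radius $R$, modeled on \cite[Proposition~3.6]{LWH25}. The plan is as follows.
\begin{itemize}
\item Rescale $B^{R}(z)$ to the unit-size ball via $\zeta\mapsto z+\rho(z)\zeta$, so that all constants become independent of $z$.
\item Pick near-minimizers $h_1,h_2$, holomorphic on $B^R(z)$, for $f$ and for $\overline f$, respectively.
\item By the triangle inequality, $h_1-\overline{h_2}$ is small in $L^q$ on $B^R(z)$, controlled by the sum of the two $G$-quantities.
\item A function that is simultaneously holomorphic and antiholomorphic up to a small $L^q$ error is close to a constant on a smaller ball. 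Concretely, $h_1-\overline{h_2}$ is pluriharmonic with small $L^q$ norm, so its derivatives are small on the smaller ball by interior estimates. Since $\partial(h_1-\overline{h_2})=\partial h_1$, this bounds $|\nabla h_1|$ there. The oscillation of $h_1$ around a constant is then controlled, and hence so is $\mathrm{MO}_{q,r}(f)(z)$, since the mean minimizes up to a factor of $2$.
\end{itemize}

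\textbf{Step 5: conclusion.} Taking $L^s$ norms in Step 4 and invoking the radius independence \eqref{mean-radius}, via Lemma~\ref{rhoB} and the lattice, we can pass from radius $R$ back to $r$. This yields
\[
\|\mathrm{MO}_{q,r}(f)\|_{L^{n/\gamma}}\simeq\|H_f\|+\|H_{\overline f}\|.
\]
The expected difficulty lies in Step 4, specifically in making the local pluriharmonic estimate uniform with respect to the varying scale $\rho$. The rescaling in Step 4 together with Lemma~\ref{rhoB}(A) should handle this.
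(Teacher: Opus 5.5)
Your proposal is correct and follows the paper's route. You apply Theorem~\ref{Hfqp} to both $f$ and $\overline f$, use $G_{q,r}(f)+G_{q,r}(\overline f)\le 2\,\mathrm{MO}_{q,r}(f)$ for (C)$\Rightarrow$(A), and for (A)$\Rightarrow$(C) you use the \cite[Proposition~3.6]{LWH25}-type bound $\mathrm{MO}_{q,r}(f)\lesssim G_{q,R}(f)+G_{q,R}(\overline f)$, which you derive correctly from interior estimates for the pluriharmonic function $h_1-\overline{h_2}$. One cosmetic point: the passage from radius $R$ to $r$ should be justified by the ``any $r$'' clause of Theorem~\ref{Hfqp}, not by \eqref{mean-radius}, which concerns $M_{q,a}$ rather than $G_{q,a}$.
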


When $f$ is holomorphic, it is trivial that $H_f = 0$. Therefore we have the following theorem on Hankel operators with conjugate holomorphic symbols.
\begin{thm}\label{fpq1}
Let \(1 \le p, q < \infty\). Suppose  \(f \in \mathcal{Q}_{\min\{p,q\}} \cap H(\CC)\) and \(\varphi \in \mathcal{W}^*_{\mathrm{gc}}(\CC)\).  Then\\
 \textup{(A)} For \(p \le q\), \(H_{\overline{f}}\) is bounded from \(F_{\varphi}^p\) to \(L_{\varphi}^q\) if and only if \(\rho^{2\gamma+1} |\nabla f| \in L^\infty\); \(H_{\overline{f}}\) is compact from \(F_{\varphi}^p\) to \(L_{\varphi}^q\) if and only if \(\lim_{|z| \to \infty} \rho^{2\gamma+1} |\nabla f(z)| = 0\).\\
 \textup{(B)} For \(p > q\), \(H_{\overline{f}}\) is bounded from \(F_{\varphi}^p\) to \(L_{\varphi}^q\) if and only if \(H_{\overline{f}}\) is compact from \(F_{\varphi}^p\) to \(L_{\varphi}^q\) if and only if \(\rho |\nabla f| \in L^{n/\gamma}\).
\end{thm}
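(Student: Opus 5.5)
The plan is to reduce Theorem~\ref{fpq1} to Theorems~\ref{Hfpq1} and~\ref{Hfqp} applied to the symbol $\overline f$, together with a local computation of $G_{q,r}(\overline f)$. We first check the symbol hypotheses. Since $|\overline f|=|f|$ and $f\in\mathcal Q_{\min\{p,q\}}$, the symbol $\overline f$ lies in $\mathcal Q_p$ when $p\le q$ and in $\mathcal Q_q$ when $q<p$, which is what those theorems require. Next, because $f$ is holomorphic, $\mathrm{MO}_{q,r}(f)=\mathrm{MO}_{q,r}(\overline f)$ and $G_{q,r}(f)=0$. So it suffices to show that, for each fixed $r>0$,
\[
G_{q,r}(\overline f)(z)\simeq \rho(z)\,\sup_{w\in B^{r'}(z)}|\nabla f(w)|
\]
in a form strong enough to pass to $L^\infty$, vanishing-at-infinity, and $L^{s}$ norms. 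Here $r'$ is comparable to $r$, possibly with different radii on the two sides.

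\textbf{Upper bound.} Apply Theorem~\ref{decompo} with $f_1=\overline f$ and $f_2=0$. This gives $\bar\partial f_1=\overline{\partial f}$, so $|\bar\partial f_1|=|\nabla f|$. The local $\bar\partial$-estimate in the converse half of that proof then yields $G_{q,r}(\overline f)(z)\lesssim M_{q,Rr}(\rho\nabla f)(z)$. Since $|\nabla f|$ is plurisubharmonic and $\rho$ is locally comparable (Lemma~\ref{rhoB}(A)), we have
\[
M_{q,Rr}(\rho\nabla f)(z)\lesssim \rho(z)\sup_{B^{Rr}(z)}|\nabla f|\lesssim M_{q,2Rr}(\rho\nabla f)(z).
\]
Thus $\|\rho^{2\gamma}G_{q,r}(\overline f)\|_{L^\infty}\lesssim\|\rho^{2\gamma+1}\nabla f\|_{L^\infty}$, and the vanishing version follows in the same way. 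In case (B), \eqref{mean-radius} gives $\|G_{q,r}(\overline f)\|_{L^s}\lesssim\|\rho\nabla f\|_{L^s}$. Note that $s=pq/(p-q)=n/\gamma$, which matches the exponent in the statement.

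\textbf{Lower bound.} This is the main step. For any $h\in H(B^r(z))$, the function $u=\overline f-h$ satisfies $\bar\partial u=\overline{\partial f}$. Rescale by $\zeta\mapsto z+\rho(z)\zeta$ and set $F(\zeta)=f(z+\rho(z)\zeta)$ and $H(\zeta)=h(z+\rho(z)\zeta)$. We need the Euclidean inequality on the unit ball
\[
|\nabla F(0)|\lesssim\Bigl(\int_{B(0,r)}|\overline F-H|^q\,dV\Bigr)^{1/q},
\]
valid for all $F,H$ holomorphic. To prove it, we note that $\overline F-H$ determines $\nabla F(0)$ through a fixed pairing. Choose a radial test function $\chi\in C_c^\infty(B(0,r))$ with $\int\chi=1$. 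Integrating by parts against $\partial_{\bar\zeta_j}\chi$ gives
\[
\int(\overline F-H)\,\partial_{\bar\zeta_j}\chi\,dV=-\int\overline{\partial_jF}\,\chi\,dV.
\]
The holomorphic term drops out because $\bar\partial H=0$. By the mean value property for the antiholomorphic function $\overline{\partial_jF}$, the right side equals $-\overline{\partial_jF(0)}$. Hölder's inequality then gives the claim. Scaling back, we obtain
\[
\rho(z)|\nabla f(z)|\lesssim G_{q,r}(\overline f)(z).
\]

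Combining the two bounds with Theorem~\ref{Hfpq1} gives (A). For boundedness, we use $\|\rho^{2\gamma+1}\nabla f\|_\infty\simeq\|\rho^{2\gamma}G_{q,r}(\overline f)\|_\infty$. For compactness, we use the pointwise two-sided comparison together with Lemma~\ref{rhoB}: if $\rho^{2\gamma+1}|\nabla f|\to0$, then $\rho(z)^{2\gamma+1}\sup_{B^{Rr}(z)}|\nabla f|\to0$, because $B^{Rr}(z)$ moves to infinity with $z$ (as in the proof of Lemma~\ref{kz0}). Combining the bounds with Theorem~\ref{Hfqp} gives (B), since $\|G_{q,r}(\overline f)\|_{L^s}\simeq\|\rho\nabla f\|_{L^s}$. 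The lower half of this equivalence comes from the pointwise bound; the upper half comes from the sup-over-balls bound together with a lattice covering argument (\eqref{NNN}).
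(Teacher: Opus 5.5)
Your proof is correct, but it takes a different route from the paper's. Since $H_f=0$, the paper invokes the simultaneous-boundedness results, Theorems~\ref{ffpq} and~\ref{ffqp}. It thereby reduces everything to the two-sided mean-oscillation estimate $\rho(z)|\partial_j f(z)|\lesssim \mathrm{MO}_{q,r}(f)(z)\lesssim \sup_{B^r(z)}\rho|\nabla f|$. Its lower bound comes from Cauchy's formula averaged over an annulus of radii comparable to $\rho(z)$, combined with subharmonicity of $|f-f(z)|$.

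You instead apply Theorems~\ref{Hfpq1} and~\ref{Hfqp} directly to the symbol $\overline f$. You then prove the stronger pointwise bound $\rho(z)|\nabla f(z)|\lesssim G_{q,r}(\overline f)(z)$ by duality. Pairing against $\partial_{\bar\zeta_j}\chi$ annihilates every holomorphic $h$, and the mean value property for the antiholomorphic $\overline{\partial_jF}$ returns $\overline{\partial_j F(0)}$.

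Because $G_{q,r}(\overline f)\le \mathrm{MO}_{q,r}(f)$, your inequality implies the paper's. It also makes your argument independent of Theorems~\ref{ffpq}--\ref{ffqp}. Their key step, $\rho^{2\gamma}\mathrm{MO}_{q,r}(f)\lesssim\|H_f\|+\|H_{\overline f}\|$, is only sketched in the paper by reference to \cite{LWH25}. So your route is more self-contained and gives a sharper local inequality. The paper's route instead reuses the mean-oscillation machinery it has already set up.

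Two small remarks. First, for the upper bound you do not need the cited local $\bar\partial$ estimate behind Theorem~\ref{decompo}. Taking the constant $h\equiv\overline{f(z)}$ gives $G_{q,r}(\overline f)(z)\le M_{q,r}(f-f(z))(z)\lesssim\rho(z)\sup_{B^r(z)}|\nabla f|$ directly. Second, in case (B), \eqref{mean-radius} by itself only lets you change radii. Passing from $\|M_{q,Rr}(\rho\nabla f)\|_{L^s}$ to $\|\rho\nabla f\|_{L^s}$ additionally uses $s/q=p/(p-q)>1$, so that the averaging operator is bounded on $L^{s/q}$. Alternatively, use the subharmonicity-plus-Fubini argument you give at the end, which already covers this.
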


\begin{proof}
By Theorems \ref{ffpq} and \ref{ffqp}, it suffices to prove that for fixed $r>0$ there are two positive constants $C_1$ and $C_2$ such that 
$$C_1 \rho(z) \left|\frac{\partial f}{\partial z_j}(z)\right| \leq \mathrm{MO}_{q,r}(f)(z) \leq C_2 \sup_{\xi \in B^r(z)} \rho(\xi) |\nabla f(\xi)|$$
for any $j=1,\ldots,n$ and every $z\in \CC$. Since $f$ is holomorphic, the mean value property gives $\widehat f_r(z)=f(z)$, and the fundamental theorem of calculus gives
\[
\mathrm{MO}_{q,r}(f)(z) = \left( \frac{1}{|B^r(z)|} \int_{B^r(z)} |f(w) - f(z)|^q \, dV(w) \right)^{1/q}\leq C\sup_{\xi \in B^r(z)} \rho(\xi) |\nabla f(\xi)|.
\]

On the other hand, by Cauchy’s formula,
\begin{align*}
\frac{\partial f}{\partial z_j}(z)= \frac{1}{2\pi} \int_{0}^{2\pi} \frac{f(z_1,\ldots,z_j + se^{i\theta},\ldots,z_n) - f(z)}{se^{i\theta}} \, d\theta,
\end{align*}
and integrating both sides in the variable $s$ from $\frac{r}{4}\rho(z)$ to $\frac{r}{2}\rho(z)$ yields
\begin{align}\label{1625}
\frac{3r^2}{32}\rho(z)\frac{\partial f}{\partial z_j}(z)
&=
\frac{1}{2\pi\rho(z)}
\int_{\frac r4\rho(z)}^{\frac r2\rho(z)}
\int_0^{2\pi}
\frac{
f(z_1,\ldots,z_j+se^{i\theta},\ldots,z_n)-f(z)
}{
se^{i\theta}
}
s\,d\theta\,ds.
\end{align}
 Set $x:=(z_1,\ldots,z_j+se^{i\theta},\ldots,z_n)$. Then $|x-z|< \frac{r}{2}\rho(z)$, and $\rho(x)\simeq \rho(z)$. It follows from the subharmonicity of $|f-f(z)|$ and \eqref{Bzwz} that for \(m>0\) sufficiently small,
\begin{align*}
|f(x)-f(z)|&\leq \frac{1}{|B^{mr}(x)|}
\int_{B^{mr}(x)}|f(w)-f(z)|\,dV(w)\\
&\lesssim\frac{1}{|B^{r}(z)|}\int_{B^{r}(z)}|f(w)-f(z)|\,dV(w)\\
&\lesssim \mathrm{MO}_{q,r}(f)(z)
\end{align*}
which, together with \eqref{1625} further implies that
$$\rho(z)\left|\frac{\partial f}{\partial z_j}(z)\right|\lesssim \mathrm{MO}_{q,r}(f)(z)$$
for all  $j=1,\ldots,n$. Lemma~\ref{rhoB}\textup{(A)} gives
\[
\rho(z)^{2\gamma}\sup_{\xi\in B^r(z)}
\rho(\xi)|\nabla f(\xi)|
\lesssim
\sup_{\xi\in B^r(z)}
\rho(\xi)^{2\gamma+1}|\nabla f(\xi)|.
\]
The growth estimates used in Lemma~\ref{kz0} show that these balls
escape every compact set as $|z|\to\infty$. Thus the left hand side
tends to zero whenever $\rho^{2\gamma+1}|\nabla f|$ does. For every
finite $t\geq1$, the local mean value estimate applied to the
holomorphic functions $\partial_jf$ on slightly larger balls,
followed by Lemma~\ref{rhoB} and Fubini's theorem, gives
\[
\left\|\sup_{\xi\in B^r(\,\cdot\,)}
\rho(\xi)|\nabla f(\xi)|\right\|_{L^t}
\lesssim
\|\rho|\nabla f|\|_{L^t}.
\]
These observations yield the required assertions for the cases
$p\leq q$ and $q<p$. This completes the proof.
\end{proof}


\begin{thebibliography}{99}

\bibitem{AC19}
H.~Arroussi and C.~Tong,
Weighted composition operators between large Fock spaces in several
complex variables,
\emph{J. Funct. Anal.} \textbf{277} (2019), 3436--3466.

\bibitem{AHV24}
G.~Asghari, Z.~Hu, and J.~A.~Virtanen,
Schatten class Hankel operators on doubling Fock spaces and the
Berger--Coburn phenomenon,
\emph{J. Math. Anal. Appl.} \textbf{540} (2024),
Article No.~128596.

\bibitem{A86}
S.~Axler,
The Bergman space, the Bloch space, and commutators of multiplication
operators,
\emph{Duke Math. J.} \textbf{53} (1986), 315--332.

\bibitem{W05}
W.~Bauer,
Mean oscillation and Hankel operators on the Segal--Bargmann space,
\emph{Integral Equations Operator Theory} \textbf{52} (2005), 1--15.

\bibitem{BC87}
C.~A.~Berger and L.~A.~Coburn,
Toeplitz operators on the Segal--Bargmann space,
\emph{Trans. Amer. Math. Soc.} \textbf{301} (1987), 813--829.

\bibitem{BC94}
C.~A.~Berger and L.~A.~Coburn,
Heat flow and Berezin--Toeplitz estimates,
\emph{Amer. J. Math.} \textbf{116} (1994), 563--590.

\bibitem{BA82}
B.~Berndtsson and M.~Andersson,
Henkin--Ramirez formulas with weight factors,
\emph{Ann. Inst. Fourier (Grenoble)} \textbf{32} (1982), 91--110.

\bibitem{Cho12}
H.~R.~Cho and K.~Zhu,
Fock--Sobolev spaces and their Carleson measures,
\emph{J. Funct. Anal.} \textbf{263} (2012), 2483--2506.

\bibitem{C91}
M.~Christ,
On the $\bar\partial$ equation in weighted $L^2$ norms in
$\mathbb C^1$,
\emph{J. Geom. Anal.} \textbf{1} (1991), 193--230.

\bibitem{CH21}
L.~Coburn, M.~Hitrik, J.~Sj\"ostrand, and F.~White,
Weyl symbols and boundedness of Toeplitz operators,
\emph{Math. Res. Lett.} \textbf{28} (2021), 681--696.

\bibitem{CO11}
O.~Constantin and J.~Ortega-Cerd\`a,
Some spectral properties of the canonical solution operator to
$\bar\partial$ on weighted Fock spaces,
\emph{J. Math. Anal. Appl.} \textbf{377} (2011), 353--361.

\bibitem{DA15}
G.~M.~Dall'Ara,
Pointwise estimates of weighted Bergman kernels in several complex
variables,
\emph{Adv. Math.} \textbf{285} (2015), 1706--1740.

\bibitem{D98}
H.~Delin,
Pointwise estimates for the weighted Bergman projection kernel in
$\mathbb C^n$, using a weighted $L^2$ estimate for the
$\bar\partial$ equation,
\emph{Ann. Inst. Fourier (Grenoble)} \textbf{48} (1998), 967--997.

\bibitem{DJ12}
J.-P.~Demailly,
\emph{Complex Analytic and Differential Geometry},
OpenContent Book, Institut Fourier,
Universit\'e Grenoble I, 2012.

\bibitem{GW79}
R.~E.~Greene and H.~Wu,
$\mathcal C^\infty$ approximations of convex, subharmonic, and
plurisubharmonic functions,
\emph{Ann. Sci. \'Ec. Norm. Sup\'er.} \textbf{12} (1979), 47--84.

\bibitem{HV21}
R.~Hagger and J.~A.~Virtanen,
Compact Hankel operators with bounded symbols,
\emph{J. Operator Theory} \textbf{86} (2021), 317--329.

\bibitem{H01}
J.~Heinonen,
\emph{Lectures on Analysis on Metric Spaces},
Universitext, Springer-Verlag, New York, 2001.

\bibitem{HL11}
Z.~Hu and X.~Lv,
Toeplitz operators from one Fock space to another,
\emph{Integral Equations Operator Theory} \textbf{70} (2011), 541--559.

\bibitem{HV22}
Z.~Hu and J.~A.~Virtanen,
Schatten class Hankel operators on the Segal--Bargmann space and the
Berger--Coburn phenomenon,
\emph{Trans. Amer. Math. Soc.} \textbf{375} (2022), 3733--3753.

\bibitem{HV23}
Z.~Hu and J.~A.~Virtanen,
IDA and Hankel operators on Fock spaces,
\emph{Anal. PDE} \textbf{16} (2023), 2041--2077.

\bibitem{HW18}
Z.~Hu and E.~Wang,
Hankel operators between Fock spaces,
\emph{Integral Equations Operator Theory} \textbf{90} (2018),
Paper No.~37.

\bibitem{K92}
S.~G.~Krantz,
\emph{Function Theory of Several Complex Variables},
2nd ed., Wadsworth \& Brooks/Cole, Pacific Grove, CA, 1992.

\bibitem{L01}
N.~Lindholm,
Sampling in weighted $L^p$ spaces of entire functions in
$\mathbb C^n$ and estimates of the Bergman kernel,
\emph{J. Funct. Anal.} \textbf{182} (2001), 390--426.

\bibitem{LWH25}
G.~Liu, X.~Wang, and W.~Huang,
Bounded, compact and Schatten class Hankel operators on large Fock
spaces,
\emph{Banach J. Math. Anal.} \textbf{19} (2025),
Paper No.~40.

\bibitem{L92}
D.~H.~Luecking,
Characterizations of certain classes of Hankel operators on the
Bergman spaces of the unit disk,
\emph{J. Funct. Anal.} \textbf{110} (1992), 247--271.

\bibitem{LW24}
X.~Lv and E.~Wang,
Hankel operators on doubling Fock spaces,
\emph{J. Math. Anal. Appl.} \textbf{531} (2024),
Paper No.~127780.

\bibitem{Mm03}
N.~Marco, X.~Massaneda, and J.~Ortega-Cerd\`a,
Interpolating and sampling sequences for entire functions,
\emph{Geom. Funct. Anal.} \textbf{13} (2003), 862--914.

\bibitem{Mo09}
J.~Marzo and J.~Ortega-Cerd\`a,
Pointwise estimates for the Bergman kernel of the weighted Fock space,
\emph{J. Geom. Anal.} \textbf{19} (2009), 890--910.

\bibitem{MP95}
P.~Mattila,
\emph{Geometry of Sets and Measures in Euclidean Spaces:
Fractals and Rectifiability},
Cambridge Stud. Adv. Math., vol.~44,
Cambridge University Press, Cambridge, 1995.

\bibitem{N03}
N.~Nikolov and P.~Pflug,
Behavior of the Bergman kernel and metric near convex boundary points,
\emph{Proc. Amer. Math. Soc.} \textbf{131} (2003), 2097--2102.

\bibitem{Phung24}
T.~T.~Phung,
$L^p$ estimates for the Bergman projection on generalized Fock spaces,
\emph{J. Geom. Anal.} \textbf{34} (2024),
Paper No.~171.

\bibitem{Sv12}
A.~Schuster and D.~Varolin,
Toeplitz operators and Carleson measures on generalized
Bargmann--Fock spaces,
\emph{Integral Equations Operator Theory} \textbf{72} (2012), 363--392.

\bibitem{SY}
K.~Seip and E.~H.~Youssfi,
Hankel operators on Fock spaces and related Bergman kernel estimates,
\emph{J. Geom. Anal.} \textbf{23} (2013), 170--201.

\bibitem{WTH}
X.~Wang, Z.~Tu, and Z.~Hu,
Bounded and compact Toeplitz operators with positive measure symbol
on Fock-type spaces,
\emph{J. Geom. Anal.} \textbf{30} (2020), 4324--4355.

\bibitem{ZHW}
Z.~Zeng, Z.~Hu, and X.~Wang,
Bounded, compact and Schatten class Hankel operators on Fock-type
spaces,
\emph{Trans. Amer. Math. Soc.} \textbf{378} (2025), 805--849.

\bibitem{Zhu07}
K.~Zhu,
\emph{Operator Theory in Function Spaces},
2nd ed., Math. Surveys Monogr., vol.~138,
Amer. Math. Soc., Providence, RI, 2007.

\bibitem{Zhu12}
K.~Zhu,
\emph{Analysis on Fock Spaces},
Grad. Texts in Math., vol.~263,
Springer, New York, 2012.

\end{thebibliography}
\end{document}